\newtheorem{theorem}{Theorem}[section]
\newtheorem{lemma}[theorem]{Lemma}
\newtheorem{corollary}[theorem]{Corollary}
\newtheorem{proposition}[theorem]{Proposition}
\theoremstyle{definition}
\newtheorem{definition}[theorem]{Definition}
\newtheorem{example}[theorem]{Example}
\theoremstyle{remark}
\newtheorem{remark}[theorem]{Remark}
\numberwithin{equation}{section}
\newtheorem{algorithm}{\sc Algorithm}
\newtheorem{notation}{\sc Notation}
\newcommand {\R} {\mbox{\rm R}}
\newcommand {\junk}[1]{}
\newcommand {\hide}[1]{}
\newcommand {\s}        {\mbox{\rm sign}}
\newcommand {\D}     {\mbox{\rm D}}
\newcommand {\C}     {\mbox{\rm C}}
\newcommand {\Real}[1]   {\mbox{${\mathbb R}^{#1}$}}
\newcommand {\Complex}[1]   {\mbox{${\mathbb C}^{#1}$}}
 \newcommand {\re}         {\Real{}}
 \newcommand {\complex}   {\Complex{}}
\newcommand {\level}{\mbox{\rm level}}
\newcommand {\Z}  {{\mathbb Z}}
\newcommand {\Q}         {{\mathbb Q}}
\newcommand {\ZZ} {{\rm Z}}
\newcommand {\RR} {{\mathcal R}}
\newcommand {\la}   {{\langle}}
\newcommand {\ra}   {{\rangle}}
\newcommand {\eps} {{\varepsilon}}
\newcommand {\E} {{\rm Ext}}
\newcommand {\dist} {{\rm dist}}
\newcommand {\I} {\mbox{\rm I}}
\newcommand {\Ker}      {\mbox{\rm Ker}}
\def\addots{\mathinner{\mkern1mu
\raise1pt\vbox{\kern7pt\hbox{.}}
\mkern2mu\raise4pt\hbox{.}\mkern2mu
\raise7pt\hbox{.}\mkern1mu}}
\newcommand{\RM}  {\mbox{\rm RM}}
\renewcommand {\Im} {\mbox {\rm Im}}
\newcommand {\spanof} {{\rm span}}
\newcommand {\Tot} {{\rm Tot}}
\newcommand {\ancestor} {{\rm an}}
\newcommand {\F} {{\mathbb F}}
\newcommand {\M} {{\mathcal M}}
\newcommand {\N} {{\mathcal N}}
\newcommand {\HH} {{\rm H}}
\newcommand {\LL} {{\rm L}}
\newcommand {\Ch} {{\rm C}}
\newcommand {\A} {{\mathbb A}}
\newcommand {\Sphere}{\mbox{${\bf S}$}}     
\newcommand {\hocolimit}{{\rm hocolim}} 
\newcommand {\colimit}{{\rm colim}}
\newcommand {\U} {{\mathcal U}}
\newcommand{\dd}{\tilde{d}}
\newcommand{\bigcupdot}
{\mathop{\makebox[0pt]{\hskip 1.4em $\boldsymbol\cdot$}\bigcup}}
\newcommand{\coucou}[1]{\ifvmode\else\marginpar[\hfill$\rhd$]{$\lhd$}\fi
                       $\langle$\textsc{#1}$\rangle$}
\newcommand{\basu}[1]{}
\newcommand {\rp}[1]{}
\begin{document}
\title[Algorithmic Semi-algebraic Geometry and Topology]{
Algorithmic Semi-algebraic Geometry and Topology
-- Recent Progress and Open Problems
\footnote{2000 Mathematics Subject Classification Primary 14P10, 14P25;
Secondary 68W30}
}
\author{Saugata Basu}
\address{School of Mathematics,
Georgia Institute of Technology, Atlanta, GA 30332, U.S.A.}
\email{saugata.basu@math.gatech.edu}
\thanks{The author was supported in part by NSF grant CCF-0634907. 
Part of this work was done while the author was visiting the 
Institute of Mathematics and its Applications, Minneapolis.}

\keywords{Semi-algebraic Sets, Betti Numbers, Arrangements,
Algorithms, Complexity
}

\begin{abstract}
We give a survey of algorithms for computing topological invariants
of semi-algebraic sets with special emphasis on the more 
recent developments in designing algorithms for computing the Betti
numbers of semi-algebraic sets. 
Aside from describing these results, we discuss briefly 
the background as well as the importance of these problems, and also 
describe the main tools from algorithmic semi-algebraic geometry,
as well as algebraic topology,  which make these advances possible.
We end with a list of open problems.
\end{abstract}
\maketitle

\tableofcontents
\section{Introduction}
\label{sec:intro}
This article has several goals. The primary goal is 
to provide the reader with a thorough survey of the current state of 
knowledge on efficient
algorithms for computing topological invariants of semi-algebraic sets --
and in particular their Betti numbers. 
At the same time we want to provide graduate students who intend to pursue
research in the area of algorithmic semi-algebraic geometry, a 
primer on the main technical tools used in the recent developments 
in this area, so that they can start to use these themselves in their own work.
Lastly, for experts in closely related areas who might want to use the results
described in the paper, we want to present self-contained descriptions
of these results in a usable form.

With this in mind  we first give a short introduction 
to the main algorithmic problems in semi-algebraic geometry, 
their history, as well as brief
descriptions of the main mathematical and algorithmic tools used in the
design of efficient algorithms for solving these problems. 
We then provide a more detailed description
of the more recent advances in the area of designing efficient algorithms for
computing the Betti numbers of semi-algebraic sets. Since the design of these
algorithms draw on several new ideas from diverse areas, we describe 
some of the most important ones in some detail for the reader's benefit. 
The goal is to provide the reader with a short but comprehensive 
introduction to the mathematical tools that have proved to be useful in
the area.
The reader who is interested in being
up-to-date with the recent developments in this area, but not interested
in pursuing research in the area, can safely skip the more technical
sections.  
Throughout the survey
we omit most proofs referring the reader to the appropriate references where
such  proofs appear.  

The rest of the paper is organized as follows. In Section \ref{sec:background}
we discuss the background, significance, and history of  algorithmic
problems in semi-algebraic geometry and topology. 
In Section \ref{sec:results} we state some of the recent results in the
field.
In Section 
\ref{sec:algoprelim} we outline a few of the basic algorithmic tools used
in the design of algorithms for dealing with  semi-algebraic sets. These
include the cylindrical algebraic decomposition, as well as the critical
point method exemplified by the roadmap algorithm.
In Section \ref{sec:topbackground} we provide the reader 
some relevant facts and definitions from algebraic topology which 
are used in the more modern algorithms, including definitions
of cohomology of simplicial complexes as well as semi-algebraic sets,
the Nerve Lemma and its generalizations for non-Leray covers,
the descent spectral sequence and the basic properties of homotopy
colimits.
In Section \ref{sec:bettifew} we describe  recent progress in the
design of algorithms for computing the higher Betti numbers of
semi-algebraic sets. In Section \ref{sec:quadratic} we restrict
our attention to sets defined by quadratic inequalities, and describe 
recent progress in the design of efficient algorithms for computing the
Betti numbers of such sets. In Section \ref{sec:arrangements}
we describe a simplified version of an older algorithm for efficiently
computing the Betti numbers of an arrangement -- where the emphasis is
on obtaining tight bounds on the combinatorial complexity only 
(the algebraic part of the complexity being assumed to be bounded by a 
constant). We end by listing some open problems in Section \ref{sec:open}.

\paragraph{{\em Prerequisites}} 
In this survey we are aiming at a wide audience.
We expect that the reader has a basic background in algebra, 
has some familiarity 
with  simplicial complexes and their homology, 
and the theory of NP and \#P-completeness. 
Beyond these we make no additional 
assumption of any prior advanced knowledge of 
semi-algebraic geometry, algebraic topology, or the theory of
computational complexity.

\section{Semi-algebraic Geometry: Background}
\label{sec:background}
\subsection{Notation}
We first fix some notation.
Let $\R$ be a real closed field (for example, the field $\re$ of real numbers
or $\re_{{\rm alg}}$ of real algebraic numbers).
A semi-algebraic subset of $\R^k$ is a set defined by a
finite system of polynomial equalities and inequalities,
or more generally by a Boolean
formula whose atoms are polynomial equalities and inequalities.
Given a finite set ${\mathcal P}$ of polynomials in ${\R}[X_1,\ldots,X_k]$, a
subset $S$ of ${\R}^k$ is ${\mathcal P}$-semi-algebraic
if $S$ is the realization of a Boolean formula with
atoms $P=0$, $P > 0$ or $P<0$ with $P \in {\mathcal P}$.
It is clear that for every semi-algebraic subset $S$ of $\R^k$
there exists
a finite set ${\mathcal P}$ of polynomials in ${\R}[X_1,\ldots,X_k]$
such that $S$ is ${\mathcal P}$-semi-algebraic.
We call a semi-algebraic set  a ${\mathcal P}$-closed semi-algebraic
set if it is defined by a Boolean formula with no negations with atoms
$P=0$, $P \geq 0$, or $P \leq 0$ with $P \in {\mathcal P}$.

For an element $a \in\R$ we let
$$
\s(a) =
\begin{cases}
0 & \mbox{ if }  a=0,\cr
1 & \mbox{ if } a> 0,\cr
-1& \mbox{ if } a< 0.
\end{cases}
$$

A  {\em sign condition}  on
${\mathcal P}$ is an element of $\{0,1,- 1\}^{\mathcal P}$.
For any semi-algebraic set $Z \subset \R^k$ 
the {\em realization of the sign condition
$\sigma$ over $Z$}, $\RR(\sigma,Z)$, is the semi-algebraic set
\label{def:R(Z)}
$$
        \{x\in Z\;\mid\; \bigwedge_{P\in{\mathcal P}} \s({P}(x))=\sigma(P) \},
$$
and in case $Z= \R^k$ we will denote $\RR(\sigma,Z)$ by just $\RR(\sigma)$.
   
If ${\mathcal P}$ is a finite subset of
$\R [X_1, \ldots , X_k]$, we write the set of zeros
of ${\mathcal P}$ in $\R^k$ as
$$
\ZZ({\mathcal P},\R^k)=\{x\in \R^k\mid\bigwedge_{P\in{\mathcal P}}P(x)= 0\}.
$$

We will denote by 
$B_k(0,r)$ the open ball with center 0 and radius $r$ in $\R^k$. 
We will also denote by $\Sphere^k$ the unit sphere in $\R^{k+1}$ centered
at the origin. Notice that these sets are semi-algebraic.

For any 
semi-algebraic set $X$, we denote by $\overline{X}$ the closure of $X$,
which is also a  semi-algebraic set by the Tarksi-Seidenberg principle
\cite{Tarski51,Seidenberg54} (see  \cite{BPRbook06} for a modern treatment).  
The Tarksi-Seidenberg principle
states that the class of semi-algebraic sets is closed under
linear projections or equivalently that the first order theory of the
reals admits quantifier elimination. It is an easy exercise
to verify that the closure of a semi-algebraic set 
admits a description by a quantified first order formula.

For any semi-algebraic set $S$, we will denote by $b_i(S)$ its $i$-th
Betti number, which is the dimension of the
$i$-th cohomology group, $\HH^i(S,\Q)$, taken with rational coefficients,
which in our setting is also isomorphic to the $i$-th homology group,
$\HH_i(S,\Q)$
(see Section \ref{subsec:sahomology} below  for precise definitions of these
groups).
In particular,
$b_0(S)$ is the number of semi-algebraically connected components of $S$.
We will sometimes refer to the sum 
$\displaystyle{b(S) = \sum_{i \geq 0} b_i(S)}$ as the
{\em topological complexity} of a semi-algebraic set $S$.

\begin{remark}
\label{rem:homologyvscohomology}
Departing from usual practice, 
in the description of the algorithms occurring later in this paper
we will mostly refer to the 
cohomology groups instead of the homology groups.
Even though the geometric interpretation of the cohomology groups
is a bit more obscure than that for homology groups
(see Section \ref{subsubsec:homologyvscohomology} below), 
it turns out that from
the point of view of designing algorithms for computing Betti numbers of
semi-algebraic sets (at least for those discussed in this survey)
the usual geometric interpretation of 
homology as measuring the number of ``holes'' or ``tunnels'' etc.
is of little use,
and the main concepts behind these algorithms are better understood
from the cohomological point of view. This is the reason why we emphasize
cohomology over homology in what follows.
\end{remark}
 
\subsection{Main Algorithmic Problems}
\label{subsec:problems}
Algorithmic problems in semi-algebraic geometry typically consist of 
the following. We are given as input a finite family, ${\mathcal P} \subset
\R[X_1,\ldots,X_k]$, as well as a
formula defining a ${\mathcal P}$-semi-algebraic set $S$.
The task is to
decide whether certain geometric and topological properties hold for $S$,
and in some cases also computing certain topological invariants of $S$.
Some of the most basic problems include the following.

Given a ${\mathcal P}$-semi-algebraic set $S \subset \R^k$:
\begin{enumerate}
\item 
decide whether it is empty or not,
\item
given two points $x,y \in S,$ decide if they  are in the
same connected component of $S$ and if so output a
semi-algebraic path in $S$ joining them,
\item
compute semi-algebraic descriptions of the connected components
of $S$,
\item
compute semi-algebraic descriptions of the projection of
$S$ onto some linear subspace of $\R^k$
(this problem is also known as the quantifier elimination
problem for the first order theory of the reals and many
other problems can be posed as special cases of this
very general problem).
\end{enumerate}

At a deeper level we have  problems of more topological flavor, such as:
\begin{itemize}
\item[(5)]
compute  the cohomology groups of $S$,
its Betti numbers, its Euler-Poincar\'e characteristic  etc.,
\item[(6)] 
compute a semi-algebraic triangulation of $S$ 
(cf.  Definition \ref{def:triangulation} below), as well as 
\item[(7)]
compute   a decomposition of $S$ into semi-algebraic smooth pieces of various
dimensions which fit together nicely (a Whitney-regular
stratification).
\end{itemize}

The complexity of an algorithm for solving any of the
above problems is measured in terms of the following
three parameters:
\begin{itemize}
\item
the number of polynomials, $s = \#{\mathcal P}$,
\item
the maximum degree, $d = \max_{P \in {\mathcal P}} \deg(P)$, and 
\item
the number of variables, $k$.
\end{itemize}

\begin{definition}[Complexity]
A typical  input to the algorithms considered in this survey 
will be a set  of polynomials with coefficients in an ordered 
ring $\D$ (which can be taken to be the ring generated by the
coeffcients of the input polynomials).
By complexity of an algorithm we will mean the number of arithmetic
operations (including comparisons) performed by the algorithm
in the ring $\D$.
In case the input polynomials have integer coefficients with bounded bit-size,
then we will often give the bit-complexity, which is the number of bit
operations performed by the algorithm.
We refer the reader to \cite{BPRbook06}[Chapter 8] for a full discussion
about the various measures of complexity.
\end{definition}

Even though the goal is always to design algorithms with 
the best possible complexity in terms of all the parameters $s,d,k$,
the relative importance of the parameters is
very much application dependent. For instance, in applications
in {\em computational geometry} it is the {\em combinatorial} complexity
(that is the dependence on $s$) that is of paramount importance,
the {\em algebraic} part depending on $d$, as well as the dimension $k$, 
are assumed to be bounded by constants. On the other hand in
algorithmic real algebraic geometry, and in applications in complexity
theory, the  algebraic part
depending on $d$ is considered to be equally important.

\subsection{Brief History}
Even though there exist algorithms  for
solving all the above problems, the main research
problem is to design {\em efficient} algorithms
for solving them. 
The  complexity of the first decision procedure 
given by Tarski \cite{Tarski51}  to solve Problems
1 and 4 listed in Section \ref{subsec:problems}
is not elementary recursive, which implies  that the
running time cannot be bounded by a function of
the size of the input which is a fixed tower of exponents.
The first algorithm with a significantly  better worst-case time bound was 
given by Collins \cite{Collins} in 1976. 
His algorithm had a worst case running time 
doubly  exponential in the number of variables.
Collins' method is to obtain a cylindrical algebraic decomposition
of the given semi-algebraic set (see Section \ref{subsec:cad} below for
definition). 
Once this decomposition is computed
most topological questions about semi-algebraic sets such as those
listed in Section \ref{subsec:problems} can be answered.
However, this method involves cascading projections
which involve squaring  of the degrees at each step resulting
in a complexity which is
doubly exponential in the number of variables.

Most of the recent work in algorithmic  semi-algebraic
geometry has focused on obtaining {\em single exponential time}
algorithms -- that is algorithms with complexity of the
order of 
$(s d)^{k^{O(1)}}$ 
rather than $(s d)^{2^k}$. 
An important motivating reason behind the search for such algorithms,
is the following theorem  due to Gabrielov and Vorobjov \cite{GV05} 
(see  \cite{OP,Thom,Milnor,Basu1}, as well as the 
survey article \cite{BPR10}, for work leading up to this result).

\begin{theorem} \cite{GV05}
\label{the:GV}
For a ${\mathcal P}$-semi-algebraic set $S \subset \R^k$,
the sum of the Betti numbers of $S$ 
(refer to Section \ref{sec:topbackground} below for definition) 
is bounded by $(O(s^2d))^k$,
where 
$s = \#{\mathcal P}$, and $d = \max_{P \in {\mathcal P}} \deg(P)$.
\end{theorem}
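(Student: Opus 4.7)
The plan is to reduce the general case to that of a closed and bounded semi-algebraic set, for which classical Oleinik--Petrovsky--Thom--Milnor type bounds yield a total Betti number bound of the form $(O(sd))^k$. The extra factor of $s$ in the statement is the unavoidable cost of this reduction, arising from the need to introduce auxiliary polynomial perturbations and to account for the sign conditions of $\mathcal{P}$ realized inside $S$.

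The first main step is to replace $S$ by a closed and bounded semi-algebraic set $\widetilde S$ over a real closed extension $\R\langle\bar\eps\rangle$ obtained by adjoining a tuple of infinitesimals $\bar\eps$. Concretely, I would rewrite the Boolean formula defining $S$ in disjunctive normal form, and in each conjunct replace a strict inequality $P > 0$ by $P \geq \eps_P^+$, a strict inequality $P < 0$ by $P \leq -\eps_P^-$, and an equality $P = 0$ by $-\eps_P^0 \leq P \leq \eps_P^0$, using a fresh infinitesimal for each occurrence, and finally intersect with a large ball $B_k(0, 1/\eps_0)$ of infinitesimally large radius. The resulting $\widetilde S \subset \R\langle\bar\eps\rangle^k$ is closed and bounded. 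The crucial (and most delicate) claim is that $b_i(\widetilde S) \geq b_i(S)$ for every $i$; this follows from a spectral sequence argument relating the cohomology of $S$ to that of the thickening $\widetilde S$, combined with Tarski--Seidenberg transfer between $\R$ and $\R\langle\bar\eps\rangle$. This is the homotopy-theoretic heart of the proof in \cite{GV05}.

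The second step is to bound the Betti numbers of the closed bounded set $\widetilde S$, which is now described in terms of a family $\widetilde{\mathcal P}$ of $O(s)$ polynomials of degree at most $d$. Expressing $\widetilde S = \bigcup_{j=1}^{s'} T_j$ as a union of basic closed semi-algebraic sets cut out by weak sign conditions on $\widetilde{\mathcal P}$, the Mayer--Vietoris spectral sequence (truncated at index $k+1$ by dimension) yields
$$
\sum_i b_i(\widetilde S) \;\leq\; \sum_{\emptyset \neq J,\, |J| \leq k+1}\; \sum_i b_i\Bigl(\bigcap_{j \in J} T_j\Bigr).
$$
Each intersection is itself a basic closed semi-algebraic set, and its total Betti number is bounded by $(O(d))^k$ via the classical critical-point method: replace the defining inequalities by a single sum-of-squares equation (possibly after introducing $O(k)$ auxiliary variables), apply a generic Morse-theoretic perturbation, and invoke the Oleinik--Petrovsky--Thom--Milnor bound for algebraic sets in terms of degree. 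Summing over the $(O(s'))^{k+1}$ index sets $J$ with $s' = O(s)$, and accounting for the $O(s)$ conjuncts produced by the DNF expansion, yields the advertised bound $(O(s^2 d))^k$.

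The main obstacle is precisely the homotopy-theoretic comparison in the reduction step: proving that the perturbed closed set $\widetilde S$ dominates $S$ in every Betti number, uniformly for arbitrary (and possibly pathological) semi-algebraic $S$. The remaining ingredients---Mayer--Vietoris and bounds for basic closed sets---are essentially standard and were available before \cite{GV05}. What is new is a careful construction of the infinitesimal thickening together with a spectral sequence argument that makes the inequality $b_i(\widetilde S) \geq b_i(S)$ rigorous despite the absence of any obvious deformation retract from $\widetilde S$ onto $S$. The quadratic dependence on $s$ produced by this approach appears to be intrinsic to the method and is exactly the content of the theorem.
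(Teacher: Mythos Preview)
The paper is a survey and does not itself prove this theorem; it cites \cite{GV05} and remarks only that Theorem~\ref{the:B99} (the $(O(sd))^k$ bound for $\mathcal{P}$-closed sets) ``is used in an essential way in the proof.'' Your two-step architecture---replace $S$ by a closed, bounded $\widetilde S$ via the Gabrielov--Vorobjov infinitesimal thickening, establish $b_i(\widetilde S)\ge b_i(S)$ by a spectral-sequence argument, then invoke the closed-case bound---is exactly the strategy the paper is alluding to, and it is the strategy of \cite{GV05}. So the overall shape of your proposal is correct and matches the intended proof.

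One point of your bookkeeping is not right, however. You attribute the extra factor of $s$ partly to ``the $O(s)$ conjuncts produced by the DNF expansion,'' but a Boolean formula with $s$ atoms can have exponentially many DNF conjuncts, so this cannot be the mechanism. In the actual Gabrielov--Vorobjov construction one does not pass through a DNF at all: the thickened set $\widetilde S$ is built directly as a $\widetilde{\mathcal P}$-closed set for an enlarged family $\widetilde{\mathcal P}$ consisting of the original polynomials shifted by several levels of infinitesimals, and it is the size of $\widetilde{\mathcal P}$ (of order $s^2$, coming from $O(s)$ perturbation levels applied to each of the $s$ polynomials) that, after a direct application of Theorem~\ref{the:B99}, produces the bound $(O(s^2d))^k$. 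Your separate Mayer--Vietoris derivation of the closed-case bound is fine but redundant, since that work is already encapsulated in Theorem~\ref{the:B99}; the point is just to count $\#\widetilde{\mathcal P}$ correctly.
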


For the special case of ${\mathcal P}$-closed semi-algebraic sets the
following slightly better bound was known before \cite{Basu1}
(and this bound is used in an essential way in the proof of
Theorem \ref{the:GV}).
Using the same notation as in Theorem \ref{the:GV} above we have

\begin{theorem} \cite{Basu1}
\label{the:B99}
For a ${\mathcal P}$-closed semi-algebraic set $S \subset \R^k$,
the sum of the Betti numbers of $S$ is bounded by $(O(sd))^k$.
\end{theorem}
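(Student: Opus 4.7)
}
The plan is to approximate $S$ by a compact semi-algebraic set $S'$ with smooth (perturbed) boundary strata, and then bound $\sum_i b_i(S')$ by a Morse-theoretic count of critical points, applying a Bezout-type estimate stratum by stratum.

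First, I would reduce to the compact case by replacing $S$ with $S \cap \overline{B_k(0,R)}$ for $R$ large enough that the Betti numbers stabilize (equivalently, working over a Puiseux extension $\R\la\Omega\ra$ with an infinitesimal $\Omega$ and intersecting with $\overline{B_k(0,1/\Omega)}$). I would then perturb the defining inequalities using positive infinitesimals: each atom ``$P \geq 0$'' is replaced by ``$P + \eps_P \geq 0$'' (and analogously for $P \leq 0$ and $P = 0$), with the $\eps_P$ chosen generically. A standard specialization argument then yields a compact set $S'$ whose boundary is a smooth manifold with corners, and such that $b_i(S) \leq b_i(S')$ for every $i$.

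Next, I would stratify $S'$ by sign conditions: for each $I \subset \mathcal{P}$, the locally closed stratum $\mathcal{R}_I \subset S'$ consists of points where $P = 0$ for $P \in I$ and the remaining inequalities hold strictly. For generic perturbation each $\mathcal{R}_I$ is a smooth submanifold of codimension $|I|$, so it is nonempty only when $|I| \leq k$. Choosing a generic linear function $f$ whose restrictions to all the strata are Morse, classical stratified Morse theory bounds $\sum_i b_i(S')$ by the total number of critical points of $f$ along the strata $\mathcal{R}_I$.

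The critical points of $f$ on a stratum $\mathcal{R}_I$ with $|I| = j$ satisfy $P = 0$ for $P \in I$ together with the Lagrange condition that $\nabla f$ lie in the span of $\{\nabla P : P \in I\}$; this is a system of polynomial equations of degree $O(d)$ to which Bezout's theorem applies, yielding at most $O(d)^k$ critical points per stratum. Summing over $I$ with $|I| \leq k$, one obtains
\[
\sum_i b_i(S) \leq \sum_i b_i(S') \leq \sum_{j=0}^{k} \binom{s}{j}\cdot O(d)^k \leq (O(sd))^k,
\]
which is the desired bound. The main obstacle is Step 1: setting up the infinitesimal deformation so that the resulting $S'$ is genuinely smooth along its strata, its Betti numbers dominate those of $S$, and the perturbations do not blow up the degrees by more than a constant factor. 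A secondary subtlety in Step 3 is formulating the Lagrange / rank-drop condition so that Bezout applies cleanly and does not introduce too many auxiliary variables that would worsen the ambient dimension in the final bound.
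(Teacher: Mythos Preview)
The survey itself does not prove this theorem; it merely quotes the result from \cite{Basu1}. So there is no ``paper's own proof'' to compare against. I will comment on your outline and on how it relates to the original argument.

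Your strategy---perturb to a compact set with smooth corner strata, apply stratified Morse theory, and count critical points stratum by stratum via B\'ezout---is a legitimate blueprint, and the reduction steps you describe (compactification, infinitesimal thickening preserving homotopy type, genericity of the height function) can all be made rigorous along the lines you indicate. The gap is in the degree count in Step~3, which as written does not produce the exponent you claim. If you encode the Lagrange condition as $P_i=0$ for $i\in I$ together with $\nabla f=\sum_i\lambda_i\nabla P_i$, you have $k+j$ equations of degree $\leq d$ in $k+j$ unknowns, and B\'ezout gives $d^{\,k+j}$ critical points, not $O(d)^k$; summing $\sum_{j\le k}\binom{s}{j}d^{\,k+j}$ then yields only $(O(sd^2))^k$. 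If instead you avoid multipliers and impose the rank-drop via $(j{+}1)\times(j{+}1)$ minors, each minor has degree $j(d-1)$, and even granting that $k-j$ of them cut out the critical locus (which is itself not automatic), B\'ezout gives $d^{\,j}\bigl(j(d-1)\bigr)^{k-j}$; summing over $j$ now introduces a factor $k^{O(k)}$, so you land at $(O(ksd))^k$ rather than $(O(sd))^k$. Either way the constant inside $O(\cdot)$ is not universal.

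The proof in \cite{Basu1} avoids the per-stratum count altogether. After reducing to the bounded case and performing an infinitesimal perturbation (replacing each $P$ by $P\pm\eps_P$), one arranges that the resulting set $S'$ is homotopy equivalent to $S$ and that its boundary is contained in the smooth real hypersurface $\{Q=0\}$, where $Q$ is essentially the product $\prod_{P\in\mathcal P}(P^2-\eps_P^2)$ of degree $O(sd)$. One then bounds $b(S')$ directly in terms of $b(\{Q=0\})$, to which the Ole\u{\i}nik--Petrovski\u{\i}--Thom--Milnor bound $\deg Q\cdot(2\deg Q-1)^{k-1}=(O(sd))^k$ applies in one shot. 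Packaging all the constraints into a single degree-$O(sd)$ polynomial is exactly what prevents the multiplicative blow-up your stratum-by-stratum B\'ezout incurs.
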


\begin{remark}
\label{rem:lowerbound}
These bounds are asymptotically tight, as can be already seen from the 
example where each $P \in {\mathcal P}$ is a product of 
$d$ generic polynomials of degree one. The number of connected
components of the ${\mathcal P}$-semi-algebraic
set defined as the subset of $\R^k$
where all polynomials in ${\mathcal P}$ are non-zero is clearly
bounded from below by $(\Omega(sd))^k$.
\end{remark}

Notice also that the above bound has single exponential rather than double 
exponential dependence on $k$.
Algorithms with single exponential complexity have now been given
for several of the problems listed above and there have been 
a sequence of improvements in the complexities of such algorithms. 
We now have  single exponential algorithms for 
deciding emptiness of semi-algebraic sets \cite{Gri88,GV,R92,BPR4},
quantifier elimination \cite{R92,BPR4,Basu2},
deciding connectivity
\cite{Canny,GV92,CGV,GR92,BPR5},
computing descriptions of the connected components \cite{HRS94,BPR9},
computing the Euler-Poincar\'e characteristic 
(see Section \ref{subsubsec:EP} below for definition)
\cite{Basu1,BPR7}, as well 
as the first few (that is, any constant number of) 
Betti numbers of semi-algebraic sets \cite{BPR9,Basu7}. 
These algorithms answer 
questions about the semi-algebraic set $S$ without
obtaining a full cylindrical algebraic decomposition
(see Section \ref{subsec:cad} below for definition), which makes it possible
to avoid having double exponential complexity.
Moreover, polynomial time algorithms are now known for
computing some of these invariants for special classes of 
semi-algebraic sets \cite{Barvinok93,GP,Basu6,Basu8,BZ}. 
We describe some of these new results in greater detail in 
Section \ref{sec:results}.

\subsection{Certain Restricted Classes of Semi-algebraic Sets}
Since general semi-algebraic sets can have exponential topological 
complexity (cf. Remark \ref{rem:lowerbound}), it is natural to consider certain
restricted classes of semi-algebraic sets. One natural class consists of
semi-algebraic sets defined by a conjunction of 
quadratic inequalities.

\subsubsection{Quantitative Bounds for Sets Defined by Quadratic Inequalities}
Since sets defined
by linear inequalities have no interesting topology, sets  defined
by quadratic inequalities can be considered to be the simplest 
class of semi-algebraic sets which can have non-trivial topology. 
Such sets are in fact quite general, since every semi-algebraic set 
can be
defined by a (quantified) formula involving only quadratic polynomials
(at the cost of increasing the number of variables and the size of the
formula). Moreover, as in the case of general semi-algebraic sets,
the Betti numbers of such sets can be exponentially large.
For example, the set  $S \subset \R^k$ defined by 
\[
X_1(1 - X_1) \leq 0, \ldots, X_k(1 - X_k) \leq 0,
\]
has $b_0(S) = 2^k$.

Hence, it is somewhat surprising that for any constant $\ell \geq 0$, 
the Betti numbers
$b_{k-1}(S),\ldots,b_{k-\ell}(S)$, of a basic closed semi-algebraic set
$S \subset \R^k$ defined by quadratic inequalities, are  polynomially 
bounded. 
The following theorem which appears in \cite{Basu4} is derived 
using a bound proved by Barvinok \cite{Barvinok97}
on the Betti numbers of sets defined by few quadratic equations.

\begin{theorem}\cite{Basu4}
\label{the:quadratic}
Let  $\R$ a real closed field and $S \subset \R^k$ be defined by 
\[
P_1 \leq 0,\ldots, P_s \leq 0, \deg(P_i) \leq 2, 1 \leq i \leq s.
\]
Then, for any $\ell \geq 0$,
\[
b_{k-\ell}(S) \leq {s \choose {\ell}} k^{O(\ell)}.
\]
\end{theorem}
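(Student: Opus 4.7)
The plan is to use Alexander duality to convert the bound on a high-codimension Betti number of $S$ into a bound on a low-dimensional Betti number of its complement, and then exploit a Mayer-Vietoris spectral sequence together with Barvinok's bound \cite{Barvinok97} on the Betti numbers of sets defined by few quadratic inequalities.

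First I would make $S$ compact without changing its homotopy type by intersecting it with a large closed ball (equivalently, adjoining the quadratic inequality $X_1^2+\cdots+X_k^2 \leq R^2$ for $R$ sufficiently large); call the result still $S$. Embedding $S$ in $\Sphere^k$ via one-point compactification and applying Alexander duality (with rational coefficients) gives
\[
b_{k-\ell}(S) \;=\; \tilde b_{\ell-1}(\Sphere^k \setminus S).
\]
The complement is, up to at most one additional point, the union $\mathcal U = \bigcup_{i=1}^s U_i$ with $U_i = \{P_i > 0\}$, so it suffices to bound $b_{\ell-1}(\mathcal U)$.

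To do so I would apply the generalized Mayer-Vietoris spectral sequence for the open cover $\{U_i\}$,
\[
E_1^{p,q} \;=\; \bigoplus_{|J| = p+1} H^q\!\left(\bigcap_{i \in J} U_i\right) \;\Longrightarrow\; H^{p+q}(\mathcal U),
\]
and observe that only terms with $p+q = \ell-1$ (so $|J| = p+1 \leq \ell$) contribute to $b_{\ell-1}(\mathcal U)$. Each open intersection $\bigcap_{i \in J} U_i$ is homotopy equivalent, uniformly in $J$ via a common small positive or infinitesimal $\eps$, to the closed semi-algebraic set $\bigcap_{i \in J}\{P_i \geq \eps\}$, which is defined by $|J| \leq \ell$ quadratic inequalities in $\R^k$. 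Barvinok's theorem then bounds its total Betti number by $k^{O(\ell)}$, so
\[
b_{k-\ell}(S) \;\leq\; \sum_{j=1}^{\ell}\binom{s}{j}\, k^{O(\ell)} \;\leq\; \binom{s}{\ell}\, k^{O(\ell)},
\]
after absorbing the factor of $\ell+1$ into $k^{O(\ell)}$.

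The main obstacle I anticipate lies in the careful handling of non-compactness and of the interaction between open and closed defining formulas: Alexander duality requires compact, locally contractible subsets of the sphere, so the ball-truncation and one-point compactification steps need to be justified (including tracking the possibly added point at infinity in the complement), and the parameter $\eps$ that turns each open set $\{P_i > 0\}$ into a closed semi-algebraic set must be chosen uniformly across all intersections appearing in the spectral sequence, which is cleanest via an infinitesimal and the Tarski-Seidenberg transfer principle so that the argument works for an arbitrary real closed field $\R$. A secondary technical point is that Barvinok's bound is often stated for sets defined by quadratic \emph{equations}; its application here to sets defined by closed quadratic inequalities goes through the standard slack-variable substitution $P_i - \eps - Y_i^2 = 0$ combined with a projection argument.
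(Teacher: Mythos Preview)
Your proposal is correct and follows essentially the same approach as \cite{Basu4}: a Mayer--Vietoris/Alexander-duality reduction from $b_{k-\ell}(S)$ to Betti numbers of sets defined by at most $\ell$ quadratic constraints, followed by Barvinok's $k^{O(\ell)}$ bound. The only organizational difference is that \cite{Basu4} (cf.\ Remark~\ref{rem:dual} and the outline in Section~\ref{sec:outline}) works with the dual Mayer--Vietoris sequence for the intersection $S=\bigcap_i S_i$ directly, so the $E_1$-terms are indexed by unions $\bigcup_{i\in J}\{P_i\le 0\}$ rather than by the Alexander-dual intersections $\bigcap_{i\in J}\{P_i>0\}$ you obtain after passing to the complement; also note that Barvinok's bound in \cite{Barvinok97} is already stated for quadratic inequalities, so your slack-variable step is not needed.
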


Notice that for fixed $\ell$ this gives a polynomial bound on the
highest $\ell$ Betti numbers of $S$ (which could possibly be non-zero). 
Observe also that similar bounds do not hold for sets defined
by polynomials of degree greater than two. For instance, the
set $V \subset \R^k$ defined by the single quartic inequality,
\[
\sum_{i=1}^k X_i^2(X_i - 1)^2 -\varepsilon \geq 0,
\]
will have $b_{k-1}(V) = 2^k$, for all small enough $\varepsilon > 0$.

To see this observe that for all sufficiently small $\varepsilon > 0$,
$\R^k \setminus V $ is defined by 
\[ 
\sum_{i=1}^k X_i^2(X_i - 1)^2 <  \varepsilon
\]
and has $2^k$ connected components
since it retracts onto the set $\{0,1\}^k$. 
It now follows that 
\[
b_{k-1}(V) = b_0(\R^k \setminus V) = 2^k,
\]
where the first equality is a consequence of the well-known
Alexander duality theorem (see \cite[pp. 296]{Spanier}).

\subsubsection{Relevance to Computational Complexity Theory}
Semi-algebraic sets defined by a system of quadratic inequalities 
have a special significance in the theory of computational complexity.
Even though such sets might seem to be the next simplest class of 
semi-algebraic sets after sets defined by linear inequalities, 
from the point of view of 
computational complexity they represent a quantum leap.
Whereas there exist (weakly) polynomial time algorithms for
solving linear programming, solving quadratic
feasibility problem is provably hard.
For instance, it follows from an easy reduction from the problem of 
testing feasibility of a real quartic equation in many variables, that 
the problem of testing whether a system of quadratic inequalities 
is feasible is  ${\rm NP}_{\rm R}$-complete in the Blum-Shub-Smale
model of computation (see \cite{BCSS}). 
Assuming the input polynomials to have integer
coefficients, the same problem is NP-hard in the classical Turing machine
model, since it is also not difficult to see that the Boolean satisfiability 
problem can be posed as the problem of deciding whether a certain 
semi-algebraic set  defined by  quadratic inequalities is empty or not.

Counting the number of connected components of such sets is even harder.
In fact, it is shown in \cite{Basu8} that 
for $\ell \leq \log k$,
computing the $\ell$-th Betti number of 
a basic semi-algebraic set defined by quadratic inequalities in $\R^k$
is $\#$P-hard. 
In contrast to these hardness results, the polynomial bound on the top 
Betti numbers of sets defined by quadratic inequalities gives rise to
the possibility that these might in fact be computable in polynomial time. 

\subsubsection{Projections of Sets Defined by Few Quadratic Inequalities}
A case of intermediate complexity between semi-algebraic sets defined by 
polynomials of higher degrees and sets defined by a fixed number of
quadratic inequalities  is obtained by considering  linear
projections of such sets.
The operation of linear projection of semi-algebraic sets plays a very
significant role in algorithmic semi-algebraic geometry.  It is a
consequence of the Tarski-Seidenberg principle (see for instance
\cite[Theorem 2.80]{BPRbook06}) 
that the image of a semi-algebraic set under a
linear projection is semi-algebraic, and designing efficient
algorithms for computing properties of projections of semi-algebraic
sets (such as its description by a quantifier-free formula) is a
central problem of the area and is a very well-studied topic (see for
example \cite{R92,BPR4,Basu2} or \cite[Chapter 14]{BPRbook06}).  
However, the
complexities of the best algorithms for computing descriptions of
projections of general semi-algebraic sets is single exponential in
the dimension and do not significantly improve when restricted to the
class of semi-algebraic sets defined by a constant number of quadratic
inequalities.  
Indeed, any semi-algebraic set can be realized as the projection of a
set defined by quadratic inequalities, and it is not known whether
quantifier elimination can be performed efficiently when the number of
quadratic inequalities is kept constant.  However, it is
shown in \cite{BZ} that, 
with a fixed number of inequalities, the projections of
such sets are topologically simpler than projections of general
semi-algebraic sets.

More precisely, let $S \subset
\re^{k+m}$ be a closed and bounded  semi-algebraic set defined by
$P_1 \geq 0, \ldots, P_\ell \geq 0,$ with $P_i \in
\re[X_1,\ldots,X_k,Y_1,\ldots,Y_m],
\deg(P_i) \leq 2, \; 1 \leq i \leq \ell$. (For technical reasons,
which we do not delve into, it is necessary in this case to restrict
ourselves to the case where $\R = \re$.)
Let $\pi:\re^{k+m} \rightarrow \re^m$ be the projection onto the
last $m$ coordinates.  In what follows, the number of inequalities,
$\ell$, used in the definition of $S$ will be considered as fixed.
Since, $\pi(S)$ is not necessarily describable using only
quadratic inequalities, the bound in 
Theorem \ref{the:quadratic} does
not hold for $\pi(S)$ and $\pi(S)$ can in principle be quite
complicated.  Using the best known complexity estimates for quantifier
elimination algorithms over the reals (see~\cite[Chapter 14]{BPRbook06}), 
one gets
single exponential (in $k$ and $m$) bounds on the degrees and the
number of polynomials necessary to obtain a semi-algebraic description
of $\pi(S)$.  In fact, there is no known algorithm for computing a
semi-algebraic description of $\pi(S)$ in time polynomial in $k$ and
$m$. Nevertheless, we know
that for any constant $q > 0$, the sum of the first $q$
Betti numbers of $\pi(S)$ is bounded by a polynomial in $k$ and $m$.

\begin{theorem}\cite{BZ}
\label{the:projquad}
Let $S \subset \re^{k+m}$ be a closed and bounded 
semi-algebraic set defined by 
\[
P_1 \geq  0, \ldots, P_\ell \geq 0, 
P_i \in \re[X_1,\ldots,X_k,Y_1,\ldots,Y_m], 
\deg(P_i) \leq 2, \; 1 \leq i \leq \ell.
\]
Let $\pi:\re^{k+m} \rightarrow \re^m$ be the projection onto the last
$m$ coordinates. For any $q >0,\; 0 \leq q \leq k$,
\begin{equation}
\label{eqn:projquad}
\sum_{i=0}^q b_i(\pi(S)) \leq (k + m)^{O(q\ell)}.
\end{equation}
\end{theorem}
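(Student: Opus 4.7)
The plan is to model $\pi(S)$ by a semi-simplicial space whose components are themselves defined by few quadratic inequalities, and then combine a descent spectral sequence with a Barvinok-type Betti bound for quadratic varieties.

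For $p \geq 1$, set
\[
W_p \;=\; \{(x^{(1)},\ldots,x^{(p)},y) \in \re^{pk+m} \;:\; P_j(x^{(i)},y) \geq 0,\ 1\leq i \leq p,\ 1 \leq j \leq \ell\},
\]
the $p$-fold fibered product of $S$ over $\pi(S)$. Each $W_p$ is closed and bounded and is defined by $p\ell$ quadratic inequalities in $pk+m$ variables; the face maps obtained by omitting one factor equip $W_\bullet$ with the structure of a semi-simplicial space augmented by $\pi\colon W_1 = S \to \pi(S)$.

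The first step is the descent principle: since $\pi$ is a proper continuous surjection of compact semi-algebraic sets, the augmentation induces a weak equivalence $|W_\bullet| \xrightarrow{\sim} \pi(S)$ from the geometric realization of $W_\bullet$ to $\pi(S)$; this is a Vietoris--Begle-type statement that relies essentially on the semi-algebraic (tame) nature of $\pi$. It produces a first-quadrant cohomological spectral sequence
\[
E_1^{a,c} \;=\; \HH^c(W_{a+1};\Q) \;\Longrightarrow\; \HH^{a+c}(\pi(S);\Q).
\]

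The second step bounds the total Betti number $\sum_{c} b_c(W_{a+1})$ uniformly for $0 \leq a \leq q$. Each $W_{a+1}$ is defined by $(a+1)\ell$ quadratic inequalities in $(a+1)k+m$ variables, so the Barvinok-style bound on the total Betti number of a closed semi-algebraic set defined by finitely many quadratic inequalities (the same input that underlies Theorem~\ref{the:quadratic}; after eliminating inequalities in favor of equations using slack variables and a transfer argument) gives
\[
\sum_{c\geq 0} b_c(W_{a+1}) \;\leq\; \bigl((a+1)k+m\bigr)^{O((a+1)\ell)} \;\leq\; (k+m)^{O(q\ell)},
\]
where we use that $\ell$ is treated as fixed and $a \leq q \leq k$.

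Combining the two ingredients via the standard edge inequality, for $i \leq q$,
\[
b_i(\pi(S)) \;\leq\; \sum_{a+c=i} \dim E_\infty^{a,c} \;\leq\; \sum_{a+c=i} \dim E_1^{a,c} \;=\; \sum_{a+c=i} b_c(W_{a+1}),
\]
and summing at most $(q+1)^2$ such terms, each bounded by $(k+m)^{O(q\ell)}$, yields the asserted bound $(k+m)^{O(q\ell)}$.

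The main obstacle is establishing the weak equivalence $|W_\bullet| \simeq \pi(S)$: this is not a formal consequence of surjectivity of $\pi$, and one must exploit tameness of semi-algebraic maps (for instance through a semi-algebraic triangulation compatible with $\pi$, or a Vietoris--Begle argument adapted to the semi-algebraic category) to legitimize \v{C}ech-style descent for a map that is not a fibration. Once this equivalence is granted, the Barvinok bound and the spectral-sequence estimate are essentially routine.
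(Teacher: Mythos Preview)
Your proposal is correct and follows essentially the same route as the paper (and the cited article \cite{BZ}): use the descent spectral sequence built from the iterated fibered products $W_p$ of $S$ over $\pi(S)$, observe that each $W_p$ is again a basic closed set defined by $p\ell$ quadratic inequalities in $pk+m$ variables, and then feed the Barvinok-type total Betti number bound $N^{O(s)}$ (for $s$ quadratic constraints in $N$ variables, which follows from Theorem~\ref{the:quadratic} by summing over $\ell$) into the $E_1$-page inequality.

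The only point where your write-up diverges from the paper's presentation is the justification of descent. The paper (Theorem~\ref{the:descent}) formulates cohomological descent for \emph{locally split} surjections, the prototypical case being a projection from an \emph{open} semi-algebraic set; the passage to closed and bounded $S$ is then handled in \cite{BZ} by an infinitesimal thickening/limit argument that reduces to the open case. Your alternative, invoking properness together with semi-algebraic tameness (e.g.\ a triangulation of $\pi$ in the sense of Hardt) to get $|W_\bullet|\simeq \pi(S)$ directly, is a legitimate substitute and you correctly flag it as the one non-routine step. Either way the analytic content is the same: one needs some tameness of $\pi$ beyond mere surjectivity, and once descent is secured the rest is bookkeeping.
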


This suggests, from the point of view of designing efficient
(polynomial time) algorithms in semi-algebraic geometry, that
images under linear projections of semi-algebraic sets
defined by a constant number of quadratic inequalities, 
are simpler than general semi-algebraic sets. So they
should be the next natural class of sets to consider,
after sets defined by linear and quadratic inequalities.

\subsection{Some Remarks About the Cohomology Groups}
Since in this survey we focus mainly on the algorithmic problem
of computing the Betti numbers of semi-algebraic sets,
which are the dimensions of the various cohomology (also homology) groups
of such sets,
it is perhaps worthwhile to say a few words about 
our motivations behind computing them, and also their connections with
other parts of mathematics, especially with computational complexity theory.

\subsubsection{Motivation behind computing the zero-th Betti number}
The algorithmic problems of deciding whether a given semi-algebraic set is 
empty or if it is connected,
have obvious applications in many different areas of science and engineering.
(Recall that the number of connected components of a semi-algebraic
set $S$ is equal to its zero-th Betti number, $b_0(S)$.)
For instance, in robotics,
the configuration space of a robot can be modeled as a semi-algebraic set.
Similarly,
in molecular chemistry the 
conformation space of a molecule with  constraints on bond lengths and angles
is a semi-algebraic set.
In both these cases understanding connectivity information is important:
for solving motion planning problem in robotics,
or for determining possible molecular conformations in molecular
chemistry. 

\subsubsection{The higher Betti numbers}
The higher cohomology groups of semi-algebraic sets,
which measure higher dimensional connectivity,
do not appear to have such obvious applications.
Nevertheless, 
there exist several reasons why the problem of
computing the higher homology groups of semi-algebraic sets is an
important problem and we mention a few of these below.
  
Firstly, the algorithmic problem of pinning down the exact topology 
of any given topological space, such as a semi-algebraic set 
in $\re^k$, is an exceedingly difficult problem. In fact, the general problem
of determining if two given spaces are homeomorphic is undecidable
\cite{Markov}. 
In order to get around this difficulty, mathematicians since  the time of 
Poincar\'e 
have devised more easily computable (albeit weaker) 
invariants of topological spaces. 
One reason that cohomology groups are so important, is that 
unlike other topological invariants, they are readily computable --
they allow one to
discard a large amount of information regarding the topology of a given
space, while retaining just enough to derive important 
{\em qualitative} information about the space in question.
For instance, in the case of semi-algebraic sets, 
the dimensions of the cohomology groups also known as the Betti numbers,
determine qualitative information about the set, such as
connectivity (in the usual sense), number of holes and/or tunnels (i.e. higher
dimensional connectivity), its Euler-Poincar\'e characteristic 
(a discrete valuation with properties analogous to those of 
volume) etc.

Secondly, the reach of cohomology theory 
is not restricted to the continuous domain (such as the study of algebraic
varieties in  ${\mathbb C}^k$ or semi-algebraic sets in ${\mathbb R}^k$).
As a consequence of a series of astonishing theorems (conjectured by Andre Weil
\cite{Weil}
and proved by Deligne \cite{Deligne1,Deligne2}, Dwork \cite{Dwork} et al.), 
it turns out that the number of solutions
of systems of polynomial equations over a finite field, ${\mathbb F}_q$,
in algebraic extensions of ${\mathbb F}_q$, is governed by the dimensions
of certain (appropriately defined) cohomology groups of the 
associated variety (see below). In this way, 
cohomology theory plays analogous roles in the discrete 
and continuous settings.

Finally, the algorithmic problem of computing the cohomology
groups of semi-algebraic sets is important from the viewpoint of 
computational complexity theory because of the following.
It is easily seen that the classical NP-complete problem in
discrete complexity theory, the Boolean satisfiability problem, is polynomial
time equivalent
to the problem of deciding whether a given system of polynomial equations in 
many variables over a finite field (say $\Z/2\Z$) has a solution.  
The real (as well as the complex) analogue of this problem has been 
proved to be
NP-complete in the real (resp. complex) version of Turing machines, namely
the Blum-Shub-Smale machine (see \cite{BCSS}). 
The algebraic variety defined by
a system of polynomial equations clearly has further structure apart from
being merely empty or non-empty as a set. In the discrete case, 
we might want to count the number of solutions -- and this turns out to 
be a $\#$P-complete problem.
Recently, a $\#P$-completeness theory has been proposed for the BSS model
as well \cite{BC, BC2} -- 
and the natural $\#$P complete problem in this context
is  computing the Euler-Poincar\'e characteristic of a given variety
(the Euler-Poincar\'e characteristic
being a discrete valuation is the ``right'' notion of cardinality 
for  infinite sets  in this context). 

If one is interested in more information about the variety, 
then in the discrete case
one could ask to count the number of solutions of the given system of
polynomials not just over the ground field ${\mathbb F}_q$, 
but in every algebraic extension,
${\mathbb F}_{q^n}$ of the ground field. 
Even though this appears to be an infinite 
sequence of numbers, its exponential generating function (the so called
zeta-function of the variety) turns out to be a rational function 
(conjectured by Weil \cite{Weil}, and proved by  Dwork \cite{Dwork}) 
of the form,
\[
Z(t) = \frac{P_1(t)P_3(t)\ldots P_{2m-1}(t)}{P_2(t)P_4(t)\ldots P_{2m}(t)},
\]
where each $P_i$ is a polynomial with coefficients in a field of
characteristic $0$,
and the degrees of the polynomials $P_i(t)$ are the dimensions of 
(appropriately defined) cohomology groups associated to the variety $V$
defined by the given system of equations. In the real and complex setting, 
the ordinary topological Betti numbers are considered some of the most 
important computable invariants of varieties and carry important 
topological information. Thus, the algorithmic problem of computing Betti
numbers of constructible sets or varieties, is a 
natural extension of some of the basic problems appearing in computational 
complexity theory 
-- namely deciding whether a given system of polynomial equation is 
satisfiable, and counting the number of solutions. This is true in both the
discrete and continuous settings. Even though, in this survey we concentrate
on the latter, some of the techniques developed in this context
conceivably have applications in the discrete case as well.

Also note that, by considering a complex variety $V \subset \complex^k$ as a
real semi-algebraic set in $\re^{2k}$, all results discussed in this survey
extend directly (with the same asymptotic complexity bounds) to the 
corresponding problems (of computing the Betti numbers) for  
complex algebraic varieties, and more generally for constructible subsets
of $\complex^k$.

\section{Recent Algorithmic Results}
\label{sec:results}
In this section we list 
some of the recent progress on the algorithmic problem
of determining the Betti numbers of semi-algebraic sets.

\begin{itemize}
\item
In \cite{BPR9}, an algorithm with single exponential complexity 
is given  for computing the first Betti number of semi-algebraic sets
(see Section \ref{sec:closedcase} below). 
Previously, only the zero-th Betti number
(i.e. the number of connected components) could be computed in single 
exponential time. Another important result contained in this paper is 
the homotopy equivalence between an arbitrary semi-algebraic set,
and a closed and bounded one 
(which is defined using infinitesimal perturbations of the polynomials
defining the original set)
obtained by a construction due to Gabrielov and Vorobjov \cite{GV05}. 
It was conjectured in \cite{GV05} that
these sets are homotopy equivalent. 
This result is important by itself since it allows, for instance,
a single exponential time reduction of the problem of computing 
Betti numbers of arbitrary 
semi-algebraic sets to the same problem for closed and bounded ones.

\item
The above result is generalized in \cite{Basu7}, 
where a single exponential time algorithm is given 
for computing the first $\ell$ Betti
numbers of semi-algebraic sets, where $\ell$ is allowed to be any
constant (see Section \ref{sec:covering} below). 
More precisely, an algorithm is described that takes as input a description
of a ${\mathcal P}$-semi-algebraic set  $S \subset \R^k$,
and outputs the first $\ell+1$ Betti numbers of $S$,
$b_0(S),\ldots,b_\ell(S).$ The complexity of the algorithm is 
$(s d)^{k^{O(\ell)}},$ 
where $s  = \#({\mathcal P})$ and $d = \max_{P\in {\mathcal P}}{\rm deg}(P),$
which is single  exponential in $k$ for $\ell$ any constant.

\item
In \cite{Basu8}, a {\em polynomial}  time algorithm is given for computing a
constant number of the top Betti numbers of semi-algebraic sets defined
by quadratic inequalities. If the number of inequalities is fixed then
the algorithm computes all the Betti numbers in polynomial time
(see Section \ref{subsec:comp} below).
More precisely, an algorithm is described which takes as input a 
semi-algebraic set, $S$, defined by 
$P_1 \geq 0,\ldots,P_s \geq 0$, where each $P_i \in \R[X_1,\ldots,X_k]$
has degree $\leq 2,$
and computes the top $\ell$ Betti numbers of $S$,
$b_{k-1}(S), \ldots, b_{k-\ell}(S),$ in polynomial time. 
The complexity of the algorithm is
$ 
\sum_{i=0}^{\ell+2} {s \choose i} k^{2^{O(\min(\ell,s))}}.
$
For fixed $\ell$, the complexity of the algorithm can be expressed as
$s^{\ell+2} k^{2^{O(\ell)}},$
which is polynomial in the input parameters $s$ and $k$.
For fixed $s$, we obtain by letting $\ell = k$, 
an algorithm for computing all
the Betti numbers of $S$ whose complexity is $k^{2^{O(s)}}$.

\item
In \cite{Basu6},
an algorithm is described which takes as input a closed semi-algebraic set,
$S \subset \R^k$, defined by 
\[
P_1 \geq 0, \ldots, P_\ell \geq 0,
P_i \in \R[X_1,\ldots,X_k], \deg(P_i) \leq 2,
\]
and computes the Euler-Poincar\'e characteristic of $S$
(see Section \ref{subsec:epquadratic} below). The complexity of the
algorithm is $k^{O(\ell)}$.
Previously, algorithms with the same complexity bound were known only
for testing emptiness (as well as computing sample points) of such sets
\cite{Barvinok93, GP}.

\item
In \cite{BZ}, a polynomial time algorithm is obtained for computing a
constant number of the lowest Betti numbers of semi-algebraic sets defined
as the projection of semi-algebraic sets defined by few 
by quadratic inequalities (see Section \ref{subsec:projquad} below). 
More precisely,
let $S \subset \re^{k + m}$ be a closed and bounded  semi-algebraic set
defined by 
$
P_1 \geq 0, \ldots, P_\ell \geq 0,
$
where 
$
P_i \in \re[X_1,\ldots,X_k,Y_1,\ldots,Y_m],$ and $\deg(P_i) \leq 2, 
1 \leq i \leq \ell.
$
Let $\pi$ denote the standard projection from $\re^{k + m}$ onto $\re^m$.
An algorithm is described for computing the
the first $q$ Betti numbers of $\pi(S)$,
whose complexity is
$\displaystyle{
(k+m)^{2^{O((q+1)\ell)}}.
}$ 
For fixed $q$ and $\ell$, the bound is polynomial in $k+m$.

\item
The complexity estimates for all the algorithms mentioned above included
both the combinatorial and algebraic parameters. As mentioned in Section
\ref{sec:background}, in applications in computational geometry the
algebraic part of the complexity is treated as a constant. In this context,
an interesting question is how efficiently can one compute the Betti 
numbers 
of an arrangement of $n$ closed and bounded  semi-algebraic sets,
$S_1,\ldots,S_n \subset \R^k$, where
each $S_i$ is described using a constant number of polynomials with degrees
bounded by a constant. Such arrangements are ubiquitous in 
computational geometry (see \cite{Agarwal}). 
A naive approach using triangulations would entail a complexity of
$O(n^{2^k})$ (see Theorem \ref{the:triangulation} below).
This problem is considered in \cite{Basu5} where
an algorithm is described for computing
$\ell$-th Betti number, $\displaystyle{b_\ell(\bigcup_{i=1}^{n} S_i), \;
0 \leq \ell \leq k-1}$, 
using $O(n^{\ell+2})$ algebraic operations.
Additionally, one has to perform linear algebra on integer matrices of 
size bounded by $O(n^{\ell+2})$ (see Section \ref{sec:arrangements} below). 
All previous algorithms for computing the Betti numbers of arrangements
triangulated the whole arrangement giving rise to a complex of size 
$O(n^{2^k})$
in the worst case. Thus, the complexity of computing
the  Betti numbers (other than the zero-th one)
for these algorithms was $O(n^{2^k})$.
This is the first algorithm for
computing $\displaystyle{b_\ell(\bigcup_{i=1}^{n} S_i)}$
that does not rely on such a global triangulation,
and has a graded complexity which depends on $\ell$.
\item
We should also mention at least one other approach towards computation 
of Betti numbers (of complex varieties) that we do not describe in  
detail in this survey. Using the theory of {\em 
local cohomology} and {\em D-modules}, 
Oaku and Takayama \cite{OT} and Walther \cite{Walther1,Walther2},
have given explicit algorithms for computing a sub-complex of the
algebraic
de Rham complex of the complements of complex affine varieties
(quasi-isomorphic to the full complex but of much smaller size)
from which the Betti numbers of such varieties as well as their
complements can be computed easily using linear algebra.
For readers familiar with de Rham cohomology theory for differentiable
manifolds, the algebraic de Rham complex is an algebraic analogue of the
usual de Rham complex consisting of vector spaces of differential forms. 
The computational complexities of these
procedures are not analyzed very precisely in the papers cited above.
However, these algorithms use Gr\"{o}bner basis computations over non-commutative
rings (of differential operators),
and as such are unlikely to have complexity better than double exponential
(see \cite[Section 2.4]{Walther2}).
Also, these 
techniques are applicable only over algebraically closed fields, and
not immediately useful in the semi-algebraic context  which is our main
interest in this paper, and as such we do not discuss these algorithms
any further. 
\end{itemize}
   
\section{Algorithmic Preliminaries}
\label{sec:algoprelim}
In this section we give a brief overview of the basic 
algorithmic constructions from semi-algebraic geometry that play a
role in the design of more sophisticated algorithms. These include
cylindrical algebraic decomposition (Section \ref{subsec:cad}), 
the critical point method (Section \ref{sec:critical}), and the 
construction of roadmaps of semi-algebraic sets (Section 
\ref{sec:roadmap}).

\subsection{Cylindrical Algebraic Decomposition}
\label{subsec:cad}
As mentioned earlier one fundamental technique for computing topological
invariants of semi-algebraic sets is through {\em Cylindrical Algebraic
Decomposition}. 
Even though the mathematical ideas behind cylindrical algebraic decomposition
were  known before (see for example \cite{Loj2}),
Collins \cite{Collins} was the first to
apply cylindrical algebraic decomposition in the setting of 
algorithmic semi-algebraic geometry. Schwartz and Sharir \cite{SS}
realized its importance in trying to solve the motion planning problem in 
robotics, as well as computing topological properties of semi-algebraic
sets. Variants of the basic 
cylindrical algebraic decomposition have also been used in several papers
in computational geometry. 
For instance in 
the paper by Chazelle  et al. \cite{CEGS91}, 
a truncated version of cylindrical decomposition 
is described whose combinatorial (though not the algebraic) complexity is
single exponential. This result has found several applications in discrete and
computational geometry (see for instance \cite{CEGSW}).

\begin{definition}[Cylindrical Algebraic Decomposition]
\label{5:def:cad}
A cylindrical algebraic decomposition of ${\R}^k$
is a sequence ${\mathcal S}_1,\ldots,{\mathcal S}_k$
where, for each $1\leq i\leq k$, ${\mathcal S}_i$ is a finite
partition of ${\R}^i$ into  semi-algebraic
subsets, called the  cells of level $i$,
which satisfy the following properties:
\begin{itemize}
\item Each cell $S\in {\mathcal S}_1$ is either a point or
 an open interval.
\item For every $1\leq i<k$ and every $S\in {\mathcal S}_i$,
there are finitely
 many continuous
semi-algebraic functions
 $$ \xi_{S,1}<\ldots<\xi_{S,\ell_S}: S\longrightarrow {\R}$$ such that
the cylinder $S\times {\R} \subset {\R}^{i+1}$ is the disjoint union
 of cells of ${\mathcal S}_{i+1}$
which are:
\begin{itemize}
\item either the graph
of one of the functions $\xi_{S,j}$, for $j =
1,\ldots,\ell_S$: $$
\{ (x', x_{j+1}) \in S \times {\R}\mid x_{j+1} = \xi_{S, j}(x') \}\;,
$$
\item or a  band
 of the cylinder bounded from below and from above by the graphs
 of the functions
$\xi_{S,j}$ and $\xi_{S,j+1}$, for $j = 0,\ldots, \ell_S$,
 where we take $\xi_{S,0} =-\infty$
and $\xi_{i,\ell_S+1} = +\infty$:
$$
\{ (x', x_{j+1}) \in S \times {\R}\mid \xi_{S,j}(x')
< x_{j+1} < \xi_{S, j+1}(x')\}\;.
$$
\end{itemize}
\end{itemize}
\end{definition}

\begin{figure}[hbt] 
\centerline{\scalebox{1.0}{
\begin{picture}(500,250)(-140,-10)
\includegraphics{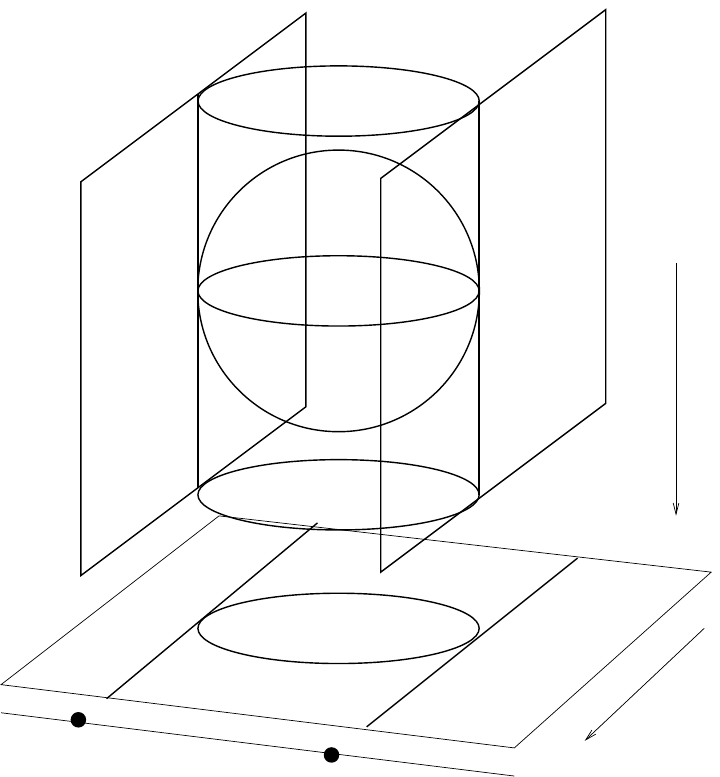}%
\end{picture}
}
}
\caption{Example of cylindrical algebraic decomposition of $\R^3$ adapted
to a sphere.}
\label{fig:cadsphere}
\end{figure}

We note that every cell of a cylindrical algebraic decomposition 
is semi-algebraical-\\
ly homeomorphic
to an open $i$-cube $(0,1)^{i}$ (by
convention, $(0,1)^0$ is a point).

A cylindrical algebraic decomposition adapted to a finite family of
semi-algebraic sets
$T_1,\ldots,T_\ell$
is a cylindrical algebraic 
decomposition of $\R^k$ such that every $T_i$ is a union of cells.
(see Figure \ref{fig:cadsphere}).

\begin{definition}
Given a finite set ${\mathcal P} \subset {\R}[X_1,\ldots,X_k]$, a
subset $S$ of ${\R}^k$ is 
is ${\mathcal P}$-invariant 
if every polynomial $P \in {\mathcal P}$
has a constant sign ($>0$, $<0$, or $=0$) on $S$.
A cylindrical algebraic 
 decomposition of ${\R}^k$ adapted to
\index{Cylindrical decomposition!adapted to ${\mathcal P}$}
\label{5:de:cylindricaladap}
${\mathcal P}$ is a cylindrical algebraic decomposition for which each cell
 $C\in {\mathcal S}_k$ is ${\mathcal P}$-invariant.  It is clear that if
$S$ is
${\mathcal P}$-semi-algebraic, a cylindrical algebraic 
 decomposition adapted to
${\mathcal P}$
is a cylindrical algebraic 
 decomposition adapted to
$S$.
\end{definition}

One important result which underlies most algorithmic applications of 
cylindrical algebraic decomposition is the following
(see \cite[Chapter 11]{BPRbook06} for an easily accessible exposition).

\begin{theorem}
\label{5:the:cad} 
For every finite set ${\mathcal P}$
of polynomials in ${\R}[X_1,\ldots,X_k]$, there is a cylindrical
decomposition of ${\R}^k$ adapted to ${\mathcal P}$.
Moreover, such a decomposition can be computed in time
$(sd)^{2^{O(k)}}$, where $s = \#{\mathcal P}$ and 
$d = \max_{P \in {\mathcal P}} \deg(P).$
\end{theorem}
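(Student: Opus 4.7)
The plan is to establish both the existence and the effective construction of a cylindrical algebraic decomposition adapted to $\mathcal{P}$ by describing Collins' classical two-phase algorithm (projection followed by lifting) and analyzing its complexity. The existence statement will emerge as a byproduct of correctness of the algorithm.

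First, I would introduce a projection operator $\mathrm{Proj}$ which takes a finite family $\mathcal{Q} \subset \R[X_1,\ldots,X_i]$ to a finite family $\mathrm{Proj}(\mathcal{Q}) \subset \R[X_1,\ldots,X_{i-1}]$. The operator collects polynomials obtained from the $Q \in \mathcal{Q}$ by iteratively taking leading coefficients with respect to $X_i$, discriminants with respect to $X_i$, and pairwise subresultant coefficients with respect to $X_i$. The central property to verify is the following: whenever a connected semi-algebraic subset $S \subset \R^{i-1}$ is $\mathrm{Proj}(\mathcal{Q})$-invariant, then for each $Q \in \mathcal{Q}$ the number of distinct real roots of $Q(x', X_i)$ as a function of $X_i$ is constant for $x' \in S$, these roots are the values of finitely many continuous semi-algebraic functions on $S$, and no two such functions (for different polynomials of $\mathcal{Q}$) cross over $S$. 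This cylindrical stability under $\mathrm{Proj}$-invariance is the technical backbone and is where most of the work actually lies.

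Given this projection operator, I would then define inductively $\mathcal{P}_k = \mathcal{P}$ and $\mathcal{P}_{i-1} = \mathrm{Proj}(\mathcal{P}_i)$, obtaining a tower $\mathcal{P}_k, \ldots, \mathcal{P}_1$. The lifting phase builds the decomposition from the bottom up: at level 1, the real roots of polynomials in $\mathcal{P}_1$ partition $\R$ into points and open intervals, giving $\mathcal{S}_1$. Inductively, assuming $\mathcal{S}_i$ has been constructed with sample points in each cell, for each cell $S \in \mathcal{S}_i$ I would pick a sample point $x' \in S$, compute the real roots of $\{P(x',X_{i+1}) : P \in \mathcal{P}_{i+1}\}$, and use the stability property of $\mathrm{Proj}$ to conclude that these roots extend to continuous semi-algebraic functions $\xi_{S,1} < \cdots < \xi_{S,\ell_S}$ on $S$, whose graphs and the bands between them are the cells of $\mathcal{S}_{i+1}$ lying above $S$. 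Correctness is immediate from the stability property; in particular each cell is $\mathcal{P}$-invariant (and semi-algebraically homeomorphic to an open cube, by an easy induction).

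Finally I would analyze complexity. If $\mathcal{P}_i$ consists of $s_i$ polynomials of degree at most $d_i$, the standard bounds on subresultants and discriminants give $s_{i-1} \leq (s_i d_i)^{O(1)}$ and $d_{i-1} \leq d_i^{O(1)}$. Unrolling the recursion from $i=k$ down to $i=1$ yields $s_i, d_i \leq (sd)^{2^{O(k-i)}}$, so the projection phase costs $(sd)^{2^{O(k)}}$. The lifting phase requires real root isolation and sign evaluation at sample points (in a suitable real algebraic extension of $\mathrm{D}$) on families of size and degree bounded by the same quantity, which using standard subalgorithms also runs in $(sd)^{2^{O(k)}}$ arithmetic operations. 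The main obstacle is not the bookkeeping but the verification that $\mathrm{Proj}$ as defined truly enforces cylindrical stability; the doubly exponential bound is then essentially forced by the iterated subresultant construction, which squares degrees at each level.
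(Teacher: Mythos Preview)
The paper does not actually include a proof of this theorem; consistent with its survey nature, it simply states the result and directs the reader to \cite[Chapter~11]{BPRbook06} for an exposition. Your proposal sketches exactly the classical Collins projection--lifting argument that one finds there (iterated elimination via subresultants and discriminants, followed by lifting over sample points, with the doubly exponential blow-up arising from degree squaring at each projection step), so there is nothing substantive to compare.
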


The cylindrical algebraic decomposition obtained in Theorem \ref{5:the:cad} can
in fact be refined to give a semi-algebraic triangulation of any given
semi-algebraic set within the same complexity bound.

Recall that
\begin{definition}[Semi-algebraic Triangulation]
\label{def:triangulation}
A semi-algebraic triangulation of a semi-algebraic set $S$ is a
simplicial complex $K$ together with a
semi-algebraic homeomorphism from $\vert K\vert$ to $S$.
\end{definition}

The following theorem states that such triangulations can
be computed  for any closed and bounded
semi-algebraic set with double exponential complexity.

\begin{theorem}
\label{the:triangulation}
 Let $S \subset \R^k$ be a closed and bounded semi-algebraic set, and
let $S_1,\ldots, S_q$ be semi-algebraic subsets of $S$.
 There exists a simplicial complex $K$ in
$\R^k$ and a semi-algebraic homeomorphism
$h: \vert K\vert \to S$ such that each $S_j$ is the
union of images by $h$ of open simplices of $K$. Moreover,
 the vertices of $K$ can be chosen
with rational coordinates. 

Moreover, if $S$ and each $S_i$ are ${\mathcal P}$-semi-algebraic sets,
then the semi-algebraic triangulation $(K,h)$ can be computed in time
$(sd)^{2^{O(k)}}$, where $s = \#{\mathcal P}$ and 
$d = \max_{P \in {\mathcal P}} \deg(P).$
\end{theorem}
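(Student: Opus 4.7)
The plan is to bootstrap from cylindrical algebraic decomposition (Theorem \ref{5:the:cad}) and convert the resulting cell decomposition into an honest simplicial complex. First I would enlarge the input family $\mathcal{P}$ by adjoining any polynomials needed to describe $S_1,\ldots,S_q$, and then compute a cylindrical algebraic decomposition adapted to this enlarged family, at cost $(sd)^{2^{O(k)}}$. Each cell is then $\mathcal{P}$-invariant and each $S_j$ is a union of cells, which is the compatibility condition we need. Since $S$ is closed and bounded, only finitely many cells meet $S$, and each such cell is semi-algebraically homeomorphic to an open cube $(0,1)^i$.

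Next I would convert this cylindrical cell decomposition into a simplicial one by an induction on the level $i$. At level $1$ the cells are points and open intervals, which are already simplices after adjoining their endpoints; choose rational sample points in each open interval. Assume inductively that the cells of ${\mathcal S}_i$ lying in (the projection of) $S$ have been subdivided into a rational simplicial complex $K_i$ compatible with the stratification. For a cell $C$ at level $i+1$ that is the graph of some $\xi_{S,j}$ over a base cell $S\in{\mathcal S}_i$, transport the triangulation of $S$ via the graph map. For a band cell $C$ between $\xi_{S,j}$ and $\xi_{S,j+1}$, choose a rational interior sample point $v_C$ (which can be extracted from the algebraic description of the cell) and form the simplicial cone with apex $v_C$ over the triangulated topological boundary $\partial C$, where the boundary triangulation has been produced in earlier steps of the induction from the triangulations of the adjacent lower-dimensional cells (the graphs of $\xi_{S,j}$ and $\xi_{S,j+1}$ and the lateral walls coming from cells in the closure of $S$). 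The closed-and-bounded hypothesis together with the fact that $\R^{i+1}$-cells lying over $S$ have their closure contained in cells lying over $\overline{S}$ keeps this construction finite and coherent.

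The main obstacle I expect is the \emph{frontier condition}: the cone construction is only well defined if the closure of every cell is a union of cells of lower dimension in the decomposition. A raw Collins-style cylindrical algebraic decomposition does not automatically satisfy this; the standard fix is to further refine by re-running the CAD construction on the additional polynomials that arise when one forces the projections of closures to be cylindrical, or equivalently to lift from a cylindrical decomposition of $\R^i$ that is already a cell complex. This costs only polynomially many additional polynomials at each projection step, so the doubly exponential CAD bound is preserved. Once this frontier condition holds, coherence of the induction (matching triangulations of faces shared between adjacent cells) reduces to ordering the cells by dimension and only coning after all lower-dimensional neighbors have been fixed, which is standard.

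Finally, for rational coordinates, I would observe that for each cell one can compute a Thom encoding of a sample point and then replace it by any rational point in the (open, full-dimensional-in-its-cell) neighborhood guaranteed by the semi-algebraic description of the cell; this substitution does not affect the combinatorics of the cone construction because each cell is homeomorphic to an open cube. The homeomorphism $h\colon |K|\to S$ is then assembled cell by cell from the graph parametrizations $\xi_{S,j}$ and the piecewise-linear cone maps, and it is semi-algebraic by construction. For complexity, all operations are dominated by the initial CAD computation and by further CAD refinements of polynomially many additional polynomials, so the total running time remains $(sd)^{2^{O(k)}}$, matching Theorem \ref{5:the:cad}.
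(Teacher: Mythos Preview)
The paper does not actually prove this theorem: it is a survey, and the statement is presented as a known refinement of cylindrical algebraic decomposition with an implicit reference to \cite{BPRbook06} for the details (``The cylindrical algebraic decomposition obtained in Theorem~\ref{5:the:cad} can in fact be refined to give a semi-algebraic triangulation \ldots''). Your outline is precisely the standard argument found in that reference: refine the CAD so that the frontier condition holds, then induct on the level of the decomposition, triangulating graph cells by transport and band cells by coning over their already-triangulated boundaries.

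Two small points worth tightening. First, your treatment of rational vertices is more elaborate than necessary: since $K$ is only required to sit in $\R^k$ with a semi-algebraic homeomorphism $h\colon|K|\to S$ (not with $|K|=S$), once the combinatorics of $K$ are fixed you can realize it with any rational vertices in general position and push the burden onto $h$; there is no need to perturb sample points inside cells. Second, the phrase ``lateral walls coming from cells in the closure of $S$'' hides exactly the point where the frontier condition bites---in the standard treatment one first passes to a \emph{well-based} (or stratifying) CAD in which the closure of every cell is a union of cells, and this is where the extra refinement you allude to actually happens. You have identified this correctly as the main obstacle; just be aware that making it precise (and verifying the complexity stays $(sd)^{2^{O(k)}}$) is the bulk of the work in the full proof in \cite{BPRbook06}.
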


\subsection{The Critical Point Method}
\label{sec:critical}
As mentioned earlier, all algorithms using cylindrical algebraic decomposition
have double exponential complexity. Algorithms with single exponential
complexity for solving problems in semi-algebraic geometry are 
mostly based on the {\em critical point method}. This method was
pioneered by several researchers including Grigoriev and Vorobjov 
\cite{GV,GV92},
Renegar \cite{R92}, Canny \cite{Canny}, 
Heintz, Roy and Solern\`o \cite{HRS94},
Basu, Pollack and Roy 
\cite{BPR4} amongst others.
In simple terms, the critical point method is nothing but
a method for finding at least one point
in every semi-algebraically  connected
component of an algebraic set.
It can be shown that for a bounded nonsingular algebraic hyper-surface, 
it is possible to change coordinates
so that its projection to the $X_1$-axis has
a finite number of non-degenerate critical points.
These points
provide at least one point in every semi-algebraically
 connected component of the
bounded nonsingular algebraic
hyper-surface.
Unfortunately this is not very useful
in algorithms since it provides no method
for performing this linear change of variables. Moreover
when we deal with the case of a
general algebraic set, which may be unbounded or singular,
this method no longer works.

In order to reduce the general case to the case of bounded nonsingular
algebraic sets, we use  
an important technique in algorithmic semi-algebraic geometry --
namely, perturbation of a given real algebraic set in $\R^k$ using 
one or more infinitesimals. 
The perturbed variety is then defined over a non-archimedean real
closed extension of the ground field -- namely the field of algebraic
Puiseux series in the infinitesimal elements with coefficients in $\R$.

Since the theory behind such extensions might be unfamiliar to some
readers, we introduce here the necessary algebraic background 
referring the reader to \cite[Section 2.6]{BPRbook06} 
for full detail and proofs.

\subsubsection{Infinitesimals and the Field of Algebraic Puiseux Series}
\label{subsec:Puiseux}

\begin{definition}[Puiseux series]
A {\em Puiseux series} in $\eps$ with coefficients in $\R$ 
is a  series of the form
\begin{equation}
\label{2:not:puiseux1}
		{\overline a}=\sum_{i \ge k} a_i \eps^{i/q},
\end{equation}
with $k \in\Z$, $i\in\Z$, $a_i\in \R$, $q$ a positive integer.
\end{definition}

It is a straightforward exercise to verify that
the field of all Puiseux series in $\eps$ with coefficients in $\R$ 
is an ordered field. The order extends the
order of $\R$, and $\eps$ is an infinitesimally small and positive,
i.e. is positive and smaller than any positive $r\in \R$.

\begin{notation}
\label{not:puiseux}
The field of Pusisex series in $\eps$ with coefficients in $\R$ contains
as a subfield, the field of Puiseux series which are algebraic over
$\R[\eps]$.
We denote by $\R\langle \eps\rangle$  the field of algebraic
Puiseux series in $\zeta$ with coefficients in $\R$.
\end{notation}

The following theorem is classical (see for example
\cite[Section 2.6]{BPRbook06} for a proof).

\begin{theorem}
\label{the:Puiseux}
The field $\R\langle \eps\rangle$ is real closed.
\end{theorem}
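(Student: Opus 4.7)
The plan is to establish that $\R\langle\eps\rangle$ is real closed by invoking the Artin--Schreier characterization: an ordered field $K$ is real closed if and only if $K$ is not algebraically closed while $K[i]$, with $i^2 = -1$, is algebraically closed. Accordingly I would split the argument into three pieces: verify the order, identify the algebraic closure, and combine these via Artin--Schreier.

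The first piece is an easy verification. For a nonzero series $\overline{a} = \sum_{i \geq k} a_i \eps^{i/q}$ with $a_k \neq 0$, declare $\overline{a} > 0$ iff $a_k > 0$. One checks directly that this gives a total order compatible with the field operations, extends the order on $\R$, and makes $\eps$ a positive infinitesimal. The second piece is the substantive one. Let $\mathbb{C} = \R[i]$ and let $\mathbb{C}\langle\eps\rangle$ denote the field of algebraic Puiseux series with coefficients in $\mathbb{C}$. The main ingredient is the Newton--Puiseux theorem, which asserts that the field of \emph{all} formal Puiseux series in $\eps$ over an algebraically closed field of characteristic zero is algebraically closed. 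Applied to $\mathbb{C}$, this means that the subfield of elements algebraic over $\mathbb{C}(\eps)$, namely $\mathbb{C}\langle\eps\rangle$, is the algebraic closure of $\mathbb{C}(\eps)$ and is in particular algebraically closed. A short separate argument identifies $\mathbb{C}\langle\eps\rangle$ with $\R\langle\eps\rangle[i]$: any element algebraic over $\mathbb{C}(\eps)$ is a fortiori algebraic over $\R(\eps)$ (multiply the minimal polynomial by its complex conjugate), and conversely any element of $\R\langle\eps\rangle[i]$ is algebraic over $\mathbb{C}(\eps)$.

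Combining these, $\R\langle\eps\rangle \subsetneq \R\langle\eps\rangle[i]$ with the extension of degree $2$ and algebraically closed, so the Artin--Schreier theorem delivers that $\R\langle\eps\rangle$ is real closed, and the order constructed in the first step is the unique order compatible with this algebraic structure.

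The main obstacle is unquestionably the Newton--Puiseux theorem itself. Its proof builds a root of a monic polynomial $P(T) \in \mathbb{C}\langle\eps\rangle[T]$ term by term using the Newton polygon: the slopes of the lower hull of the polygon yield candidate leading exponents $\gamma \in \mathbb{Q}$, an auxiliary polynomial in $\mathbb{C}$ associated to each slope yields candidate leading coefficients $c \in \mathbb{C}$, and one iterates after substituting $T = c\eps^\gamma + T'$. The delicate points are (i) showing that all partial sums can be expressed with exponents in a single $\frac{1}{N}\Z$ so that the formal limit is indeed a Puiseux series, and (ii) showing $\eps$-adic convergence of the construction. A minor additional remark is that the root produced lies in $\mathbb{C}\langle\eps\rangle$ rather than only in the larger field of all Puiseux series; this is immediate from its defining property of satisfying $P$, whose coefficients are algebraic. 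Everything else in the argument --- the order, the degree-$2$ extension, the application of Artin--Schreier --- is formal once Newton--Puiseux is in hand.
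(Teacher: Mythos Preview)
Your proposal is correct and follows the standard route to this classical result: Newton--Puiseux to get algebraic closedness of $\mathbb{C}\langle\eps\rangle$, identification of this field with $\R\langle\eps\rangle[i]$, and the Artin--Schreier characterization to conclude. The paper does not give its own proof but refers to \cite[Section 2.6]{BPRbook06}, where precisely this argument is carried out; so your approach coincides with the one the paper points to.
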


\begin{definition}[The $\lim_\eps$ map]
\label{def:lim}
When $a \in \R\la \eps \ra$ is bounded by an element of $\R$,
$\lim_\eps(a)$ is the constant term of $a$, obtained by
substituting 0 for $\eps$ in $a$.
\end{definition}

\begin{example}
\label{ex:lim}
A typical example of the application of the $\lim$ map can be
seen in Figures \ref{fig:homotopy1} and  \ref{fig:homotopy2} below. 
The first picture depicts the algebraic set $\ZZ(Q,\R^3)$,
while the second depicts 
the algebraic set $\ZZ(\bar Q,\R\la\zeta\ra^3)$ (where we substituted
a very small positive number for $\zeta$ in order to able display
this set),
where $Q$ and $\bar Q$ are defined by Eqn. (\ref{eqn:examplehomotopy}) and
Eqn. (\ref{11:equation:deform1}) resp.
The algebraic sets $\ZZ(Q,\R^3)$ and $\ZZ(\bar Q,\R\la\zeta\ra^3)$
are related by
\[
\ZZ(Q,\R^3) = \lim_\zeta~\ZZ(\bar Q,\R\la\zeta\ra^3).
\]
\end{example}

Since we will often consider the semi-algebraic sets
defined by the same formula, but over different real closed
extensions of the ground field, the following notation is 
useful.

\begin{notation}
\label{not:extension}
Let $\R'$ be a real closed field containing $\R$.
Given a semi-algebraic set
$S$ in ${\R}^k$, the {\em extension}
of $S$ to $\R'$, denoted $\E(S,\R')$, is
the semi-algebraic subset of ${ \R'}^k$ defined by the same
quantifier free formula that defines $S$.
\end{notation}
The set $\E(S,\R')$ is well defined (i.e. it only depends on the set
$S$ and not on the quantifier free formula chosen to describe it).
This is an easy consequence of the transfer principle.

We now return to the discussion of the critical point method.
In order for the critical point method to work for all algebraic sets,
we associate to a possibly unbounded
algebraic set $Z \subset \R^k$ a bounded algebraic set
$Z' \subset \R\la \eps \ra^{k+1},$
whose semi-algebraically connected components are closely related to those
of $Z$.

Let $Z=\ZZ(Q,\R^k)$ and consider
$$Z'= \ZZ(Q^2+(\eps^2(X_1^2+\ldots+X_{k+1}^2)-1)^2,\R\la \eps \ra^{k+1}).$$
The set $Z'$ is the intersection of the  sphere $S^{k}_\eps$ of
center $0$ and radius $\displaystyle{1 \over \eps}$
with a cylinder
based on the extension of $Z$ to $\R \la \eps \ra$.
The intersection of $Z'$ with the hyperplane $X_{k+1}=0$
is the intersection  of $Z$ with  the  sphere $S^{k-1}_\eps$ of
center $0$ and radius $\displaystyle{1 \over \eps}$.
Denote by $\pi$ the projection from
$\R\la \eps \ra^{k+1}$ to $\R\la \eps \ra^{k}.$

The following proposition which appears in \cite{BPRbook06} then
relates the connected component of $Z$ with
those of $Z'$ and this allows us to reduce the problem of finding
points on every connected component of a possibly unbounded algebraic set
to the same problem on bounded algebraic sets.

\begin{proposition}
\label{11:prop:boundedunbounded}
Let $N$ be a finite number of points meeting  every semi-\\algebraically
connected component of $Z'$.
Then $\pi(N)$ meets  every semi-algebraically
connected component of the extension $\E(Z',\R\la \eps \ra)$
of $Z'$ to $\R\la \eps \ra$.
\end{proposition}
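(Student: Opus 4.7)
The plan is to produce, for every semi-algebraically connected component $C$ of the extension of $Z$ to $\R\la\eps\ra$ (which is what the statement must mean, since components of $Z'$ itself already sit in $\R\la\eps\ra^{k+1}$ rather than in the target of $\pi$), a semi-algebraically connected component $D$ of $Z'$ with $\pi(D)\subseteq C$. Since $N$ meets $D$ by hypothesis, this immediately produces a point of $\pi(N)\cap C$, and ranging over $C$ finishes the proof.

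The first step is to invoke the standard transfer result that the decomposition into semi-algebraically connected components is preserved under real closed extension: if $Z = C_1\sqcup\cdots\sqcup C_r$ is the decomposition over $\R$, then $\E(Z,\R\la\eps\ra) = \E(C_1,\R\la\eps\ra)\sqcup\cdots\sqcup\E(C_r,\R\la\eps\ra)$ is the decomposition over $\R\la\eps\ra$. In particular, every semi-algebraically connected component $C$ of $\E(Z,\R\la\eps\ra)$ contains an $\R$-rational point.

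Given such $C = \E(C_i,\R\la\eps\ra)$, I would pick any $x_0\in C_i\subset\R^k$. Since every coordinate of $x_0$ is a standard real number, the norm $|x_0|$ is in $\R$ and hence strictly less than $1/\eps$ in the ordered field $\R\la\eps\ra$. Consequently the two lifts $(x_0, \pm\sqrt{1/\eps^2 - |x_0|^2})$ are well-defined elements of $Z'$. Taking $D$ to be the semi-algebraically connected component of $Z'$ containing $(x_0, +\sqrt{1/\eps^2 - |x_0|^2})$, the continuity of $\pi$ forces $\pi(D)$ to be semi-algebraically connected in $\E(Z,\R\la\eps\ra)$; since $\pi(D)$ contains $x_0\in C$, we must have $\pi(D)\subseteq C$.

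The main obstacle is the preservation of the number of semi-algebraically connected components under the extension from $\R$ to $\R\la\eps\ra$ (the first step above). The cleanest justification proceeds via the Tarski-Seidenberg transfer principle applied to the first-order statement encoding semi-algebraic path-connectedness: two points of a semi-algebraic set lie in the same component iff they are joined by a semi-algebraic path, and the existence of such a path is expressible by a first-order formula whose truth transfers between $\R$ and any real closed extension. Everything else in the argument is routine: the lift exists because $1/\eps$ is larger than any element of $\R$, and the continuous image of a semi-algebraically connected set is semi-algebraically connected.
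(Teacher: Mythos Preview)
The paper does not include a proof of this proposition; it merely cites \cite{BPRbook06}. Your reading of the statement is right: the conclusion should refer to the extension $\E(Z,\R\la\eps\ra)$ of the original set $Z$, not of $Z'$ (which already lives over $\R\la\eps\ra$); the surrounding paragraph confirms this intended meaning.

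Your argument is correct and is essentially the standard one in the reference. The only point worth sharpening is the justification of the transfer step: the sentence ``there exists a semi-algebraic path joining $x$ and $y$'' is not literally a single first-order formula, since such a path carries no a priori complexity bound. The clean version is to use that the semi-algebraically connected components $C_1,\ldots,C_r$ of $Z$ are themselves definable over $\R$, so the disjoint decomposition $Z=C_1\cup\cdots\cup C_r$ extends to the disjoint decomposition $\E(Z,\R\la\eps\ra)=\E(C_1,\R\la\eps\ra)\cup\cdots\cup\E(C_r,\R\la\eps\ra)$; each piece remains semi-algebraically connected over $\R\la\eps\ra$ by transfer (now applied to a genuine first-order sentence, since for a \emph{fixed} semi-algebraic set the complexity of connecting paths is bounded, e.g.\ via cylindrical decomposition). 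The remainder of your argument---lifting an $\R$-point of $C_i$ to $Z'$ using $|x_0|<1/\eps$, and observing that $\pi$ carries the component of $Z'$ through that lift into $\E(C_i,\R\la\eps\ra)$---is exactly right.
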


We obtain immediately
using Proposition \ref{11:prop:boundedunbounded}
a method for finding a point in every connected component
of an algebraic set. Note  that these points
have coordinates in the extension $\R \la \eps\ra$
rather than in the real closed field $\R$ we started with.
However, the extension from $\R$ to $\R \la \eps\ra$
preserves  semi-algebraically connected components.

For dealing with possibly singular algebraic sets
we define $X_1$-pseudo-critical points of $\ZZ(Q,\R^k)$ when
$\ZZ(Q,\R^k)$ is a bounded algebraic set.
These pseudo-critical points are a finite set of
points meeting every semi-algebraically connected
 component of $\ZZ(Q,\R^k)$. They are the
limits of the  critical points of the projection to
the $X_1$ coordinate of  a bounded nonsingular
algebraic hyper-surface defined by a particular
infinitesimal perturbation, $\bar Q$, of the polynomial $Q$. Moreover, the
equations defining the critical points of
the projection on the $X_1$ coordinate
on the perturbed algebraic set have a very  special
algebraic structure (they form a Gr\"{o}bner basis 
\cite[Section 12.1]{BPRbook06}), 
which makes possible efficient computation 
of these pseudo-critical values and points. We refer the reader to
\cite[Chapter 12]{BPRbook06} for a full exposition including the definition
and basic properties of Gr\"{o}bner basis.

The deformation $\bar Q$ of $Q$ is defined as follows.
Suppose that $\ZZ(Q,\R^k)$ is contained in the ball of center
$0$ and radius $1/c$.
Let $\bar d$ be an even integer bigger than the degree $d$ of $Q$ and let
\begin{equation}
G_k(\bar d,c)
=c^{\bar d}(X_1^{\bar d}+\cdots+
X_{k}^{\bar d}+X_2^2+\cdots+X _k^2)-(2k-1),
\end{equation}
\begin{equation}
\label{11:equation:deform1}
\bar Q=\zeta G_k(\bar d,c)+{(1-\zeta)
}Q.
\end{equation}

The algebraic set $\ZZ(\bar Q,\R \la \zeta \ra ^k)$
is a bounded and non-singular 
hyper-surface lying infinitesimally close to $\ZZ(Q,\R^k)$
and the critical points of the projection
map onto the $X_1$ co-ordinate restricted to
$\ZZ(\bar Q,\R \la \zeta \ra ^k)$
form a finite set of points.
We take the 
images of these points under $\lim_\zeta$ (cf. Definition
\ref{def:lim}) and
we call the points obtained in this manner  the $X_1$-pseudo-critical points
of $\ZZ(Q,\R^k)$.
Their projections on the $X_1$-axis  are called pseudo-critical values.

\begin{example}
\label{ex:homotopy}
We illustrate the perturbation mentioned above by a concrete example.
Let $k=3$ and $Q \in \R[X_1,X_2,X_3]$ be defined by
\begin{equation}
\label{eqn:examplehomotopy}
Q = X_2^2 - X_1^2 + X_1^4 + X_2^4 + X_3^4.
\end{equation}
Then, $\ZZ(Q,\R^3)$ is a bounded algebraic subset of $\R^3$ shown below
in Figure \ref{fig:homotopy1}.
Notice that $\ZZ(Q,\R^3)$ has a singularity at the origin.
The surface  $\ZZ(\bar Q,\R^3)$ 
with a small positive real number substituted for $\zeta$ is shown in
Figure \ref{fig:homotopy2}. 
Notice that this surface is non-singular, but has a different homotopy
type than $\ZZ(Q,\R^3)$ (it has three connected components compared to
only one of  $\ZZ(Q,\R^3)$). However, the semi-algebraic set bounded
by $\ZZ(\bar Q,\R^3)$  (i.e. the part inside the
larger component but outside the smaller ones)
is homotopy equivalent to $\ZZ(Q,\R^3)$.

\vspace*{2.75cm}
\begin{figure}[hbt]
\centerline{\scalebox{0.3}{
\begin{picture}(500,100)
\includegraphics[bb=0 0 64mm 30mm]{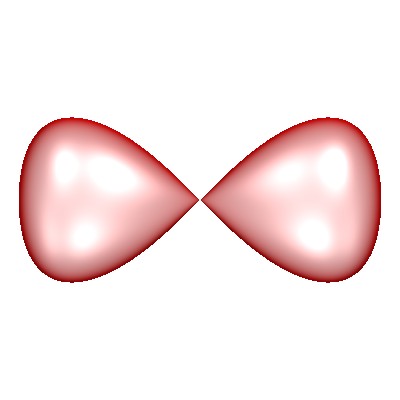}%
\end{picture}
}
}
\caption{The algebraic set $\ZZ(Q,\R^3)$.}
\label{fig:homotopy1}
\end{figure}

\vspace*{2.75cm}
\begin{figure}[hbt]
\centerline{\scalebox{0.3}{
\begin{picture}(500,100)
\includegraphics[bb=0 0 64mm 30mm]{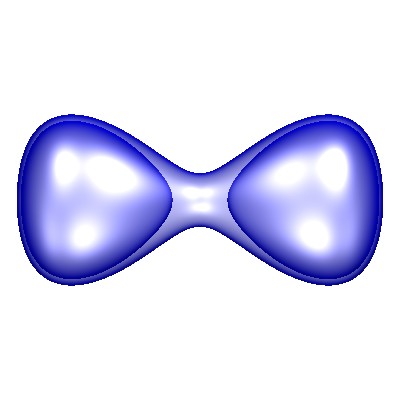}%
\end{picture}
}
}
\caption{The algebraic set $\ZZ(\bar Q,\R^3)$.}
\label{fig:homotopy2}
\end{figure}
\end{example}

By computing algebraic representations (see \cite[Section 12.4]{BPRbook06}
for the precise definition of such a representation)
of the pseudo-critical points
one obtains for any given algebraic set a finite set of points guaranteed to
meet every connected component of this algebraic set. Using some 
more arguments from real algebraic geometry one can also reduce the problem
of computing a finite set of points guaranteed to meet every connected
component of the realization of every realizable sign condition on a given
family of polynomials to finding points on certain algebraic sets defined by
the input  polynomials (or infinitesimal perturbations of these polynomials).
The details of this argument can be found in \cite[Proposition 13.2]
{BPRbook06}. 

The following theorem which is the best result of this kind appears in 
\cite{BPR3}.

\begin{theorem}\cite{BPR3}
\label{13:the:samplealg}
Let $\ZZ(Q,\R^k)$ be an algebraic set
of real dimension $k'$, where $Q$ is a
polynomial in $\R[X_1, \ldots ,X_k]$ of degree at
most $d$, and let ${\mathcal P} \subset\R[X_1, \ldots ,X_k]$ be a set of $s$
polynomials with
each $P\in {\mathcal P}$ also of degree at most
$d$.
Let $\D$ be the
ring generated by the coefficients of $Q$ and the
 polynomials in ${\mathcal P}$.
There is an algorithm which
computes a set of points
meeting every
semi-algebraically connected
component of every  realizable sign condition on $\mathcal P$
over $\ZZ(Q,\R\la \eps,\delta \ra ^k)$. The algorithm has
complexity
$$(k'(k-k')+1)\displaystyle{\sum_{j \le k'}4^j {s  \choose j}
d^{O(k)}=s^{k'} d^{O(k)}}$$
 in $\D$.
There is also an
algorithm providing the
list of
signs of all the polynomials of
$\mathcal P$
at each of these points
 with
complexity
$$
(k'(k-k')+1)\displaystyle s{\sum_{j \le k'}4^j {s  \choose j}
d^{O(k)}=s^{k'+1} d^{O(k)}}
$$ in $\D$.
\end{theorem}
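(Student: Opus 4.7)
The plan is to reduce the general sampling problem to computing critical points on bounded nonsingular hyper-surfaces, while exploiting the dimension $k'$ of $\ZZ(Q,\R^k)$ to bring the combinatorial factor down from $s^k$ to $s^{k'}$.

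First I would reduce to the bounded case. Using the trick from Proposition~\ref{11:prop:boundedunbounded}, replace $\ZZ(Q,\R^k)$ by its intersection with a large sphere of radius $1/\eps$, working over $\R\la\eps\ra$; this preserves semi-algebraically connected components of every realizable sign condition (after applying $\pi$ at the end). To replace strict sign conditions on $\mathcal P$ by equalities, introduce a second infinitesimal $\delta$ and, for each $P \in {\mathcal P}$ we wish to handle, allow a perturbation to $P$, $P+\delta$, $P-\delta$, or leave it alone; this encodes the sign patterns combinatorially. The key observation is that since $\ZZ(Q,\R\la\eps,\delta\ra^k)$ has real dimension $k'$, after a generic linear change of coordinates any realizable sign condition on $\mathcal P$ restricted to $\ZZ(Q,\R\la\eps,\delta\ra^k)$ has a point in its closure that is cut out by $Q=0$ together with at most $k'$ equations $P_{i_1}=\cdots=P_{i_j}=0$ (with appropriately $\pm\delta$-shifted $P$'s). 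This is the step that yields the factor $\sum_{j \le k'} 4^j {s\choose j}$: $4^j$ from the three shifts plus leaving the polynomial alone, and ${s \choose j}$ from choosing which $j\le k'$ polynomials of $\mathcal P$ contribute equations.

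Next, for each such choice of a subset $\{P_{i_1},\ldots,P_{i_j}\}$ and each choice of shifts, I would apply the critical point method of Section~\ref{sec:critical} to the associated algebraic set $\ZZ(Q^2 + \sum (P_{i_\ell} \pm \delta)^2 + (\eps^2\|X\|^2 - 1)^2,\cdot)$ embedded in one extra dimension. To handle singularities and the unbounded case in a uniform way, I would use the deformation $\bar Q$ of equation~(\ref{11:equation:deform1}), further perturbing with an additional infinitesimal $\zeta$ so that the perturbed hyper-surface is bounded and nonsingular. The critical points of the projection onto $X_1$ on the perturbed variety are computable via the special Gr\"obner-basis structure of the associated polynomial system \cite[Section~12.1]{BPRbook06}, and computing an algebraic representation of these critical points (cf.\ \cite[Section~12.4]{BPRbook06}) costs $d^{O(k)}$ arithmetic operations in $\D$ per instance. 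Taking $\lim_\zeta$ of these points yields the $X_1$-pseudo-critical points on the (possibly singular) variety; pushing down by $\lim_\eps$, $\lim_\delta$ (after $\pi$) returns the desired sample points of $\ZZ(Q,\R^k)$ inside $\R\la\eps,\delta\ra^k$.

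Multiplying the number of subsystems $\sum_{j\le k'} 4^j{s\choose j}$ by the per-subsystem cost $d^{O(k)}$, and absorbing the factor $(k'(k-k')+1)$ that accounts for iterating the procedure over affine slices of all relevant dimensions (needed to reach every connected component when the regularity of the generic linear change of coordinates degenerates along a stratification), gives the claimed total complexity $s^{k'}d^{O(k)}$. For the sign-determination version, at each computed point one evaluates the signs of all $s$ polynomials in $\mathcal P$, using the real-root counting machinery via Thom encodings (\cite[Chapter~10]{BPRbook06}); each such evaluation costs $d^{O(k)}$ and the extra factor of $s$ accounts for enumerating the polynomials, giving $s^{k'+1}d^{O(k)}$.

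The main obstacle is the dimension-refined counting in the second paragraph: proving that one only needs to consider subsets of $\mathcal P$ of size at most $k'$ (rather than $k$). This requires a careful genericity argument showing that, over $\ZZ(Q,\R\la\eps,\delta\ra^k)$ after a suitable linear change of variables, each realizable sign condition admits a sample point where at most $k'$ of the perturbed polynomials simultaneously vanish, so that the remaining coordinates in the ambient $\R^k$ can be handled by the sphere/perturbation machinery without blowing up the combinatorial factor. Everything else is a careful bookkeeping of the standard critical-point construction together with the infinitesimal deformations $\eps,\delta,\zeta$ described in Section~\ref{sec:critical}.
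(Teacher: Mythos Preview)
The paper does not prove this theorem; it is cited from \cite{BPR3} (with details deferred to \cite[Chapter~13]{BPRbook06}), and the surrounding text in Section~\ref{sec:critical} only sketches the ingredients: the reduction to the bounded case via Proposition~\ref{11:prop:boundedunbounded}, the infinitesimal deformation $\bar Q$ of~(\ref{11:equation:deform1}), pseudo-critical points, and a remark that ``some more arguments from real algebraic geometry'' reduce the sign-condition sampling to finding points on algebraic sets defined by subsets of the (perturbed) input polynomials. Your outline follows exactly this strategy and is consistent with the approach in the cited sources.

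Two points where your sketch is looser than the actual argument in \cite{BPR3,BPRbook06}. First, your explanation of the factor $(k'(k-k')+1)$ as ``iterating over affine slices when genericity degenerates'' is not quite what happens: in the actual algorithm this factor arises from a flag of linear subspaces used to ensure that projections are generically finite on the $k'$-dimensional variety, and it is a precise count, not an artifact of degeneration. Second, the $4^j$ is not literally ``three shifts plus leaving alone'' applied to the $j$ chosen polynomials; it accounts for the specific set of perturbed systems (involving products of the form $P(P\pm\gamma)$, etc.) that one must solve to capture all boundary strata of the sign conditions. Your identification of the \emph{main obstacle}---the genericity argument showing that only subsets of size $\le k'$ are needed---is exactly right: this is the content of \cite[Proposition~13.2]{BPRbook06}, and it is where the dimension $k'$ enters the complexity bound.
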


Notice that the combinatorial complexity of the algorithm in Theorem
\ref{13:the:samplealg} depends on the dimension of the variety rather
than that of the ambient space. Since we are mostly concentrating
on single exponential algorithms in this part of the survey, 
we do not emphasize  this aspect too much.

\subsection{Roadmaps}
\label{sec:roadmap}
Theorem \ref{13:the:samplealg} gives a single exponential time
algorithm for testing if a given semi-algebraic set is empty or not.
However, it gives no way of testing if any two sample points computed
by it belong to the same connected component of the given
semi-algebraic set, even though the set of sample points is guaranteed to meet
each such connected component. 
In order to obtain connectivity information in single exponential time
a more sophisticated construction is required -- namely that of a {\em roadmap}
of a semi-algebraic set, which is an one dimensional semi-algebraic subset
of the given semi-algebraic set which is non-empty and connected inside
each connected component of the given set. Roadmaps
were first introduced by Canny \cite{Canny}, but similar constructions
were considered as well by Grigoriev and Vorobjov \cite{GV92} and
Gournay and Risler \cite{GR92}. 
Our exposition below follows that in \cite{BPR5,BPRbook06} where the
most efficient algorithm for computing roadmaps is given.
The notions of pseudo-critical points and values defined above play a
critical role in the design of efficient algorithms for computing 
roadmaps of semi-algebraic sets.

We first define a roadmap  of a semi-algebraic set. 
We use the following notation.
We  denote by $\pi_{1\ldots j}$ the projection,
$x \mapsto (x_1,\ldots,x_j).$
Given a set $S \subset \R^k$ and $y \in \R^j$, we denote by 
$S_y=S \cap \pi_{1 \ldots j}^{-1}(y)$.

\begin{definition}[Roadmap of a semi-algebraic set]
\label{15:def:roadmap}
Let $S \subset \R^k$ be a semi-algebraic set.
A {\em roadmap}
\index{Roadmap} for $S$
 is a semi-algebraic set $M$
of dimension at most one contained in $S$
which satisfies the following roadmap
conditions:
\begin{itemize}
\item ${\rm RM}_1$ For every semi-algebraically
connected component $D$ of $S$,
$D \cap M$ is semi-algebraically connected.
\item ${\rm RM}_2$ For every $x \in {\R}$ and
for every semi-algebraically connected component $D'$
of $S_x$, $D'\cap  M \neq \emptyset.$
\end{itemize}
\end{definition}

We describe the construction of a roadmap
$\RM(\ZZ(Q,\R^k),{\mathcal N})$ for a bounded algebraic set $\ZZ(Q,\R^k)$
 which contains a finite set of
points ${\mathcal N}$ of $\ZZ(Q,\R^k)$. A precise description of how the
construction can be performed algorithmically
can be found in \cite{BPRbook06}.
We should emphasize here that $\RM(\ZZ(Q,\R^k),{\mathcal N})$ denotes the
semi-algebraic set output by the specific algorithm described below which
satisfies the properties stated in Definition \ref{15:def:roadmap}
(cf. Proposition \ref{15:prop:rm}).

Also, in order to understand the roadmap algorithm it is easier to first
concentrate on the case of a bounded and non-singular real algebraic
set in $\R^k$ (see Figure \ref{fig:torus2} below). In this case several
definitions get simplified. For example, the pseudo-critical values
defined below are in this case ordinary critical values of the projection map
on the first co-ordinate. However, one should keep in mind that even 
if one starts with a bounded non-singular algebraic set, the input to
the recursive calls corresponding to the critical sections (see below)
are necessarily singular and thus it is not possible to treat the 
non-singular case independently.
\begin{center}
\begin{figure}
\includegraphics[height=7cm]{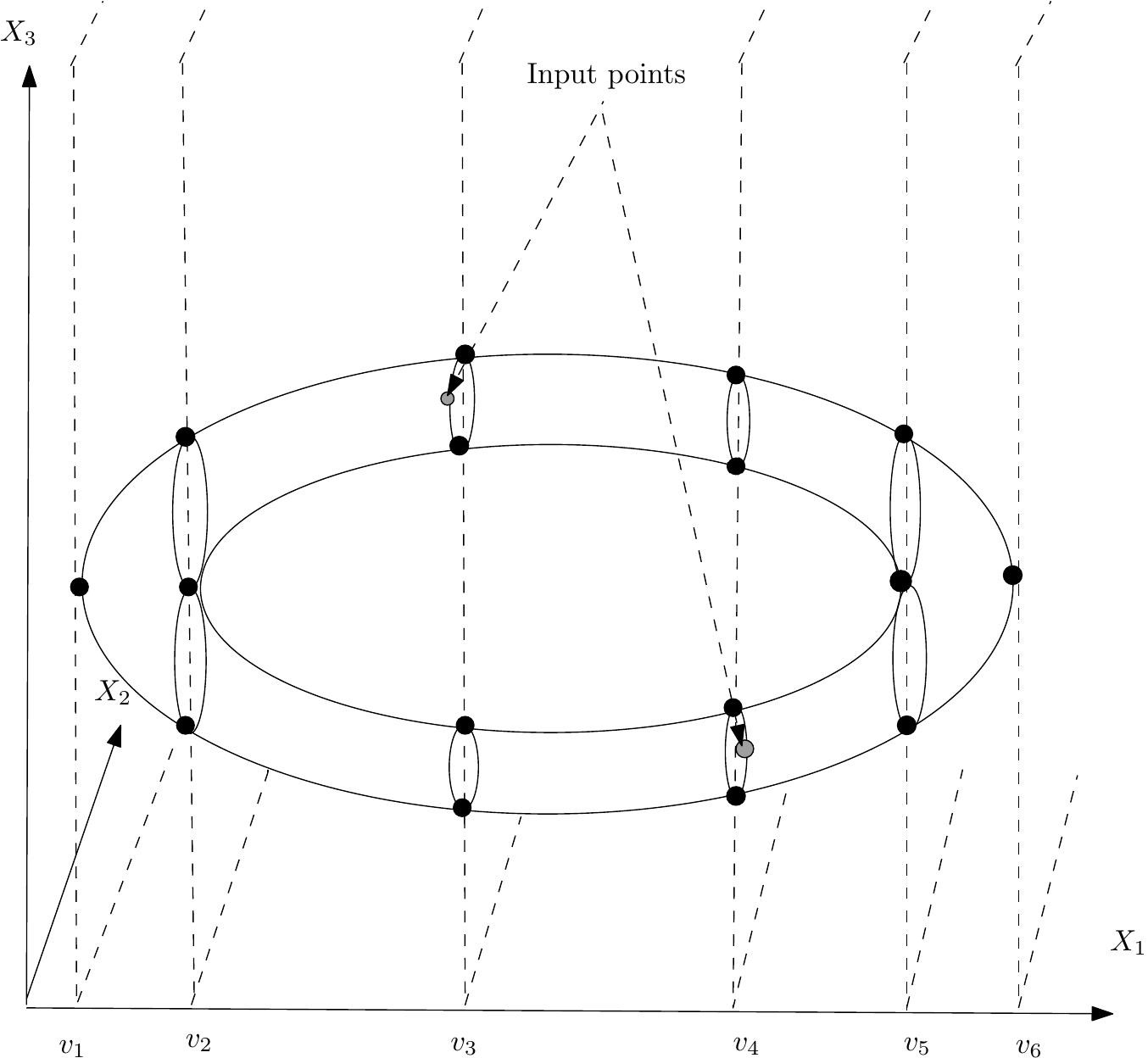}
\caption{Roadmap of the torus in $\R^3$.}
\label{fig:torus2}
\end{figure}
\end{center}
A key ingredient of the roadmap is  the construction of 
pseudo-critical points and values defined above.
The construction of the roadmap of an algebraic set
containing a finite number 
of input points ${\mathcal N}$ of this algebraic set
is as follows.
We first construct $X_2$-pseudo-critical points on $\ZZ(Q,\R^k)$
in a parametric way along  the $X_1$-axis by following continuously, 
as $x$ varies on the $X_1$-axis,
the $X_2$-pseudo-critical points on $\ZZ(Q,\R^k)_{x}$. This results in curve segments
and their endpoints on $\ZZ(Q,\R^k).$ The curve segments
 are continuous semi-algebraic curves parametrized by open intervals on the 
$X_1$-axis and their endpoints are points of $\ZZ(Q,\R^k)$
above the corresponding endpoints of the open intervals.
Since these curves and their endpoints include
for every $x\in\R$ the $X_2-$pseudo-critical points of
$\ZZ(Q,\R^k)_{x}$, they meet every connected component of
$\ZZ(Q,\R^k)_{x}$.  Thus, the set of curve
segments and their endpoints already satisfy
${\rm RM}_2.$ However, it is clear that this set might not
be semi-algebraically connected in a semi-algebraically
connected component and so ${\rm RM}_1$ might not be satisfied.
We add additional curve segments to ensure connectedness by recursing 
in certain  distinguished hyperplanes defined by
$X_1=z$ for distinguished values $z$.

The set of {\em distinguished values}
is the union of the $X_1$-pseudo-critical values, the first
coordinates of the input points ${\mathcal N}$, and the
first coordinates of the endpoints of the curve
segments. A {\em distinguished hyperplane}
is an hyperplane defined by $X_1=v$, where
$v$ is a distinguished value. The input points, the endpoints of
the curve segments, and the intersections of the curve
segments with the distinguished hyperplanes define 
the set of {\em distinguished points}.

Let
the distinguished values be
$v_1<\ldots <v_\ell.$
Note that amongst these  are the $X_1$-pseudo-critical values. Above each
interval $(v_i, v_{i+1})$ we have  constructed a collection
of curve segments ${\mathcal C}_i$ meeting every
semi-algebraically connected component of
$\ZZ(Q,\R^k)_v$ for every $v \in (v_i, v_{i+1})$. Above
each distinguished value $v_i$  we have 
a set of distinguished points ${\mathcal N}_i$. 
Each curve segment in ${\mathcal C}_i$
has an endpoint in ${\mathcal N}_i$ and another in
${\mathcal N}_{i+1}$.  Moreover, the union of the ${\mathcal N}_i$
contains ${\mathcal N}$.

We then repeat this construction in each 
distinguished hyperplane $H_i$ defined by $X_1=v_i$
with input $Q(v_i,X_2,\ldots,X_k)$ and the
distinguished points in ${\mathcal N}_i$. 
Thus, we construct distinguished values
$v_{i,1},\ldots, v_{i,\ell(i)}$ of 
$\ZZ(Q(v_i,X_2,\ldots,X_k),\R^{k-1})$ 
(with the role of $X_1$ being now played by $X_2$) and 
the process is iterated until
for 
$I=(i_1,\ldots,i_{k-2}), 1 \leq i_1 \leq\ell,
\ldots, 1 \leq i_{k-2} \leq \ell(i_1,\ldots,i_{k-3}),$ we have
distinguished values
$v_{I,1}< \ldots < v_{I, \ell(I)}$ along the
$X_{k-1}$ axis with corresponding sets of curve
segments and sets of distinguished points with the
required incidences between them.

The following theorem  is proved in \cite{BPR5}
(see also \cite{BPRbook06}).
\begin{proposition}
\label{15:prop:rm}
The semi-algebraic set
$\RM(\ZZ(Q,\R^k),{\mathcal N})$ obtained by this construction is a
roadmap for $\ZZ(Q,\R^k)$ containing ${\mathcal N}$.
\end{proposition}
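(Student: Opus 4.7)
The plan is to verify the three easy structural properties by construction and then establish the two roadmap conditions, with property ${\rm RM}_1$ handled by induction on $k$. By construction $\RM(\ZZ(Q,\R^k),{\mathcal N})$ is a finite union of continuous semi-algebraic curves and points lying in $\ZZ(Q,\R^k)$, so it is semi-algebraic of dimension at most one. The inclusion ${\mathcal N} \subset \RM(\ZZ(Q,\R^k),{\mathcal N})$ is immediate, since the points of ${\mathcal N}$ are placed into the set of distinguished points at the first level of the recursion.

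For ${\rm RM}_2$, fix $x \in \R$ and consider the slice $\ZZ(Q,\R^k)_x$. If $x$ is not a distinguished value, it lies in some open interval $(v_i,v_{i+1})$, and above this interval the curve segments in ${\mathcal C}_i$ were constructed by parametrically following the $X_2$-pseudo-critical points of $\ZZ(Q,\R^k)_{x'}$ as $x'$ varies; by the defining property of pseudo-critical points, their intersection with the slice at $x$ meets every semi-algebraically connected component of $\ZZ(Q,\R^k)_x$. If $x = v_i$ is a distinguished value, we apply ${\rm RM}_2$ to the recursively constructed roadmap inside the distinguished hyperplane $H_i$ (which, by induction on $k$, is a roadmap of $\ZZ(Q(v_i,X_2,\dots,X_k),\R^{k-1})$).

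For ${\rm RM}_1$, proceed by induction on $k$, the case $k=1$ being trivial since $\ZZ(Q,\R^1)$ is finite. For the inductive step, fix a semi-algebraically connected component $D$ of $\ZZ(Q,\R^k)$, and let $p,q \in D \cap \RM$. Project both points to the $X_1$-axis; since $D$ is connected, $\pi_1(D)$ is a connected subset of $\R$, so it suffices to show that within each interval $[v_i,v_{i+1}] \cap \pi_1(D)$ the intersection $D \cap \RM$ is connected, and that across a distinguished value $v_i$ the pieces glue together. For the first, above an open interval $(v_i,v_{i+1})$ the projection $\pi_1\colon D \cap \pi_1^{-1}(v_i,v_{i+1}) \to (v_i,v_{i+1})$ is (by semi-algebraic triviality between pseudo-critical values) a locally trivial fibration whose fibers have the same connected components; the curve segments of ${\mathcal C}_i$ form semi-algebraic sections that hit each fiber component, hence two curve segments in the same component of this piece of $D$ are sections of the same component-bundle and share endpoints in ${\mathcal N}_i$ or ${\mathcal N}_{i+1}$. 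The gluing at a distinguished value $v_i$ is then handled by the inductive hypothesis: the endpoints in ${\mathcal N}_i$ of curve segments coming from $(v_{i-1},v_i)$ and $(v_i,v_{i+1})$ that lie in the same connected component of $D \cap H_i$ are joined through the recursively constructed roadmap of $\ZZ(Q(v_i,X_2,\dots,X_k),\R^{k-1})$, which by inductive hypothesis satisfies ${\rm RM}_1$.

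The main obstacle is the gluing step across distinguished values, because a naive use of critical points would miss components of $D$ that are created or destroyed as $X_1$ crosses $v_i$. The key point that makes the argument go through is that the distinguished values include not only the $X_1$-pseudo-critical values but also the first coordinates of all endpoints of curve segments produced by the parametric construction; together with the property that the pseudo-critical points of a bounded (possibly singular) algebraic set meet every semi-algebraically connected component, this ensures that whenever two curve segments meet $D \cap H_i$ in the same connected component, they are connected by the roadmap of that section, closing the inductive argument.
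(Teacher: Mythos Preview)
The paper itself does not prove this proposition; it cites \cite{BPR5} and \cite{BPRbook06} for the argument. Your sketch follows the same inductive strategy used there: ${\rm RM}_2$ comes directly from the construction of the parametrized $X_2$-pseudo-critical curves, and ${\rm RM}_1$ is established by induction on $k$, using local triviality of the projection between consecutive distinguished values and gluing across each distinguished hyperplane $H_i$ via the recursive roadmap.

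Two points deserve tightening. First, your sentence that two curve segments in the same connected component of $D \cap \pi_1^{-1}(v_i,v_{i+1})$ ``share endpoints in ${\mathcal N}_i$ or ${\mathcal N}_{i+1}$'' is not what you need or what is true: distinct curve segments in the same tube typically have distinct endpoints. The operative fact (and the one you correctly invoke one sentence later) is that their endpoints at $v_i$ lie in the \emph{same semi-algebraically connected component} of $D_{v_i}$. This is the genuine content of the slab argument, and it requires more than Hardt triviality over the open interval: one needs that for a bounded $\ZZ(Q,\R^k)$ with no $X_1$-pseudo-critical values in $(v_i,v_{i+1}]$, each connected component of $\ZZ(Q,\R^k)_{[v_i,v_{i+1}]}$ has connected intersection with the slice $X_1 = v_i$ (a retraction lemma in \cite{BPRbook06}). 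Second, an arbitrary point of $D \cap \RM$ need not lie on a top-level curve segment; it may lie inside a recursive roadmap over some $H_i$. You should first use the inductive hypothesis (${\rm RM}_1$ for the roadmap in $H_i$) to connect such a point, within $D_{v_i}$, to a distinguished point in ${\mathcal N}_i$, and only then run the slab-and-glue argument. With these two clarifications your outline matches the standard proof.
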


Note that if $x \in \ZZ(Q,\R^k)$,  $\RM(\ZZ(Q,\R^k),\{x\})$ 
contains a path,
$\gamma(x)$,  connecting a distinguished point $p$ 
of   $\RM(\ZZ(Q,\R^k))$ to $x$.

\subsubsection{The Divergence Property of Connecting Paths}
\label{subsec:connectingpaths}
In applications to algorithms for computing Betti numbers of semi-algebraic
sets it becomes important to
examine the properties of parametrized paths which are the
unions of connecting paths starting 
at a given $p$ and ending 
at $x$,
where $x$ varies over a certain semi-algebraic subset of   
$\ZZ(Q,\R^k)$.  

We first note that for any $x = (x_1,\ldots,x_k) \in \ZZ(Q,\R^k)$
we have by construction that  $\RM(\ZZ(Q,\R^k))$ is contained in
$\RM(\ZZ(Q,\R^k),\{x\})$. In fact,
$$
\displaylines{
\RM(\ZZ(Q,\R^k),\{x\}) = 
\RM(\ZZ(Q,\R^k)) \cup \RM(\ZZ(Q,\R^k)_{x_1}, {\mathcal M}_{x_1}),
}
$$
where ${\mathcal M}_{x_1}$  
consists of 
$(x_2,\ldots,x_k)$
and the finite set of points obtained by intersecting
the curves in $\RM(\ZZ(Q,\R^k))$ parametrized by the $X_1$-coordinate
with the hyperplane $\pi_{1}^{-1}(x_1)$.

\begin{center}
\begin{figure}
\includegraphics[height=7cm]{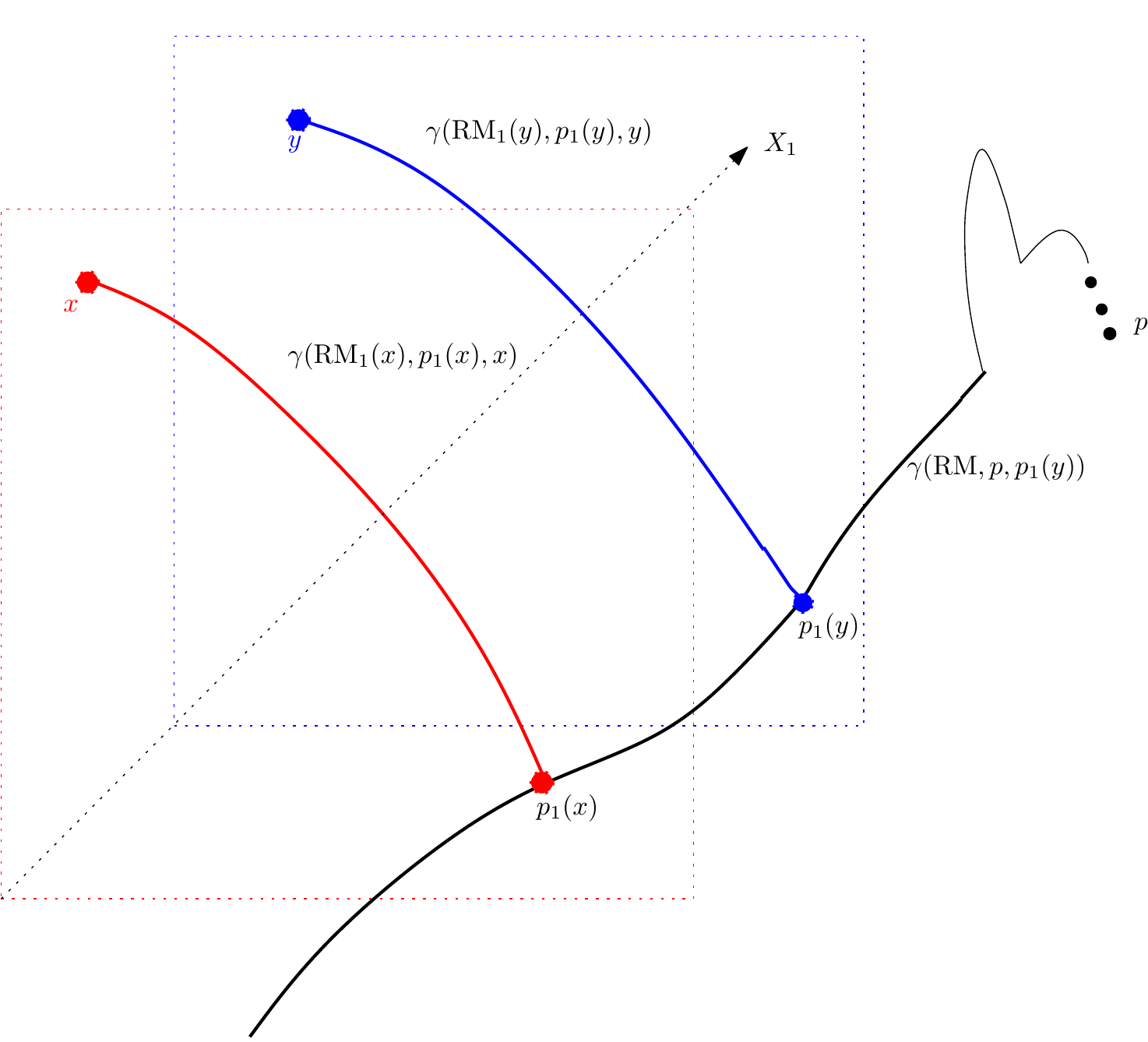}
\caption{The connecting path $\Gamma(x)$}
\label{fig:figure1}
\end{figure}
\end{center}

A connecting path $\gamma(x)$ 
(with non-self intersecting image) 
joining a distinguished point $p$ of $\RM(\ZZ(Q,\R^k))$ 
to $x$ can be extracted from $\RM(\ZZ(Q,\R^k),\{x\})$.
The connecting path $\gamma(x)$ consists of two consecutive parts,  
$\gamma_0(x)$ and $\Gamma_1(x)$.
The  path  $\gamma_0(x)$ is contained in $\RM(\ZZ(Q,\R^k))$
and the path $\Gamma_1(x)$  is contained in $\ZZ(Q,\R^k)_{x_1}$.
The part $\gamma_0(x)$ consists of a sequence of sub-paths
$\gamma_{0,0},\ldots,\gamma_{0,m}$. Each
$\gamma_{0,i}$ is a semi-algebraic path parametrized by 
one of the co-ordinates $X_1,\ldots,X_k$, over some interval
$[a_{0,i},b_{0,i}]$
with $\gamma_{0,0}(a_{0,0}) = p$.
The semi-algebraic maps
$\gamma_{0,0},\ldots,\gamma_{0,m}$ and the end-points
of their intervals of definition
$a_{0,0},b_{0,0},\ldots,a_{0,m},b_{0,m}$ are all independent of $x$
(up to the discrete choice of the path 
$\gamma(x)$ in $\RM(\ZZ(Q,\R^k),\{x\})$),
except $b_{0,m}$ which depends on $x_1$. 

Moreover,  $\Gamma_1(x)$ can again be decomposed into two parts 
$\gamma_1(x)$ and $\Gamma_2(x)$
with 
$\Gamma_2(x)$ contained in 
$\ZZ(Q,\R^k)_{(x_1,x_2)}$ 
and so on.

If $y = (y_1,\ldots,y_k) \in \ZZ(Q,\R^k)$ is another point such that
$x_1 \neq y_1$, then since
$\ZZ(Q,\R^k)_{x_1}$
and 
$\ZZ(Q,\R^k)_{y_1}$
are disjoint, 
it is clear that
$$
\RM(\ZZ(Q,\R^k),\{x\}) \cap \RM(\ZZ(Q,\R^k),\{y\}) = \RM(\ZZ(Q,\R^k)).
$$
Now consider a
connecting path  $\gamma(y)$ extracted from $\RM(\ZZ(Q,\R^k),\{y\})$. 
The images of $\Gamma_1(x)$ and $\Gamma_1(y)$ are
disjoint.
If the image of 
$\gamma_0(y)$ (which is contained in $\RM(\ZZ(Q,\R^k)$) follows the same
sequence of curve segments as $\gamma_0(x)$   
starting at $p$
(i.e. it consists of the same curves segments 
$\gamma_{0,0},\ldots,\gamma_{0,m}$ as in $\gamma_0(x)$),
then it is clear that the images of the 
paths $\gamma(x)$ and $\gamma(y)$ has the
property that they 
are identical up to a point and they are disjoint after it.
This is called the {\em divergence property} in \cite{BPR9}.

\subsubsection{Roadmaps of General Semi-algebraic Sets}
\label{subsecsec:generalroadmap}
Using the same ideas as above and some additional techniques for
controlling the combinatorial complexity of the algorithm 
it is possible to extend 
the roadmap algorithm to the case of semi-algebraic sets. The following
theorem appears in \cite{BPR5,BPRbook06} and gives the most 
efficient algorithm for constructing roadmaps.

\begin{theorem}\cite{BPR5,BPRbook06}
\label{16:the:saconnecting}
Let $Q \in
{\R}[X_1,\ldots,X_k]$
 with $\ZZ(Q,\R^k)$  of dimension
$k'$ and let ${\mathcal
P}\subset {\R}[X_1,\ldots,X_k]$
be a set of at most $s$
polynomials
for
which the degrees of the polynomials
 in ${\mathcal P}$ and $Q$ are bounded by
$d.$
Let $S$ be a ${\mathcal P}$-semi-algebraic subset of $\ZZ(Q,\R^k)$.
There is an algorithm which computes a roadmap $\RM(S)$ for $S$
with complexity $s^{k'+1} d^{O(k^2)}$
 in the ring ${\D}$
generated by the
coefficients of $Q$ and the elements
of ${\mathcal P}$.
If $\D=\Z,$ and the
bit-sizes
of the coefficients of the polynomials are bounded by
 $\tau$,
then the bit-sizes of the integers appearing in the
intermediate
computations and the output are bounded
by $\tau  d^{O(k^2)}$.
\end{theorem}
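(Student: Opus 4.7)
The plan is to combine the algebraic roadmap construction sketched in Section \ref{sec:roadmap} with the dimension-sensitive sampling algorithm of Theorem \ref{13:the:samplealg}, so that the combinatorial dependence on $s$ is confined to an initial sampling phase while the rest of the work is purely algebraic. The essential point is that once we have a finite set $\mathcal{N}\subset \ZZ(Q,\R^k)$ meeting every semi-algebraically connected component of every realizable sign condition of $\mathcal{P}$ on $\ZZ(Q,\R^k)$, it suffices to output the roadmap $\RM(\ZZ(Q,\R^k),\mathcal{N})$ together with the signs of the polynomials of $\mathcal{P}$ along its curve segments and distinguished points, and then extract the subset lying in $S$.

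First, I would perform the by-now-standard reduction to a bounded, non-singular situation. Using the construction of Proposition \ref{11:prop:boundedunbounded} together with the infinitesimal perturbation $\bar Q = \zeta\, G_k(\bar d,c)+(1-\zeta)Q$ of Eqn.~\eqref{11:equation:deform1}, we pass to the bounded non-singular hyper-surface $\ZZ(\bar Q,\R\la\zeta,\varepsilon\ra^{k+1})$, and use $\lim_\zeta$ (and Notation \ref{not:extension}) to transfer the resulting roadmap back to $\ZZ(Q,\R^k)$. This step preserves semi-algebraic connectivity and is essential because the iterated pseudo-critical point construction only behaves well on bounded non-singular inputs. Second, I would invoke Theorem \ref{13:the:samplealg} applied to $Q$ and $\mathcal{P}$ to produce, in time $s^{k'+1} d^{O(k)}$, a set $\mathcal{N}$ of sample points meeting every semi-algebraically connected component of every realizable sign condition of $\mathcal{P}$ over $\ZZ(Q,\R^k)$, together with the sign vector of $\mathcal{P}$ at each point of $\mathcal{N}$. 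Notice that this is exactly where the factor $s^{k'+1}$ enters, and that no further combinatorial blow-up in $s$ will occur afterwards.

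Third, I would run the algebraic roadmap construction of Section \ref{sec:roadmap} on $\ZZ(Q,\R^k)$ with $\mathcal{N}$ as its set of input points, obtaining $\RM(\ZZ(Q,\R^k),\mathcal{N})$ satisfying the conclusions of Proposition \ref{15:prop:rm}. The recursion descends through the coordinates $X_1,X_2,\ldots,X_{k-1}$, and at each level the pseudo-critical points are described by equations whose degrees grow by a factor of $d^{O(1)}$ because of the Gröbner-basis structure of the perturbed critical system; over $O(k)$ levels this yields the announced $d^{O(k^2)}$ algebraic complexity. During the descent, I would propagate along each curve segment the sign vector with respect to $\mathcal{P}$ — these signs are piecewise constant on the curve segments of $\RM(\ZZ(Q,\R^k),\mathcal{N})$ and can only change at distinguished points, so they are determined by a polynomial number of sign evaluations that do not disturb the $s^{k'+1}d^{O(k^2)}$ estimate. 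The roadmap $\RM(S)$ is then defined as the union of those curve segments and distinguished points on which the sign vector is compatible with the Boolean formula defining $S$.

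Finally, I would verify that the output satisfies conditions ${\rm RM}_1$ and ${\rm RM}_2$ of Definition \ref{15:def:roadmap}. Condition ${\rm RM}_1$ follows because $\mathcal{N}$ hits every connected component of every realizable sign condition of $\mathcal{P}$ on $\ZZ(Q,\R^k)$ (so in particular every component of $S$), while Proposition \ref{15:prop:rm} ensures that $\RM(\ZZ(Q,\R^k),\mathcal{N})$ connects these points within the ambient algebraic set; together with a standard argument that points of $\mathcal{N}$ lying in the same component of $S$ must be joined by a path in $\RM(\ZZ(Q,\R^k),\mathcal{N})$ that never leaves $S$, this yields ${\rm RM}_1$. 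Condition ${\rm RM}_2$ follows by applying the same reasoning parametrically in the $X_1$-fibre, which is exactly what the recursive step of the roadmap construction performs. The main obstacle, in my view, is precisely this last point: ensuring that the \emph{parametric} version of the covering property holds for $S$ (not merely for $\ZZ(Q,\R^k)$) without a naive enumeration of sign conditions that would blow up the combinatorial complexity to $s^{k+1}$ or worse. Handling this requires inserting the dimension-sensitive sampling step inside each recursive call along the distinguished hyperplanes, and a careful induction on $k$ to track that the combinatorial parameter of each subproblem stays bounded by $s$ while the algebraic parameter grows controllably. The bit-size claim follows from the same induction, since each recursion raises the bit-size by a factor $d^{O(k)}$ over $O(k)$ levels.
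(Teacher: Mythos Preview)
The paper does not prove this theorem; it is a survey that cites \cite{BPR5,BPRbook06} and remarks only that the semi-algebraic case requires ``some additional techniques for controlling the combinatorial complexity'' beyond the algebraic-set roadmap of Section~\ref{sec:roadmap}. Your outline therefore goes further than the paper's exposition, and its overall architecture --- sample points via Theorem~\ref{13:the:samplealg}, then a roadmap of the ambient algebraic set through those points, then extraction of the part lying in $S$ --- is indeed the skeleton of the construction in the cited references.

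There is, however, a genuine gap. You assert that along each curve segment of $\RM(\ZZ(Q,\R^k),\mathcal{N})$ the sign vector of $\mathcal{P}$ ``can only change at distinguished points''. This is false for the roadmap as you have described it: the distinguished values are pseudo-critical values of the projection, endpoints of curve segments, and first coordinates of the points of $\mathcal{N}$, none of which are tied to the vanishing of the polynomials of $\mathcal{P}$ along the curves. A polynomial $P\in\mathcal{P}$ can perfectly well vanish at an interior point of a curve segment, so intersecting the roadmap with $S$ may disconnect a single segment. Your ``standard argument'' for ${\rm RM}_1$ then collapses: two sample points in the same component of $S$ are joined by a path in $\RM(\ZZ(Q,\R^k),\mathcal{N})$, but that path may leave $S$, and nothing you have set up forces an alternative route staying inside $S$. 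The fix in \cite{BPR5,BPRbook06} is to make the roadmap $\mathcal{P}$-sign-invariant by design, adding at every level of the recursion the zeros of the polynomials of $\mathcal{P}$ along the parametrized curves as further distinguished points (and hence further distinguished hyperplanes). You rightly flag that the hard part is then preventing the combinatorial complexity from ballooning to $s^{O(k)}$; that is precisely the content of the ``additional techniques'' the paper alludes to, and your proposal, while correctly locating the obstacle, does not yet supply them.
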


Theorem \ref{16:the:saconnecting} immediately implies that
there is an algorithm whose output is exactly one point
in every semi-algebraically connected component of
$S$ and  whose complexity in the ring generated by the 
coefficients of $Q$ and ${\mathcal P}$ is bounded by
$s^{k'+1} d^{O(k^2)}$.
In particular, this algorithm counts
the number semi-algebraically connected component of
$S$ within the same time bound.

\section{Topological Preliminaries}
\label{sec:topbackground}
The purpose of this section is to provide a self-contained introduction
to the basic mathematical machinery needed later. Some of the topics
would be familiar to most readers while a few others perhaps less so.
The sophisticated reader can choose to skip this whole section and
proceed directly to the descriptions of the various algorithms in the
later sections. 

We give a brief review of the concepts
from algebraic topology that play a role in the results surveyed 
in this paper. 
These include the definition of complexes of vector spaces 
(Section \ref{subsubsec:complex}),
definition of cohomology groups of semi-algebraic sets (Section
\ref{subsec:sahomology}), 
properties of the Euler-Poincar\'e characteristic of semi-algebraic sets
\ref{subsubsec:EP}), 
the nerve complex of covers (Section \ref{subsec:nerve}), 
a generalization of the nerve complex (Section \ref{subsec:nonleray}),
the Mayer-Vietoris double complex and its associated spectral
sequence (Section \ref{subsec:MV}),
the descent spectral sequence (Section \ref{subsec:descent}), and the
properties of homotopy colimits (Section \ref{subsec:hocolimit}).

\subsection{Homology and Cohomology groups}
\label{subsec:intuition}
Before we get to the  precise definitions of these groups it is good to 
have some intuition about them. As noted before  closed and
bounded semi-algebraic sets are finitely triangulable. This means that 
each closed and bounded semi-algebraic set $S \subset \R^k$ is
homeomorphic (in fact,  by a semi-algebraic map) to the polyhedron 
$|K|$ associated to a finite simplicial complex $K$. 
In fact $K$ can be chosen such that 
$|K| \subset \R^k$, and there is an effective algorithm
(see Theorem \ref{the:triangulation}) 
for computing $K$ given $S$. 
The simplicial cohomology (resp. homology groups) of $S$ are 
defined in terms of $K$ and are well-defined
(i.e they are independent of the chosen triangulation
which is of course very far from being unique). 

Roughly speaking 
the simplicial homology groups of a finite simplicial complex $K$
with coefficients in a field $\F$  
(which we assume to be $\Q$ in this survey)
are finite dimensional  $\F$-vector spaces and measure the 
{\em connectivity} of $|K|$ in various dimensions. For example, the zero-th
simplicial homology group, $\HH_0(K)$, has a generator corresponding to each 
connected component of $K$ and  its dimension gives  
the number of connected components of $|K|$. 
Similarly the first 
simplicial homology group, $\HH_1(K)$, is generated by the
``one-dimensional holes''
of $|K|$, and its dimension is the number of ``independent'' one-dimensional
holes of $|K|$. If $K$ is one-dimensional (that is a finite graph)
the dimension of $\HH_1(K)$ is the number of independent cycles in $K$.
Analogously, the $i$-th the 
simplicial homology group, $\HH_i(K)$, is generated by the 
``$i$-dimensional holes'' of $|K|$, and its dimension 
is the number of independent $i$-dimensional
holes of $|K|$. Intuitively an $i$-dimensional hole is an 
$i$-dimensional closed surface in $K$ (technically called a {\em cycle}) 
which does not {\em bound} any $(i+1)$-dimensional subset of $|K|$.

The simplicial {\em cohomology groups} of $K$ are dual (and isomorphic)
to the simplicial homology groups of $K$ as groups. 
However, in addition to the group structure
they also carry a multiplicative structure (the so called cup-product) 
which makes them a finer topological invariant than the homology groups. 
We are not going to use this multiplicative structure. Cohomology groups
also have nice but less geometric interpretations. 
Roughly speaking the cohomology groups of $K$ 
represent spaces of globally defined objects satisfying certain 
local conditions. 
For example,
the zero-th cohomology group, $\HH^0(K)$, can be interpreted as the
vector space of global functions on $|K|$ which are locally constant.
It is easy to see from this interpretation that 
the dimension of $\HH^0(K)$ is the number of connected components of $K$.
Similar geometric interpretations can be given for the higher cohomology 
groups, in terms of vector spaces of (globally defined) differential forms 
satisfying certain local condition (of being closed). In literature
this cohomology theory is referred to as {\em de Rham cohomology theory} 
and it is usually defined for smooth manifolds, but
it can also be defined for simplicial complexes 
(see for example \cite[Section 1.3.1]{Morita}).

\subsubsection{Homology vs Cohomology}
\label{subsubsec:homologyvscohomology}
It turns out that  the cohomological point of view gives
better intuition in designing algorithms described later in the paper.
This is our primary reason behind  preferring cohomology over homology.
Another reason for 
preferring the cohomology groups over the homology groups is that their 
interpretations continue to make sense in applications
outside of semi-algebraic geometry where
the notions of holes is meaningless (for instance, think of algebraic 
varieties defined over fields of positive characteristics) but the notion
of global functions (or for instance differential forms) 
continue to make sense.

\subsection{Definition of the Cohomology Groups of a Simplicial Complex}
\label{sec:precise}
We now give precise definitions of the cohomology groups of 
simplicial complexes. 

In order to do so we first need to introduce some
amount of algebraic machinery -- namely the concept of complexes of 
vector spaces and homomorphisms between them. 

\subsubsection{Complex of Vector Spaces}
\label{subsubsec:complex}
A complex of vector spaces is just a  sequence of vector spaces
and linear transformations
satisfying the property that the composition of two successive linear
transformations is  $0$.

More precisely
\begin{definition}[Complex of Vector Spaces]
\label{def:complex}
A sequence $\{\Ch^p\}$, $p \in {\mathbb Z}$, of
$\Q$-vector spaces together with a sequence
$\{\delta^p\}$ of
homomorphisms $\delta^p :\Ch^p \rightarrow \Ch^{p+1}$ 
(called differentials) for
which 
\begin{equation}
\label{eqn:defofcomplex}
\delta^{p+1}\circ \delta^p = 0 
\end{equation}
for all $p$ is called a complex. 
\end{definition}

The 
most important example for us
of a complex of vector spaces is the co-chain complex
of a simplicial complex $K$ denoted by $\Ch^{\bullet}(K)$.
It is defined as follows. 

\begin{definition}[Simplicial cochain complex]
\label{def:cochaincomplex}
For each $p \geq 0$, 
$\Ch^p(K)$ is a linear functional on the $\Q$-vector-space generated
by the $p$-simplices of $K$. Given $\phi \in \Ch^p(K)$, $\delta^p(\phi)$
is specified by its values on the $(p+1)$-dimensional simplices of $K$.
Given a $(p+1)$-dimensional simplex  $\sigma = [a_0,\ldots,a_{p+1}]$ of $K$
\begin{equation}
(\delta^p \phi)([a_0,\ldots,a_{p+1}]) = \sum_{i=0}^{p+1}
(-1)^i \phi([a_0,\ldots,\hat{a_i},\ldots,a_{p+1}]),
\end{equation}
where $\hat{}$ denotes omission.
\end{definition}
Notice that each $[a_0,\ldots,\hat{a_i},\ldots,a_{p+1}]$ is a $p$-dimensional
simplex of $K$ and since $\phi\in \Ch^p(K)$, 
$\phi([a_0,\ldots,\hat{a_i},\ldots,a_{p+1}]) \in \Q $ is well-defined.
It is an exercise now to check that the
homomorphisms $\delta^p :\Ch^p(K)\rightarrow \Ch^{p+1}(K)$ indeed satisfy
Eqn. \ref{eqn:defofcomplex} in the definition of a complex.

Now let $K$ be a simplicial complex and $L \subset K$ a sub-complex
of $K$ -- we will denote such a pair simply by $(K,L)$. 
Then for each $p \geq 0$ we have that
$\Ch^p(L) \subset \Ch^p(K)$ and we denote by
$\Ch^p(K,L)$ the quotient space $\Ch^p(K)/\Ch^p(L)$. It is now an easy
exercise to verify that the differentials $\delta^p$ in the complex
$\Ch^p(K)$ descend to $\Ch^p(K,L)$ and we define 

\begin{definition}[Simplicial cochain complex of a pair]
\label{def:cochaincomplexofapair}
The simplicial cochain complex of the pair $(K,L)$ to be the complex
$\Ch^\bullet(K,L)$ whose terms, $\Ch^p(K,L)$, and differentials, $\delta^p$,
are defined as above.
\end{definition}

Often, particularly in the context of algorithmic applications
it is more economical to use cellular complexes instead of 
simplicial complexes. We recall here the definition of a finite
regular cell complex referring the reader to standard sources
in algebraic topology for more in-depth study of cellular theory
(see \cite[pp. 81]{Whitehead}).

\begin{definition}[Regular cell complex]
\label{def:cellcomplex}
An $\ell$-dimensional {\em cell} in $\R^k$ is a subset of 
$\R^k$ homeomorphic to  $\overline{B_\ell(0,1)}$.
A regular cell
complex $\Sigma$ in $\R^k$ is a finite collection of cells
satisfying the following properties:
\begin{enumerate}
\item If $c_1,c_2 \in \Sigma$, then either 
$c_1 \cap c_2 = \emptyset$ or $c_1 \subset \partial c_2$ or
$c_2 \subset \partial c_1$.
\item
The boundary of each cell of $\Sigma$ is a union of cells
of $\Sigma$.
\end{enumerate}
We denote by $|\Sigma|$ the set 
$\displaystyle{
\bigcup_{c \in \Sigma} c
}.
$
\end{definition}

\begin{remark}
Notice that every simplicial complex $K$ may be considered as a regular
cell complex whose cells are the closures of the  simplices
of $K$.
\end{remark}

As in the case of simplicial complexes it is possible
to associate a complex, $\Ch^{\bullet}(\Sigma)$ (the co-chain complex of $K$), 
to each regular cell complex $K$  which is defined in an analogous manner. 
In order to avoid technicalities  
we omit the precise definition of this complex
referring the interested reader to \cite[pp. 82]{Whitehead} instead. 
We remark that the dimension of $\Ch^p(\Sigma)$ is equal to the 
number of $p$-dimensional cells in $\Sigma$ and the matrix entries for the
differentials in the complex with respect to the standard basis comes
from $\{0,1,-1\}$ just as in the case of simplicial co-chain complexes.

The advantage of using cell complexes instead of simplicial complexes
can be seen in the following example.
\begin{example}
Consider the unit sphere $\Sphere^k \subset \R^{k+1}$. For 
$0 \leq j \leq k+1$ and $\eps \in \{+,-\}$ let
\begin{equation}
c_j^{\eps} = \{ x \in \Sphere^k \;\mid\; X_0 = \cdots = X_{j-1} = 0,
\eps X_j \geq 0\}.
\end{equation}

Then it is easy to check that each $c_j^{\eps}$ is a $k-j$ dimensional
cell and the collection,
$\displaystyle{
\Sigma_k = \{c_j^{\eps} \;\mid\; 0 \leq j \leq k, \eps \in \{-,+\}\}
}
$ is a regular cell complex with $|\Sigma_k| = \Sphere^k$
(see Figure \ref{fig:exampleofsphere} for the case $k=2$).

\hide{
\begin{center}
\begin{figure}
\includegraphics[height=5cm]{cellcomplexofsphere}
\caption{Cell decomposition of $\Sphere^2$}
\label{fig:exampleofsphere}
\end{figure}
\end{center}
}

       \begin{figure}[hbt]
         \centerline{
           \scalebox{0.5}{
 \input{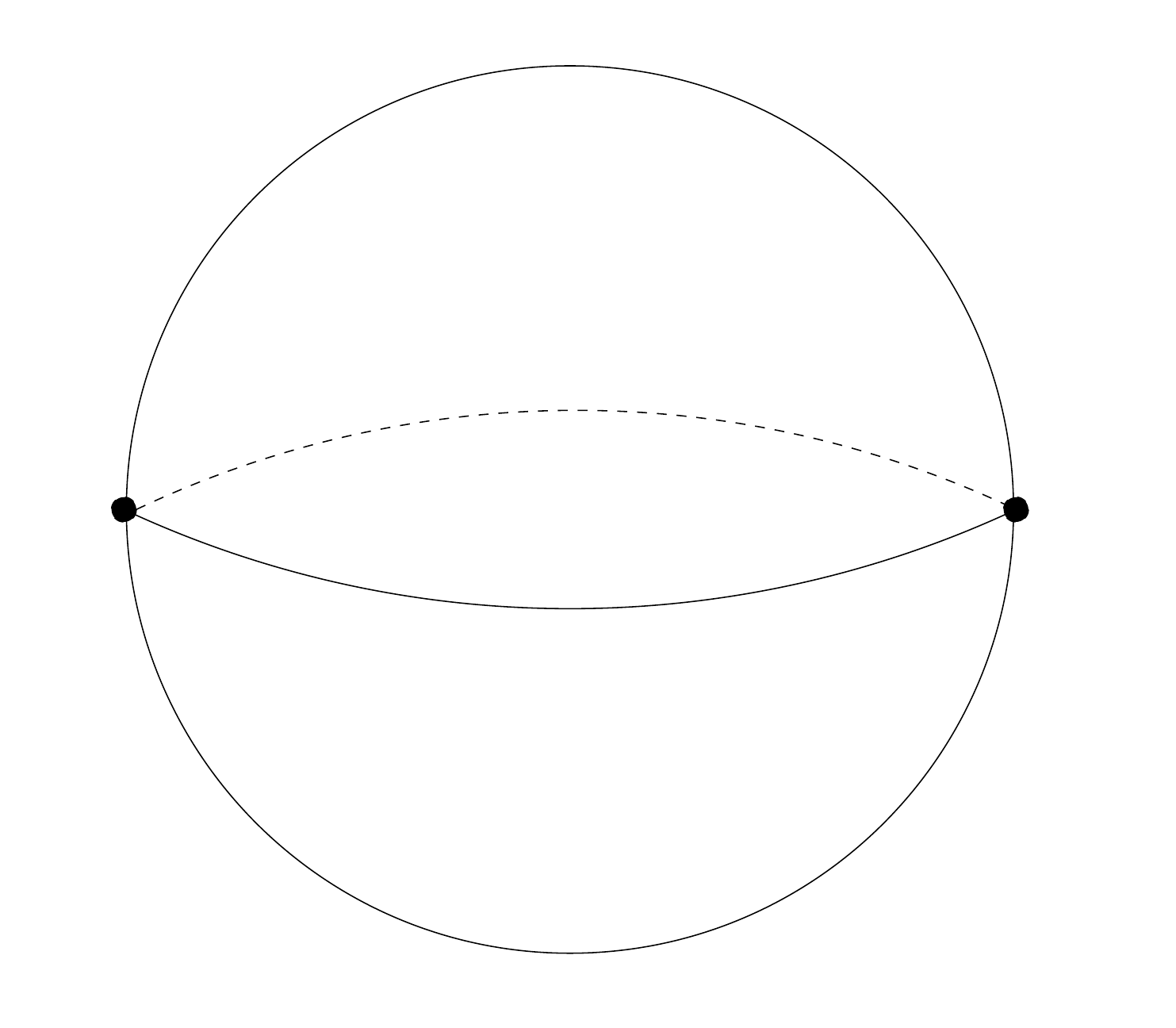tex_t}
             }
           }
\caption{Cell decomposition of $\Sphere^2$}
\label{fig:exampleofsphere}
       \end{figure}

Notice that $\#\Sigma_k = 2k$.
On the other hand if we consider the sphere
as homeomorphic to the boundary of a standard $(k+1)$-dimensional simplex,
then the corresponding simplicial complex 
will contain $(2^{k+2}-2)$ simplices (which is exponentially large in $k$).
\end{example}

We now associate to each complex, $\Ch^{\bullet}$, 
a sequence of vector spaces, $\HH^p(\Ch^\bullet)$, called the cohomology
groups of $\Ch^{\bullet}$. Note that it follows from Eqn. 
\ref{eqn:defofcomplex} that for a complex $\Ch^{\bullet}$ with differentials
$\delta^p :\Ch^p\rightarrow \Ch^{p+1}$ 
the subspace $B^p(\Ch^\bullet)= {\rm Im}(\delta^{p-1})\subset \Ch^p$ 
is contained in the subspace $Z^p(\Ch^\bullet) = \Ker(\delta^p) \subset 
\Ch^p$. The subspaces $B^p(\Ch^\bullet)$ (resp. $Z^p(\Ch^\bullet)$) are 
usually referred to as the co-boundaries (resp. co-cycles) of the complex
$\Ch^{\bullet}$.  Moreover,

\begin{definition}[Cohomology groups of a complex]
\label{def:cohomologyofcomplex}
The cohomology groups, $\HH^p(\Ch^\bullet)$, are defined by
\begin{equation}
\label{eqn:defofcohomology}
\HH^p(\Ch^\bullet) = {Z^p(\Ch^{\bullet})}/{B^p(\Ch^{\bullet})}.
\end{equation}
We will denote by
$\HH^*(\Ch^{\bullet})$ the graded vector space
$\bigoplus_{p} \HH^p(\Ch^{\bullet})$.
\end{definition}

Note that the cohomology groups, $\HH^p(\Ch^\bullet)$,  are all
$\Q$-vector spaces
(finite dimensional if the vector spaces $\Ch^p$'s are themselves finite
dimensional).

\begin{definition}[Exact sequence]
A complex $\Ch^{\bullet}$ is called {\em acyclic} 
and the corresponding sequence of vector space homomorphisms
is called an {\em exact sequence} if $\HH^*(\Ch^{\bullet}) = 0$.
\end{definition}

Applying Definition \ref{def:cohomologyofcomplex} to the particular case
of the co-chain complex of a simplicial complex $K$
(cf. Definition \ref{def:cochaincomplex}) we obtain

\subsubsection{Cohomology of a Simplicial Complex}
\begin{definition}[Cohomology of a simplicial complex]
\label{def:cohomologygroupsofsimplicialcomplex}
The cohomology groups of a simplicial complex $K$ are by definition
the cohomology groups, $\HH^p(\Ch^{\bullet}(K))$, of its co-chain complex.
\end{definition}

Similarly, given a pair of simplicial complexes $(K,L)$, we define
\begin{definition}[Cohomology of a pair]
\label{def:cohomologygroupsofapairofsimplicialcomplexes}
The cohomology groups of the pair $(K,L)$ are by definition
the cohomology groups, $\HH^p(\Ch^{\bullet}(K,L))$, of its co-chain complex.
\end{definition}

\begin{example}
\label{ex:simplex}
Let $\Delta_n$ be the simplicial complex corresponding to an $n$-simplex.
In other words the simplices of $\Delta_n$ consist of 
$[i_0,\ldots,i_\ell], 0 \leq i_0 < \cdots < i_\ell \leq n$.
The polyhedron $|\Delta_n|$ is just the $n$-dimensional simplex.  Then using
Definition \ref{def:cohomologygroupsofsimplicialcomplex}
one can verify that
\begin{align*}
\HH^i(\Delta_n) &= \Q, \; i = 0, \\ 
\HH^i(\Delta_n) &= 0,  \; i > 0.
\end{align*}
\end{example}

\begin{example}
\label{ex:sphere}
Let $\partial \Delta_n$ 
be the simplicial complex corresponding to the boundary of
the  $n$-simplex.
In other words the simplices of $\partial\Delta_n$ consist of 
$[i_0,\ldots,i_\ell], 0 \leq i_0 < \cdots < i_\ell \leq n, \ell < n$.
Then again by a
direct application of Definition \ref{def:cohomologygroupsofsimplicialcomplex}
one can verify that
\begin{align*}
\HH^i(\partial \Delta_n) &= \Q, \; i = 0,n-1 \\ 
\HH^i(\partial \Delta_n) &= 0,  \; \mbox{ else}.
\end{align*}
\end{example}

The above examples serve to confirm our geometric 
intuition behind the homology groups of the spaces 
$|\Delta_n|$ and
$|\partial \Delta_n|$ 
explained in Section \ref{subsec:intuition} above -- 
namely that they are both connected
and $|\Delta_n|$ has no holes in dimension $> 0$, and $|\partial \Delta_n|$ 
has a single $(n-1)$-dimensional hole. 

\begin{example}
\label{ex:pair}
It is also an useful exercise to verify that
\begin{align*}
\HH^i(\Delta_n,\partial \Delta_n) &= \Q, \; i = 0,n \\ 
\HH^i(\Delta_n,\partial \Delta_n) &= 0,  \; \mbox{ else}.
\end{align*}
\end{example}
\begin{remark}
Example \ref{ex:pair}
illustrates  that for ``nice spaces'' 
of the kind we consider in this paper
(such as regular cell complexes)
the cohomology groups of a pair $(K,L)$ are isomorphic to the 
cohomology groups of the quotient space $|K|/|L|$. 
For instance, the above example illustrates the fact that the 
topological quotient of an $n$-dimensional ball by its boundary 
is the $n$-dimensional sphere.
\end{remark}

\subsubsection{Homomorphisms of Complexes}
\label{subsec:homomorphism}
We will also need the notion of homomorphisms of complexes which generalizes
the notion of ordinary vector space homomorphisms. 
\begin{definition}[Homomorphisms of complexes]
\label{def:complexhomomorphsim}
Given two  complexes, $\Ch^\bullet = (\Ch^p,\delta^p)$ and $\D^{\bullet}=
(\D^{p},\delta^p)$,
{\em a homomorphism of complexes},
$\phi^{\bullet}: \Ch^{\bullet} \rightarrow \D^{\bullet}$, is a
sequence of homomorphisms $\phi^p: \Ch^p \rightarrow \D^p$ for which
$\delta^p\circ \phi^p = \phi^{p+1}\circ\delta^p$ for all $p$.

In other words the following diagram is commutative.

\begin{equation}
\label{eqn:doublecomplexcommutative}
\begin{diagram}
\node{\cdots}\arrow{e}\node{\Ch^p}\arrow{e,t}{\delta^p}\arrow{s,l}{\phi^p}
\node{\Ch^{p+1}}\arrow{e}\arrow{s,l}{\phi^{p+1}}\node{\cdots} \\
\node{\cdots}\arrow{e}\node{\D^p}\arrow{e,t}{\delta^p} 
\node{\D^{p+1}}\arrow{e}\node{\cdots}
\end{diagram}
\end{equation}
\end{definition}

A homomorphism of complexes
$\phi^{\bullet}: \Ch^{\bullet} \rightarrow \D^{\bullet}$
induces homomorphisms
$\phi^i: \HH^i(\Ch^{\bullet}) \rightarrow \HH^i(\D^{\bullet})$ and 
we will
denote the corresponding homomorphism between the  graded
vector spaces $\HH^*(\Ch^{\bullet}), \HH^*(\D^{\bullet})$ by
$\phi^*$. 

\begin{definition}[Quasi-isomorphism]
\label{def:qis}
The homomorphism $\phi^{\bullet}$ is called a {\em quasi-isomorphism} if the 
homomorphism $\phi^*$ is an isomorphism.
\end{definition}

Having introduced the algebraic machinery of complexes of vector spaces,
we now define the cohomology groups of semi-algebraic sets in terms
of their triangulations and their associated simplicial complexes.

\subsection{Cohomology Groups of Semi-algebraic Sets}
\label{subsec:sahomology}
A closed and bounded semi-algebraic
set $S \subset \R^k$ is semi-algebraically triangulable
(see Theorem \ref{the:triangulation} above). 
\begin{definition}[Cohomology groups of closed and bounded semi-algebraic sets]
\label{def:sacohomology}
Given a triangulation,
$h: |K| \rightarrow S$, where $K$ is a simplicial complex,
we define the $i$-th simplicial cohomology group of $S$,
by $\HH^i(S) = \HH^i(\Ch^{\bullet}(K))$,
where $\Ch^{\bullet}(K)$ is the co-chain complex of $K$.
The groups  $\HH^i(S)$ are invariant under
semi-algebraic homeomorphisms (and they coincide with the corresponding
singular cohomology groups when $\R = \re$). We denote by $b_i(S)$ the
$i$-th Betti number of $S$ (i.e. the dimension  of $\HH^i(S)$ as a vector
space).
\end{definition}

\begin{remark}
\label{rem:conicstructureatinfinity}
For a closed but not necessarily bounded semi-algebraic set $S \subset \R^k$
we will denote by $\HH^i(S)$  the $i$-th simplicial cohomology group of 
$S \cap \overline{B_k(0,r)}$ for sufficiently large $r > 0$.
The sets $S \cap \overline{B_k(0,r)}$ are semi-algebraically homeomorphic
for all sufficiently large $r> 0$
and hence this definition makes sense.
(The last property is usually referred to as
{\em the local conic structure at infinity} of semi-algebraic sets
\cite[Theorem 5.48]{BPRbook06}).
The definition of cohomology groups of arbitrary semi-algebraic sets in
$\R^k$ requires some care and several possibilities exist and 
we refer the reader to \cite[Section 6.3]{BPRbook06} 
where one such definition is given
which agrees with singular cohomology in case $\R = \re$.
\end{remark}

\subsubsection{The Euler-Poincar\'e Characteristic: Definition and Basic
Properties}
\label{subsubsec:EP}
An useful topological invariant of semi-algebraic sets which is often easier
to compute than their Betti numbers is the 
{\em Euler-Poincar\'e characteristic}.
\begin{definition}[Euler-Poincar\'e characteristic of a closed and
bounded semi-algebraic set]
\label{def:EP}
Let $S \subset \R^k$, be a closed and bounded semi-algebraic set. Then
the Euler-Poincar\'e characteristic of $S$ is defined by
\begin{equation}
\label{eqn:defofEP}
\chi(S) = \sum_{i \geq 0} (-1)^i b_i(S).
\end{equation}
\end{definition}

From the point of view of designing algorithms, it is useful to define 
Euler-Poincar\'e characteristic also for 
{\em locally closed} semi-algebraic sets.
A semi-algebraic set is locally closed if it is
the intersection of a closed semi-algebraic set with an open one. 
A standard example of a locally closed semi-algebraic set is the
realization, $\RR(\sigma)$, of a sign-condition $\sigma$ on a 
family of polynomials.

We now define 
Euler-Poincar\'e characteristic for locally closed semi-algebraic sets
in terms of the Borel-Moore cohomology groups of such sets (defined
below).
This definition agrees with the definition of Euler-Poincar\'e
characteristic stated above  for closed and bounded semi-algebraic sets.
They may be distinct for semi-algebraic sets which are closed but not 
bounded.

\begin{definition}
\label{def:cohomologyofpairs}
The simplicial cohomology groups  of a pair of
closed and bounded semi-algebraic sets $T \subset S \subset \R^k$ are
defined as follows.
Such a pair of closed and  bounded semi-algebraic sets can be
triangulated (cf. Theorem \ref{the:triangulation})
using  a pair of simplicial complexes $(K,A)$
where $A$ is a sub-complex of $K$.
The  $p$-th simplicial cohomology group
of the pair
$(S,T)$, $\HH^p(S,T)$, is by definition to be $\HH^p(K,A)$.
The dimension of $\HH^p(S,T)$ as a $\Q$-vector space is called the
$p$-th Betti number of the pair
$(S,T)$ and denoted $b_p(S,T)$.
The  Euler-Poincar\'e characteristic
of the pair $(S,T)$ is
$$\chi(S,T) = \sum_i (-1)^i b_i(S,T).$$
\end{definition}

\begin{definition}[Borel-Moore cohomology group]
\label{def:defofBMhomology}
The $p$-th Borel-Moore cohomology group of $S\subset \R^k$,
denoted $\HH^p_{BM}(S)$, 
is defined in terms of the cohomology groups of a pair of 
closed and bounded semi-algebraic sets as follows.
For any $r > 0$ let $S_r = S \cap B_k(0,r)$. 
Note that for a locally closed semi-algebraic
set $S$ both $\overline{S_r}$ and $\overline{S_r}\setminus S_r$ are closed
and bounded, and hence $\HH^p(\overline{S_r}, \overline{S_r}\setminus S_r)$ is
well defined.
Moreover, it is a consequence of 
the local conic structure at infinity of semi-algebraic sets
(see Remark \ref{rem:conicstructureatinfinity} above)
that the cohomology group $H^p(\overline{S_r}, \overline{S_r}\setminus S_r)$
is invariant for all sufficiently  large $r > 0$.
We define
$\HH^p_{BM}(S) = \HH^p(\overline{S_r}, \overline{S_r}\setminus S_r)$ for
$r >0$ sufficiently large and it follows from the above remark that it
is well defined. 
\end{definition}

The Borel-Moore cohomology groups are  invariant under semi-algebraic
homeomorphisms (see \cite{BCR}.
It also follows clearly from the definition that for a  closed and bounded
semi-algebraic set the Borel-Moore cohomology groups coincide with the
simplicial cohomology groups.

\begin{definition}[Borel-Moore Euler-Poincar\'e characteristic]
\label{def:defofBMEP}
For a locally closed semi-algebraic set $S$ we define the Borel-Moore
Euler-Poincar\'e characteristic by
\begin{equation}
\label{eqn:BMEP}
\chi^{BM}(S) = \sum_{i=0}^k (-1)^i~b_i^{BM}(S)
\end{equation}
where $b_i^{BM}(S)$ denotes the dimension of $\HH^i_{BM}(S).$
\end{definition}

Since the Borel-Moore Euler-Poincar\'e characteristic might not be very
familiar, the reader is encouraged to compute it in a few simple examples.
In particular, one should check that for the half-open interval
$[0,1)$ which is locally closed we have
\begin{equation}
\label{eqn:bmepforhalfopeninterval}
\chi^{BM}([0,1)) = 0.
\end{equation}
We also have
\begin{equation}
\label{eqn:bmepforopeninterval}
\chi^{BM}((0,1)) = -1, 
\end{equation}
and more generally,
\begin{equation}
\label{eqn:bmepforball}
\chi^{BM}(B_k(0,1)) = (-1)^{k}.
\end{equation}

If $S$ is closed and bounded then $\chi^{BM}(S) = \chi(S)$.

The Borel-Moore Euler-Poincar\'e characteristic 
has the following additivity 
property (reminiscent of the similar property of volumes)
which makes them particularly useful
in algorithmic applications (see for example
Section \ref{subsec:epquadratic} below).

\begin{proposition}
\label{6:prop:additivity}
Let $X,X_1$ and $X_2$ be locally closed semi-algebraic
sets such that  $$X_1\cup X_2=X, X_1\cap X_2=\emptyset.$$  Then
\begin{equation}
\label{eqn:additivityofbmep}
\chi^{BM}(X)=\chi^{BM}(X_1)+\chi^{BM}(X_2).
\end{equation}
\end{proposition}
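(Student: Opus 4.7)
The plan is to deduce the additivity from a simultaneous semi-algebraic triangulation together with the standard alternating-sum formula for the Euler characteristic of a simplicial pair. Fix $r > 0$ large enough that the local conic structure at infinity (Remark \ref{rem:conicstructureatinfinity}) applies simultaneously to $X$, $X_1$ and $X_2$, so that
\[
\chi^{BM}(S) = \chi\bigl(\overline{S_r},\,\overline{S_r}\setminus S_r\bigr)
\qquad (S \in \{X,X_1,X_2\}).
\]
Writing $S_r = S \cap B_k(0,r)$ as in Definition \ref{def:defofBMhomology}, all six sets appearing above are closed and bounded semi-algebraic sets.

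First, I would invoke Theorem \ref{the:triangulation} applied to the closed and bounded semi-algebraic set $\overline{X_r}$ together with the finite collection
\[
X_{1,r},\; X_{2,r},\; \overline{X_{1,r}},\; \overline{X_{2,r}},\; \overline{X_r}\setminus X_r,\; \overline{X_{i,r}}\setminus X_{i,r}\; (i=1,2)
\]
of semi-algebraic subsets, to obtain a single semi-algebraic triangulation $h : |K| \to \overline{X_r}$ in which each of these sets is the image under $h$ of a union of open simplices. In particular there are sub-complexes $K_i \subset K$ triangulating $\overline{X_{i,r}}$, and sub-complexes $L \subset K$ and $L_i \subset K_i$ triangulating $\overline{X_r}\setminus X_r$ and $\overline{X_{i,r}}\setminus X_{i,r}$ respectively.

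The next step is the combinatorial formula
\[
\chi(K',L') \;=\; \sum_{\sigma \in K' \setminus L'} (-1)^{\dim \sigma}
\]
valid for any simplicial pair, which follows because in each dimension $p$ the relative cochain space $\Ch^p(K',L') = \Ch^p(K')/\Ch^p(L')$ has a basis indexed by the $p$-simplices of $K'$ not in $L'$, and because for a bounded complex of finite-dimensional $\Q$-vector spaces the alternating sum of the dimensions of the terms equals the alternating sum of the dimensions of the cohomology groups. Combining this with Definition \ref{def:defofBMhomology} and the paragraph above gives
\[
\chi^{BM}(X) = \sum_{\sigma \in K \setminus L} (-1)^{\dim \sigma}, \qquad
\chi^{BM}(X_i) = \sum_{\sigma \in K_i \setminus L_i} (-1)^{\dim \sigma}\quad (i=1,2).
\]
Now the open simplices indexed by $K\setminus L$ are exactly those mapped by $h$ into $X_r$, and since $X_r = X_{1,r} \sqcup X_{2,r}$ this index set partitions into the open simplices mapped into $X_{1,r}$ and those mapped into $X_{2,r}$; by the compatibility of the triangulation these two pieces are precisely the index sets $K_1 \setminus L_1$ and $K_2 \setminus L_2$. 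Splitting the sum for $\chi^{BM}(X)$ along this partition yields the identity $\chi^{BM}(X) = \chi^{BM}(X_1) + \chi^{BM}(X_2)$.

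The main obstacle is not conceptual but bookkeeping: one must check that the triangulation supplied by Theorem \ref{the:triangulation}, applied to the finite family of semi-algebraic sets listed above, indeed yields sub-complexes $K_i, L_i$ for which the identifications $h^{-1}(X_{i,r}) = |K_i|\setminus|L_i|$ hold on the nose, and that the resulting alternating sums are genuinely independent of the (large) choice of $r$. Both are immediate from the statement of the triangulation theorem, which already guarantees that every set in the prescribed family is a union of images of open simplices, combined with the local conic structure at infinity which makes the Borel--Moore cohomology groups stabilize for $r$ sufficiently large.
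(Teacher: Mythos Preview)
Your argument is correct. The paper is a survey and states this proposition without proof, so there is no in-paper argument to compare against; your approach via a simultaneous semi-algebraic triangulation of $\overline{X_r}$ compatible with the locally closed pieces, together with the alternating simplex count $\chi(K',L') = \sum_{\sigma \in K'\setminus L'} (-1)^{\dim\sigma}$, is exactly the standard proof one finds in the references (e.g.\ \cite{BCR} or \cite[Chapter 6]{BPRbook06}).

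One small point worth making explicit in your write-up: for $L$ and the $L_i$ to be genuine sub-complexes (closed under faces) you need $\overline{X_r}\setminus X_r$ and $\overline{X_{i,r}}\setminus X_{i,r}$ to be closed sets, and this is precisely where the hypothesis that $X$ and the $X_i$ are \emph{locally closed} is used (for a locally closed $Y$ one has $\overline{Y}\setminus Y = \overline{Y}\cap (\R^k\setminus O)$ closed, where $Y = C\cap O$). You implicitly rely on this but do not state it, and it is the one place where the hypothesis on $X_1,X_2$ beyond ``semi-algebraic'' actually enters.
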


Since for closed and bounded semi-algebraic sets, the Borel-Moore 
Euler-Poincar\'e characteristic  agrees with the ordinary 
Euler-Poincar\'e characteristic, it is easy to derive the following additivity
property of the Euler-Poincar\'e characteristic of closed and
bounded sets.

\begin{proposition}
\label{6:prop:additivityordinary}
Let $X_1$ and $X_2$ be  closed and bounded semi-algebraic
sets. 
Then
\begin{equation}
\label{eqn:additivityofep}
\chi(X_1 \cup X_2)=\chi(X_1)+\chi(X_2) - \chi(X_1 \cap X_2).
\end{equation}
\end{proposition}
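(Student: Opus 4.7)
The plan is to deduce this immediately from Proposition \ref{6:prop:additivity} by decomposing $X_1 \cup X_2$ into locally closed pieces and then using the fact that, for closed and bounded sets, the Borel--Moore Euler--Poincar\'e characteristic coincides with the ordinary one.

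First I would write the disjoint decomposition
\[
X_1 \cup X_2 \;=\; (X_1 \setminus X_2) \;\sqcup\; (X_1 \cap X_2) \;\sqcup\; (X_2 \setminus X_1),
\]
and observe that each piece is locally closed: $X_1 \cap X_2$ is closed (intersection of closed sets), and $X_i \setminus X_j$ is the intersection of the closed set $X_i$ with the open semi-algebraic set $\R^k \setminus X_j$. Since $X_1, X_2$ are bounded, so are all these pieces. Hence Proposition \ref{6:prop:additivity} applies and yields
\[
\chi^{BM}(X_1 \cup X_2) \;=\; \chi^{BM}(X_1 \setminus X_2) \;+\; \chi^{BM}(X_1 \cap X_2) \;+\; \chi^{BM}(X_2 \setminus X_1).
\]

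Next I would apply Proposition \ref{6:prop:additivity} again to the two disjoint decompositions $X_1 = (X_1 \setminus X_2) \sqcup (X_1 \cap X_2)$ and $X_2 = (X_2 \setminus X_1) \sqcup (X_1 \cap X_2)$, which give
\[
\chi^{BM}(X_1 \setminus X_2) = \chi^{BM}(X_1) - \chi^{BM}(X_1 \cap X_2),\qquad \chi^{BM}(X_2 \setminus X_1) = \chi^{BM}(X_2) - \chi^{BM}(X_1 \cap X_2).
\]
Substituting these into the previous display gives the identity
\[
\chi^{BM}(X_1 \cup X_2) = \chi^{BM}(X_1) + \chi^{BM}(X_2) - \chi^{BM}(X_1 \cap X_2).
\]

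Finally, I would invoke the remark made just before the statement, namely that for closed and bounded semi-algebraic sets $S$ the Borel--Moore cohomology groups agree with the simplicial cohomology groups, so $\chi^{BM}(S) = \chi(S)$. Since all four sets $X_1, X_2, X_1 \cup X_2, X_1 \cap X_2$ are closed and bounded, the BM identity above translates directly into the desired equality $\chi(X_1 \cup X_2) = \chi(X_1) + \chi(X_2) - \chi(X_1 \cap X_2)$. There is no real obstacle here, since Proposition \ref{6:prop:additivity} does all the topological work; the only point requiring minor care is verifying the local closedness of $X_i \setminus X_j$ so that the Borel--Moore additivity is legitimately applicable.
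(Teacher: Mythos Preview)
Your proposal is correct and follows essentially the same approach as the paper: the paper also decomposes $X_1 \cup X_2$ as the disjoint union of $Y_1 = X_1 \setminus (X_1 \cap X_2)$, $Y_2 = X_2 \setminus (X_1 \cap X_2)$, and $X_1 \cap X_2$, applies Proposition~\ref{6:prop:additivity} to each $X_i = Y_i \sqcup (X_1 \cap X_2)$, and then uses $\chi^{BM} = \chi$ on closed bounded sets. Your write-up is slightly more explicit about verifying local closedness, but the argument is the same.
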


Note that
Proposition \ref{6:prop:additivityordinary} is an immediate consequence of
Proposition \ref{6:prop:additivity} once we notice that the sets
$Y_1 = X_1 \setminus (X_1 \cap X_2)$ and
$Y_2 = X_2 \setminus (X_1 \cap X_2)$ are locally closed,
the set $X_1 \cup X_2$  is the disjoint union of
the locally closed sets $Y_1,Y_2$ and $X_1 \cap X_2$,
and
\[
\chi^{BM}(Y_i) = \chi^{BM}(X_i) - \chi^{BM}(X_1 \cap X_2) = 
\chi(X_i) - \chi(X_1 \cap X_2), \mbox{ for } i = 1,2.
\]

More generally by applying 
Proposition \ref{6:prop:additivityordinary} inductively
we get the following inclusion-exclusion property of the
(ordinary) Euler-Poincar\'e characteristic.

For any $n \in \Z_{\geq 0}$ we denote by $[n]$ the set $\{1,\ldots,n\}$.

\begin{proposition}
\label{6:prop:epinclusionexclusion}
Let $X_1,\ldots,X_n$ be  closed and bounded semi-algebraic
sets. Then denoting by $X_I$ the semi-algebraic set 
$\displaystyle{\bigcap_{i \in I} X_i}$ for $I \subset [n]$, 
we have
\begin{equation}
\label{eqn:epinclusionexclusion}
\chi(\bigcup_{i \in [n]} X_i)=
\sum_{I \subset [n]} (-1)^{(\#I+1)}~\chi(X_I).
\end{equation}
\end{proposition}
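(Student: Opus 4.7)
The plan is to prove the formula by induction on $n$, using Proposition \ref{6:prop:additivityordinary} as both the base case ($n=2$) and the engine of the inductive step. The base case $n=1$ is trivial, and $n=2$ is exactly the hypothesis. The key structural observation that makes the induction work is that the class of closed and bounded semi-algebraic sets is stable under both finite unions and finite intersections, so at every stage the sets we feed into Proposition \ref{6:prop:additivityordinary} satisfy its hypotheses.

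For the inductive step, assume the formula holds for any collection of at most $n-1$ closed and bounded semi-algebraic sets, and set $Y = \bigcup_{i=1}^{n-1} X_i$. Applying Proposition \ref{6:prop:additivityordinary} to the pair $(Y, X_n)$ yields
\[
\chi\!\left(\bigcup_{i \in [n]} X_i\right) = \chi(Y) + \chi(X_n) - \chi(Y \cap X_n).
\]
Now $Y \cap X_n = \bigcup_{i=1}^{n-1}(X_i \cap X_n)$, a union of $n-1$ closed and bounded semi-algebraic sets, so the inductive hypothesis applies to both $\chi(Y)$ and $\chi(Y \cap X_n)$. Expanding each gives an alternating sum indexed by the non-empty subsets of $[n-1]$.

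The remaining work is purely combinatorial bookkeeping of the indexing sets. The expansion of $\chi(Y)$ contributes exactly the terms $(-1)^{\#I+1}\chi(X_I)$ for non-empty $I \subset [n]$ with $n \notin I$. The expansion of $\chi(Y \cap X_n)$, after distributing the leading minus sign and using $X_{I} \cap X_n = X_{I \cup \{n\}}$ together with the identity $-(-1)^{\#I+1} = (-1)^{\#(I \cup \{n\})+1}$, contributes precisely the terms indexed by subsets $J \subset [n]$ with $n \in J$ and $\#J \geq 2$. Finally, the isolated term $\chi(X_n)$ accounts for the singleton $J = \{n\}$. Together these exhaust the non-empty subsets of $[n]$ with the correct signs, yielding (\ref{eqn:epinclusionexclusion}).

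I do not expect any serious obstacle here: the geometric content is entirely concentrated in Proposition \ref{6:prop:additivityordinary} (which in turn is a short consequence of the additivity of the Borel-Moore Euler-Poincar\'e characteristic on disjoint unions of locally closed sets, Proposition \ref{6:prop:additivity}). The only care required is to keep track of the sign conventions when re-indexing, and to verify that all the intermediate sets that appear (the partial unions, and the intersections $X_I$) remain closed and bounded so that the ordinary Euler-Poincar\'e characteristic is defined for them and Proposition \ref{6:prop:additivityordinary} applies.
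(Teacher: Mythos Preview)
Your proposal is correct and matches the paper's approach exactly: the paper simply states that the result follows ``by applying Proposition \ref{6:prop:additivityordinary} inductively,'' and your argument is a faithful and careful expansion of that one-line indication.
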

 
\subsection{Homotopy Invariance}
\label{subsec:homotopy}
The cohomology groups of semi-algebraic sets as defined above
(Definition \ref{def:sacohomology}) 
are obviously invariant under semi-algebraic
homeomorphisms. But, in fact, they are invariant under a weaker
equivalence relation -- namely, semi-algebraic {\em 
homotopy equivalence} (defined below).
This property is crucial in the design of efficient algorithms 
for computing Betti numbers of semi-algebraic sets since it allows
us to replace a given set by one that is better behaved from the
algorithmic point of view but having the same homotopy type
as the original set. This technique is ubiquitous in 
algorithmic semi-algebraic geometry and we will see some version of
it in almost every algorithm described in the following sections
(cf.  Example \ref{ex:homotopy}).

\begin{remark}
\label{rem:sa}
The reason behind insisting on the prefix
{\em ``semi-algebraic''} with regard to homeomorphisms and 
homotopy equivalences here and in the rest of the paper, is that for
general real closed fields, the ordinary Euclidean topology 
could be rather strange. For example, the real closed field, 
$\re_{{\rm alg}}$, of  real algebraic
numbers is totally disconnected as a 
topological space under the Euclidean topology. On the other hand, if
the ground field $\R = \re$, then we can safely drop the prefix
``semi-algebraic'' in the statements made above. However, even if we
start with $\R = \re$, in many applications described below we  enlarge
the field by taking non-archimedean extensions of $\R$ 
(see Section \ref{subsec:Puiseux}), and the remarks
made above would again apply to these field extensions.
\end{remark}

\begin{definition}[Semi-algebraic homotopy]
\label{def:homotopy}
Let $X,Y$ be two closed and bounded semi-algebraic sets. Two
semi-algebraic continuous functions
$f,g: X \rightarrow Y$ are {\em semi-algebraically homotopic},
$f \sim_{sa} g$, if there is a continuous
semi-algebraic function $F : X \times [0,1] \rightarrow Y$ such that
$F(x,0) =
f(x)$ and $F(x,1) = g(x)$ for all $x \in X$.
\end{definition}

Clearly, semi-algebraic
homotopy is an
equivalence relation among semi-algebraic continuous maps from $X$ to $Y$.

\begin{definition}[Semi-algebraic homotopy equivalence]
\label{def:sahomotopyequivalence}
The sets  $X,Y$ are semi-algebraically homotopy equivalent
if there exist semi-algebraic continuous functions
$f: X \rightarrow Y$, $g: Y \rightarrow X$
such that $g \circ f \sim_{sa}{\rm Id}_X$,
$f \circ g \sim_{sa} {\rm Id}_Y$.
\end{definition}

We have
\begin{proposition}[Homotopy Invariance of the Cohomology Groups]
\label{6:prop:homotopicsa}
Let $X,Y$ be two closed and  bounded semi-algebraic sets
of $\R^k$ that
are semi-algebraically homotopy equivalent. Then, $\HH^*(X) \cong \HH^*(Y)$.
\end{proposition}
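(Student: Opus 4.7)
The plan is to reduce the statement to a single key lemma: if $f_0, f_1 \colon X \to Y$ are two semi-algebraically homotopic continuous semi-algebraic maps between closed and bounded semi-algebraic sets, then the induced homomorphisms $f_0^*, f_1^* \colon \HH^*(Y) \to \HH^*(X)$ on simplicial cohomology coincide. Once this is in hand, I apply it to $g \circ f \sim_{sa} {\rm Id}_X$ and $f \circ g \sim_{sa} {\rm Id}_Y$ from Definition \ref{def:sahomotopyequivalence} to conclude that $f^* \circ g^* = {\rm Id}$ on $\HH^*(Y)$ and $g^* \circ f^* = {\rm Id}$ on $\HH^*(X)$, so that $f^*$ is the desired isomorphism.

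To make $f_i^*$ well-defined on the abstract cohomology groups of Definition \ref{def:sacohomology}, first fix semi-algebraic triangulations $h_X \colon |K| \to X$ and $h_Y \colon |L| \to Y$, which exist by Theorem \ref{the:triangulation}. Then the first step is a semi-algebraic version of the simplicial approximation theorem: for any continuous semi-algebraic $f \colon X \to Y$, after sufficiently many barycentric subdivisions $K^{(n)}$ of $K$, there exists a simplicial map $\phi \colon K^{(n)} \to L$ such that $h_Y \circ |\phi| \circ h_X^{-1}$ is semi-algebraically homotopic to $f$. The map $\phi$ induces a cochain homomorphism $\phi^{\#} \colon \Ch^{\bullet}(L) \to \Ch^{\bullet}(K^{(n)})$, and after identifying $\HH^*(\Ch^{\bullet}(K^{(n)}))$ with $\HH^*(X)$ via the canonical isomorphism induced by subdivision, one obtains the induced map on cohomology; its independence from the choice of approximation follows from the key lemma itself applied to the constant homotopy.

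For the key lemma, given $F \colon X \times [0,1] \to Y$ realizing $f_0 \sim_{sa} f_1$, I triangulate the product $X \times [0,1]$ compatibly with the chosen triangulation of $X$ on the two ends, using the classical prism decomposition of $\Delta^p \times [0,1]$, which is already semi-algebraic. After enough subdivision, produce a simplicial approximation $\Phi$ of $F$ whose restrictions to the two ends refine the chosen simplicial approximations $\phi_0, \phi_1$ of $f_0, f_1$. Dualizing the classical chain-level prism operator $P_*$ satisfying $\partial P_* + P_* \partial = (i_1)_* - (i_0)_*$, one obtains a cochain-level prism $P \colon \Ch^{\bullet}(L) \to \Ch^{\bullet - 1}(K^{(n)})$ built from $\Phi$ with $\delta P + P \delta = \phi_1^{\#} - \phi_0^{\#}$. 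Thus $\phi_0^{\#}$ and $\phi_1^{\#}$ are chain-homotopic cochain maps, hence induce the same homomorphism on $\HH^*$.

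The main obstacle is the semi-algebraic simplicial approximation theorem, which is precisely where the hypothesis that $X, Y$ are closed and bounded is essential: one needs uniform continuity of $f$ so that every simplex of a sufficiently fine subdivision $K^{(n)}$ is mapped by $f$ into the open star of some vertex of $L$. Over $\R = \re$ this is standard. Over a general real closed field $\R$ one must either transfer the statement from $\re$ via the Tarski--Seidenberg principle applied to the first-order formula asserting the existence of such an $n$ and $\phi$ for fixed combinatorial types of triangulations, or argue directly using the semi-algebraic Lebesgue number lemma available for closed and bounded semi-algebraic sets (see \cite{BPRbook06}). Once simplicial approximation is established, the remaining manipulations with the prism operator are purely combinatorial and transfer without difficulty.
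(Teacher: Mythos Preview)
The paper does not actually prove this proposition: it is stated without proof as a standard fact from semi-algebraic topology, in keeping with the survey's announced policy of omitting most proofs and referring the reader to \cite{BPRbook06} and \cite{BCR}. So there is no ``paper's own proof'' to compare against.

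Your outline is the standard textbook argument (simplicial approximation plus the prism operator chain homotopy), adapted to the semi-algebraic category, and it is essentially correct. You have correctly identified the one genuinely nontrivial point, namely the semi-algebraic simplicial approximation theorem over an arbitrary real closed field $\R$, and both remedies you mention are viable: either invoke the semi-algebraic uniform continuity and Lebesgue-number arguments available for closed and bounded semi-algebraic sets (this is developed in \cite[Chapter~6]{BPRbook06}), or use Tarski--Seidenberg transfer from $\re$. One small point worth tightening: the independence of $f^*$ from the choice of simplicial approximation should be established \emph{before} you can invoke the key lemma to prove it, to avoid circularity; the cleanest way is to prove the chain-homotopy statement directly at the level of simplicial approximations (two approximations of the same map are contiguous after common subdivision, hence chain-homotopic), and only then deduce the homotopy-invariance lemma for genuinely different $f_0, f_1$.
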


\subsection{The Leray Property and the Nerve Lemma}
\label{subsec:nerve}
It clear from the definition of the cohomology groups of closed and 
bounded semi-algebraic sets (Definition \ref{def:sacohomology} above)
the Betti numbers of such a set can be computed using elementary linear
algebra once we have a triangulation of the set. However, 
as we have seen before (cf. Theorem \ref{the:triangulation}), 
triangulations of semi-algebraic sets are expensive
to compute, requiring double exponential time.

One basic idea that underlies some of the recent progress
in designing algorithms for computing the
Betti numbers of semi-algebraic sets is that the cohomology
groups of a semi-algebraic set can often be computed from a sufficiently
well-behaved covering of the set {\em without having to triangulate the set}.

The idea of computing
cohomology from ``good'' covers is an old one in algebraic topology
and the first result in this direction is often called the ``Nerve Lemma''.
In this section we give a brief introduction to the Nerve Lemma and
its generalizations.

We first define formally the notion of a cover of a closed, bounded
semi-algebraic set.
\begin{definition}[Cover]
\label{def:covering}
Let $S \subset \R^k$ be a closed and bounded semi-algebraic set.
A cover, ${\mathcal C}(S)$, of $S$ consists  of an ordered
index set, which by a slight abuse of language we also 
denote by  ${\mathcal C}(S)$, and a map that
associates to each $\alpha \in {\mathcal C}(S)$
a closed and bounded semi-algebraic
subset $S_\alpha \subset S$ such that 
\[
S = \bigcup_{\alpha \in {\mathcal C}(S)} S_\alpha.
\]
\end{definition}

\begin{remark}
Even though the notation for a cover might seem unnecessarily heavy 
at the moment
it will prove useful later on the paper when we discuss non-Leray covers
(see Section \ref{subsec:nonleray} below).
\end{remark}

For $\alpha_0,\ldots,\alpha_p, \in {\mathcal C}(S)$,
we associate to the formal product,
$\alpha_0\cdots\alpha_p$, the closed and bounded semi-algebraic set 

\begin{equation}
\label{eqn:defofprod}
S_{\alpha_0\cdots\alpha_p} = S_{\alpha_0} \cap \cdots \cap S_{\alpha_p}.
\end{equation}

Recall that the $0$-th simplicial cohomology group of a closed and bounded 
semi-algebraic set $X$, $\HH^0(X)$, can be identified with the 
$\Q$-vector space
of $\Q$-valued locally constant functions on $X$. Clearly the dimension
of $\HH^0(X)$ is equal to the number of connected components of $X$.

For $\alpha_0,\alpha_1,\ldots,\alpha_p,\beta \in {\mathcal C}(S)$, 
and $\beta \not\in \{\alpha_0,\ldots,\alpha_p\}$,
let
\[
r_{\alpha_0,\ldots,\alpha_p;\beta}: 
\HH^0(S_{\alpha_0\cdots\alpha_p}) \longrightarrow 
\HH^0(S_{\alpha_0\cdots\alpha_{p}\cdot\beta})
\]
be the homomorphism defined as follows.
Given a locally constant function,
$\phi \in \HH^0(S_{\alpha_0\cdots\alpha_p})$,
$r_{\alpha_0\cdots\alpha_p;\beta}(\phi)$ 
is the locally constant function
on $S_{\alpha_0\cdots\alpha_{p}\cdot\beta}$ obtained by restricting
$\phi$ to $S_{\alpha_0\cdots\alpha_{p}\cdot\beta}$.

We define the generalized restriction homomorphisms
\[
\delta^p: \bigoplus_{\alpha_0 < \cdots < \alpha_p, \alpha_i \in {\mathcal C}(S)}  
\HH^0(S_{\alpha_0\cdots\alpha_p})\longrightarrow 
\bigoplus_{\alpha_0< \cdots <\alpha_{p+1}, \alpha_i \in {\mathcal C}(S)} 
\HH^0(S_{\alpha_0\cdots\alpha_{p+1}})
\]
by 
\begin{equation}
\label{eqn:generalized_restriction}
\delta^p(\phi)_{\alpha_0\cdots\alpha_{p+1}}= \sum_{0 \leq i \leq p+1} 
(-1)^{i} r_{\alpha_0\cdots\hat{\alpha_i}\cdots\alpha_{p+1}; \alpha_i}
(\phi_{\alpha_0\cdots\hat{\alpha_{i}}\cdots\alpha_{p+1}}),
\end{equation}
where $\phi \in \bigoplus_{\alpha_0< \cdots <\alpha_p \in {\mathcal C}(S)}  
\HH^0(S_{\alpha_0\cdots\alpha_p})$ and
$r_{\alpha_0\cdots\hat{\alpha_i}\cdots\alpha_{p+1}; \alpha_i}$ 
is the restriction homomorphism defined previously.
The sequence of homomorphisms $\delta^p$ gives rise to a complex,
$\LL^{\bullet}({\mathcal C}(S))$, defined by

\begin{equation}
\label{eqn:defofLL}
\LL^p({\mathcal C}(S)) = 
\bigoplus_{\alpha_0< \cdots <\alpha_p, \alpha_i \in {\mathcal C}(S)} 
\HH^0(S_{\alpha_0\cdots\alpha_p}),
\end{equation}

with the differentials 
$\delta^p: \LL^p({\mathcal C}(S)) \rightarrow \LL^{p+1}({\mathcal C}(S))$ 
defined as in Eqn. (\ref{eqn:generalized_restriction}).

\begin{definition}[Nerve complex]
\label{def:nervecomplex}
The complex $\LL^{\bullet}({\mathcal C}(S))$ is 
called the {\em nerve complex} of the cover  ${\mathcal C}(S)$. 
\end{definition}

For $\ell \geq 0$ we will denote by 
$\LL^{\bullet}_\ell({\mathcal C}(S))$ the truncated complex
defined by

\begin{align*}
\LL^p_\ell({\mathcal C}(S)) &= \LL^p({\mathcal C}(S)),\; 0 \leq p \leq \ell, \\
&= 0,\;  p > \ell.
\end{align*}

Notice that once we have a cover of $S$
and we identify the connected components of the various intersections,
$S_{\alpha_0\cdots\alpha_p}$, we have natural bases for the vector
spaces
$$
\LL^p({\mathcal C}(S)) = 
\bigoplus_{\alpha_0< \cdots <\alpha_p, \alpha_i \in {\mathcal C}(S)} 
\HH^0(S_{\alpha_0\cdots\alpha_p})
$$ 
appearing as terms of the nerve complex. Moreover, the matrices
corresponding to the homomorphisms $\delta^p$ in this basis depend only
on the inclusion relationships between the connected components of
$S_{\alpha_0\cdots\alpha_{p+1}}$ and those of 
$S_{\alpha_0\cdots\alpha_p}$. 

\begin{definition}[Leray Property]
\label{def:leray}
We say that the cover
${\mathcal C}(S)$
{\em satisfies the Leray property} if
each non-empty intersection
$S_{\alpha_0\cdots\alpha_p}$ is contractible.
\end{definition}

Clearly, in this case
$$
\begin{array}{cccc}
\HH^0(S_{\alpha_0\cdots\alpha_p}) &\cong & \Q, &\mbox{if $S_{\alpha_0\cdots\alpha_p} \neq \emptyset$} \\
                                  &\cong &  0,  & \mbox{if $S_{\alpha_0\cdots\alpha_p} = \emptyset$}.
\end{array}
$$

It is a classical fact (usually referred to as the {\em Nerve Lemma}) that

\begin{theorem}[Nerve Lemma]
\label{the:nerve}
Suppose that the cover ${\mathcal C}(S)$  satisfies the Leray property.
Then for each $i \geq 0$,
\[ \HH^i(\LL^{\bullet}({\mathcal C}(S))) \cong \HH^i(S).\]
\end{theorem}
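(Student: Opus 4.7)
The plan is to prove the Nerve Lemma via a double complex argument, exhibiting a common total complex whose cohomology computes both $\HH^*(S)$ and $\HH^*(\LL^\bullet({\mathcal C}(S)))$. First, using Theorem \ref{the:triangulation}, I would choose a semi-algebraic triangulation $h\colon |K|\to S$ simultaneously adapted to every finite intersection $S_{\alpha_0\cdots\alpha_p}$, so that each such intersection is realized as a subcomplex $K_{\alpha_0\cdots\alpha_p}$ of $K$. This reduces the problem to a statement about simplicial cochain complexes and makes all the restriction maps available at the chain level.

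Next I would assemble the generalized Mayer--Vietoris (\v{C}ech) double complex
\[
D^{p,q} \;=\; \bigoplus_{\alpha_0<\cdots<\alpha_p,\;\alpha_i\in{\mathcal C}(S)} \Ch^q(K_{\alpha_0\cdots\alpha_p}),
\]
whose vertical differential $d_v$ is the simplicial coboundary $\delta^q$ on each summand, and whose horizontal differential $d_h$ is the alternating-sum restriction analogous to the map defined in Eqn.~(\ref{eqn:generalized_restriction}), but now applied to full cochains rather than only to locally constant functions. Verifying $d_v d_h = d_h d_v$ and $d_h^2 = 0$ is routine. Let $\Tot(D)^{\bullet}$ be its total complex. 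I would then analyse the two standard spectral sequences $E_r^{\bullet,\bullet} \Rightarrow \HH^{p+q}(\Tot(D))$.

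For the spectral sequence obtained by taking vertical cohomology first, the $E_1$-page is
\[
E_1^{p,q} \;=\; \bigoplus_{\alpha_0<\cdots<\alpha_p} \HH^q(S_{\alpha_0\cdots\alpha_p}).
\]
The Leray hypothesis makes every non-empty $S_{\alpha_0\cdots\alpha_p}$ contractible, so $E_1^{p,q}=0$ for $q>0$, while the bottom row $E_1^{p,0}$ is exactly $\LL^p({\mathcal C}(S))$ with the nerve differential. Hence $E_2^{p,0}=\HH^p(\LL^\bullet({\mathcal C}(S)))$, $E_2^{p,q}=0$ for $q>0$, and the sequence collapses, yielding $\HH^n(\Tot(D))\cong \HH^n(\LL^\bullet({\mathcal C}(S)))$. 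For the spectral sequence obtained by taking horizontal cohomology first, I would prove that for each fixed $q$ the row $D^{\bullet,q}$ is exact in positive degrees and has $0$-th cohomology equal to $\Ch^q(K)$, mapped in by the diagonal restriction. This is the main technical step and the place where I expect the real work to go: concretely, one checks exactness simplex-by-simplex, using that for a single simplex $\sigma\in K$ the set of indices $\{\alpha : \sigma \in K_\alpha\}$ is a nonempty finite set and the corresponding augmented nerve of a point is the cochain complex of a simplex, which is acyclic. Granting this, $E_2^{0,q}=\HH^q(S)$ and $E_2^{p,q}=0$ for $p>0$, so $\HH^n(\Tot(D))\cong \HH^n(S)$. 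Combining the two identifications gives $\HH^n(S)\cong \HH^n(\LL^\bullet({\mathcal C}(S)))$, as required. The hard part is genuinely the row-exactness of $D^{\bullet,q}$; everything else is bookkeeping with the two spectral sequences and an appeal to semi-algebraic triangulability.
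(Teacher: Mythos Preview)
Your proposal is correct and follows the standard double-complex/spectral-sequence proof of the Nerve Lemma. The paper itself does not give a proof of Theorem~\ref{the:nerve}; it simply cites \cite{Rotman}. That said, the machinery you invoke is precisely what the paper develops in Section~\ref{subsec:MV}: your double complex $D^{\bullet,\bullet}$ is the Mayer--Vietoris double complex $\N^{\bullet,\bullet}(A)$ of Definition~\ref{def:MV}, and the row-exactness you identify as the ``hard part'' is exactly the exactness of the generalized Mayer--Vietoris sequence (Definition~\ref{def:MVsequence}), which the paper also states without proof and attributes to \cite{Rotman} and \cite{Basu4}. So your argument is not only correct but is in effect the proof the paper gestures at via Proposition~\ref{prop:MV} specialized to the Leray case.
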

(See for instance \cite{Rotman} for a proof.)

\begin{remark}
There are several interesting extensions of Theorem
\ref{the:nerve} (Nerve Lemma). For instance, 
if the Leray property is weakened to say that each $t$-ary intersection
is $(k-t+1)$-connected, then one can conclude that the nerve complex is 
$k$-connected. We refer the reader to the article by Bj\"{o}rner 
\cite{Bjorner} for more details.
\end{remark}

Notice that Theorem \ref{the:nerve} gives a method for 
computing the Betti numbers of $S$ using linear algebra
from a cover of $S$ by contractible sets for which all
non-empty intersections are also contractible, once we are able to  test 
emptiness of the various intersections $S_{\alpha_0\cdots\alpha_p}$.

Now suppose that each individual member, $S_{\alpha_0}$, of the cover is 
contractible, but the various intersections  
$S_{\alpha_0\cdots\alpha_p}$
are not necessarily contractible for $p \geq 1$. 
Theorem \ref{the:nerve} does not hold in this case. 
However, the following theorem is proved in \cite{BPR9} and underlies the
single exponential algorithm for computing the first Betti number of
semi-algebraic sets described there.

\begin{theorem}\cite{BPR9}
\label{the:bettione}
Suppose that each individual member, $S_{\alpha_0}$, of the cover 
${\mathcal C}(S)$ is  contractible.
Then,
\[ \HH^i(\LL^{\bullet}_2({\mathcal C}(S))) \cong \HH^i(S),
\mbox{ for } i = 0,1.
\]
\end{theorem}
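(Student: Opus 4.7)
The plan is to derive this from the generalized Mayer--Vietoris spectral sequence associated to a cover, whose construction in the semi-algebraic setting is the subject of Section \ref{subsec:MV}. This is a first-quadrant spectral sequence with
\[
E_1^{p,q} \;=\; \bigoplus_{\alpha_0 < \cdots < \alpha_p,\; \alpha_i \in {\mathcal C}(S)} \HH^q(S_{\alpha_0 \cdots \alpha_p}),
\]
whose $d_1$ differential is the alternating sum of restriction maps and which converges to $\HH^{p+q}(S)$.

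First, I would identify the bottom row $q=0$ of the $E_1$ page, together with its $d_1$ differential, with the nerve complex $\LL^{\bullet}({\mathcal C}(S))$ as defined by \eqref{eqn:defofLL} and \eqref{eqn:generalized_restriction}. This gives $E_2^{p,0} = \HH^p(\LL^{\bullet}({\mathcal C}(S)))$ for every $p$. The contractibility hypothesis is used only to kill the column $p=0$ above the bottom row: since each $S_{\alpha_0}$ is contractible, $\HH^q(S_{\alpha_0}) = 0$ for $q > 0$, hence $E_1^{0,q} = 0$ for $q > 0$. No hypothesis is imposed (or needed) on the $p \geq 1$ columns, which is exactly why the theorem is weaker than the classical nerve lemma.

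Next I would read off $\HH^0$ and $\HH^1$ from the abutment. For position $(0,0)$ all $d_r$ with $r \geq 2$ are trivially zero (both source and target lie outside the first quadrant), so $E_\infty^{0,0} = E_2^{0,0} = \HH^0(\LL^{\bullet}({\mathcal C}(S)))$. For position $(1,0)$ the outgoing differentials $d_r: E_r^{1,0} \to E_r^{1+r,\,1-r}$ for $r \geq 2$ have zero target because $1-r < 0$, and the incoming ones $d_r: E_r^{1-r,\,r-1} \to E_r^{1,0}$ have zero source since $1-r < 0$; hence $E_\infty^{1,0} = E_2^{1,0} = \HH^1(\LL^{\bullet}({\mathcal C}(S)))$. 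Finally, $E_\infty^{0,1} = 0$ because already $E_1^{0,1} = 0$ by the killed column. Since the abutment filtration gives $\HH^1(S)$ as an extension of $E_\infty^{1,0}$ by $E_\infty^{0,1}$, this yields $\HH^i(S) \cong \HH^i(\LL^{\bullet}({\mathcal C}(S)))$ for $i = 0, 1$.

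To conclude, note that $\HH^0$ and $\HH^1$ of any cochain complex depend only on its terms in degrees $0, 1, 2$, so truncating the nerve complex beyond degree $2$ leaves $\HH^0$ and $\HH^1$ unchanged, and therefore $\HH^i(\LL^{\bullet}({\mathcal C}(S))) \cong \HH^i(\LL^{\bullet}_2({\mathcal C}(S)))$ for $i = 0,1$, as desired. The one nontrivial step I expect to be the main obstacle is giving a clean construction of the Mayer--Vietoris spectral sequence in the semi-algebraic category together with the identification of its bottom row as the nerve complex; one standard route is to refine the cover by a semi-algebraic triangulation adapted to the ${\mathcal C}(S)$-pieces (cf.\ Theorem \ref{the:triangulation}) and then invoke the usual double-complex machinery, all of which is developed in Section \ref{subsec:MV}.
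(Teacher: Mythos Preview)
Your argument is correct and is precisely the approach one would extract from the machinery the paper develops in Section~\ref{subsec:MV}: the paper does not spell out a proof of Theorem~\ref{the:bettione} (it is cited to \cite{BPR9}), but the Mayer--Vietoris double complex $\N^{\bullet,\bullet}$ of Definition~\ref{def:MV} gives exactly the spectral sequence you describe, with ${'E}_1^{p,q}=\bigoplus_{\alpha_0<\cdots<\alpha_p}\HH^q(S_{\alpha_0\cdots\alpha_p})$ and bottom row equal to $\LL^\bullet({\mathcal C}(S))$, so your low-degree readout and truncation step are the intended proof.
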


\begin{remark}
\label{rem:bettione}
Notice that from a cover by contractible sets
Theorem \ref{the:bettione} allows us to compute using linear algebra,
$b_0(S)$ and $b_1(S)$, once we have identified the non-empty 
connected components
of the pair-wise and triple-wise intersections of the sets in the cover
and their inclusion relationships. 
\end{remark}

\begin{example}
We illustrate Remark \ref{rem:bettione} with a simple example.

Consider the following set $S$ depicted in Figure  \ref{fig-bettione}
below  and let ${\mathcal C}(S) = \{0,1,2\}$
and the corresponding sets $S_0,S_1,S_2$ are the three edges as shown in
in the figure. 

\hide{
\begin{center}
\begin{figure}
\includegraphics[height=5cm]{bettioneexample}
\caption{Example illustrating Theorem \ref{the:bettione}}  
\label{fig-bettione}
\end{figure}
\end{center}
}

       \begin{figure}[hbt]
         \centerline{
           \scalebox{0.5}{
 \input{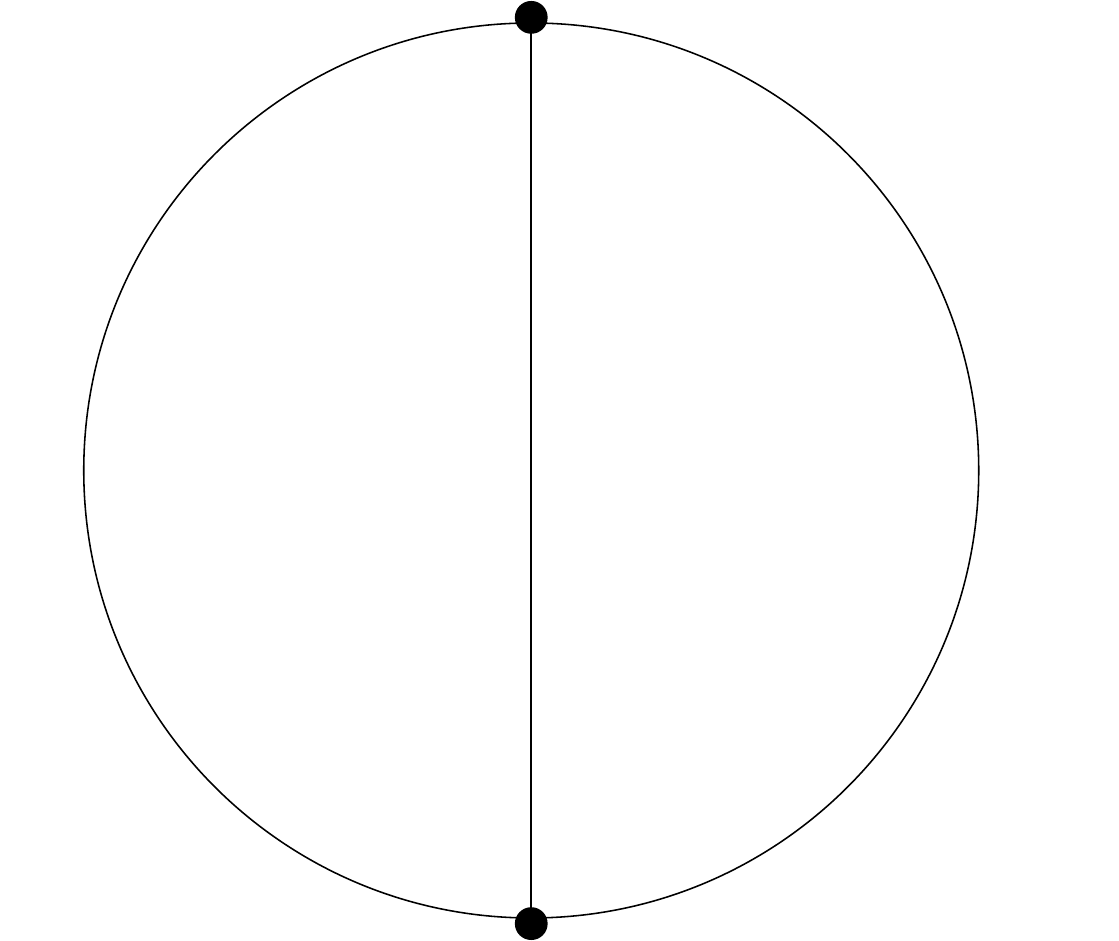tex_t}
             }
           }
\caption{Example illustrating Theorem \ref{the:bettione}}  
\label{fig-bettione}
       \end{figure}

Notice that each pair-wise and triple-wise intersections
in this case has two connected components. 
Let us construct the complex $\LL^{\bullet}_2({\mathcal C}(S))$.
We have
\begin{equation}
\LL^0({\mathcal C}(S)) = 
\bigoplus_{\alpha_0\in {\mathcal C}(S)} 
\HH^0(S_0) \oplus \HH^0(S_1)\oplus \HH^0(S_1)  \cong\Q\oplus\Q\oplus\Q,
\end{equation}
\begin{equation}
\LL^1({\mathcal C}(S)) = 
\bigoplus_{\alpha_0 < \alpha_1 \in {\mathcal C}(S)} 
\HH^0(S_{01}) \oplus \HH^0(S_{02})\oplus \HH^0(S_{12})  
\cong\Q\oplus\Q\oplus\Q\oplus\Q\oplus\Q\oplus\Q,
\end{equation}
and
\begin{equation}
\LL^2({\mathcal C}(S)) = 
\bigoplus_{\alpha_0<\alpha_1<\alpha_2\in {\mathcal C}(S)} 
\HH^0(S_{012})  \cong\Q\oplus\Q.
\end{equation}

We now display the matrices $M_0$ and $M_1$ 
corresponding to the homomorphisms $\delta^0$ and
$\delta^1$ respectively (with respect to the obvious basis 
corresponding to the connected components of the various intersections).

We have
\begin{equation}
M_0 = 
\left(
\begin{array}{ccc}
1 & -1 & 0 \\ 
1 & -1 & 0 \\ 
1 & 0  & -1\\
1 & 0  & -1\\
0 & 1  & -1 \\
0 & 1  & -1
\end{array}
\right),
\end{equation}
and
\begin{equation}
M_1= 
\left(
\begin{array}{cccccc}
1 & 0 & -1 & 0 & 1 & 0 \\
0 & 1 & 0 & -1 & 0 & 1 
\end{array}
\right).
\end{equation}

It is now easy to verify 
that ${\rm rank}(M_0) = 2$ and ${\rm rank}(M_1) = 2$.
We derive applying Theorem \ref{the:bettione} that
\begin{align*}
b_0(S) &= \dim \ker M_0  \\
       &= \dim (\LL^0({\mathcal C}(S))) - {\rm rank}(M_0) \\
       &= 3 - 2 \\
       &= 1,
\end{align*}
and 
\begin{align*}
b_1(S) &= \dim \ker(M_1) - \dim {\rm Im}(M_0)  \\
       &= \dim (\LL^1({\mathcal C}(S))) - {\rm rank}(M_1) - {\rm rank}(M_0)\\
       &= 6 -2- 2 \\
       &= 2.
\end{align*}
We refer the reader to \cite{BK05} for more complicated higher dimensional
examples of a similar nature.
\end{example}

\begin{remark}
\label{rem:counterexample}
It is easy to see that if we
extend the complex in Theorem  \ref{the:bettione} by one more term,
that is consider the complex, $\LL^{\bullet}_{3}({\mathcal C}(S))$,
then the cohomology of the complex does not yield information about
$\HH^2(S)$. Just consider the cover of the standard sphere 
$\Sphere^2 \subset \R^3$  and the cover $\{H_1,H_2\}$ of $\Sphere^2$ where
$H_1,H_2$ are closed hemispheres meeting at the equator. The 
corresponding complex, $\LL^{\bullet}_{3}({\mathcal C})$,  is as follows.

\[
0 \rightarrow \HH^0(H_1) \bigoplus \HH^0(H_2) 
\stackrel{\delta^0} \longrightarrow 
\HH^0(H_{1}\cap H_{2}) \stackrel{\delta^1} \longrightarrow 
0
\longrightarrow
0
\]

Clearly, $\HH^2(\LL^{\bullet}_{3}({\mathcal C})) \not\simeq \HH^2(\Sphere^2)$,
and indeed it is impossible to compute  $b_i(S)$ 
just from the information on the number of connected components of 
intersections of the sets of a cover of $S$ 
by contractible sets for  $i \geq 2$.
For example the nerve complex corresponding to the cover of the
sphere by two hemispheres is isomorphic to the nerve complex of a
cover of the unit segment $[0,1]$ by the subsets $[0,1/2]$ and 
$[1/2,1]$, but clearly $\HH^2(\Sphere^2) = \Q$, while $\HH^2([0,1]) = 0$. 
\end{remark}

\subsection{Non-Leray Covers}
\label{subsec:nonleray}
In the design of algorithms for computing covers of semi-algebraic sets
it is often difficult to satisfy the full Leray property.
In order to utilize  covers not satisfying the Leray property it
is necessary to consider a generalization of the nerve complex.
However, before we can describe this generalization we need to expand
slightly the algebraic machinery at our disposal.

We first introduce the notion of a {\em double complex} which is 
in essence a {\em complex of complexes}.

\begin{definition}[Double complex]
\label{def:doublecomplex}
A {\em double complex} is a bi-graded vector space
\[
{\Ch}^{\bullet,\bullet} = \bigoplus_{p,q \in {\mathbb Z}} \Ch^{p,q}
\]
with co-boundary operators
$d : \Ch^{p,q} \rightarrow \Ch^{p,q+1}$ and
$\delta: \Ch^{p,q} \rightarrow \Ch^{p+1,q}$ and such that 
$d\circ\delta +\delta\circ d = 0$ (see diagram below).
We say that $\Ch^{\bullet,\bullet}$ is {\em a first quadrant
double complex} if it additionally satisfies
the condition that $\Ch^{p,q} = 0$ if either $p < 0$ or $q < 0$.
Double complexes lying in other quadrants are defined in an analogous manner.
\[
\begin{diagram}
\node{\vdots} \node{\vdots}\node{\vdots} \\
\node{\Ch^{0,2}} \arrow{e,t}{\delta}\arrow{n,l}{d}
\node{\Ch^{1,2}}\arrow{e,t}{\delta}\arrow{n,l}{d}
\node{\Ch^{2,2}}\arrow{e,t}{\delta}\arrow{n,l}{d}\node{\cdots} \\
\node{\Ch^{0,1}} \arrow{e,t}{\delta}\arrow{n,l}{d}
\node{\Ch^{1,1}}\arrow{e,t}{\delta}\arrow{n,l}{d}
\node{\Ch^{2,1}}\arrow{e,t}{\delta}\arrow{n,l}{d}\node{\cdots} \\
\node{\Ch^{0,0}} \arrow{e,t}{\delta}\arrow{n,l}{d}
\node{\Ch^{1,0}}\arrow{e,t}{\delta}\arrow{n,l}{d}
\node{\Ch^{2,0}}\arrow{e,t}{\delta}\arrow{n,l}{d}\node{\cdots} \\
\end{diagram}
\]
\end{definition}

\begin{definition}[The Associated Total Complex]
\label{def:totalcomplex}
The complex defined by 
\[
{\rm Tot}^n(\Ch^{\bullet,\bullet}) = \bigoplus_{p+q=n} \Ch^{p,q},
\]
with differential
\[
\D^n  = 
\bigoplus_{p+q=n} d + (-1)^{p}\delta: 
{\rm Tot}^{n}(\Ch^{\bullet,\bullet}) 
\longrightarrow {\rm Tot}^{n+1}(\Ch^{\bullet,\bullet}),
\]
is denoted by ${\rm Tot}^{\bullet}(\Ch^{\bullet,\bullet})$ and called
the {\em associated total complex} of $\Ch^{\bullet,\bullet}$.
\end{definition}

Associated to double complexes, or more accurately to their filtrations,
is another algebraic object that is quite ubiquitous in 
modern algebraic topology.

\begin{definition}[Spectral Sequence]
\label{def:spectral}
A {\em (cohomology) spectral sequence} is a sequence of bi-graded 
(this is a direct sum of vector subspaces indexed by $\Z \times \Z$) 
complexes
$\{E_r^{i,j} \mid i,j,r \in \mathbb{Z}, r \geq a\}$
endowed with differentials 
$d_r^{i,j}: E^{i,j}_r \rightarrow E^{i+r,j-r+1}_r$ such that
$(d_r)^2=0$ for all $r.$ 
Moreover, we require the existence of isomorphism between 
the complex $E_{r+1}$ 
and the homology of $E_r$ with respect to $d_r$: 
$$
E_{r+1}^{i,j} \cong \HH_{d_r}(E_r^{i,j})=
\frac{\ker d_r^{i,j}}{d_r^{i+r,j-r+1}\left(E_r^{i+r,j-r+1}\right)}
$$

The spectral sequence is called a {\em first quadrant spectral sequence}
(see Figure \ref{fig:spectral})
if the initial complex $E_a$ lies in the first quadrant,
i.e. $E_a^{i,j}=0$ whenever $ij<0.$ In that case, all subsequent
complexes $E_r$ also lie in the first quadrant.  Since the
differential $d_r^{i,j}$ maps outside of the first quadrant for $r>i$,
the homomorphisms of a first quadrant spectral sequence $d_r$ are
eventually zero, and thus the groups $E_r^{i,j}$ are all isomorphic to
a fixed group $E_\infty^{i,j}$ for $r$ large enough, and we say the
spectral sequence is convergent.
\end{definition}

\begin{figure}[hbt] 
\begin{center}
\begin{picture}(0,0)%
\includegraphics{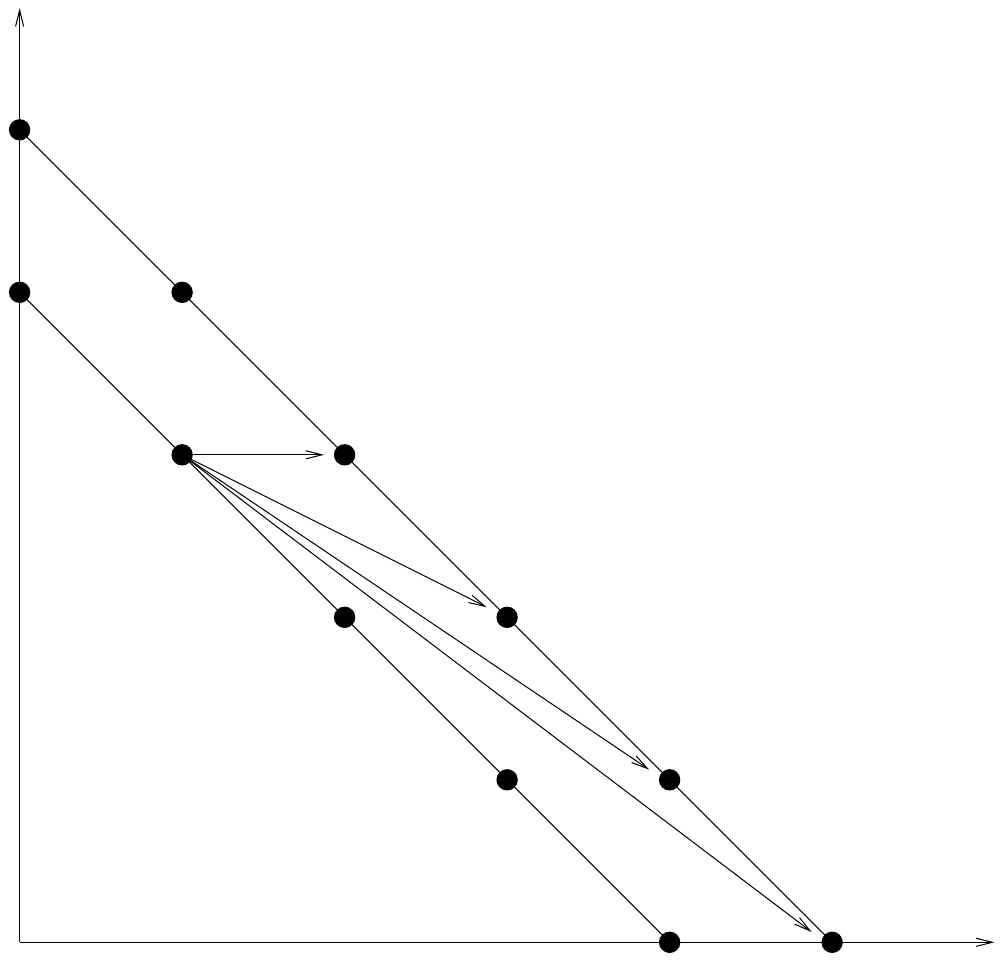}%
\end{picture}%
\setlength{\unitlength}{2565sp}%
\begingroup\makeatletter\ifx\SetFigFont\undefined%
\gdef\SetFigFont#1#2#3#4#5{%
  \reset@font\fontsize{#1}{#2pt}%
  \fontfamily{#3}\fontseries{#4}\fontshape{#5}%
  \selectfont}%
\fi\endgroup%
\begin{picture}(7512,7414)(601,-7394)
\put(6751,-7336){\makebox(0,0)[lb]{\smash{\SetFigFont{8}{9.6}{\familydefault}{\mddefault}{\updefault}{$p + q = \ell+1$}%
}}}
\put(5251,-7336){\makebox(0,0)[lb]{\smash{\SetFigFont{8}{9.6}{\familydefault}{\mddefault}{\updefault}{$p + q = \ell$}%
}}}
\put(8101,-7186){\makebox(0,0)[lb]{\smash{\SetFigFont{8}{9.6}{\familydefault}{\mddefault}{\updefault}{$p$}%
}}}
\put(601,-136){\makebox(0,0)[lb]{\smash{\SetFigFont{8}{9.6}{\familydefault}{\mddefault}{\updefault}{$q$}%
}}}
\put(2401,-3286){\makebox(0,0)[lb]{\smash{\SetFigFont{8}{9.6}{\familydefault}{\mddefault}{\updefault}{$d_1$}%
}}}
\put(3226,-3886){\makebox(0,0)[lb]{\smash{\SetFigFont{8}{9.6}{\familydefault}{\mddefault}{\updefault}{$d_2$}%
}}}
\put(4276,-4786){\makebox(0,0)[lb]{\smash{\SetFigFont{8}{9.6}{\familydefault}{\mddefault}{\updefault}{$d_3$}%
}}}
\put(5851,-6211){\makebox(0,0)[lb]{\smash{\SetFigFont{8}{9.6}{\familydefault}{\mddefault}{\updefault}{$d_4$}%
}}}
\end{picture}
\caption{$d_r: E_r^{p,q} \rightarrow E_r^{p+r, q- r +1}$}
\label{fig:spectral}
\end{center}
\end{figure}

There are two spectral sequences,
$'E_*^{p,q},{''E}_*^{p,q}$,  (corresponding to taking row-wise or
column-wise filtrations respectively) 
associated with a first quadrant double complex $\Ch^{\bullet,\bullet}$ 
which  will be important for us. 
Both of these converge to $\HH^*({\rm Tot}^{\bullet}(\Ch^{\bullet,\bullet})).$
This means that the homomorphisms, $d_r$ are eventually zero, and hence the
spectral sequences stabilize and
\begin{equation}
\label{eqn:totspectral}
\bigoplus_{p+q = i}{'E}_{\infty}^{p,q} \cong 
\bigoplus_{p+q = i}{''E}_{\infty}^{p,q} \cong 
\HH^i({\rm Tot}^{\bullet}(\Ch^{\bullet,\bullet}))
\end{equation}
for each $i \geq 0$.

The first terms of these are
\begin{equation}
\label{eqn:'E}
{'E}_1 = \HH_{d}(\Ch^{\bullet,\bullet}), 
{'E}_2 = \HH_{\delta}\HH_d(\Ch^{\bullet,\bullet}),
\end{equation}
and
\begin{equation}
\label{eqn:''E}
{''E}_1 = \HH_{\delta} (\Ch^{\bullet,\bullet}), 
{''E}_2 = \HH_d \HH_{\delta} (\Ch^{\bullet,\bullet}).
\end{equation}

Given  two (first quadrant) double complexes, $\Ch^{\bullet,\bullet}$ and
$\bar{\Ch}^{\bullet,\bullet}$, a homomorphism of double complexes,
\[
\phi^{\bullet,\bullet}: \Ch^{\bullet,\bullet} \longrightarrow 
\bar{\Ch}^{\bullet,\bullet}
\]
is a collection of homomorphisms
$\phi^{p,q}: \Ch^{p,q} \longrightarrow \bar{\Ch}^{p,q}$ such that all
the following diagrams commute.

\[
\begin{array}{ccc}
\Ch^{p,q} &
\stackrel{\delta}{\longrightarrow} & \Ch^{p+1,q} \\ 
\Big\downarrow\vcenter{\rlap{$\phi^{p,q}$}} & &
\Big\downarrow\vcenter{\rlap{$\phi^{p+1,q}$}} \\ 
\bar{\Ch}^{p,q} &
\stackrel{\delta}{\longrightarrow} & \bar{\Ch}^{p+1,q}
\end{array}
\]

\[
\begin{array}{ccc}
 \Ch^{p,q} &
\stackrel{d}{\longrightarrow} & \Ch^{p,q+1}\\
\Big\downarrow\vcenter{\rlap{$\phi^{p,q}$}} & &
\Big\downarrow\vcenter{\rlap{$\phi^{p,q+1}$}} \\ 
\bar{\Ch}^{p,q} &
\stackrel{d}{\longrightarrow} & \bar{\Ch}^{p,q+1}
\end{array}
\]

A homomorphism of double complexes
\[
\phi^{\bullet,\bullet}: \Ch^{\bullet,\bullet} \longrightarrow \bar{\Ch}^{\bullet,\bullet},
\]
induces an homomorphism of the corresponding total complexes which we
will denote by
\[
\Tot^{\bullet}(\phi^{\bullet,\bullet}): \Tot^{\bullet}(\Ch^{\bullet,\bullet}) \longrightarrow \Tot^{\bullet}(\bar{\Ch}^{\bullet,\bullet}).
\]
It also induces homomorphisms
$'\phi_s: {'E}_s \longrightarrow {'\bar{E}}_s$
(respectively,
$''\phi_s: {''E}_s \longrightarrow {''\bar{E}}_s$)
between the associated spectral sequences (corresponding either to the
row-wise or column-wise filtrations).
For the precise definition of homomorphisms of spectral sequences
see \cite{Mcleary}.
We will need the following useful fact 
(see \cite[pp. 66]{Mcleary}).

\begin{theorem}[Comparison Theorem]
\label{the:spectral}
If ${'\phi}_s$  (respectively, ${''\phi}_s$)
is an isomorphism for some $s \geq 1$ then
${'E}_r^{p,q}$ and ${'\bar{E}}_r^{p,q}$ 
(respectively, ${''E}_r^{p,q}$ and ${''\bar{E}}_r^{p,q}$)
are isomorphic for
all $r \geq s$. In particular the induced homomorphism
\[
\Tot^{\bullet}(\phi^{\bullet,\bullet}): \Tot^{\bullet}(\Ch^{\bullet,\bullet}) \longrightarrow 
\Tot^{\bullet}(\bar{\Ch}^{\bullet,\bullet})
\]
is a quasi-isomorphism.
\end{theorem}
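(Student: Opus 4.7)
The plan is to deduce the Comparison Theorem in two stages: first propagate the isomorphism from page $s$ to all later pages of the spectral sequences, and then use convergence to pass from the $E_\infty$ page to the cohomology of the total complex.

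First I would argue by induction on $r \geq s$ that ${'\phi}_r : {'E}_r^{p,q} \to {'\bar E}_r^{p,q}$ is an isomorphism for all $p,q$. The base case $r=s$ is the hypothesis. For the inductive step, the fact that ${'\phi}_r$ is a homomorphism of spectral sequences means it commutes with the differentials $d_r$ and $\bar d_r$. An isomorphism of complexes induces an isomorphism on the cohomology of those complexes: since ${'\phi}_r$ is an isomorphism of vector spaces commuting with $d_r$, it restricts to isomorphisms on $\ker d_r$ and on $\operatorname{im} d_r$, and hence descends to an isomorphism on the quotient, which by definition is $E_{r+1}^{p,q}$. The same argument applies verbatim to the column-wise sequence ${''\phi}_r$.

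Next, because both spectral sequences are first-quadrant, for each fixed bidegree $(p,q)$ the differential $d_r^{p,q}$ lands outside the first quadrant as soon as $r > \max(p, q+1)$, so the groups $E_r^{p,q}$ stabilize to $E_\infty^{p,q}$ for $r$ large enough. The first step then yields an isomorphism ${'\phi}_\infty : {'E}_\infty^{p,q} \xrightarrow{\sim} {'\bar E}_\infty^{p,q}$ in every bidegree.

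Finally, to conclude that $\Tot^{\bullet}(\phi^{\bullet,\bullet})$ is a quasi-isomorphism, I would invoke the filtered structure underlying convergence. The (say row-wise) spectral sequence arises from a decreasing filtration $F^\bullet$ on $\HH^n(\Tot^\bullet(\Ch^{\bullet,\bullet}))$ with $F^p \HH^n / F^{p+1} \HH^n \cong {'E}_\infty^{p,n-p}$, and analogously for $\bar\Ch$; the chain map $\Tot^{\bullet}(\phi^{\bullet,\bullet})$ respects these filtrations and its induced map on associated gradeds is precisely ${'\phi}_\infty$. Since the total complex is first-quadrant, the filtration on $\HH^n$ is finite: $F^0 \HH^n = \HH^n$ and $F^{n+1} \HH^n = 0$. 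A short five-lemma argument applied inductively to the short exact sequences
\[
0 \to F^{p+1}\HH^n \to F^p \HH^n \to F^p\HH^n/F^{p+1}\HH^n \to 0
\]
(starting from the top filtration step and working downward) shows that an isomorphism on each graded piece lifts to an isomorphism on each $F^p \HH^n$, hence ultimately on $\HH^n(\Tot^{\bullet}(\Ch^{\bullet,\bullet}))$.

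The main obstacle, and the only nontrivial bookkeeping, is the last step: one has to unwind what convergence of a spectral sequence actually says about $\HH^*(\Tot^{\bullet})$ and verify that the induced map on associated gradeds agrees with ${'\phi}_\infty$. For these formal filtration-theoretic details I would refer to \cite{Mcleary}. The column-wise case is identical with roles of $d$ and $\delta$ swapped.
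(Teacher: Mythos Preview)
Your argument is correct and is the standard proof of the comparison theorem for spectral sequences. The paper itself does not give a proof of this statement at all; it simply cites \cite[pp.~66]{Mcleary} as a ``useful fact,'' so your write-up actually supplies more detail than the paper does, and the argument you sketch is exactly the one found in that reference.
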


Having introduced the definitions of double complexes and spectral sequences
above, we now describe two particular double complexes that are of interest
to us in this paper.

\subsection{The Mayer-Vietoris Double Complex and its 
Associated Spectral Sequence}
\label{subsec:MV}
Let $A_1,\ldots,A_n$ be sub-complexes of a finite simplicial complex
$A$ such that $A = A_1 \cup \cdots \cup A_n$. 
Note that the intersections of any number of the sub-complexes, $A_i$,
is again a sub-complex of $A$.
We  denote by $A_{\alpha_0\cdots \alpha_p}$ the sub-complex
$A_{\alpha_0} \cap \cdots \cap A_{\alpha_p}$.

\begin{definition}[The Generalized Mayer-Vietoris Exact Sequence]
\label{def:MVsequence}
The {\em generalized Mayer-Vietoris sequence} is the following
exact sequence of vector spaces.

\[
0 \longrightarrow \Ch^{\bullet}(A) \stackrel{r^{\bullet}}{\longrightarrow}
\bigoplus_{1 \leq \alpha_0 \leq n} \Ch^{\bullet}(A_{\alpha_0}) 
\stackrel{\delta^{0,\bullet}}{\longrightarrow}
\bigoplus_{1 \leq \alpha_0<\alpha_1 \leq n} \Ch^{\bullet}(A_{\alpha_0\cdot \alpha_1}) 
 \stackrel{\delta^{1,\bullet}}{\longrightarrow}
\cdots 
\]
\[
\bigoplus_{1 \leq \alpha_0 < \cdots < \alpha_p \leq n}\Ch^{\bullet}(A_{\alpha_0\cdots \alpha_p})
 \stackrel{\delta^{p-1,\bullet}}{\longrightarrow}
\bigoplus_{1 \leq \alpha_0< \cdots <\alpha_{p+1} \leq n}\Ch^{\bullet}(A_{\alpha_0\cdots \alpha_{p+1}})
 \stackrel{\delta^{p,\bullet}}{\longrightarrow}
\cdots
\]

\hide{
\[
\begin{diagram}
\node{0} \arrow{e} 
\node{\Ch^{\bullet}(A)}\arrow{e,t}{r^{\bullet}}
\node{\bigoplus_{1 \leq \alpha_0 \leq n} \Ch^{\bullet}(A_{\alpha_0})}
\arrow{e,t}{\delta^{0,\bullet}}
\node{\bigoplus_{1 \leq \alpha_0<\alpha_1 \leq n} \Ch^{\bullet}(A_{\alpha_0\cdot \alpha_1})} 
\arrow{e,t}{\delta^{1,\bullet}}
\node{\cdots} \\
\end{diagram}
\begin{diagram}
\\
\node{\bigoplus_{1 \leq \alpha_0 < \cdots < \alpha_p \leq n}\Ch^{\bullet}(A_{\alpha_0\cdots \alpha_p})}
\arrow{e,t}{\delta^{p-1,\bullet}}
\node{\bigoplus_{1 \leq \alpha_0< \cdots <\alpha_{p+1} \leq n}\Ch^{\bullet}(A_{\alpha_0\cdots \alpha_{p+1}})}
\arrow{e,t}{\delta^{p,\bullet}}
\node{\cdots}
\end{diagram}
\]
}
where $r^{\bullet}$ is induced by restriction and the homomorphisms 
$\delta^{p,\bullet}$ are defined as follows.

Given an $\omega \in \bigoplus_{\alpha_0< \cdots <\alpha_p}\Ch^q(A_{\alpha_0\cdots \alpha_p})$ 
we define $\delta^{p,q}(\omega)$ as follows:

First note that 
$
\displaystyle{
\delta^{p,q}\omega \in \bigoplus_{\alpha_0< \cdots <\alpha_{p+1}}\Ch^q(A_{\alpha_0 \cdots \alpha_{p+1}})},$ 
and it suffices to define
\[
(\delta^{p,q}\omega)_{\alpha_0,\ldots,\alpha_{p+1}}
\] 
for each $(p+2)$-tuple
$1 \leq \alpha_0< \cdots <\alpha_{p+1} \leq n$.
Note that, $(\delta^{p,q}\omega)_{\alpha_0,\ldots,\alpha_{p+1}}$ is a linear
form on the vector space, $C_q(A_{\alpha_0\cdots \alpha_{p+1}})$, and
hence is determined by its values on the $q$-simplices 
in the complex $A_{\alpha_0\cdots \alpha_{p+1}}$. Furthermore, 
each $q$-simplex, $s \in  A_{\alpha_0\cdots \alpha_{p+1}}$ is automatically
a simplex of the complexes 
\[
A_{\alpha_0\cdots\hat{\alpha_i}\cdots \alpha_{p+1}}, \; 0 \leq i \leq p+1.
\]

We define
\[
\label{delta}
(\delta^{p,q} \omega)_{\alpha_0,\ldots,\alpha_{p+1}}(s) =  
 \sum_{0 \leq j \leq p+1} (-1)^i \omega_{\alpha_0,\ldots,\hat{\alpha_j},\ldots,\alpha_{p+1}}
(s).
\]
\end{definition}

The fact that the generalized Mayer-Vietoris  sequence
is exact is classical (see \cite{Rotman} or \cite{Basu4} for example).

We now define the Mayer-Vietoris double complex of the complex $A$
with respect to the sub-complexes $A_{\alpha_0}, 1 \leq \alpha_0 \leq n$, 
which we will denote by  $\N^{\bullet,\bullet}(A)$ (we suppress the
dependence of the complex on sub-complexes $A_{\alpha_0}$ in the notation
since this dependence will be clear from context). 

\begin{definition}[Mayer-Vietoris Double Complex]
\label{def:MV}
The Mayer-Vietoris double complex of a simplicial complex $A$
with respect to the sub-complexes $A_{\alpha_0}, 1 \leq \alpha_0 \leq n$, 
$\N^{\bullet,\bullet}(A)$, is the double complex defined by
$$
\displaylines{
\N^{p,q}(A) = 
\bigoplus_{1 \leq \alpha_0< \cdots <\alpha_p \leq n}
\Ch^q(A_{\alpha_0\cdots\alpha_p}).
}
$$
The horizontal differentials are as defined above. The vertical
differentials are those induced by the ones in the different
complexes, $\Ch^{\bullet}(A_{\alpha_0\cdots\alpha_p}).$

$\N^{\bullet,\bullet}(A)$ is depicted in the following figure.
\begin{equation}
\label{eqn:MVfig}
\begin{diagram}
\node{}\node{}\node{}\\
\node{\bigoplus_{\alpha_0} \Ch^2(A_{\alpha_0})} \arrow{e,t}{\delta^{0,2}}
\arrow{n,l}{d^2}
\node{\bigoplus_{\alpha_0 < \alpha_1} \Ch^2(A_{\alpha_0\cdot\alpha_1})}
\arrow{e,t}{\delta^{1,2}}\arrow{n,l}{d^2}\node{\cdots} \\
\node{\bigoplus_{\alpha_0} \Ch^1(A_{\alpha_0})} 
\arrow{e,t}{\delta^{0,1}}\arrow{n,l}{d^1}
\node{\bigoplus_{\alpha_0 < \alpha_1} \Ch^1(A_{\alpha_0\cdot\alpha_1})}
\arrow{e,t}{\delta^{1,1}}\arrow{n,l}{d^1}\node{\cdots} \\
\node{\bigoplus_{\alpha_0} \Ch^0(A_{\alpha_0})}
\arrow{e,t}{\delta^{0,0}}\arrow{n,l}{d^0}
\node{\bigoplus_{\alpha_0 < \alpha_1} \Ch^0(A_{\alpha_0\cdot\alpha_1})}
\arrow{e,t}{\delta^{1,0}}\arrow{n,l}{d^0}\node{\cdots} 
\end{diagram}
\end{equation}
\end{definition}

\begin{remark}
\label{rem:dual}
There is also dual version of the Mayer-Vietoris double complex where the
unions and intersections are inter-changed and the directions of the
arrows get reversed. The reader is referred to \cite{Basu4} for 
more detail.
\end{remark}

Finally, for complexity reasons it is often  useful to consider truncations
of the Mayer-Vietoris double complex. 

\begin{definition}[Truncated Mayer-Vietoris Double Complex]
\label{def:MVdoubletruncated}
For any $t \geq 0,$
we denote by $\N_t^{\bullet,\bullet}(A)$ the following truncated complex.
\[
\begin{array}{cccc}
\N_t^{p,q}(A) & = & \N^{p,q}(A), & 
0 \leq p+q \leq t, \cr
\N_t^{p,q}(A) & = &  0, & \mbox{otherwise}.\cr
\end{array}
\]
\end{definition}

The following proposition is classical (see \cite {Rotman} or \cite{Basu4} 
for a proof) and 
follows from the exactness of the generalized Mayer-Vietoris sequence.

\begin{proposition}
\label{prop:MV}
The  spectral sequences,
${'E}_r,{''E}_r$,
associated to $\N^{\bullet,\bullet}(A)$ converge to $\HH^*(A)$ and thus,
$$
\displaylines{
\HH^*(\Tot^{\bullet}(\N^{\bullet,\bullet}(A))) \cong \HH^*(A).
}
$$
Moreover, the homomorphism 
\[
\psi^{\bullet}: \Ch^{\bullet}(A) \rightarrow \Tot^{\bullet}(\N^{\bullet,\bullet}(A))
\]
induced by the homomorphism $r^{\bullet}$ (in the generalized Mayer-Vietoris
sequence) is a quasi-isomorphism.
\end{proposition}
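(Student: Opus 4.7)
The plan is to extract the isomorphism from the column-wise spectral sequence ${''E}_r$ of the double complex $\N^{\bullet,\bullet}(A)$, exploiting the exactness of the generalized Mayer-Vietoris sequence. First I would look at the first page ${''E}_1^{p,q} = \HH^p_\delta(\N^{\bullet,q}(A))$. Fix the row index $q$; then the $p$-indexed row of $\N^{\bullet,\bullet}(A)$, augmented on the left by $\Ch^q(A) \xrightarrow{r^q} \bigoplus_{\alpha_0} \Ch^q(A_{\alpha_0})$, is precisely the degree-$q$ part of the generalized Mayer-Vietoris sequence, which is exact by the remark following Definition \ref{def:MVsequence}. Hence the $\delta$-cohomology is concentrated in the column $p = 0$, where it equals $\Ima(r^q) \cong \Ch^q(A)$, and vanishes for $p > 0$.

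Next I would take vertical cohomology to obtain the second page. Since the isomorphism ${''E}_1^{0,q} \cong \Ch^q(A)$ is induced by $r^\bullet$, a homomorphism of complexes with respect to the vertical differentials, we get
\[
{''E}_2^{p,q} = \HH^q_d({''E}_1^{p,\bullet}) = \begin{cases}\HH^q(A), & p = 0,\\ 0, & p > 0.\end{cases}
\]
Since ${''E}_2$ is concentrated in a single column, all higher differentials $d_r$, $r \geq 2$, have either source or target zero, so the spectral sequence degenerates: ${''E}_\infty = {''E}_2$. By the convergence identity (\ref{eqn:totspectral}) we conclude
\[
\HH^n(\Tot^\bullet(\N^{\bullet,\bullet}(A))) \cong \bigoplus_{p+q=n}{''E}_\infty^{p,q} \cong \HH^n(A),
\]
which establishes the first assertion. (The same argument applied to ${'E}_r$ after interchanging the roles of $d$ and $\delta$ shows that ${'E}_r$ also converges to $\HH^*(A)$, via the dual augmentation; but this is not needed for the main statement.)

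For the second assertion, I would view $\psi^\bullet$ as arising from a homomorphism of double complexes $\Phi^{\bullet,\bullet}: \D^{\bullet,\bullet} \to \N^{\bullet,\bullet}(A)$, where $\D^{p,q}$ is $\Ch^q(A)$ for $p = 0$ and $0$ otherwise, with $\Phi^{0,q} = r^q$. The total complex of $\D^{\bullet,\bullet}$ is just $\Ch^\bullet(A)$, so $\Tot^\bullet(\Phi^{\bullet,\bullet}) = \psi^\bullet$. On the column-wise spectral sequences, $\Phi$ induces on the $E_1$-page the identification ${''E}_1^{0,q}(\D) = \Ch^q(A) \xrightarrow{r^q} \Ima(r^q) = {''E}_1^{0,q}(\N)$, which is an isomorphism in every $(p,q)$ by the exactness argument above. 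Theorem \ref{the:spectral} (the Comparison Theorem) then forces $\psi^\bullet = \Tot^\bullet(\Phi^{\bullet,\bullet})$ to be a quasi-isomorphism.

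The only genuinely delicate point is verifying that the spectral sequence isomorphism truly coincides with the map induced by $\psi^\bullet$ on cohomology; handling this via the Comparison Theorem applied to the augmentation double complex $\D^{\bullet,\bullet}$ circumvents the need to chase cocycles through the $E_\infty$-filtration by hand. All remaining verifications — sign conventions for the total differential, the fact that $r^\bullet$ commutes with the vertical differentials, and exactness of each row — are routine unwindings of the definitions.
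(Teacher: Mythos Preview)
Your proposal is correct and is precisely the classical argument the paper has in mind: the paper does not spell out a proof of Proposition~\ref{prop:MV} but simply states that it ``follows from the exactness of the generalized Mayer-Vietoris sequence'' and refers to \cite{Rotman,Basu4}. Your computation of the ${''E}_1$-page via row-wise exactness, the resulting degeneration at ${''E}_2$, and the use of the Comparison Theorem (Theorem~\ref{the:spectral}) against the augmentation double complex is exactly that standard argument.
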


We denote by  $\Ch^{\bullet}_{\ell+1}(A)$ the truncation of the
complex $\Ch^{\bullet}(A)$ after the $(\ell+1)$-st term.
As an immediate corollary we have that,
\begin{corollary}
\label{cor:MV}
For any $\ell \geq 0$,
the homomorphism 
\begin{equation}
\label{eqn:restriction}
\psi_{\ell+1}^{\bullet}: \Ch^{\bullet}_{\ell+1}(A) \rightarrow \Tot^{\bullet}(\N_{\ell+1}^{\bullet,\bullet}(A))
\end{equation}

induced by the homomorphism $r^{\bullet}$ (in the generalized Mayer-Vietoris
sequence) is a quasi-isomorphism.
Hence, for $0 \leq i \leq \ell,$
$$
\displaylines{
\HH^i(\Tot^{\bullet}(\N_{\ell+1}^{\bullet,\bullet}(A))) \cong \HH^i(A).
}
$$
\end{corollary}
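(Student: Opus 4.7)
The plan is to derive this corollary from Proposition \ref{prop:MV} together with a general fact about naive truncation of cochain complexes. First I would note that forming the total complex commutes with truncation at total degree $\ell+1$: we have $\Tot^n(\N_{\ell+1}^{\bullet,\bullet}(A)) = \Tot^n(\N^{\bullet,\bullet}(A))$ for $n \leq \ell+1$, and both vanish for larger $n$, since only finitely many $\N^{p,q}$ with $p+q=n$ contribute and they are all killed by the truncation once $n > \ell+1$. Consequently $\psi_{\ell+1}^{\bullet}$ is nothing but $\psi^{\bullet}$ restricted to degrees $\leq \ell+1$.

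Second, I would invoke the standard fact that for any cochain complex $C^{\bullet}$ and its naive truncation $C^{\bullet}_{\ell+1}$ (obtained by setting all terms of degree $> \ell+1$ to zero), one has $\HH^i(C^{\bullet}_{\ell+1}) \cong \HH^i(C^{\bullet})$ for $0 \leq i \leq \ell$. This is immediate: $\HH^i$ in this range is computed from the three consecutive terms $C^{i-1}, C^i, C^{i+1}$ with $i+1 \leq \ell+1$, all of which (together with the relevant differentials $d^{i-1}$ and $d^i$) are preserved by the truncation. Combining these observations with Proposition \ref{prop:MV}, for $0 \leq i \leq \ell$ one obtains the chain of isomorphisms
\[
\HH^i(\Ch^{\bullet}_{\ell+1}(A)) \cong \HH^i(\Ch^{\bullet}(A)) \stackrel{\psi^*}{\cong} \HH^i(\Tot^{\bullet}(\N^{\bullet,\bullet}(A))) \cong \HH^i(\Tot^{\bullet}(\N_{\ell+1}^{\bullet,\bullet}(A))),
\]
and naturality identifies the composite with the map induced by $\psi_{\ell+1}^{\bullet}$.

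The only subtle point, which I would address explicitly, concerns the interpretation of the phrase ``$\psi_{\ell+1}^{\bullet}$ is a quasi-isomorphism'': the argument above only yields an isomorphism on cohomology in degrees $\leq \ell$. In degree $\ell+1$ the naive truncation generally destroys cohomology, since $\HH^{\ell+1}$ of a truncated complex is the full quotient $C^{\ell+1}/\mathrm{Im}(d^{\ell})$ rather than $\ker(d^{\ell+1})/\mathrm{Im}(d^{\ell})$. This is exactly what the stated consequence in the corollary requires and is sufficient for the algorithmic applications later in the paper; if one insisted on a literal quasi-isomorphism in every degree, one would replace the naive truncation by the good truncation $\tau_{\leq \ell+1}$. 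I do not anticipate any serious obstacle beyond this bit of bookkeeping, since the substantive content is already delivered by Proposition \ref{prop:MV}.
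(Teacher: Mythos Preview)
Your argument is correct and is exactly the approach the paper has in mind: the corollary is stated there as an ``immediate corollary'' of Proposition~\ref{prop:MV} with no further proof, and what you have written simply spells out the truncation bookkeeping that justifies the word ``immediate.'' Your observation about degree $\ell+1$ --- that naive truncation only guarantees isomorphisms on $\HH^i$ for $i \leq \ell$, so the phrase ``quasi-isomorphism'' is being used a bit loosely --- is a valid and careful point; the paper's own applications only ever use the range $0 \leq i \leq \ell$, so the looseness is harmless.
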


\begin{remark}
\label{rem:induction}
Notice that in the truncated Mayer-Vietoris double complex,
$\N_t^{\bullet,\bullet}(A)$, the $0$-th column is a complex
having at most $t+1$ non-zero terms, the first column can have at most
$t$ non-zero terms, and in general the $i$-th column has at most $t+1 - i$
non-zero terms. This observation along with the fact that,
{\em 
each term in the double complex $\N_t^{\bullet,\bullet}(A)$
depends on tuples of at most $t+1$ of the $A_{\alpha}$'s at a time, 
}
play a crucial role in the inductive
arguments used in the design of single exponential time
algorithm for computing the first few Betti numbers of
semi-algebraic sets. 
\end{remark}

\subsection{The Descent Double Complex and its Associated Spectral Sequence}
\label{subsec:descent}
For the algorithmic problem of computing the Betti numbers of 
projections of semi-algebraic sets, another spectral sequence
plays an important role.

\begin{definition}[Locally split maps]
\label{df:split}
A continuous surjection $f:X \to Y$ is called {\em locally split} if there
exists an open covering $\U$ of $Y$ such that for all $U \in \U$,
there exists a continuous section $\sigma: U \to X$ of $f$,
i.e. $\sigma$ is a continuous map such that $f(\sigma(y))=y$ for all
$y \in U$. 
\end{definition}

In particular, if $X$ is an open semi-algebraic set and $f: X \to Y$
is a projection, the map $f$ is obviously locally split. 
For any semi-algebraic surjection $f:X \to Y$, we denote
by $W_f^p(X)$ the $(p+1)$-fold fibered power of $X$ over $f$,
\[
W^p_f(X) = \{(\bar{x}_0,\ldots,\bar{x}_p) \in X^{p+1}
\mid f(\bar{x}_0) = \cdots = f(\bar{x}_p)\}.
\]
The map $f$ induces for each $p \geq 0,$ 
a map from $W_f^p(X)$ to $Y$,
sending $(\bar{x}_0,\ldots,\bar{x}_p)$ to the common value 
$f(\bar{x}_0) = \cdots = f(\bar{x}_p),$
and 
abusing notation a little
we will denote this  map by $f$ as well. 

\subsubsection{The Descent Double Complex}
\label{subsubsec:ddd}
Let $\Ch^{\bullet}(W_f^p(X))$ denote the singular co-chain complex of
$W_f^p(X)$ (refer to \cite{Rotman} for definition). 
For each $p \geq 0$, we now define a homomorphism,
\[
\delta^p: \Ch^{\bullet}(W_f^p(X)) \longrightarrow 
\Ch^{\bullet}(W_f^{p+1}(X))
\]
as follows:
for each $i, 0 \leq i \leq p$, define
$\pi_{p,i}: W_f^p(X) \rightarrow W_f^{p-1}(X)$  by
\[
\pi_{p,i}({x}_0,\ldots,{x}_p) = ({x}_0,\ldots,\widehat{{x}_i},
\ldots,{x}_p)
\] 
($\pi_{p,i}$ drops the $i$-th coordinate).

We will denote by $(\pi_{p,i})_*$ the induced map on
$C_\bullet(W_f^{p}(X)) \rightarrow C_\bullet(W_f^{p-1}(X))$
and let $\pi_{p,i}^*: \Ch^{\bullet}(W_f^{p-1}(X)) \rightarrow 
\Ch^{\bullet}(W_f^{p}(X))$
denote the dual map.
For $\phi \in \Ch^{\bullet}(W_f^p(X))$, we define
$\delta^p \;\phi$ by
\begin{equation}\label{eqn:deltadef}
\delta^p\;\phi =\sum_{i=0}^{p+1}(-1)^i \pi_{p+1,i}^*\ \phi.
\end{equation}

\begin{definition}[Descent Double Complex]
\label{def:descentdoublecomplex}
Now, let $\D^{\bullet,\bullet}(X)$ denote the  double complex defined by
$\D^{p,q}(X) = \Ch^q(W_f^p(X))$ with vertical and horizontal
homomorphisms
given by $\dd^q=(-1)^p d^q$ and $\delta$ respectively,
where $d$ is the singular coboundary operator,
and $\delta$ is the map defined in ~\eqref{eqn:deltadef}. Also, let
$\D^{p,q}(X) = 0$ if $ p < 0$ or $q < 0$. 
\end{definition}
{\tiny
\[
\begin{diagram}
\node{}\node{\vdots}\node{\vdots}\node{\vdots} \\
\node{0} \arrow{e}\node{\Ch^3(W_f^0(X))}\arrow{e,t}{\delta}
\arrow{n,l}{\dd}
\node{\Ch^3(W_f^1(X))}\arrow{e,t}{\delta}\arrow{n,l}{\dd}
\node{\Ch^3(W_f^2(X))} \arrow{e,t}{\delta}\arrow{n,l}{\dd} \\
\node{0} \arrow{e}\node{\Ch^3(W_f^0(X))}\arrow{e,t}{\delta}
\arrow{n,l}{\dd}
\node{\Ch^2(W_f^1(X))}\arrow{e,t}{\delta}\arrow{n,l}{\dd}
\node{\Ch^2(W_f^2(X))} \arrow{e,t}{\delta}\arrow{n,l}{\dd} \\
\node{0} \arrow{e}\node{\Ch^1(W_f^0(X))}\arrow{e,t}{\delta}
\arrow{n,l}{\dd}
\node{\Ch^1(W_f^1(X))}\arrow{e,t}{\delta}\arrow{n,l}{\dd}
\node{\Ch^1(W_f^2(X))} \arrow{e,t}{\delta}\arrow{n,l}{\dd} \\
\node{0} \arrow{e}\node{\Ch^0(W_f^0(X))}\arrow{e,t}{\delta}
\arrow{n,l}{\dd}
\node{\Ch^0(W_f^1(X))}\arrow{e,t}{\delta}\arrow{n,l}{\dd}
\node{\Ch^0(W_f^2(X))} \arrow{e,t}{\delta}\arrow{n,l}{\dd} \\
\node{}\node{0}\arrow{n} \node{0}\arrow{n} \node{0}\arrow{n}
\end{diagram}
\]
}

\begin{theorem}\cite{GVZ,BZ}
\label{the:descent}
For any continuous  semi-algebraic surjection $f:X \to Y$,
where $X$ and $Y$ are open semi-algebraic subsets of $\R^n$ and $\R^m$ 
respectively (or, more generally, for any locally split continuous
surjection $f$),
the spectral sequence associated to the double complex
$\D^{\bullet,\bullet}(X)$ with 
$E_1 = \HH^d(\D^{\bullet,\bullet}(X))$ converges  to 
$\HH^*(\Ch^\bullet(Y)) \cong \HH^*(Y).$
In particular,
\begin{enumerate}
\item
$\displaystyle{
E_1^{i,j} = \HH^j(W_f^i(X)),
}
$ and
\item
$\displaystyle{
E_\infty \cong  \HH^*(\Tot^{\bullet}(\D^{\bullet,\bullet}(X))) \cong \HH^*(Y).
}
$
\end{enumerate}
\end{theorem}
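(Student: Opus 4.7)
The plan is to augment the double complex $\D^{\bullet,\bullet}(X)$ by introducing a $(-1)$-st column whose entries are $\Ch^q(Y)$, connected to the $0$-th column by the pullback $f^*: \Ch^\bullet(Y) \to \Ch^\bullet(W_f^0(X)) = \Ch^\bullet(X)$, and to show that the total complex of this augmented first-quadrant double complex is acyclic. Granting this, the assertion $E_1^{i,j} = \HH^j(W_f^i(X))$ is immediate from the definition of $\D^{\bullet,\bullet}(X)$ together with the identification $E_1 = \HH_d(\D^{\bullet,\bullet}(X))$ in Eqn.~\eqref{eqn:'E}, while acyclicity of the augmentation forces a quasi-isomorphism $\Ch^\bullet(Y) \simeq \Tot^\bullet(\D^{\bullet,\bullet}(X))$, yielding $E_\infty \cong \HH^*(Y)$ by Eqn.~\eqref{eqn:totspectral}.

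First I would verify that the augmentation is well-defined: $f^*$ commutes with the singular coboundary $d$ because it is induced by a continuous map of spaces, and $\delta^0 \circ f^* = 0$ because the two face maps $\pi_{1,0}, \pi_{1,1}: W_f^1(X) \to X$ satisfy $f \circ \pi_{1,0} = f \circ \pi_{1,1}$ on $W_f^1(X)$ (both sides return the common value $f(x_0) = f(x_1)$), so the two surviving terms of $\delta^0 f^* = \pi_{1,0}^* f^* - \pi_{1,1}^* f^*$ cancel.

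The heart of the argument is exactness of each augmented row
\[
0 \longrightarrow \Ch^q(Y) \stackrel{f^*}{\longrightarrow} \Ch^q(W_f^0(X)) \stackrel{\delta^0}{\longrightarrow} \Ch^q(W_f^1(X)) \stackrel{\delta^1}{\longrightarrow} \Ch^q(W_f^2(X)) \longrightarrow \cdots
\]
for each fixed $q \geq 0$. Given row exactness, the first page of the row-wise spectral sequence for the augmented double complex vanishes, hence so does its total cohomology; by the acyclic-assembly lemma this identifies $\HH^*(\Ch^\bullet(Y))$ with $\HH^*(\Tot^\bullet(\D^{\bullet,\bullet}(X)))$, which is exactly what is needed.

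The main obstacle, and the only place where the hypothesis of local splitness enters, is exactness of this row; my approach is a two-step local-to-global argument. In the special case where $f$ admits a \emph{global} section $\sigma: Y \to X$, define maps $h_p: W_f^p(X) \to W_f^{p+1}(X)$ for $p \geq -1$ by $h_{-1} = \sigma$ and $h_p(x_0,\ldots,x_p) = (\sigma(f(x_0)), x_0, \ldots, x_p)$, well-defined since $f \circ \sigma = \Id_Y$ and $f(x_0) = \cdots = f(x_p)$; these satisfy the simplicial contraction identities $d_0 \circ h_p = \Id$ and $d_i \circ h_p = h_{p-1} \circ d_{i-1}$ for $1 \leq i \leq p+1$, which upon dualizing with $s^{p+1} = h_p^*$ and the convention $\delta^{-1} = f^*$ give the cochain homotopy $s^{p+1}\delta^p + \delta^{p-1} s^p = \Id$ (together with $s^0 \circ f^* = \Id$), proving exactness of the augmented row. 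For the general locally split case, let $\U$ be the open cover of $Y$ furnished by Definition~\ref{df:split} and replace each $\Ch^q$ by the quasi-isomorphic subcomplex of singular cochains small with respect to $\{f^{-1}(U)\}_{U \in \U}$ (via iterated barycentric subdivision); apply the section-case construction on each $U$ and assemble the local contracting homotopies into a global chain homotopy by a Mayer-Vietoris/partition-of-unity argument, or equivalently sheafify the row on $Y$ and check exactness stalk-wise using the local sections guaranteed by $\U$, then conclude by the fact that on a reasonable space an exact sequence of sheaves induces an exact sequence of global cochains after refinement.
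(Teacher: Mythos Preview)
The paper does not prove this theorem; it is a survey and simply cites \cite{GVZ,BZ} for the result. So there is no ``paper's own proof'' to compare against. That said, your approach is the standard one and is essentially what one finds in the cited references: augment by $\Ch^\bullet(Y)$, prove row-exactness via an extra-degeneracy contraction built from a section, and pass from local to global.

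Your global-section argument is correct; the contracting homotopy you write down is the usual one for a split simplicial object. The local-to-global step, however, is where your write-up becomes sketchy. You offer two routes --- patching small-chain homotopies, or sheafifying and checking stalk-wise --- but neither is carried out, and both require care. For the first route, the local contracting homotopies $s^p_U$ you build over different $U \in \U$ need not agree on overlaps (they depend on the choice of section $\sigma_U$), so ``assembling'' them is not automatic; you would need something like a partition of unity at the chain level or an inductive argument over the cover. For the second route, you must identify the row as the global sections of a complex of \emph{flabby} (or soft) sheaves on $Y$ so that stalk-wise exactness propagates to global sections; for open semi-algebraic subsets of $\R^m$ the singular cochain sheaves are indeed soft, but this deserves a sentence. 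Either completion is routine, but as written the passage from ``locally split'' to ``rows exact'' is asserted rather than proved.
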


We will also need 
a well-known construction in homotopy theory called {\em homotopy colimits},
which we define now.

\subsection{Homotopy Colimits}
\label{subsec:hocolimit}
Let ${\mathcal A} = \{A_1,\ldots,A_n\}$,  where each $A_i$ is a sub-complex
of a finite CW-complex.
Let $\Delta_{[n]}$ denote the standard simplex of dimension $n-1$ with
vertices  in $[n]$.
For $I \subset [n]$, we denote by $\Delta_I$ 
the $(\#I-1)$-dimensional face of $\Delta_{[n]}$ corresponding
to $I$, and by $A_I$ (resp. $A^I$) the CW-complex 
$\displaystyle{\bigcap_{i \in I} A_i}$ (resp.
$\displaystyle{\bigcup_{i \in I} A_i}$).
The homotopy colimit, $\hocolimit({\mathcal A})$,  is a CW-complex defined as follows.
\begin{definition}[Homotopy colimit]
\label{def:hocolimit}
\begin{equation}
\label{eqn:defofhocolimit}
\hocolimit({\mathcal A}) = 
\bigcupdot_{I \subset [n]}
\Delta_I \times A_I/\sim
\end{equation}
where the equivalence relation $\sim$ is defined as follows.
For $I \subset J \subset [n]$, let $s_{I,J}: \Delta_I \hookrightarrow \Delta_J$
denote the inclusion map of the face $\Delta_I$ in $\Delta_J$, and let
$i_{IJ}: A_J \hookrightarrow A_I$ denote the inclusion map of
$A_J$ in $A_I$.
Given $(s,x) \in \Delta_I \times A_I$ and $(t,y) \in \Delta_J 
\times A_J$ with $I \subset J$, then $(s,x) \sim 
(t,y)$ if and only if
$t = s_{IJ}(s)$ and $x = i_{IJ}(y)$.
\end{definition}

We have an obvious map 
\begin{equation}
\label{eqn:defoff}
f_{{\mathcal A}}: \hocolimit({\mathcal A}) \longrightarrow 
\colimit({\mathcal A}) = A^{[n]}
\end{equation}
sending $(s,x) \mapsto x$. 
Notice that for each $x \in A^{[n]}$, 
\[
f_{{\mathcal A}}^{-1}(x) = \overline{|\Delta_{I_x}|},
\]
where $I_x = \{i \;\mid\; x \in A_i\}$. In particular, 
$f_{{\mathcal A}}^{-1}(x)$
is contractible for each $x \in f_{{\mathcal A}}^{-1}(x)$, and it follows 
from the Smale-Vietoris  theorem \cite{Smale} that

\begin{lemma}
\label{lem:hocolimit1}
The map $f_{{\mathcal A}}$ is a homotopy equivalence.
\end{lemma}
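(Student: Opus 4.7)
The plan is to verify the hypotheses of the Smale--Vietoris theorem \cite{Smale} and apply it directly, as the excerpt already suggests. First I would check that the setup is in the scope of that theorem: both $\hocolimit({\mathcal A})$ and $A^{[n]} = \colimit({\mathcal A})$ carry natural CW structures (the first from the product cell structure on $\bigcupdot_I \Delta_I \times A_I$ modulo the face/inclusion identifications, the second by hypothesis on the $A_i$), and $f_{\mathcal A}$ is a continuous surjection. Continuity is automatic from the construction of the quotient topology on $\hocolimit({\mathcal A})$, and surjectivity holds because any $x \in A^{[n]}$ lies in some $A_i$, whence $(v_{\{i\}}, x) \in \Delta_{\{i\}} \times A_i$ maps to $x$.

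The heart of the argument is the fiber computation, which is already essentially performed in the statement: for $x \in A^{[n]}$ with index set $I_x = \{ i \mid x \in A_i\}$, the equivalence relation $\sim$ identifies all representatives of $x$ coming from different $A_J$ with $J \subset I_x$, and the resulting fiber is exactly the closed simplex $\overline{|\Delta_{I_x}|}$. Since a closed simplex is contractible, every point-preimage of $f_{\mathcal A}$ is contractible. The Smale--Vietoris theorem then implies that $f_{\mathcal A}$ is a weak homotopy equivalence, and since source and target are CW-complexes, Whitehead's theorem upgrades this to a genuine homotopy equivalence.

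The main obstacle I would expect is verifying that the technical hypotheses of Smale--Vietoris are met in this setting (it is typically stated for maps between compact metric ANRs, or for proper surjections with a local triviality or uniformity condition, rather than mere pointwise contractibility of fibers). To handle this cleanly, I would stratify $A^{[n]}$ by the locally closed pieces $\{x \mid I_x = I\}$ for $I \subset [n]$, over which $f_{\mathcal A}$ is literally a product projection with fiber $\overline{|\Delta_I|}$, and then check that the gluing of these trivial pieces across strata satisfies the needed regularity.

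If this route turns out to be delicate, a safer fallback is induction on $n$. For $n = 1$ the map is the identity on $A_1$. For the inductive step, observe that $\hocolimit(A_1, \ldots, A_n)$ is obtained from $\hocolimit(A_1, \ldots, A_{n-1})$ by the pushout attaching the cone-like piece $\bigcupdot_{n \in I} \Delta_I \times A_I / {\sim}$ along its intersection with the previous stage; this attaching is a cofibration of CW pairs, and the analogous pushout presents $A^{[n]}$ from $A^{[n-1]}$ and $A_n$. The gluing lemma for homotopy equivalences of pushouts along cofibrations, combined with the inductive hypothesis applied to the sub-diagrams, then yields that $f_{\mathcal A}$ is a homotopy equivalence.
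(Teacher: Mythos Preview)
Your primary route is exactly the paper's argument: the paper simply observes that each fiber $f_{\mathcal A}^{-1}(x) = \overline{|\Delta_{I_x}|}$ is contractible and invokes the Smale--Vietoris theorem \cite{Smale}, without further verification of hypotheses. Your additional care about the technical conditions and the inductive fallback go beyond what the paper records.
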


For $\ell \geq 0$, we will denote by $\hocolimit_{\leq \ell}({\mathcal A})$ the
subcomplex of $\hocolimit({\mathcal A})$ defined by
\begin{equation}
\label{def:truncatedhocolimit}
\hocolimit_{\leq \ell}({\mathcal A}) = 
\bigcupdot_{I \subset [n], \#I \leq \ell+2} 
\Delta_I \times A_I/\sim
\end{equation}

The following theorem is the key ingredient in the algorithm for computing 
Betti numbers of arrangements described in Section \ref{sec:arrangements}.

\begin{theorem}
\label{the:homologyoftruncated}
For $0 \leq j \leq \ell$ we have,
\[
\HH^j(\hocolimit_{\leq \ell}({\mathcal A})) \cong 
\HH^j(\hocolimit({\mathcal A})) \cong
\HH^j(A^{[n]}).
\]
\end{theorem}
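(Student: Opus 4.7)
My plan is to establish the two isomorphisms in turn. The second, $\HH^j(\hocolimit(\mathcal{A})) \cong \HH^j(A^{[n]})$, holds for every $j \geq 0$ by Lemma \ref{lem:hocolimit1}. For the first isomorphism I will use the natural filtration of $\hocolimit(\mathcal{A})$ by its subcomplexes $\hocolimit_{\leq k}(\mathcal{A})$, noting that $\hocolimit_{\leq k}(\mathcal{A}) = \hocolimit(\mathcal{A})$ as soon as $k \geq n - 2$, so that one may reduce to the case $\ell < n - 2$ (the other case being trivial).

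The key geometric claim is that the pair $(\hocolimit_{\leq k}(\mathcal{A}),\ \hocolimit_{\leq k-1}(\mathcal{A}))$ has relative rational cohomology concentrated in degrees $\geq k + 1$. The cells added in passing from $\hocolimit_{\leq k-1}(\mathcal{A})$ to $\hocolimit_{\leq k}(\mathcal{A})$ are precisely the pieces $\Delta_I \times A_I$ for $I \subset [n]$ with $\#I = k + 2$, each attached to $\hocolimit_{\leq k-1}(\mathcal{A})$ along $(\partial \Delta_I) \times A_I$. Indeed, the identifications in Definition \ref{def:hocolimit} between $\Delta_I \times A_I$ and $\Delta_J \times A_J$ with $J \supsetneq I$ require $\#J \geq k + 3$, hence lie outside $\hocolimit_{\leq k}(\mathcal{A})$ altogether. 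A direct computation therefore yields
\[
\hocolimit_{\leq k}(\mathcal{A}) \bigm/ \hocolimit_{\leq k-1}(\mathcal{A}) \;\cong\; \bigvee_{\#I = k+2} S^{k+1} \wedge (A_I)_+,
\]
whose reduced cohomology, equivalently $\HH^j(\hocolimit_{\leq k}(\mathcal{A}),\ \hocolimit_{\leq k-1}(\mathcal{A}))$, vanishes for all $j \leq k$.

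Plugging this vanishing into the long exact sequence of the pair gives the inclusion-induced isomorphism
\[
\HH^j(\hocolimit_{\leq k}(\mathcal{A})) \stackrel{\cong}{\longrightarrow} \HH^j(\hocolimit_{\leq k-1}(\mathcal{A})) \qquad \text{for every } j \leq k - 1.
\]
Iterating this descent from $k = n - 2$ down to $k = \ell + 1$, and observing that the tightest constraint on $j$ occurs at the terminal step and reads $j \leq \ell$, we obtain the desired isomorphism $\HH^j(\hocolimit(\mathcal{A})) \cong \HH^j(\hocolimit_{\leq \ell}(\mathcal{A}))$ for $0 \leq j \leq \ell$, induced by the inclusion.

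The main obstacle will be the bookkeeping of the quotient in the displayed equation above: one has to verify carefully that for each $I$ with $\#I = k+2$ the only identifications on $\Delta_I \times A_I$ imposed by Definition \ref{def:hocolimit} that persist inside $\hocolimit_{\leq k}(\mathcal{A})$ are those coming from proper faces $\Delta_{I'}$ with $I' \subsetneq I$ (which already lie in $\hocolimit_{\leq k-1}(\mathcal{A})$), so that the attaching map really is $(\partial \Delta_I) \times A_I \to \hocolimit_{\leq k-1}(\mathcal{A})$. Once that is in hand, the wedge-decomposition of the quotient and the subsequent long exact sequence argument are routine.
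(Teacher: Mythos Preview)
Your argument is correct. The second isomorphism is handled identically to the paper (via Lemma~\ref{lem:hocolimit1}), and your filtration/long-exact-sequence argument for the first isomorphism goes through: the quotient identification $\hocolimit_{\leq k}(\mathcal{A})/\hocolimit_{\leq k-1}(\mathcal{A}) \cong \bigvee_{\#I=k+2} S^{k+1}\wedge (A_I)_+$ is valid because the interiors of $\Delta_I\times A_I$ for distinct $I$ of size $k+2$ are never identified under $\sim$ (identifications only occur along nested pairs $I\subset J$), and the resulting vanishing of relative cohomology in degrees $\leq k$ then feeds the descent.

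The paper's proof, however, is a one-liner that short-circuits all of this. It simply observes that the $(\ell+1)$-st skeletons of $\hocolimit_{\leq \ell}(\mathcal{A})$ and $\hocolimit(\mathcal{A})$ coincide: any cell of $\hocolimit(\mathcal{A})$ has the form $(\text{interior of }\Delta_I)\times c$ for some cell $c$ of $A_I$, with total dimension $(\#I-1)+\dim c$, so dimension $\leq \ell+1$ forces $\#I\leq \ell+2$. Since two CW-complexes with the same $(\ell+1)$-skeleton have isomorphic $\HH^j$ for $j\leq \ell$, the result follows immediately. Your filtration argument is, in effect, a hands-on unwinding of exactly this skeleton fact in the case at hand---the vanishing of the relative cohomology in degrees $\leq k$ is precisely the statement that the cells attached at stage $k$ have dimension $\geq k+1$. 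So the two proofs rest on the same geometric observation; the paper just invokes the standard cellular fact directly rather than rederiving it through the long exact sequence.
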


\begin{proof}
By Lemma \ref{lem:hocolimit1} we have that

\begin{equation}
\label{eqn:inproofofhomologyoftruncated1}
\HH^j(\hocolimit({\mathcal A})) \cong \HH^j(A^{[n]}), \;\;j \geq 0.
\end{equation}

We also have by construction that 
the $(\ell+1)$-st skeletons of $\hocolimit_{\leq \ell}({\mathcal A})$ and
$\hocolimit({\mathcal A})$ coincide, which implies that

\begin{equation}
\label{eqn:inproofofhomologyoftruncated2}
\HH^j(\hocolimit_{\leq \ell}({\mathcal A})) \cong 
\HH^j(\hocolimit({\mathcal A})), \;\; 0 \leq j \leq \ell.
\end{equation}

The theorem now follows from (\ref{eqn:inproofofhomologyoftruncated1})
and (\ref{eqn:inproofofhomologyoftruncated2}) above.
\end{proof}

\section{Algorithms for Computing the First Few Betti Numbers}
\label{sec:bettifew}
We are now in a position to describe some of the new ideas that make
possible the design of algorithms with single exponential complexity
for computing the higher Betti numbers of semi-algebraic sets.

\subsection{Computing Covers by Contractible Sets}
One important idea  in the algorithm for computing the
first Betti number of semi-algebraic sets, is   
the construction of certain semi-algebraic sets called 
{\em parametrized paths}.
Under a certain hypothesis, these sets  are semi-algebraically contractible.
Moreover, there exists an algorithm for computing 
a covering of a given basic semi-algebraic set, $S \subset \R^k$,
by a single exponential number of parametrized paths. 

\subsubsection{Parametrized Paths}
\label{sec:pp}
We are given  a polynomial $Q \in \R[X_1,\ldots,X_k]$
such that
$\ZZ(Q,\R^k)$ is bounded and
a finite set of polynomials ${\mathcal P} \subset \D[X_1,\ldots,X_k]$.

The main technical construction underlying the algorithm for
computing the first Betti number in \cite{BPR9}, is to
to obtain a covering of a given 
$\mathcal P$-closed semi-algebraic set contained in $\ZZ(Q,\R^k)$
by a family of semi-algebraically contractible subsets.
This construction
is based on a parametrized version of the connecting algorithm:
we compute a family of polynomials such that for each realizable
sign condition $\sigma$ on this family, the description of the connecting
paths of different points in the realization, $\RR(\sigma,\ZZ(Q,\R^k)),$ are
uniform. 
We first define parametrized paths.
A parametrized path is a
semi-algebraic set which is a union of semi-algebraic
paths having the 
divergence property (see Section \ref{subsec:connectingpaths}).

More precisely,
\begin{definition}[Parametrized paths]
\label{def:parametrizedpath}
A parametrized path 
$\gamma$ is
 a continuous semi-algebraic mapping  from 
$V \subset \R^{k+1}
\rightarrow \R^k,$
such that,
denoting by 
$U=\pi_{1\ldots k}(V)\subset \R^k$,
there exists a semi-algebraic continuous function
$\ell: U \rightarrow [0,+\infty),$ 
and there exists  a point $a$ in $\R^k$, such that
\begin{enumerate}
\item $V =\{(x,t) \mid x \in U, 0 \le t \le \ell(x)\},$
\item $\forall \; x \in U, \; \gamma(x,0)=a$,
\item $\forall \; x \in U, \; \gamma(x,\ell(x))=x$,
\item 
$$
\displaylines{
\forall \; x \in U, \forall \; y \in U, \forall \; s \in [0,\ell(x)], \forall \; t \in [0,\ell(y)]\cr
 \left(\gamma(x,s)=\gamma(y,t) \Rightarrow s=t \right),
}
$$
\item  
$$
\displaylines{
\forall \; x \in U, \forall \; y \in U, \forall \; s \in [0,\min(\ell(x),\ell(y))]\cr
\left(\gamma(x,s)=\gamma(y,s) \Rightarrow \forall \; t \le s \; \gamma(x,t)=\gamma(y,t) \right).
}
$$
\end{enumerate}
\end{definition}

\begin{center}
\begin{figure}
\includegraphics[height=7cm]{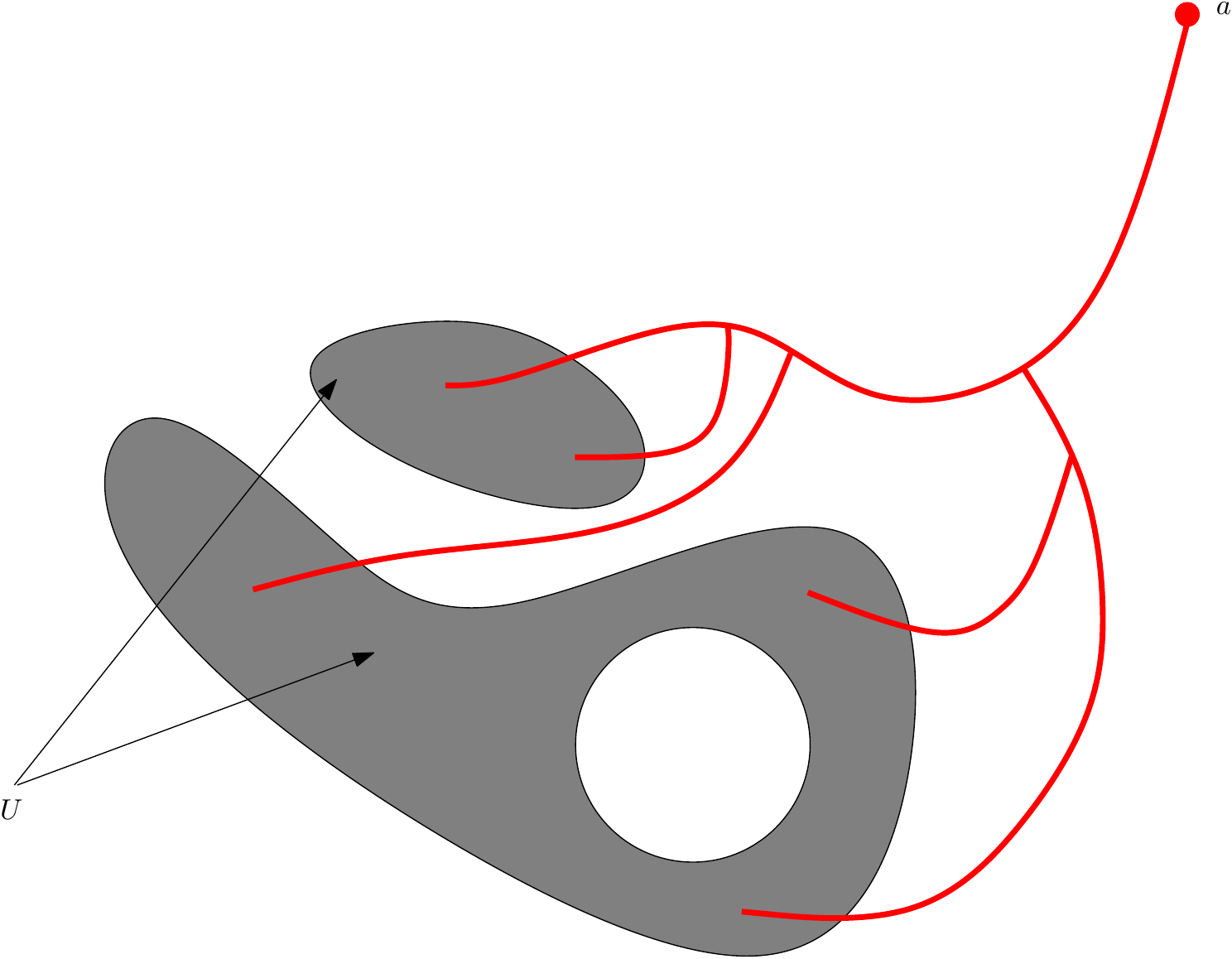}
\caption{A parametrized path}
\end{figure}
\end{center}

Given a parametrized path, $\gamma: V \rightarrow \R^k$, we will
refer to $U = \pi_{1\ldots k}(V)$ as its {\em base}.
Also,  any semi-algebraic subset 
$U' \subset U$ of the base of such a parametrized
path,
defines in a natural way  the restriction of 
$\gamma$ to the base $U'$, which is
another parametrized path,
obtained by restricting $\gamma$ to the set $V' \subset V$,
defined by 
$V' = \{(x,t) \mid x \in U', 0 \leq t \leq \ell(x) \}$.

The following proposition which appears in  \cite{BPR9} describes a crucial
property of parametrized paths, which makes them useful in algorithms
for computing Betti numbers of semi-algebraic sets.

\begin{proposition}\cite{BPR9}
\label{contractible}
Let $\gamma: V \rightarrow R^k$ be a parametrized path such that
$U = \pi_{1\ldots k}(V)$ is closed and bounded.
Then, the image of $\gamma$ is semi-algebraically contractible.
\end{proposition}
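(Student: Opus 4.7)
The plan is to build an explicit semi-algebraic deformation retraction of the image $\gamma(V)$ onto the point $a$, contracting every point along the unique path leading back to $a$. Properties (4) and (5) in Definition \ref{def:parametrizedpath} are exactly what one needs to make such a retraction well-defined, while the assumption that $U$ (hence $V$) is closed and bounded is what will let us transfer continuity from $V$ to the image $\gamma(V)$.

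First, I would define a \emph{height function} $h : \gamma(V) \to [0,+\infty)$. Given $p \in \gamma(V)$, write $p = \gamma(x,t)$ for some $(x,t) \in V$; by property (4), the value $t$ depends only on $p$, so set $h(p) := t$. Next, I would define a \emph{partial retraction} $\phi$ on the semi-algebraic set $W := \{(p,s) \in \gamma(V) \times \R \mid 0 \leq s \leq h(p)\}$ by the formula $\phi(p,s) := \gamma(x,s)$ for any $x$ with $\gamma(x,h(p)) = p$. Property (5) (the divergence property) guarantees that this is independent of the choice of $x$: if $\gamma(x,h(p)) = \gamma(y,h(p)) = p$ then $\gamma(x,s) = \gamma(y,s)$ for all $s \leq h(p)$. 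Observe that $\phi(p,h(p)) = p$ (by property (3) applied after choosing $x$ with $\gamma(x,h(p))=p$ — more precisely, by the definition) and $\phi(p,0) = a$ (by property (2)), and the image of $\phi$ lies in $\gamma(V)$.

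Then I would define the contraction
\[
H : \gamma(V) \times [0,1] \longrightarrow \gamma(V), \qquad H(p,u) := \phi\bigl(p,\,(1-u)\,h(p)\bigr),
\]
which satisfies $H(p,0) = p$ and $H(p,1) = a$, so this is the sought-after semi-algebraic contraction provided everything in sight is continuous and semi-algebraic.

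The main obstacle — really the only non-formal point — is continuity. All ingredients ($h$, $\phi$, and hence $H$) are defined by passing to the quotient along the surjection $\gamma : V \to \gamma(V)$, so the argument I would use is: since $U \subset \R^k$ is closed and bounded and $\ell$ is continuous, $V$ is closed and bounded; and a semi-algebraic continuous surjection from a closed and bounded semi-algebraic set onto a semi-algebraic set is a quotient map in the semi-algebraic category (the standard closed-map/compactness argument, valid over any real closed field). Thus $h$ is continuous because its lift $\tilde h(x,t) = t$ on $V$ is continuous and constant on the fibers of $\gamma$ (property (4)); similarly $\phi$ is continuous because its lift $\tilde\phi(x,t,s) = \gamma(x,s)$, defined on the closed and bounded semi-algebraic set $\{(x,t,s) : (x,t) \in V,\ 0 \leq s \leq t\}$, is continuous and, by property (5) combined with property (4), constant on the fibers of the quotient map $(x,t,s) \mapsto (\gamma(x,t),s)$. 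The semi-algebraicity of $h$, $\phi$, and $H$ is immediate from the Tarski–Seidenberg principle, since each is defined by a first-order formula in the data of $\gamma$, $\ell$, and $V$. Putting these together yields the required semi-algebraic contraction, proving Proposition \ref{contractible}.
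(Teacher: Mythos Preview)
Your argument is correct and is essentially the standard one: the divergence properties (4) and (5) let you push each point of $\gamma(V)$ back along its own path to $a$, and the closed--bounded hypothesis on $U$ makes $\gamma$ a closed (hence quotient) map so the contraction descends. The survey does not actually prove this proposition---it simply cites \cite{BPR9}---but the proof there follows exactly this line, so your approach matches.
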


It is also shown in \cite{BPR9} that,
\begin{theorem}
\label{the:alg:parametrized}
Moreover, there exists an algorithm that takes as input
a finite set of  polynomials ${\mathcal P} \subset \R[X_1,\ldots,X_k]$,
and produces as output,
\begin{itemize}
\item
a finite set of polynomials ${\mathcal A} \subset \R[X_1,\ldots,X_k]$,
\item 
a finite set $\Theta$ of quantifier free formulas, 
with atoms of the form $P = 0, P > 0, P< 0, \;P \in {\mathcal A}$,
such that for every semi-algebraically connected component $S$ of
the realization of
every weak sign condition on ${\mathcal P}$ on $\ZZ(Q, \R^k)$,
there exists a subset $\Theta(S) \subset \Theta$
 such that
$
\displaystyle{
S=\bigcup_{\theta \in \Theta(S)} \RR(\theta,\ZZ(Q,\R^k)),
}
$
\item for every $\theta \in \Theta$,
a parametrized path
$$
\displaylines{
\gamma_\theta : V_\theta \rightarrow \R^k,
}
$$
with base $U_\theta =  \RR(\theta,\ZZ(Q,\R^k))$, 
such that for each $y \in \RR(\theta,\ZZ(Q,\R^k)),$ 
$\Im \;\gamma_\theta(y,\cdot)$  is a semi-algebraic path which
connects the point $y$
to a distinguished point  $a_\theta$
of some roadmap
$\RM(\ZZ({\mathcal P}' \cup \{Q\}, \R^k))$
where ${\mathcal P}'\subset {\mathcal P}$, staying inside
${\RR}(\overline\sigma(y),\ZZ(Q,\R^k)).$
\end{itemize}

Moreover, the complexity of the algorithm is  
$s^{k'+1} d^{O(k^4)}$, where
 $s$ is a bound on the number of elements of
${\mathcal  P}$
and $d$ is a bound on the degrees of $Q$ and
the elements of ${\mathcal  P}$.
\end{theorem}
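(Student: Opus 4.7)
The plan is to build $\gamma_\theta$ by running the roadmap construction of Section \ref{sec:roadmap} \emph{parametrically} in the starting point $y$, while tracking how the combinatorial structure of the connecting path in $\RM(\ZZ(Q,\R^k),\{y\})$ (cf.\ Section \ref{subsec:connectingpaths}) varies with $y$. First, I would compute the base roadmap $\RM(\ZZ({\mathcal P}' \cup \{Q\},\R^k))$ for each relevant ${\mathcal P}' \subset {\mathcal P}$ together with its distinguished points, so as to have candidates for the points $a_\theta$. The polynomials appearing in these roadmap computations (the pseudo-critical defining equations at each level, plus the polynomials in ${\mathcal P}$ used to stratify by sign conditions) will form the initial portion of ${\mathcal A}$.

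Next, recall from Section \ref{subsec:connectingpaths} that the connecting path $\gamma(y)$ splits as a concatenation of sub-paths $\gamma_{0,0},\dots,\gamma_{0,m},\gamma_{1,\bullet},\dots$, where each $\gamma_{j,i}$ is a semi-algebraic curve segment parametrised by one coordinate over an interval $[a_{j,i},b_{j,i}]$, and where only the indices, the choice of segments at each level, and the terminal parameter values depend on $y$ (through the coordinates $y_1,\ldots,y_{k-1}$). The discrete data --- which pseudo-critical interval $y_1$ falls into, which connected component of the fibre $\ZZ(Q,\R^k)_{y_1}$ contains $y$, which curve segment in the recursive roadmap of that fibre is to be followed, etc.\ --- is determined by the signs of finitely many polynomials at $y$. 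I would take ${\mathcal A}$ to be the union, over all $k$ levels of the recursion, of: the polynomials defining the pseudo-critical values at that level (which are outputs of the parametric critical-point computation, applied to the perturbed polynomial $\bar Q$ from \eqref{11:equation:deform1} and to the polynomials of ${\mathcal P}$ restricted to the current distinguished hyperplane), together with the polynomials needed to distinguish connected components of the corresponding fibres. The set $\Theta$ consists of those sign conditions on ${\mathcal A}$ which fix all of the discrete choices above; for each such $\theta$, the formulas $\gamma_{j,i}$ are the same for every $y \in \RR(\theta, \ZZ(Q,\R^k))$, and concatenating them yields a well-defined semi-algebraic map $\gamma_\theta : V_\theta \to \R^k$.

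The properties (1)--(5) of Definition \ref{def:parametrizedpath} are then verified as follows. Properties (1)--(3) are immediate from the construction (the common endpoint is the distinguished point $a_\theta$ reached at the root of the recursion). Property (4), the non-self-intersection along the parameter, follows because within a single sign stratum the sequence of sub-paths is the same and each sub-path is monotone in its parametrising coordinate. Property (5), the divergence property, is precisely the property established in Section \ref{subsec:connectingpaths}: two paths starting at $a_\theta$ and following the same sequence of roadmap segments coincide on a common initial portion and split when they first enter sub-roadmaps of disjoint fibres. Finally, the requirement that each path stays inside $\RR(\bar\sigma(y), \ZZ(Q,\R^k))$ is enforced by adjoining the polynomials of ${\mathcal P}$ to the family whose sign-invariant cells are refined at each recursive step, so that no polynomial of ${\mathcal P}$ changes sign along any curve segment of $\gamma_\theta$.

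The hardest part will be the complexity analysis and the parametric continuity of the construction. For complexity, the roadmap algorithm on a set of dimension $k'$ costs $s^{k'+1} d^{O(k^2)}$ by Theorem \ref{16:the:saconnecting}; running it parametrically along the recursion and refining by sign conditions on ${\mathcal P}$ at each of the $O(k)$ levels blows this up to $s^{k'+1} d^{O(k^4)}$, which matches the stated bound provided one is careful to reuse pseudo-critical computations across strata (so that degrees do not compound multiplicatively in a worse-than-advertised way). For continuity, the main delicate point is that the terminal parameter $b_{0,m}$ of the top-level sub-path genuinely depends on $y_1$, and one must verify that the semi-algebraic continuation of the pseudo-critical points along the base $U_\theta$ produces a continuous map on all of $V_\theta$; this is where the divergence property and the fact that no roadmap bifurcations occur inside a single stratum $\RR(\theta,\ZZ(Q,\R^k))$ do the essential work, and I expect this continuity check, rather than the combinatorial bookkeeping or the complexity estimate, to be the main technical obstacle.
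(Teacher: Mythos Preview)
The paper does not actually prove this theorem: it is a survey, and the statement is introduced with ``It is also shown in \cite{BPR9} that,'' so there is no in-paper proof to compare against. That said, your proposal matches exactly the strategy the paper sketches in the paragraphs preceding the theorem (Section~\ref{sec:pp}), namely ``a parametrized version of the connecting algorithm: we compute a family of polynomials such that for each realizable sign condition $\sigma$ on this family, the description of the connecting paths of different points in the realization \ldots\ are uniform.'' Your identification of the discrete data (pseudo-critical intervals, choice of curve segment at each recursive level, connected-component labels in fibres) as being governed by sign conditions on finitely many computable polynomials, and your use of the divergence property from Section~\ref{subsec:connectingpaths} to verify property~(5) of Definition~\ref{def:parametrizedpath}, are precisely the ingredients of the construction in \cite{BPR9}.

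One caution on the complexity bookkeeping: you write that ``running it parametrically along the recursion \ldots\ at each of the $O(k)$ levels blows this up to $s^{k'+1} d^{O(k^4)}$,'' but the jump from $d^{O(k^2)}$ to $d^{O(k^4)}$ is not just a factor of $k$ in the exponent from $k$ recursive levels. The extra $k^2$ in the exponent comes from the fact that the polynomials in ${\mathcal A}$ produced at each level have degrees that are themselves $d^{O(k^2)}$ (they arise from eliminating variables in the pseudo-critical systems), and one must then run sign-determination and connected-component separation on \emph{these} polynomials, which costs another $d^{O(k^2)}$-type factor in the degree exponent. Your parenthetical about ``degrees not compounding multiplicatively in a worse-than-advertised way'' is exactly the issue, and it deserves more than a parenthesis: this is where a careless analysis would give $d^{O(k^{2k})}$ rather than $d^{O(k^4)}$, and the control comes from the fact that the recursion has depth $k$ but the parametric elimination at each level is over only $O(k)$ variables with input degrees bounded by $d^{O(k)}$ from the previous level.
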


\subsubsection{Constructing Coverings of Closed Semi-algebraic Sets
by Closed Contractible Sets}
\label{sec:acycliccov}
The parametrized paths obtained in Theorem \ref{the:alg:parametrized} are
not necessarily closed or even contractible, but become so after making
appropriate modifications. At the same time it is possible to maintain the
covering property, namely for any given ${\mathcal P}$-closed semi-algebraic
$S$ set, there exists a set of modified parametrized paths, whose union
is $S$. Moreover, these modified sets are closed and contractible.
We omit the details of this (technical) 
construction  referring the reader to \cite{BPR9} for more detail. 
Putting together the constructions outlined above we have:

\begin{theorem}
\label{the:alg:covering}
There exists an algorithm that given as input
a  ${\mathcal P}$-closed and bounded semi-algebraic set $S$, 
outputs
a set of formulas $\{\phi_1,\ldots,\phi_M\}$ such that
\begin{itemize}
\item 
each  $\RR(\phi_i,\R'^k)$ is 
semi-algebraically contractible, and
\item 
$\displaystyle{
\bigcup_{1 \leq i \leq M} 
\RR(\phi_i,\R'^k) = \E(S,\R'),
}
$
\end{itemize}
where $\R'$ is some real closed extension of $\R$. 
The complexity of the algorithm is bounded by $s^{(k+1)^2}d^{O(k^5)}$,
where $s = \#{\mathcal P}$ and $d = \max_{P \in {\mathcal P}} deg(P).$
\end{theorem}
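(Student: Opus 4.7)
The plan is to combine Theorem \ref{the:alg:parametrized} with an infinitesimal closure trick. First, I would apply Theorem \ref{the:alg:parametrized} to the input family ${\mathcal P}$, with $Q$ chosen so that $\ZZ(Q,\R^k)$ is a bounded algebraic set containing $S$ (for instance, after introducing a first infinitesimal $\zeta$ to intersect with a large ball of radius $1/\zeta$, as in the construction of Section \ref{sec:critical}). This produces a finite family $\Theta$ of quantifier-free formulas over a refined polynomial family ${\mathcal A}$ together with parametrized paths $\gamma_\theta$ whose images cover every connected component of every realizable sign condition on ${\mathcal P}$ over $\ZZ(Q,\R^k)$. Because $S$ is ${\mathcal P}$-closed, it is a union of such realizations, so the subset of formulas corresponding to those realizations yields images that together cover $\E(S,\R\langle\zeta\rangle)$.

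The next step addresses the fact that Proposition \ref{contractible} requires each base $U_\theta = \RR(\theta,\ZZ(Q,\R^k))$ to be closed and bounded, whereas a generic realization of a sign condition is only locally closed. I would introduce a second positive infinitesimal $\eps \ll \zeta$ and replace every strict inequality $P > 0$ (respectively $P < 0$) occurring in $\theta$ by the weak inequality $P \geq \eps$ (respectively $P \leq -\eps$), obtaining a new formula $\tilde\theta$ whose realization over $\R' = \R\langle\zeta,\eps\rangle$ is closed and bounded. Restricting $\gamma_\theta$ to this thickened base produces a new parametrized path whose image, by Proposition \ref{contractible}, is semi-algebraically contractible. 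The formulas $\phi_i$ are then taken to be semi-algebraic descriptions of these images (obtained by using the explicit description of $\gamma_\theta$ coming from the roadmap algorithm of Theorem \ref{the:alg:parametrized}).

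The main obstacle is verifying that the modified paths still cover $\E(S,\R')$ and keep the contractibility property after the two-step infinitesimal deformation. Coverage follows from the ${\mathcal P}$-closed hypothesis: every atomic predicate defining $S$ is already weak, so each point of $\E(S,\R')$ lies in some thickened realization $\RR(\tilde\theta, \E(\ZZ(Q,\R^k),\R'))$. Contractibility is preserved because the parametric construction of $\gamma_\theta$ in Theorem \ref{the:alg:parametrized} is given by continuous semi-algebraic data that extends verbatim to $\R'$, and the divergence property guarantees that the five conditions in Definition \ref{def:parametrizedpath} remain valid on the thickened base. Care is needed to apply the infinitesimals in the correct order (first $\zeta$ to bound, then $\eps$ to close up the sign-condition strata) so that taking $\lim_\eps$ followed by $\lim_\zeta$ recovers the original covering of $S$.

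For the complexity analysis, the dominant cost is the call to Theorem \ref{the:alg:parametrized}, which in the worst case $k' = k$ runs in time $s^{k+1} d^{O(k^4)}$ and produces $s^{k+1} d^{O(k^4)}$ parametrized paths. The infinitesimal thickening multiplies the number of polynomials in the output by a factor polynomial in $s$ and $d$, and the successive substitutions of the infinitesimals (which enter as new variables before being specialized) raise degrees in a controlled way; bookkeeping the exponents through the roadmap recursion of depth $k$ absorbs an additional $d^{O(k^5)}$ factor and an $s^{k^2}$ factor coming from the $(k{+}1)^2$ levels of nested sign-condition refinements. This yields the claimed bound $s^{(k+1)^2} d^{O(k^5)}$.
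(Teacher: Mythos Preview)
Your overall strategy---run the parametrized-path construction of Theorem~\ref{the:alg:parametrized}, then close up the bases with an infinitesimal $\eps$ so that Proposition~\ref{contractible} applies---is the right idea, and it is indeed the skeleton of what is done in~\cite{BPR9} (the paper itself omits the argument and defers to that reference). However, your coverage argument has a genuine gap.

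The formulas $\theta\in\Theta$ produced by Theorem~\ref{the:alg:parametrized} are sign conditions on the auxiliary family~$\mathcal{A}$, \emph{not} on~$\mathcal{P}$. When you replace each strict atom $A>0$ by $A\geq\eps$ to form $\tilde\theta$, you shrink the base $U_\theta$ to $\tilde U_\theta\subset U_\theta$. Your justification that ``every atomic predicate defining $S$ is already weak'' is irrelevant here: a point $x\in\E(S,\R')$ certainly satisfies weak inequalities in~$\mathcal{P}$, but the value $A(x)$ for $A\in\mathcal{A}$ may well be a positive infinitesimal smaller than~$\eps$, in which case $x$ lies in \emph{no} thickened base $\tilde U_\theta$. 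Since the image of the restricted path $\gamma_\theta|_{\tilde U_\theta}$ need not pass through such an $x$ either (the connecting paths are specific curves, not space-filling), you have not shown that the images cover $\E(S,\R')$. The actual construction in~\cite{BPR9} has to do more work at this point: roughly, one must also take certain limits of the parametrized paths toward the boundaries of the sign-condition cells and argue that the resulting enlarged images remain closed, contractible, and contained in~$S$. This is precisely the ``appropriate modifications'' the paper alludes to before stating the theorem.

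A secondary issue is your complexity accounting. You invoke Theorem~\ref{the:alg:parametrized} once, which costs $s^{k'+1}d^{O(k^4)}$, and then claim an $s^{(k+1)^2}$ factor from ``$(k{+}1)^2$ levels of nested sign-condition refinements''---but your construction has no such nesting. The jump from $s^{k+1}$ to $s^{(k+1)^2}$ and from $d^{O(k^4)}$ to $d^{O(k^5)}$ in the stated bound reflects additional work (describing the images of the modified paths by quantifier-free formulas, handling the boundary pieces) that your sketch does not account for.
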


\subsection
{Computing the First Betti Number}
\label{sec:closedcase}
It is now an easy consequence of the existence of single exponential
time covering algorithm (Theorem \ref{the:alg:covering}), 
and Theorem \ref{the:bettione} stated above, 
along with the fact that we can compute descriptions of the connected
components of semi-algebraic sets in single exponential time,
that we can compute the first Betti number of 
closed and bounded semi-algebraic sets in single exponential time
(see Remark \ref{rem:bettione} above), since the 
dimensions of the images and kernels of
the homomorphisms of the complex,
$\LL^{\bullet}_2({\mathcal C}(S))$ in  Theorem \ref{the:bettione},
can then be computed using traditional algorithms from linear algebra.
As mentioned earlier, 
for arbitrary semi-algebraic sets (not necessarily closed and
bounded),
there is a single exponential time reduction to the closed and
bounded case using the construction of Gabrielov and Vorobjov
\cite{BPR9,GV05}.

\subsection{Computing the Higher Betti Numbers}
\subsubsection{Double Complexes Associated to Certain Covers}
\label{sec:covering}
We now describe how covers by contractible sets of a closed and bounded 
semi-algebraic set, $S$,  can be used for computing the higher 
(than the first) Betti numbers of $S$. 
Recall (Remark \ref{rem:counterexample}) 
that it is no longer possible to use the nerve complex of
a (possibly non-Leray) cover by contractible sets for this purpose.

In this section, we consider a fixed family of polynomials,
${\mathcal P} \subset \R[X_1,\ldots,X_k],$ as well as a fixed  
${\mathcal P}$-closed and bounded semi-algebraic set, $S \subset \R^k$. 
We also fix a number, $\ell, 0 \leq \ell \leq k.$

We define below 
(see Definition \ref{def:admissible} below)
a finite set of indices, $\A_S$, which we call the set of {\em admissible
indices}, and a map that associates to each $\alpha \in \A_S$ a closed and
bounded semi-algebraic subset $X_\alpha \subset S$, which we call an
{\em admissible subset}. The reason behind having the set of indices 
is that 
in the construction of our complex the same set might occur with different
indices and we would like to distinguish these occurrences from each other.

To each $\alpha  \in \A_S$, 
we will associate its level, denoted $\level(\alpha)$, which is an integer
between $0$ and $\ell$. The set $\A_S$ will be partially ordered, and we
denote by $\ancestor(\alpha)\subset \A_S$, the set of ancestors
of $\alpha$ under this partial order. For $\alpha,\beta \in \A_S$,
$\beta \in \ancestor(\alpha)$ will imply that  
$X_\alpha \subset X_\beta$.

For each admissible index $\alpha \in \A_S$, 
we define  a double complex, $\M^{\bullet,\bullet}(\alpha)$, 
such that 
\begin{equation}
\label{eqn:definingM}
\HH^i(\Tot^{\bullet}(\M^{\bullet,\bullet}(\alpha))) \cong \HH^i(X_\alpha),\;
0 \leq i \leq \ell - \level(\alpha),
\end{equation}
and for each pair $\alpha,\beta \in \A_S$
with $\alpha \in \ancestor(\beta)$ a homomorphism,
\begin{equation}
r_{\alpha,\beta}^{\bullet,\bullet}:  \M^{\bullet,\bullet}(\alpha) 
\rightarrow \M^{\bullet,\bullet}(\beta),
\end{equation}
which induces the restriction homomorphisms between the cohomology
groups via the isomorphisms in 
$$
\displaylines{
r^*_{\alpha,\beta}: \HH^i(X_\alpha) \rightarrow \HH^i(X_\beta)
}
$$ 
for $0 \leq i \leq \ell - \level(\alpha)$  via the isomorphisms in 
(\ref{eqn:definingM}).
 
The main idea behind the construction of the double complex 
$\M^{\bullet,\bullet}(\alpha)$ is a recursive one. 
Associated to  any cover of
$X_\alpha$ there exists  a double complex (the Mayer-Vietoris double complex)
arising from the generalized Mayer-Vietoris exact sequence 
(see Section \ref{subsec:nonleray}).
If the individual sets of the cover of $X$ are all contractible, then the
first column 
of the $'E_1$-term of the corresponding spectral sequence
(cf. Eqn. (\ref{eqn:'E}))
is  zero except at the first row.
The cohomology groups of the associated total complex of the Mayer-Vietoris
double complex are isomorphic to those of $X_\alpha$ 
and thus in order to compute
$b_0(X_\alpha),\ldots,b_{\ell - \level(\alpha)}(X_\alpha),$
it suffices to compute a suitable
truncation of the Mayer-Vietoris double complex. Computing 
(even the truncated) Mayer-Vietoris double complex directly within a 
single exponential time complexity
is not possible by any known method, since we are unable 
to compute triangulations of semi-algebraic sets in single exponential time.
However, 
making use of the cover construction recursively, we are
able to compute another double complex, 
$\M^{\bullet,\bullet}(\alpha)$, 
which has much smaller size,
but whose associated spectral sequence, $'E_*$, 
is isomorphic 
to the one corresponding to the Mayer-Vietoris double complex. Hence, 
by Theorem \ref{the:spectral} (Comparison Theorem)
$\Tot^{\bullet}(\M^{\bullet,\bullet}(\alpha))$, is quasi-isomorphic to the
associated total complex of the 
Mayer-Vietoris double complex (see Proposition \ref{prop:main} below).
The construction of 
$\M^{\bullet,\bullet}(\alpha)$ is possible in single exponential time since the
covers can be computed in single exponential time.

Finally, given any closed and bounded semi-algebraic set $X \subset \R^k$,
we will denote by ${\mathcal C}'(X)$, a fixed cover of $X$
(we will assume that the construction implicit in 
Theorem \ref{the:alg:covering} provides such a cover).

We now define $\A_S$,
and for each $\alpha \in \A_S$ a cover
${\mathcal C}(\alpha)$ of $X_\alpha$ 
obtained by enlarging the cover ${\mathcal C}'(X_\alpha)$. 

\begin{definition}[Admissible indices and covers]
\label{def:admissible}
$\A_S$ is defined by induction on level.
\begin{enumerate}
\item
Firstly, 
$0 \in \A_S$, $\level(0) = 0,$ $X_0 = S$, $\ancestor(0) = \emptyset$,
and ${\mathcal C}(0) = {\mathcal C}'(S)$. 

\item 
The admissible indices  at level ${i+1}$ are now inductively defined in terms
of the admissible indices at level $\leq i$.

The set of admissible indices at level $i+1$ is
\begin{equation}
\bigcupdot_{\alpha \in \A_S, \level(\alpha) = i}
\{\alpha_0\cdot\alpha_1\cdots\alpha_j \mid 
\alpha_i \in {\mathcal C}(\alpha), 0 \leq j \leq \ell-i+1\},
\end{equation}
where $\bigcupdot$ denotes the disjoint union,
and for each $\beta = \alpha_0\cdot\alpha_1\cdots\alpha_j$
we set $X_\beta = X_{\alpha_0} \cap \cdots \cap X_{\alpha_j}$.

We now enlarge the set of ancestor relations by adding:
\begin{enumerate}
\item
For each 
$\{\alpha_0,\ldots,\alpha_m\} \subset \{\beta_0,
\ldots,\beta_n \} \subset {\mathcal C}(\alpha)$,  
with $n \leq \ell-i+1$, 
$\alpha_0\cdots\alpha_m \in  \ancestor(\beta_0\cdots\beta_n),
$ 
and $\alpha \in \ancestor(\beta_0\cdots\beta_n)$.
\item
Moreover, if 
$\alpha_1\cdots\alpha_m, \beta_0\cdots\beta_n \in \A_S$ are such that
for every  $j \in \{0,\ldots, n\}$ there exists $i \in \{0,\ldots, m\}$ 
such that
$\alpha_i$ is an ancestor of $\beta_j$, then $\alpha_0\cdots\alpha_m$ is
an ancestor of $\beta_0\cdots\beta_n$.
\item
The ancestor relation is transitively closed, so that ancestor of 
an ancestor is also an ancestor.
\end{enumerate}

Finally, for each $\alpha \in \A_S$ at level 
$i+1$, 
we define
${\mathcal C}(\alpha)$ as follows. 
Let $\ancestor(\alpha) = \{\alpha_1,\ldots,\alpha_N\}$. 
\begin{equation}
{\mathcal C}(\alpha) = 
\bigcupdot
{\mathcal C}'(\beta_1  \cdots \beta_N \cdot \alpha).
\end{equation}
where the disjoint union is taken over all tuples
$(\beta_1,\ldots,\beta_N)$ satisfying for each $1 \leq i, j \leq N$,
$\beta_i \in {\mathcal C}(\alpha_i)$, and 
if $\alpha_i \in \ancestor(\alpha_j)$ then 
$\beta_i \in \ancestor(\beta_j)$. 
\end{enumerate}
\end{definition}

Observe that by the above definition, if $\alpha,\beta \in \A_S$
and $\beta \in \ancestor(\alpha)$,
then each $\alpha' \in {\mathcal C}(\alpha)$ has a unique ancestor 
in  each  ${\mathcal C}(\beta)$, which we will denote by
$a_{\alpha,\beta}(\alpha')$.

The mappings $a_{\alpha,\beta}$
has the property that if $\beta \in \ancestor(\alpha)$ and
$\gamma \in \ancestor(\beta)$, then 
$a_{\alpha,\gamma} = a_{\beta,\gamma}\circ a_{\alpha,\beta}$.

Now, suppose that there is a procedure for computing  ${\mathcal C}'(X)$, 
for any given ${\mathcal P}'$-closed and bounded semi-algebraic set, $X$,
such that
the number and the degrees of the polynomials appearing the descriptions
of the semi-algebraic sets, $X_\alpha, \alpha \in {\mathcal C}'(X)$,
is bounded by 
\begin{equation}
\label{eqn:c_1}
D^{k^{c_1}},
\end{equation}
where $c_1 > 0$ is some absolute constant, and 
$D = \sum_{P \in {\mathcal P}'} \deg(P).$
  
Using  the above procedure for computing ${\mathcal C}'(X)$, and
the definition of $\A_S$,  we have the following
quantitative bounds on $\#\A_S$  and the 
 semi-algebraic sets $X_\alpha, \alpha \in \A_S$,
which is crucial in proving the single exponential 
complexity bound of the algorithm for computing the first few
Betti numbers of semi-algebraic sets.

\begin{proposition} \cite{Basu7}
\label{prop:bound}
Let $S \subset \R^k$ be a ${\mathcal P}$-closed semi-algebraic set, where
${\mathcal P} \subset \R[X_1,\ldots,X_k]$ is a family of $s$ polynomials of 
degree at most $d$. Then $\#A_S$, as well as 
the number of polynomials used to define the semi-algebraic sets 
$X_\alpha, \alpha \in \A_S$ and the  
the degrees of these polynomials, are all bounded by
$(sd)^{k^{O(\ell)}}.$
\end{proposition}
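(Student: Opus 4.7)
The plan is to prove the proposition by induction on the level $i$, $0 \le i \le \ell$, iterating the cover-algorithm bound (\ref{eqn:c_1}) exactly $\ell$ times. For each such $i$ I define $D_i$ to be a common upper bound on three quantities: (a) the number of admissible indices of level at most $i$, (b) the number of polynomials needed to describe $X_\alpha$ for any admissible $\alpha$ with $\level(\alpha) \le i$, and (c) the maximum degree of those polynomials. The target bound is then $D_\ell \le (sd)^{k^{O(\ell)}}$.

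The base case $i = 0$ is immediate: the only admissible index is $0$, with $X_0 = S$ described by the $s$ polynomials of ${\mathcal P}$ of degree at most $d$, so $D_0 \le sd$.

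For the inductive step, assume the bound $D_i$ at level $i$. Every new admissible index at level $i+1$ has the form $\alpha_0 \cdots \alpha_j$ with $j \le \ell - i + 1 \le \ell + 1$, and the entries $\alpha_r$ are drawn from a cover ${\mathcal C}(\alpha)$ of an admissible $X_\alpha$ at level $i$ (or of an intersection of $X_\alpha$ with sets corresponding to its ancestors). By hypothesis (\ref{eqn:c_1}), any call to the cover-producing procedure on an input described by at most $D_i$ polynomials of degree at most $D_i$ returns at most $D_i^{k^{c_1}}$ sets, each described by at most $D_i^{k^{c_1}}$ polynomials of degree at most $D_i^{k^{c_1}}$. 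Intersecting at most $\ell + 2$ such cover elements increases the number of polynomials only by a factor of $\ell + 2$ without changing the maximum degree. Combining these observations with the count of tuples of length at most $\ell + 2$ leads to a recursion of the form
\[
D_{i+1} \le D_i^{c_2\, k^{c_1}},
\]
for some absolute constant $c_2 > 0$, which absorbs the polynomial factors in $\ell$ and the product over ancestor tuples appearing in the definition of ${\mathcal C}(\alpha)$.

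Iterating this recursion $\ell$ times starting from $D_0 \le sd$ yields
\[
D_\ell \le (sd)^{(c_2 k^{c_1})^\ell} = (sd)^{k^{O(\ell)}},
\]
as claimed. The main technical obstacle is precisely the step in which ${\mathcal C}(\alpha)$ at level $i+1$ is built as a disjoint union of covers of intersections $X_{\beta_1 \cdots \beta_N \cdot \alpha}$ indexed by tuples drawn from covers of the ancestors of $\alpha$. One must verify that the number of such tuples, and the total complexity of the input to each call of the cover procedure, both remain bounded by $D_i$ raised to a constant multiple of $k^{c_1}$, rather than something as bad as $D_i^{D_i}$. This reduces to controlling the number of ancestors by exploiting the truncation $j \le \ell - i + 1$ in the construction, and carefully accounting for how new ancestor relations introduced by rules (a), (b), (c) of Definition \ref{def:admissible} propagate through the levels; this combinatorial bookkeeping is the delicate heart of the proof, and is the step I would have to execute carefully to convert the sketch above into a complete argument.
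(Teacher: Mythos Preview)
The paper does not actually prove this proposition; it simply cites \cite{Basu7} and states the bound, so there is no proof in the paper to compare your attempt against. Your inductive scheme --- tracking a single parameter $D_i$ bounding the number of indices, number of defining polynomials, and their degrees at level $\le i$, and then feeding the cover bound (\ref{eqn:c_1}) through the recursion --- is exactly the shape of the argument one expects, and the final iteration $D_\ell \le (sd)^{(c_2 k^{c_1})^\ell} = (sd)^{k^{O(\ell)}}$ is correct once the recursion $D_{i+1} \le D_i^{c_2 k^{c_1}}$ is established.

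That said, you yourself flag the real issue: the cover ${\mathcal C}(\alpha)$ at level $i+1$ is a disjoint union indexed by tuples $(\beta_1,\ldots,\beta_N)$ ranging over products of covers of \emph{all} ancestors of $\alpha$, and the number $N$ of ancestors is not obviously bounded by a constant depending only on $\ell$. If $N$ were of order $D_i$, the tuple count would be $D_i^{D_i}$ and the argument would collapse. What is needed is a bound of the form $N \le c(\ell)$ for some function of $\ell$ alone (or at worst a polynomial in $k$ and $\ell$), which comes from the truncation $j \le \ell - i + 1$ together with a careful unwinding of how ancestor relations (a)--(c) in Definition~\ref{def:admissible} compose across levels. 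You correctly identify this as ``the delicate heart of the proof'' but do not carry it out; until that step is done, what you have is an outline rather than a proof. The full argument with this bookkeeping appears in \cite{Basu7}.
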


\subsubsection{Double Complex Associated to a Cover}
\label{subsec:doublecover}
Given the different covers described above, 
we now associate to each 
$ \alpha \in \A_S$  
a double complex,
${\mathcal M}^{\bullet,\bullet}(\alpha),$ 
and for  every $\beta \in \A_S$, such that $\alpha \in \ancestor(\beta)$,
and $\level(\alpha) = \level(\beta)$,
a restriction homomorphism:
\[
r_{\alpha,\beta}^{\bullet,\bullet}:  \M^{\bullet,\bullet}(\alpha) 
\rightarrow \M^{\bullet,\bullet}(\beta),
\]
satisfying the following:

\begin{enumerate}
\item
\begin{equation}
\label{eqn:iso}
\HH^i(\Tot^{\bullet}(\M^{\bullet,\bullet}(\alpha)))  \cong 
\HH^i(X_\alpha), \;\mbox{\rm for} \;
0 \leq i \leq \ell - \level(\alpha).
\end{equation}
\item
The restriction homomorphism
\[
r_{\alpha,\beta}^{\bullet,\bullet}:  \M^{\bullet,\bullet}(\alpha) 
\rightarrow \M^{\bullet,\bullet}(\beta),
\]
induces the restriction homomorphisms between the cohomology
groups:
$$
\displaylines{
r^*_{\alpha,\beta}: \HH^i(X_\alpha) \rightarrow \HH^i(X_\beta)
}
$$ 
for $0 \leq i \leq \ell - \level(\alpha)$  via the isomorphisms in (\ref{eqn:iso}).
\end{enumerate}

We now define the double complex $\M^{\bullet,\bullet}(\alpha)$.
The 
double complex $\M^{\bullet,\bullet}(\alpha)$ 
is constructed inductively using induction on $\level(\alpha)$.

\begin{definition}
\label{def:double}
The base case is when 
$\level(\alpha) = \ell$.
In this case the double complex, $\M^{\bullet,\bullet}(\alpha)$
is defined by:
\[
\begin{array}{ccll}
\M^{0,0}(\alpha) & = & \bigoplus_{{\alpha_0}\; \in\; {\mathcal C}(\alpha)}\; \HH^0(X_{\alpha_0}),
\cr
\M^{1,0}(\alpha) & = & \bigoplus_{{\alpha_0}, {\alpha_1} \;\in\; {\mathcal C}(\alpha)}\; \HH^0(X_{\alpha_0\cdot\alpha_1}), \cr
\M^{p,q}(\alpha) & = &  0, \; \mbox{if} \; q > 0 \;\mbox{or} \; p > 1 . \cr
\end{array}
\]

This is shown diagrammatically below.

\[
{\tiny
\begin{diagram}
\node{0}\arrow{e}\node{0}\arrow{e}\node{0} \\
\node{0}\arrow{e}\arrow{n}\node{0}\arrow{e}\arrow{n}\node{0}\arrow{n}\\
\node{\bigoplus_{{\alpha_0} \in {\mathcal C}(\alpha)} 
\HH^0(X_{\alpha_0})}\arrow{e,t}{\delta}\arrow{n}
\node{\bigoplus_{{\alpha_0}, {\alpha_1} \in {\mathcal C}(\alpha)} 
\HH^0(X_{\alpha_0\cdot \alpha_1})}
\arrow{e}\arrow{n}
\node{0}\arrow{n}
\end{diagram}
}
\]

The only non-trivial homomorphism in the above complex, 
$$
\displaylines{
\delta:\bigoplus_{{\alpha_0} \in {\mathcal C}(\alpha)} 
\HH^0(X_{\alpha_0}) 
\longrightarrow
\bigoplus_{{\alpha_0},{\alpha_1} \in {\mathcal C}(\alpha)} 
\HH^0(X_{\alpha_0\cdot \alpha_1}) 
}
$$
is defined as follows.

$\delta(\phi)_{\alpha_0,\alpha_1} = 
(\phi_{\alpha_1} - \phi_{\alpha_0})|_{X_{\alpha_0\cdot\alpha_1}}$ for
$\phi \in \bigoplus_{{\alpha_0} \in {\mathcal C}(\alpha)} \HH^0(X_{\alpha_0}).$

For every $\beta \in \A_S$, such that $\alpha \in \ancestor(\beta)$,
and $\level(\alpha) = \level(\beta) = \ell$,
we define 
$r_{\alpha,\beta}^{0,0}: \M^{0,0}(\alpha) \rightarrow \M^{0,0}(\beta)$,
as follows.

Recall that,
$\displaystyle{
\M^{0,0}(\alpha) = \bigoplus_{\alpha_0 \;\in\; {\mathcal C}(\alpha)} 
\; \HH^0(X_{\alpha_0}),
}
$
and 
$
\displaystyle{
\M^{0,0}(\beta) = \bigoplus_{\beta_0 \;\in\; {\mathcal C}(\beta)} \; \HH^0(X_{\beta_0}).
}
$

For $\phi \in \M^{0,0}(\alpha)$ and $\beta_0 \in {\mathcal C}(\beta)$ 
we define
$$
\displaylines{
r_{\alpha,\beta}^{0,0}(\phi)_{\beta_0} = \phi_{a_{\beta,\alpha}(\beta_0)}
|_{X_{\beta_0}}.
}
$$

We define
$r_{\alpha,\beta}^{1,0}: \M^{1,0}(\alpha) \rightarrow \M^{1,0}(\beta),$
in a similar manner. More precisely,
for $\phi \in \M^{0,0}(\alpha)$ and
$\beta_0,\beta_1 \in {\mathcal C}(\beta)$,
we define
$$
\displaylines{
r_{\alpha,\beta}^{1,0}(\phi)_{\beta_0,\beta_1} = 
\phi_{a_{\beta,\alpha}(\beta_0)\cdot a_{\beta,\alpha}(\beta_1)}|_{X_{\beta_0\cdot\beta_1}}.
}
$$ 

(The inductive step)
In general the $\M^{p,q}(\alpha)$ are defined as follows using induction
on $\level(\alpha)$ and with $n_\alpha= \ell - \level(\alpha)+1.$
$$
\displaylines{
\begin{array}{ll}
\M^{0,0}(\alpha) =  \bigoplus_{{\alpha_0} \;\in\; {\mathcal C}(\alpha)} 
\;\HH^0(X_{\alpha_0}),& \cr
\M^{0,q}(\alpha) =   0, & 0< q, \cr
\M^{p,q}(\alpha) =   \bigoplus_{\alpha_0< \cdots <\alpha_p, \;
{\alpha_i}  \in {\mathcal C}(\alpha)}
\; \Tot^q(\M^{\bullet,\bullet}({\alpha_0 \cdots \alpha_p})),  & 0 < p,\;
0< p+q \leq n_\alpha,  \cr
\M^{p,q}(\alpha) =   0, &\;\mbox{else}.
\end{array}
}
$$

\hide{
The double complex $\M^{\bullet,\bullet}(\alpha)$ is shown  in the following 
diagram:

\[
\divide\dgARROWLENGTH by2
{\tiny
\begin{diagram}
\node{0}\arrow{e}\node{0}\arrow{e}\node{0}\node{cdots}
\node{0}\\
\node{0}\arrow{e}\arrow{n}
\node{
\bigoplus_{\alpha_0 < \alpha_1}\Tot^{n_\alpha-1}(\M^{\bullet,\bullet}
({\alpha_0\cdot \alpha_1}))}
\arrow{e,t}{\delta}\arrow{n}
\node{0}\arrow{n}
\node{\cdots}\node{0}\arrow{n}\\
\node{0}\arrow{e}\arrow{n,l}{d}
\node{
\bigoplus_{\alpha_0 < \alpha_1}\Tot^{n_\alpha-2}(\M^{\bullet,\bullet}({\alpha_0\cdot\alpha_1}))}
\arrow{e,t}{\delta}\arrow{n,l}{d}
\node{\bigoplus_{\alpha_0 < \alpha_1 < \alpha_2}\Tot^{n_\alpha-2}(\M^{\bullet,\bullet}
({\alpha_0\cdot\alpha_1\cdot \alpha_2}))}
\arrow{n,l}{d}
\node{\cdots}\node{0}\arrow{n,l}{d} \\
\node{\vdots}\node{\vdots}\node{\vdots}\node{\vdots}\node{\vdots}
\node{\vdots}\node{\vdots}\\
\node{0}\arrow{e}
\node{
\bigoplus_{\alpha_0 < \alpha_1}\Tot^2(\M^{\bullet,\bullet}({\alpha_0\cdot\alpha_1}))}
\arrow{e,t}{\delta}
\node{
\bigoplus_{\alpha_0< \alpha_1 < \alpha_2}
\Tot^2(\M^{\bullet,\bullet}({\alpha_0\cdot\alpha_1\cdot\alpha_2}))}
\node{\cdots}\node{0} \\
\node{0}\arrow{e}\arrow{n}
\node{\bigoplus_{\alpha_0 < \alpha_1}\Tot^1(\M^{\bullet,\bullet}({\alpha_0\cdot\alpha_1}))} 
\arrow{e,t}{\delta}\arrow{n,l}{d}
\node{
\bigoplus_{\alpha_0 < \alpha_1 < \alpha_2}\Tot^1(\M^{\bullet,\bullet}({\alpha_0\cdot\alpha_1\cdot\alpha_2}))
}\arrow{n,l}{d}
\node{\cdots}\node{0}\arrow{n,l}{d} \\
\node{\bigoplus_{\alpha_0 \in {\mathcal C}(X)} \HH^0(S_{\alpha_0})} 
\arrow{e,t}{\delta}\arrow{n,l}{d} 
\node{
\bigoplus_{\alpha_0 < \alpha_1}\Tot^0(\M^{\bullet,\bullet}({\alpha_0\cdot\alpha_1}))} 
\arrow{e,t}{\delta}\arrow{n,l}{d}
\node{
\bigoplus_{\alpha_0 < \alpha_1 < \alpha_2}\Tot^0(\M^{\bullet,\bullet}({\alpha_0\cdot\alpha_1\cdot \alpha_2}))}\arrow{n,l}{d}
\node{\cdots}
\node{
\bigoplus_{\alpha_0< \cdots <\alpha_{n_\alpha}}\Tot^0(\M^{\bullet,\bullet}
({\alpha_0\cdots\alpha_{n_\alpha}}))}
\arrow{n,l}{d}
\end{diagram}
}
\]
}

The double complex $\M^{\bullet,\bullet}(\alpha)$ is shown  in the following 
diagram:

\[
\divide\dgARROWLENGTH by8
{\tiny
\begin{diagram}
\node{0}\arrow{e}\node{0}
\node{\cdots}\node{0}\\
\node{0}\arrow{e}\arrow{n}
\node{
\bigoplus_{\alpha_0 < \alpha_1}\Tot^{n_\alpha-1}(\M^{\bullet,\bullet}
({\alpha_0\cdot \alpha_1}))}
\arrow{n}
\node{\cdots}
\node{0}\arrow{n}\\
\node{0}\arrow{e}\arrow{n,l}{d}
\node{
\bigoplus_{\alpha_0 < \alpha_1}\Tot^{n_\alpha-2}(\M^{\bullet,\bullet}({\alpha_0\cdot\alpha_1}))}
\arrow{n,l}{d}
\node{\cdots}\node{0}\arrow{n,l}{d} \\
\node{\vdots}\node{\vdots}\node{\vdots}
\node{\vdots}\\
\node{0}\arrow{e}
\node{
\bigoplus_{\alpha_0 < \alpha_1}\Tot^2(\M^{\bullet,\bullet}({\alpha_0\cdot\alpha_1}))}
\node{\cdots}
\node{0} \\
\node{0}\arrow{e}\arrow{n}
\node{\bigoplus_{\alpha_0 < \alpha_1}\Tot^1(\M^{\bullet,\bullet}({\alpha_0\cdot\alpha_1}))} 
\arrow{n,l}{d}
\node{\cdots}
\node{0}\arrow{n,l}{d} \\
\node{\bigoplus_{\alpha_0 \in {\mathcal C}(X)} \HH^0(S_{\alpha_0})} 
\arrow{e,t}{\delta}\arrow{n,l}{d} 
\node{
\bigoplus_{\alpha_0 < \alpha_1}\Tot^0(\M^{\bullet,\bullet}({\alpha_0\cdot\alpha_1}))} 
\arrow{n,l}{d}
\node{\cdots}
\node{
\bigoplus_{\alpha_0< \cdots <\alpha_{n_\alpha}}\Tot^0(\M^{\bullet,\bullet}
({\alpha_0\cdots\alpha_{n_\alpha}}))}
\arrow{n,l}{d}
\end{diagram}
}
\]

The vertical homomorphisms, $d$, in $\M^{\bullet,\bullet}(\alpha)$ are those 
induced by the differentials in the various 
$$
\Tot^{\bullet}(\M^{\bullet,\bullet}({\alpha_0\cdots\alpha_p})),
{\alpha_i} \in {\mathcal C}(\alpha).
$$

The horizontal ones are defined by generalized restriction as follows.
Let 
\[\phi \in \bigoplus_{\alpha_0< \cdots <\alpha_p, \alpha_i \in 
{\mathcal C}(\alpha)} 
\Tot^q(\M^{\bullet,\bullet}({\alpha_0\cdots\alpha_p})),
\]

with 
\[
\phi_{\alpha_0,\ldots,\alpha_p} = \bigoplus_{0 \leq j \leq q}
\phi_{\alpha_0,\ldots,\alpha_p}^j,
\]
and 
\[
\phi_{\alpha_0,\ldots,\alpha_p}^j \in \M^{j,q-j}({\alpha_0\cdots\alpha_p}).
\]

We define
$$\displaylines{
\delta:
\bigoplus_{\alpha_0< \cdots <\alpha_p, \alpha_i \in {\mathcal C}(\alpha)} 
\Tot^q(\M^{\bullet,\bullet}({\alpha_0\cdots\alpha_p})) \longrightarrow
\bigoplus_{\alpha_0< \cdots <\alpha_{p+1}}
\Tot^q(\M^{\bullet,\bullet}({\alpha_0\cdots\alpha_{p+1}}))
}
$$
by
$$
\displaylines{
\delta(\phi)_{\alpha_0,\ldots, \alpha_{p+1}} = 
\bigoplus_{0 \leq i \leq p+1} (-1)^i 
\bigoplus_{0 \leq j \leq q}
r_{{\alpha_0\cdots\hat{\alpha_i}
\cdots\alpha_{p+1}},{\alpha_0\cdots\alpha_{p+1}}}^{j,q-j}
(\phi_{\alpha_0,\ldots,\hat{\alpha_i},
\ldots,\alpha_{p+1}}^j),
}
$$
noting that for each $i, 0 \leq i \leq p+1,$ 
${\alpha_0\cdots\hat{\alpha_i}\cdots\alpha_{p+1}}$
is an ancestor of ${\alpha_0\cdots\alpha_{p+1}}$, and

\[
\level({\alpha_0\cdots\hat{\alpha_i}\cdots\alpha_{p+1}})=
\level({\alpha_0\cdots\alpha_{p+1}}) = \level(\alpha)+ 1,
\]
and hence the homomorphisms 
$r_{{\alpha_0\cdots\hat{\alpha_i}
\cdots\alpha_{p+1}},{\alpha_0\cdots\alpha_{p+1}}}^{j,q-j}$
are already defined by induction.

Now let,
$\alpha,\beta \in \A_S$ 
with $\alpha$ an ancestor of $\beta$ and
$\level(\alpha) = \level(\beta).$
We define the restriction homomorphism, 
$$
r_{\alpha,\beta}^{\bullet,\bullet}:  
\M^{\bullet,\bullet}(\alpha) \longrightarrow \M^{\bullet,\bullet}(\beta)
$$
as follows.

As before,
for $\phi \in \M^{0,0}(\alpha)$ and 
$\beta_0 \in {\mathcal C}(\beta)$ we define
$$
\displaylines{
r_{\alpha,\beta}^{0,0}(\phi)_{\beta_0} = \phi_{a_{\beta,\alpha}(\beta_0)}|_{X_{\beta_0}}.
}
$$
For $0 < p, 0< p+q \leq \ell-\level(\alpha)+1,$
we define
\[
r_{\alpha,\beta}^{p,q}: \M^{p,q}(\alpha) \rightarrow \M^{p,q}(\beta),
\]
as follows.

Let 
$
\displaystyle{
\phi \in \M^{p,q}(\alpha) = \bigoplus_{\alpha_0< \cdots <\alpha_p, \;
{\alpha_i}  \in {\mathcal C}(\alpha)}
\; \Tot^q(\M^{\bullet,\bullet}({\alpha_0\cdots\alpha_p})).
}
$ 
We define
\[
r_{\alpha,\beta}^{p,q}(\phi) =
\bigoplus_{\beta_0< \cdots <\beta_p, \beta_i \in {\mathcal C}(\beta)} 
\bigoplus_{0 \leq i \leq  q}r_{a_{\beta,\alpha}({\beta_0\cdots\beta_p}),
{\beta_0\cdots\beta_p}}^{i,q-i} \phi_{a_{\beta,\alpha}
({\beta_0}),\ldots, a_{\beta,\alpha}({\beta_p})}^i,
\]
where 
$a_{\beta,\alpha}({\beta_0\cdots\beta_p}) = 
a_{\beta,\alpha}({\beta_0})\cdots a_{\beta,\alpha}({\beta_p}).$
Moreover,
$$
\level(a_{\beta,\alpha}({\beta_0\cdots\beta_p}))= 
\level({\beta_0\cdots\beta_p})
= \level(\alpha) +1,
$$
and hence we can assume that the homomorphisms
$r_{a_{\beta,\alpha}({\beta_0\cdots\beta_p}),{\beta_0\cdots\beta_p}}^
{\bullet,\bullet}$ 
used in the definition of
$r_{\alpha,\beta}^{\bullet,\bullet}$ are already defined by induction.
\end{definition}

The following proposition proved in \cite{Basu7} is the 
key ingredients in the single exponential time algorithm for
computing the first few Betti numbers of semi-algebraic sets.

\begin{proposition}\cite{Basu7}
\label{prop:main}
For each $\alpha\in \A_S$ 
the double complex $\M^{\bullet,\bullet}(\alpha)$ satisfies the following 
properties:

\begin{enumerate}
\item
$
\HH^i(\Tot^{\bullet}(\M^{\bullet,\bullet}(\alpha)))  \cong 
\HH^i(X_\alpha)$ 
for $0 \leq i \leq \ell - \level(\alpha)$.
\item
For every $\beta \in \A_S$, such that $\alpha$ is an ancestor of $\beta$,
and $\level(\alpha) = \level(\beta)$,
the homomorphism,
$ r_{\alpha,\beta}^{\bullet,\bullet}:  
\M^{\bullet,\bullet}(\alpha) \rightarrow \M^{\bullet,\bullet}(\beta),$
induces the  restriction homomorphisms between the cohomology
groups:
$$
\displaylines{
r^*: \HH^i(X_\alpha) \longrightarrow \HH^i(X_\beta)
}
$$ 
for $0 \leq i \leq \ell - \level(\alpha)$  via the isomorphisms in (1).
\end{enumerate}
\end{proposition}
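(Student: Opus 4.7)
The proof plan is to proceed by induction on the quantity $\ell - \level(\alpha) \ge 0$, comparing $\M^{\bullet,\bullet}(\alpha)$ with the Mayer--Vietoris double complex $\N^{\bullet,\bullet}(X_\alpha)$ associated to the cover ${\mathcal C}(\alpha)$. The crucial input is that each $X_{\alpha_0}$ with $\alpha_0 \in {\mathcal C}(\alpha)$ is semi-algebraically contractible (Theorem \ref{the:alg:covering}), together with Proposition \ref{prop:MV}, which says that the spectral sequence associated to $\N^{\bullet,\bullet}(X_\alpha)$ converges to $\HH^*(X_\alpha)$.

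For the base case $\level(\alpha) = \ell$, only the statement for $i = 0$ is needed. The double complex is concentrated in the bottom row, so $\HH^0(\Tot^{\bullet}(\M^{\bullet,\bullet}(\alpha))) = \ker\bigl(\delta: \bigoplus_{\alpha_0} \HH^0(X_{\alpha_0}) \to \bigoplus_{\alpha_0<\alpha_1} \HH^0(X_{\alpha_0\cdot\alpha_1})\bigr)$. On the other hand, the $'E_2^{0,0}$ term of the Mayer--Vietoris spectral sequence for the cover ${\mathcal C}(\alpha)$ equals the same kernel, and survives unchanged to $'E_\infty^{0,0} \cong \HH^0(X_\alpha)$ because all differentials into and out of position $(0,0)$ in a first-quadrant spectral sequence vanish from page two onwards. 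Compatibility with $r_{\alpha,\beta}^{\bullet,\bullet}$ (when $\level(\beta) = \ell$ as well) is immediate from the definition, since $r_{\alpha,\beta}^{0,0}$ and $r_{\alpha,\beta}^{1,0}$ are nothing but pull-back of locally constant functions along the inclusions $X_{\beta_0} \subset X_{a_{\beta,\alpha}(\beta_0)}$.

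For the inductive step, assume part (1) and part (2) both hold for every $\gamma \in \A_S$ with $\level(\gamma) > \level(\alpha)$. Every $\alpha_0\cdots\alpha_p$ appearing as an index in $\M^{\bullet,\bullet}(\alpha)$ has level exactly $\level(\alpha)+1$, so by the inductive hypothesis, for $0 \leq q \leq \ell - \level(\alpha) - 1$,
\[
\HH^q\bigl(\Tot^{\bullet}(\M^{\bullet,\bullet}(\alpha_0\cdots\alpha_p))\bigr) \cong \HH^q(X_{\alpha_0\cdots\alpha_p}),
\]
and moreover the horizontal restriction maps used to build $\M^{\bullet,\bullet}(\alpha)$ descend, via these isomorphisms, to the ordinary cohomology restriction maps. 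Compute the spectral sequence $'E_*(\M)$ of $\M^{\bullet,\bullet}(\alpha)$ obtained by first taking vertical cohomology. For $p > 0$, the inductive isomorphism gives $'E_1^{p,q}(\M) \cong \bigoplus_{\alpha_0<\cdots<\alpha_p} \HH^q(X_{\alpha_0\cdots\alpha_p})$ in the appropriate range; for $p = 0$, we have $'E_1^{0,q}(\M) = \bigoplus_{\alpha_0} \HH^0(X_{\alpha_0})$ when $q=0$ and $0$ otherwise, which agrees with $'E_1^{0,q}(\N^{\bullet,\bullet}(X_\alpha))$ because each $X_{\alpha_0}$ is contractible. The second part of the inductive hypothesis ensures the isomorphism on $E_1$ commutes with the horizontal differentials, giving an identification of $'E_2$ pages in total degree $\leq \ell - \level(\alpha)$. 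Applying Theorem \ref{the:spectral} (Comparison Theorem) and Proposition \ref{prop:MV} then yields part (1). Part (2) follows from the fact that $r_{\alpha,\beta}^{\bullet,\bullet}$ is assembled from the ancestor maps $a_{\beta,\alpha}$ and the inductively defined restrictions on the smaller double complexes, so it is intertwined with the evident restriction homomorphism of Mayer--Vietoris double complexes $\N^{\bullet,\bullet}(X_\alpha) \to \N^{\bullet,\bullet}(X_\beta)$.

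The main obstacle is the clean execution of the spectral sequence comparison, since there is no direct morphism between $\M^{\bullet,\bullet}(\alpha)$ and $\N^{\bullet,\bullet}(X_\alpha)$: one is built from cohomology groups assembled recursively, the other from genuine singular cochains. The cleanest way around this is to realise the inductive quasi-isomorphism $\Tot^{\bullet}(\M^{\bullet,\bullet}(\alpha_0\cdots\alpha_p)) \simeq \Ch^{\bullet}(X_{\alpha_0\cdots\alpha_p})$ in the appropriate range as a zigzag through an auxiliary double complex, and then assemble these zigzags coherently, using the second inductive conclusion to check compatibility with the restriction maps that determine the horizontal differentials. Once this bookkeeping is in place, the Comparison Theorem applies to the truncation in total degree $\leq \ell - \level(\alpha)$, and both conclusions fall out simultaneously.
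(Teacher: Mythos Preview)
Your approach is essentially the same as the paper's. Both proceed by downward induction on $\level(\alpha)$, both recognize that the comparison between $\M^{\bullet,\bullet}(\alpha)$ and genuine cochains cannot be made directly and must go through an auxiliary zigzag, and both use the spectral sequence Comparison Theorem together with contractibility of the $X_{\alpha_0}$'s. The paper makes the auxiliary object explicit: it fixes once and for all a sufficiently fine triangulation $\Delta$ of $S$ respecting every admissible $X_\alpha$, then builds for each $\alpha$ an intermediate double complex $\D^{\bullet,\bullet}(\alpha)$ together with actual chain maps $\M^{\bullet,\bullet}(\alpha)\to\D^{\bullet,\bullet}(\alpha)$ and $\Ch^{\bullet}(\Delta_\alpha)\to\Tot^{\bullet}(\D^{\bullet,\bullet}(\alpha))$, showing inductively that both are quasi-isomorphisms. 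Your last paragraph anticipates exactly this device; the only thing missing from your sketch is the concrete choice of that intermediate complex, which the global triangulation supplies and which is what makes the ``coherent assembly'' of zigzags go through.
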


\begin{proof}[Proof Sketch]
For the benefit of the reader we include an outline of the
proof of Proposition \ref{prop:main}
referring the reader to \cite{Basu7} for more detail.
The proof is by induction on ${\rm level}(\alpha)$.
After having chosen a suitably fine triangulation $\Delta$ 
of $S$ which respects all
the admissible subsets $X_\alpha$, 
we construct by induction on ${\rm level}(\alpha)$,
for each $\alpha \in \A_S$, a double complex $\D^{\bullet,\bullet}(\alpha)$
and homomorphisms
\[
\phi^{\bullet,\bullet}: \M^{\bullet,\bullet}(\alpha) \rightarrow \D^{\bullet,\bullet}(\alpha),
\]
and 
\[
\psi^{\bullet}: \C^{\bullet}(\Delta_\alpha) 
\rightarrow \Tot^\bullet(\D^{\bullet,\bullet}(\alpha)),
\]
where $\Delta_\alpha$ denotes the restriction of the triangulation
$\Delta$ to $X_\alpha$. 
It is then shown (inductively) that
each homomorphism in the following diagram is a quasi-isomorphism.

\begin{equation}
\begin{diagram}
\node{}\node{\Tot^\bullet(\D^{\bullet,\bullet}(\alpha))}\node{} \\
\node{\Tot^{\bullet}(\M^{\bullet,\bullet}(\alpha))}\arrow{ne,t}{\Tot^{\bullet}(
\phi^{\bullet,\bullet})}
\node{}
\node{\C^{\bullet}(\Delta_\alpha)}
\arrow{nw,t}{\psi^{\bullet}}
\end{diagram}
\end{equation}
This suffices to prove the proposition.
\end{proof}

We now give an example of the construction of the 
complex described above in a very simple situation.

\begin{example}
We take for the set $S$, the unit sphere
$\Sphere^2 \subset \R^3$.
Even though this example looks very simple, it is actually illustrative
of the main topological ideas behind the construction of the complex
$\M^{\bullet,\bullet}(S)$ starting from a cover of $S$ by two
closed hemispheres meeting at the equator. Since the intersection of
the two hemisphere is a topological circle which is not 
contractible, Theorem \ref{the:nerve} is not applicable.
Using Theorem  \ref{the:bettione} we can compute $\HH^0(S),\HH^1(S)$, but it
is not enough to compute $\HH^2(S)$.  
The recursive
construction of ${\mathcal M}^{\bullet,\bullet}$ described in the last section
overcomes this problem and this is illustrated in the example.

\hide{
\begin{figure}[hbt] 
\begin{center}
\includegraphics[width=5cm]{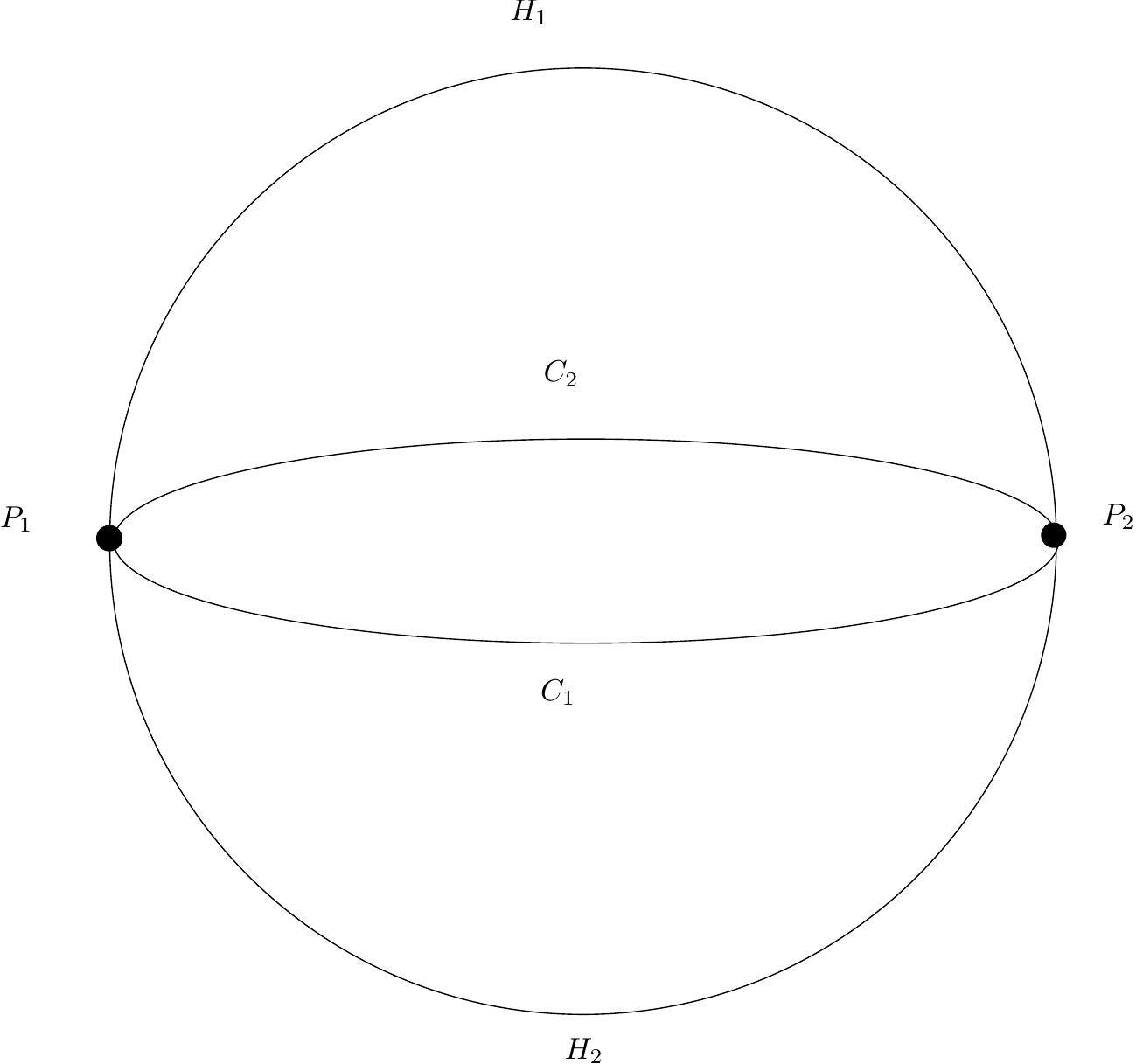}%
\end{center}
\caption{Example of $\Sphere^2 \subset \R^3$}
\label{fig:example}
\end{figure}
}

       \begin{figure}[hbt]
         \centerline{
           \scalebox{0.5}{
 \input{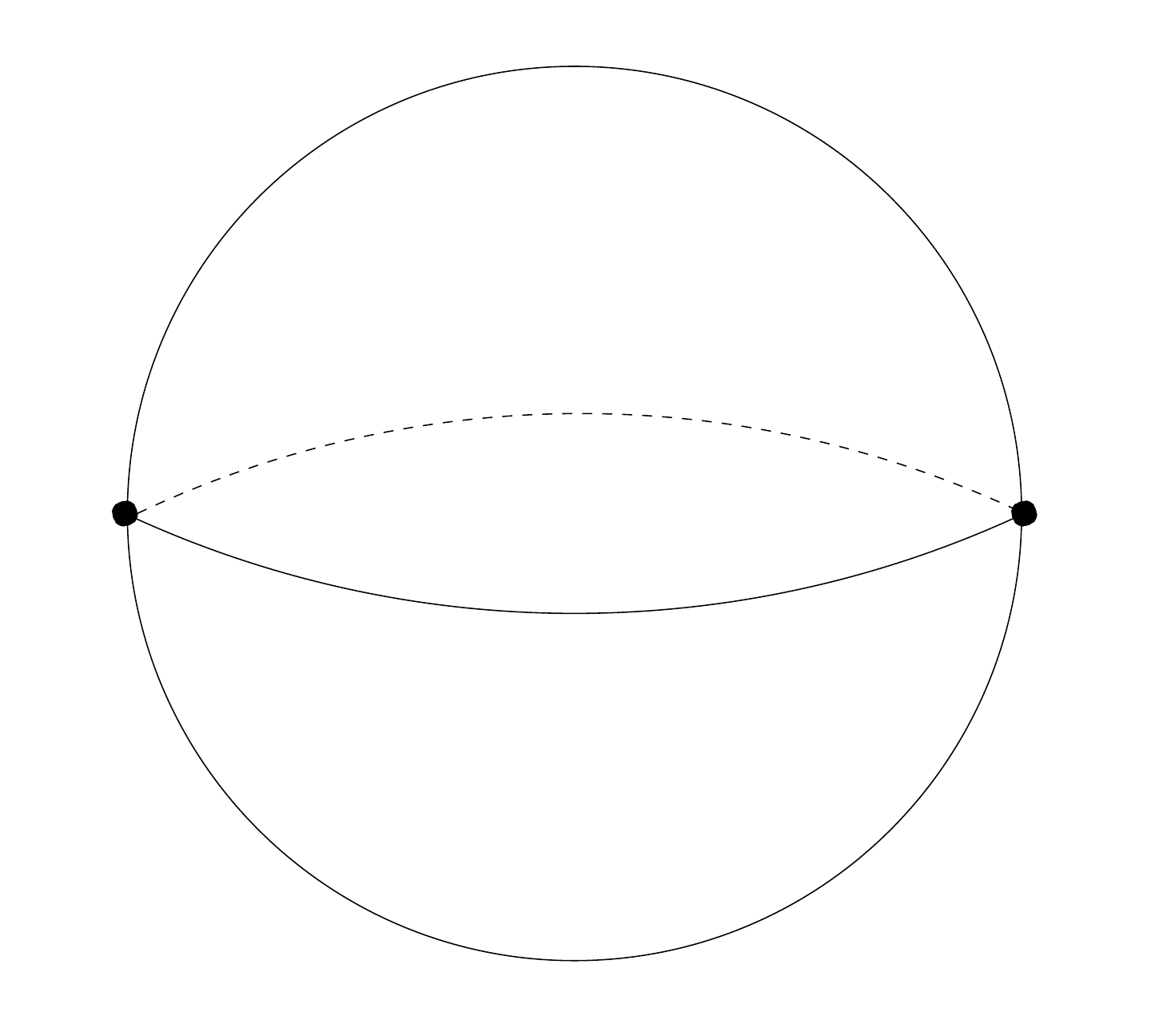tex_t}
             }
           }
\caption{Example of $\Sphere^2 \subset \R^3$}
\label{fig:example}
       \end{figure}

We first fix some notation (see Figure \ref{fig:example}).
Let $H_1$ and $H_2$ denote the
closed upper and lower hemispheres respectively. Let $H_{12} = H_1 \cap H_2$
denote the equator, 
and let $H_{12} = C_1 \cup C_2$, where $C_1,C_2$ are closed
semi-circular arcs. Finally, let $C_{12} = C_1 \cap C_2 = \{P_1,P_2\}$, where 
$P_1,P_2$ are two antipodal points.

For the purpose of this example, we will take for the 
covers ${\mathcal C'}$ the obvious ones, namely:
$$
\begin{array}{cccc}
{\mathcal C}'(S) &=& \{H_1,H_2\},& \\
{\mathcal C}'(H_i) &=& \{H_i\}, &i =1,2, \\
{\mathcal C}'(H_{12}) &=& \{C_1,C_2\},& \\
{\mathcal C}'(C_i) &=& \{C_i\},& i =1,2, \\
{\mathcal C}'(C_{12}) &=& \{P_1,P_2\}, \\
{\mathcal C}'(P_i) &=&\{P_i\},& i =1,2. \\
\end{array}
$$
Note that, in order not to complicate notation further, 
we are using the same names
for the elements of ${\mathcal C}'(\cdot)$, as well as their
associated sets. 
Strictly speaking, we should have defined,
\[
{\mathcal C}'(S) =  \{\alpha_1,\alpha_2\}, \; X_{\alpha_1} = H_1, X_{\alpha_2}
 = H_2, \ldots .\]
However, since each set occurs at most once, this
does not create confusion in this example.

Note that the elements of the sets occurring on the right are all closed, 
bounded contractible subsets of $S$.
It is now easy to check from Definition \ref{def:admissible},
that the elements of $\A_S$ in order of their levels
as follows.
\begin{enumerate}
\item Level $0$:
\[
0 \in \A_S, \level(0) = 0,\]
and
$$
\begin{array}{cccc}
{\mathcal C}(0) &=& \{\alpha_1,\alpha_2\}, & X_{\alpha_1} = H_1, X_{\alpha_2} = H_2.\\
\end{array}
$$
 
\item Level $1$:
The elements of level $1$ are 
\[
\alpha_1,\alpha_2,\alpha_1\cdot\alpha_2,
\]
and
$$
\begin{array}{cccc}
{\mathcal C}(\alpha_1) &=& \{\beta_1\},  & X_{\beta_1} = H_1,\\  
{\mathcal C}(\alpha_2) &=& \{\beta_2\},  & X_{\beta_2} = H_2,\\  
{\mathcal C}(\alpha_1 \cdot \alpha_2) &=& \{\beta_3,\beta_4\}, & X_{\beta_3} = C_1, 
X_{\beta_4} = C_2. \\  
\end{array}
$$

\item Level $2$:
The elements of level $2$ are 
$\beta_1,\beta_2, \beta_3,\beta_4,\beta_3\cdot\beta_4$. We also have,
$$
\begin{array}{ccccc}
{\mathcal C}(\beta_i) &=& \{\gamma_i\}, & X_{\gamma_i} = H_i, & i = 1,2,\\  
{\mathcal C}(\beta_i) &=& \{\gamma_i\}, & X_{\gamma_i} = C_{i-2},   &  i = 3,4,\\
{\mathcal C}(\beta_{3}\cdot\beta_{4}) &=& \{\gamma_5,\gamma_6\},& 
X_{\gamma_i} = P_{i-4},  & i = 5,6. \\  
\end{array}
$$
\end{enumerate}

We now display diagrammatically the various complexes,
$\M^{\bullet,\bullet}(\alpha)$ for $\alpha \in \A_S$ starting 
at level $2$.

\hide{
\begin{enumerate}
\item Level $2$:
For $1 \leq i \leq 4$, we have
\begin{eqnarray*}
\M^{\bullet,\bullet}(\beta_i)  &=&
\hide{
\begin{diagram}
0 & \rTo& 0 &\rTo& 0 &\\
\uTo &&\uTo &&\uTo &\\
0 & \rTo& 0 &\rTo& 0 &\\
\uTo &&\uTo &&\uTo &\\
\HH^0(X_{\gamma_i}) & \rTo & 0&\rTo & 0&
\end{diagram}
}
{\tiny
\begin{diagram}
\node{0}\arrow{e}\node{0}\arrow{e}\node{0}\\
\node{0}\arrow{e}\arrow{n}\node{0}\arrow{e}\arrow{n}\node{0}\arrow{n}\\
\node{\HH^0(X_{\gamma_i})}\arrow{e}\arrow{n}\node{0}\arrow{e}\arrow{n}
\node{0}\arrow{n}
\end{diagram}
}
\end{eqnarray*}
Notice that for $1 \leq i \leq 4$, 
\[
\HH^0(\Tot^{\bullet}(\M^{\bullet,\bullet}(\beta_i))) 
\cong \HH^0(X_{\beta_i}) \cong \Q.
\]

The complex $\M^{\bullet,\bullet}(\beta_3\cdot\beta_4)$ is 
shown below.

\hide{
\begin{diagram}
0 & \rTo& 0 &\rTo& 0 &\\
\uTo &&\uTo &&\uTo &\\
0 & \rTo& 0 &\rTo& 0 &\\
\uTo &&\uTo &&\uTo &\\
\HH^0(P_1)\bigoplus \HH^0(P_2) & \rTo & 0&\rTo & 0&
\end{diagram}
}
{\tiny
\[
\begin{diagram}
\node{0}\arrow{e}\node{0}\arrow{e}\node{0}\\
\node{0}\arrow{e}\arrow{n}\node{0}\arrow{e}\arrow{n}\node{0}\arrow{n}\\
\node{\HH^0(P_1)\bigoplus \HH^0(P_2)}\arrow{e}\arrow{n}\node{0}
\arrow{e}\arrow{n}\node{0}\arrow{n}
\end{diagram}
\]
}
Notice that, 
\[
\HH^0(\Tot^{\bullet}(\M^{\bullet,\bullet}(\beta_3\cdot\beta_4))) \cong \HH^0(X_{\beta_3\cdot\beta_4}) \cong \Q\oplus \Q.
\]

\item Level $1$:
For $i=1,2$, the complex $\M^{\bullet,\bullet}(\alpha_i)$ is as follows.
\hide{
\begin{diagram}
0&\rTo&0&\rTo&0&\rTo&0& \\
\uTo&&\uTo&&\uTo&&\uTo&\\
0&\rTo&0&\rTo&0&\rTo&0& \\
\uTo&&\uTo&&\uTo&&\uTo&\\
0 & \rTo& 0 &\rTo&0&\rTo&0&\\
\uTo &&\uTo &&\uTo&&\uTo&\\
\HH^0(H_i) & \rTo & 0& \rTo & 0&\rTo & 0&
\end{diagram}
}

{\tiny
\[
\begin{diagram}
\node{0}\arrow{e}\node{0}\arrow{e}\node{0}\arrow{e}\node{0} \\
\node{0}\arrow{e}\arrow{n}\node{0}\arrow{e}\arrow{n}\node{0}
\arrow{e}\arrow{n}\node{0}\arrow{n} \\
\node{0}\arrow{e}\arrow{n}\node{0}\arrow{e}\arrow{n}\node{0}\arrow{e}\arrow{n}
\node{0}\arrow{n}\\
\node{\HH^0(H_i)}\arrow{e}\arrow{n}\node{0}\arrow{e}\arrow{n}\node{0}
\arrow{e}\arrow{n}\node{0}\arrow{n}
\end{diagram}
\]
}
Notice that for $i=1,2$ and $j =0,1$, 
\[
\HH^j(\Tot^{\bullet}(\M^{\bullet,\bullet}(\alpha_i))) \cong 
\HH^j(H_i).
\] 

The complex $\M^{\bullet,\bullet}(\alpha_1\cdot\alpha_2)$is shown below.
\hide{
\begin{diagram}
0&\rTo&0&\rTo&0&\rTo&0& \\
\uTo&&\uTo&&\uTo&&\uTo&\\
0&\rTo&0&\rTo&0&\rTo&0& \\
\uTo&&\uTo&&\uTo&&\uTo&\\
0 & \rTo& 0 &\rTo&0&\rTo&0&\\
\uTo &&\uTo &&\uTo&&\uTo&\\
\HH^0(C_1)\bigoplus\HH^0(C_2) & \rTo & \HH^0(P_1)\bigoplus\HH^0(P_2)& \rTo & 0&\rTo & 0&
\end{diagram}
}
{\tiny
\[
\begin{diagram}
\node{0}\arrow{e}\node{0}\arrow{e}\node{0}\arrow{e}\node{0}\\
\node{0}\arrow{e}\arrow{n}\node{0}\arrow{e}\arrow{n}\node{0}
\arrow{e}\arrow{n}\node{0}\arrow{n} \\
\node{0}\arrow{e}\arrow{n}\node{0}\arrow{e}\arrow{n}\node{0}
\arrow{e}\arrow{n}\node{0}\arrow{n}\\
\node{\HH^0(C_1)\bigoplus\HH^0(C_2)}
\arrow{e}\arrow{n}\node{\HH^0(P_1)\bigoplus\HH^0(P_2)}
\arrow{e}\arrow{n}\node{0}\arrow{e}\arrow{n}\node{0}\arrow{n}
\end{diagram}
\]
}
Notice that for $j =0,1$, 
\[
\HH^j(\Tot^{\bullet}(\M^{\bullet,\bullet}(\alpha_1\cdot\alpha_2))) \cong  \HH^j(H_{12}).
\]

\item Level $0$:

The complex $\M^{\bullet,\bullet}(0)$ is shown below:

\hide{
\begin{diagram}
0&\rTo&0&\rTo&0&\rTo&0&\rTo&0& \\
\uTo&&\uTo&&\uTo&&\uTo&&\uTo&\\
0&\rTo&0&\rTo&0&\rTo&0&\rTo&0& \\
\uTo&&\uTo&&\uTo&&\uTo&&\uTo&\\
0&\rTo&0&\rTo&0&\rTo&0&\rTo&0& \\
\uTo&&\uTo&&\uTo&&\uTo&&\uTo&\\
0 & \rTo& \HH^0(P_1)\bigoplus\HH^0(P_2) &\rTo&0&\rTo&0&\rTo&0&\\
\uTo &&\uTo^{d^{1,0}} &&\uTo&&\uTo&&\uTo&\\
\HH^0(H_1)\bigoplus\HH^0(H_2) & \rTo^{\delta^{0,0}} & \HH^0(C_1)\bigoplus\HH^0(C_2)& \rTo & 0&\rTo & 0&\rTo & 0&
\end{diagram}
}
{\tiny
\[
\begin{diagram}
\node{0}\arrow{e}\node{0}\arrow{e}\node{0}\arrow{e}\node{0}\arrow{e}\node{0} \\
\node{0}\arrow{e}\arrow{n}
\node{0}\arrow{e}\arrow{n}
\node{0}\arrow{e}\arrow{n}
\node{0}\arrow{e}\arrow{n}
\node{0}\arrow{n}\\
\node{0}\arrow{e}\arrow{n}
\node{0}\arrow{e}\arrow{n}
\node{0}\arrow{e}\arrow{n}
\node{0}\arrow{e}\arrow{n}
\node{0}\arrow{n} \\
\node{0}\arrow{e}\arrow{n} 
\node{\HH^0(P_1)\bigoplus\HH^0(P_2)}\arrow{e}\arrow{n}
\node{0}\arrow{e}\arrow{n}
\node{0}\arrow{e}\arrow{n}
\node{0}\arrow{n}\\
\node{\HH^0(H_1)\bigoplus\HH^0(H_2)}
\arrow{e,t}{\delta^{0,0}}\arrow{n}
\node{\HH^0(C_1)\bigoplus\HH^0(C_2)}\arrow{e}\arrow{n,l}{d^{1,0}}
\node{0}\arrow{e}\arrow{n}
\node{0}\arrow{e}\arrow{n}
\node{0}\arrow{n}
\end{diagram}
\]
}
}

\begin{enumerate}
\item Level $2$:
For $1 \leq i \leq 4$, we have
\begin{eqnarray*}
\M^{\bullet,\bullet}(\beta_i)  &=&
{\tiny
\begin{diagram}
\node{0}\arrow{e}\node{0}\\
\node{\HH^0(X_{\gamma_i})}\arrow{e}\arrow{n}\node{0}\arrow{n}
\end{diagram}
}
\end{eqnarray*}
Notice that for $1 \leq i \leq 4$, 
\[
\HH^0(\Tot^{\bullet}(\M^{\bullet,\bullet}(\beta_i))) 
\cong \HH^0(X_{\beta_i}) \cong \Q.
\]

The complex $\M^{\bullet,\bullet}(\beta_3\cdot\beta_4)$ is 
shown below.
{\tiny
\[
\begin{diagram}
\node{0}\arrow{e}\node{0}\\
\node{\HH^0(P_1)\bigoplus \HH^0(P_2)}\arrow{e}\arrow{n}\node{0}
\arrow{n}
\end{diagram}
\]
}
Notice that, 
\[
\HH^0(\Tot^{\bullet}(\M^{\bullet,\bullet}(\beta_3\cdot\beta_4))) \cong \HH^0(X_{\beta_3\cdot\beta_4}) \cong \Q\oplus \Q.
\]

\item Level $1$:
For $i=1,2$, the complex $\M^{\bullet,\bullet}(\alpha_i)$ is as follows.
{\tiny
\[
\begin{diagram}
\node{0}\arrow{e}\node{0}\arrow{e}\node{0}\\
\node{0}\arrow{e}\arrow{n}\node{0}\arrow{e}\arrow{n}\node{0}\arrow{n}
\\
\node{\HH^0(H_i)}\arrow{e}\arrow{n}\node{0}\arrow{e}\arrow{n}\node{0}
\arrow{n}
\end{diagram}
\]
}
Notice that for $i=1,2$ and $j =0,1$, 
\[
\HH^j(\Tot^{\bullet}(\M^{\bullet,\bullet}(\alpha_i))) \cong 
\HH^j(H_i).
\] 

The complex $\M^{\bullet,\bullet}(\alpha_1\cdot\alpha_2)$is shown below.
{\tiny
\[
\begin{diagram}
\node{0}\arrow{e}\node{0}\arrow{e}\node{0}\\
\node{0}\arrow{e}\arrow{n}\node{0}\arrow{e}\arrow{n}\node{0}
\arrow{n}\\
\node{\HH^0(C_1)\bigoplus\HH^0(C_2)}
\arrow{e}\arrow{n}\node{\HH^0(P_1)\bigoplus\HH^0(P_2)}
\arrow{e}\arrow{n}\node{0}\arrow{n}
\end{diagram}
\]
}
Notice that for $j =0,1$, 
\[
\HH^j(\Tot^{\bullet}(\M^{\bullet,\bullet}(\alpha_1\cdot\alpha_2))) \cong  \HH^j(H_{12}).
\]

\item Level $0$:

The complex $\M^{\bullet,\bullet}(0)$ is shown below:

{\tiny
\[
\begin{diagram}
\node{0}\arrow{e}
\node{0}\arrow{e}
\node{0}\arrow{e}
\node{0}\\
\node{0}\arrow{e}\arrow{n}
\node{0}\arrow{e}\arrow{n}
\node{0}\arrow{e}\arrow{n}
\node{0}\arrow{n}\\
\node{0}\arrow{e}\arrow{n} 
\node{\HH^0(P_1)\bigoplus\HH^0(P_2)}\arrow{e}\arrow{n}
\node{0}\arrow{e}\arrow{n}
\node{0}\arrow{n} \\
\node{\HH^0(H_1)\bigoplus\HH^0(H_2)}
\arrow{e,t}{\delta^{0,0}}\arrow{n}
\node{\HH^0(C_1)\bigoplus\HH^0(C_2)}\arrow{e}\arrow{n,l}{d^{1,0}}
\node{0}\arrow{e}\arrow{n}
\node{0}\arrow{n}
\end{diagram}
\]
}

The matrices for the homomorphisms, $\delta^{0,0}$ and $d^{1,0}$ in the
obvious bases are both equal to
$$
\left(
\begin{array}{cc}
1\;\; &\;\; 1 \\
1\;\; &\;\; 1
\end{array}
\right).
$$

From the fact that the rank of the above matrix is $1$, 
it is not too difficult to deduce that, 
$\HH^j(\Tot^{\bullet}(\M^{\bullet,\bullet}(0))) \cong \HH^j(S)$,
for $j =0,1,2$, that is
$$
\begin{array}{ccc}
\HH^0(\Tot^{\bullet}(\M^{\bullet,\bullet}(0))) &\cong & \Q, \cr
\HH^1(\Tot^{\bullet}(\M^{\bullet,\bullet}(0))) &\cong & 0, \cr
\HH^2(\Tot^{\bullet}(\M^{\bullet,\bullet}(0))) &\cong &  \Q. 
\end{array}
$$
\end{enumerate}
\end{example}

\subsubsection{Algorithm for Computing the First Few Betti Numbers}
Using the construction of the double complex outline in Section 
\ref{subsec:doublecover},
as well as the single exponential time algorithm for obtaining 
covers by contractible sets described in Section \ref{sec:acycliccov}, 
along with straightforward  algorithms from  linear algebra,
it is now easy to obtain the following result:

\begin{theorem}\cite{Basu7}
\label{the:bettifew}
For any given $\ell$, there is an algorithm that
takes as input a ${\mathcal P}$-formula describing a semi-algebraic set 
$S \subset \R^k$,
and outputs $b_0(S),\ldots,b_\ell(S).$ 
The complexity of the algorithm is $(sd)^{k^{O(\ell)}}$, 
where $s  = \#({\mathcal P})$ and $d = \max_{P\in {\mathcal P}}{\rm deg}(P).$
\end{theorem}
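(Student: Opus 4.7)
The plan is to assemble the pieces already developed in the preceding sections into a single recursive procedure whose correctness is guaranteed by Proposition~\ref{prop:main} and whose complexity is controlled by Proposition~\ref{prop:bound}. First I would reduce to the case where $S$ is closed and bounded. By the Gabrielov--Vorobjov construction (mentioned in the discussion of \cite{BPR9,GV05} at the beginning of Section \ref{sec:results}), one can in single exponential time replace the input ${\mathcal P}$-formula by a description of a closed and bounded semi-algebraic set $S' \subset \R\langle \eps_1,\ldots,\eps_m\rangle^{k'}$ that is homotopy equivalent to $\E(S,\R\langle \eps_1,\ldots,\eps_m\rangle)$, where $k',m$ and the number of defining polynomials are all bounded by $s^{O(1)}$. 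Since homotopy equivalent sets have the same Betti numbers (Proposition~\ref{6:prop:homotopicsa}), it suffices to compute $b_0(S'),\ldots,b_\ell(S')$.

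Next I would construct the set of admissible indices $\A_{S'}$ together with the associated semi-algebraic sets $X_\alpha$ and covers ${\mathcal C}(\alpha)$ from Definition~\ref{def:admissible}. This is done by a recursion of depth at most $\ell+1$, where at each level one calls the covering algorithm of Theorem~\ref{the:alg:covering} on each of the semi-algebraic sets produced at the previous level. Proposition~\ref{prop:bound} guarantees that the total number of admissible indices, as well as the number and degrees of polynomials needed to describe every $X_\alpha$, are all bounded by $(sd)^{k^{O(\ell)}}$, so the recursion terminates within the desired complexity.

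The third step is to build the double complex $\M^{\bullet,\bullet}(0)$ (Definition~\ref{def:double}), again by recursion on level starting from the base case $\level(\alpha)=\ell$. The vector spaces at each bigraded position are direct sums of $\HH^0(X_\beta) \cong \Q^{b_0(X_\beta)}$ for certain intersections $X_\beta$; to compute their dimensions and to write down the matrices of the differentials $d$ and $\delta$ in the natural basis, I need only the connected components of each relevant $X_\beta$ together with the inclusion relations between components of $X_\beta$ and those of $X_{\beta'}$ for suitable pairs $\beta,\beta'$. All of this is provided by the single exponential connectivity algorithm of Theorem~\ref{16:the:saconnecting} (together with emptiness testing on the pairwise images), applied to each of the $(sd)^{k^{O(\ell)}}$ intersections that occur.

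Finally, having written down $\Tot^{\bullet}(\M^{\bullet,\bullet}(0))$ as an explicit complex of $\Q$-vector spaces of total dimension $(sd)^{k^{O(\ell)}}$, I compute $b_i(\Tot^{\bullet}(\M^{\bullet,\bullet}(0)))$ for $0 \leq i \leq \ell$ by ordinary Gaussian elimination, which takes time polynomial in the size of the matrices and hence fits within the overall bound. By Proposition~\ref{prop:main}(1) applied to $\alpha = 0$, these integers coincide with $b_0(S'),\ldots,b_\ell(S')$, and hence with $b_0(S),\ldots,b_\ell(S)$. The main obstacle that has to be overcome in justifying this plan is not algorithmic but topological: it is the verification that the recursively constructed $\M^{\bullet,\bullet}(\alpha)$ really does compute the cohomology of $X_\alpha$ in the correct range, i.e. Proposition~\ref{prop:main} itself; everything else is a matter of bookkeeping and invoking earlier algorithms as black boxes.
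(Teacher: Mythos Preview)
Your proposal is correct and follows essentially the same approach as the paper: the paper's own argument for Theorem~\ref{the:bettifew} is a one-sentence pointer to the double complex construction of Section~\ref{subsec:doublecover}, the covering algorithm of Section~\ref{sec:acycliccov}, and linear algebra, and you have accurately unpacked these ingredients (reduction to the closed bounded case, recursive construction of $\A_{S'}$ and $\M^{\bullet,\bullet}(0)$, connectivity computations for the $\HH^0$ terms, and Gaussian elimination), with correctness resting on Proposition~\ref{prop:main} and complexity on Proposition~\ref{prop:bound}.
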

Note that the complexity is single exponential in $k$ for every fixed $\ell$.

\section{The Quadratic Case}
\label{sec:quadratic}

\subsection{Brief Outline}
\label{sec:outline}
We denote by $\Sphere^k \subset \R^{k+1}$ the unit sphere 
centered at the origin.
Consider the case of 
semi-algebraic subsets of the unit sphere, $\Sphere^k \subset \R^{k+1},$
defined by homogeneous quadratic inequalities.
There is a straightforward reduction of the  
general problem to this special case.

Let $S \subset \Sphere^{k}$ be the set defined on $\Sphere^k$  by 
$s$ inequalities, $P_1 \leq 0,\ldots, P_s \leq 0$, where
$P_1,\ldots,P_s \in \R[X_0,\ldots,X_k]$ are homogeneous quadratic
polynomials. For  each $i, 1 \leq i \leq s$, 
let $S_i \subset \Sphere^k$ denote 
the set defined on $\Sphere^k$ by $P_i \leq 0$. Then, 
$
\displaystyle{
S = \bigcap_{i=1}^s S_i.
}
$
There are two main  ingredients in the polynomial time algorithm for
computing the top Betti numbers of $S$.

The first main idea is to consider $S$ as the intersection of the
various $S_i$'s and to utilize  the double complex arising from the generalized
Mayer-Vietoris exact sequence (see Section \ref{sec:topbackground}). 
It follows from the exactness of the generalized Mayer-Vietoris sequence
(see Definition \ref{def:MVsequence}),
that the top dimensional cohomology groups of $S$ are isomorphic to those
of the total complex associated to a suitable truncation of the Mayer-Vietoris
double complex. However, computing even the truncation of the 
Mayer-Vietoris double complex, starting from a triangulation of $S$ would 
entail a doubly exponential complexity. However, we utilize the fact that
terms appearing in the truncated complex depend on
the unions  of the $S_i$'s taken at most $\ell+2$ at a time
(cf. Remark \ref{rem:induction}). 
There are at most 
$\displaystyle{
\sum_{j=1}^{\ell+2} {s \choose j}}
$ 
such sets.
Moreover, for semi-algebraic
sets defined by the disjunction of a small number of quadratic inequalities, 
we are able to compute in polynomial (in $k$) time a complex,
whose homology groups are  isomorphic to those of the given sets. 
The construction
of these complexes in polynomial time is the second important ingredient
in our algorithm and  is outlined below 
in Section \ref{subsec:comp}.
These complexes along with the homomorphisms between them
define
another double complex
whose associated spectral sequence (corresponding
to the column-wise filtration) is isomorphic 
from the $E_2$ term onwards 
to the
corresponding one of the (truncated) Mayer-Vietoris double complex
(see Theorem \ref{the:main} below).
Since, we know that 
the latter converges to the homology groups of $S$, 
the top Betti numbers of $S$ are equal to
the ranks of the homology groups of the associated total complex of the 
double complex we computed. These can then be computed using well known 
efficient algorithms from linear algebra.

In order to carry through the program described above we need to understand
a few things about the topology of sets defined by homogeneous quadratic
inequalities.

\subsection{Topology of Sets Defined by Quadratic Inequalities}
\label{sec:top_quadratic}
In this section we state a few results concerning the topology
of sets defined by quadratic inequalities, which are exploited in 
designing efficient algorithms for computing their Betti numbers.

\subsubsection{Case of One Quadratic Form}
We first consider the case of a single quadratic form 
$Q \in \R[X_0,\ldots,X_k]$. Let $S \subset \Sphere^k$ be the
set defined by $Q \geq 0$ on the unit sphere in $\R^{k+1}$.
The crucial fact that distinguishes quadratic forms from forms
of higher degree is that the homotopy type of the set $S$ is 
determined by a single invariant attached to the quadratic form
$Q$, namely its {\em index}. 

\begin{definition}[Index of a quadratic form]
\label{def:index}
For any  quadratic form $Q$, ${\rm index}(Q)$ is the number of
negative eigenvalues of the symmetric matrix of the corresponding bilinear
form, that is of the matrix $M$ such that,
$Q(x) = \langle Mx, x \rangle$ for all $x \in \R^{k+1}$ 
(here $\langle\cdot,\cdot\rangle$ denotes the usual inner product). 
We will also
denote by $\lambda_i(Q), 0 \leq i \leq k$, the eigenvalues of $M$, 
in non-decreasing order, i.e.
\[ \lambda_0(Q) \leq \lambda_1(Q) \leq \cdots \leq \lambda_k(Q).
\]
\end{definition}

A simple argument involving diagonalizing the quadratic form $Q$ 
(see \cite{Agrachev} or \cite{Basu8})
yields that the homotopy type of the set $S$ defined above is related to the 
${\rm index}(Q)$ by

\begin{proposition}
\label{prop:oneform}
The set $S$ is homotopy equivalent to the $\Sphere^{k - {\rm index}(Q)}$.
\end{proposition}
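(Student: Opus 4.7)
The plan is to produce an explicit deformation retraction from $S$ onto a copy of $\Sphere^{k-\text{index}(Q)}$ sitting inside $S$. Since the sphere $\Sphere^k$ is preserved by orthogonal transformations, I would first use the spectral theorem to replace $Q$ by an orthogonally equivalent form, and so assume without loss of generality that $Q$ is diagonal, $Q(x)=\sum_{i=0}^{k}\lambda_i x_i^2$, with $\lambda_0\leq\cdots\leq\lambda_k$. Setting $j=\text{index}(Q)$, exactly the first $j$ eigenvalues are strictly negative and the remaining ones are $\geq 0$.

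Next I would split $\R^{k+1}=V_-\oplus V_+$, where $V_-=\text{span}(e_0,\ldots,e_{j-1})$ is the strictly negative eigenspace and $V_+=\text{span}(e_j,\ldots,e_k)$ is the non-negative eigenspace, and write each $x\in\Sphere^k$ as $x=x_-+x_+$ with $x_\pm\in V_\pm$. The unit sphere $\Sphere(V_+)$ of $V_+$ has dimension $k-j$, and since $Q|_{V_+}\geq 0$, we have $\Sphere(V_+)\subset S$. I would then define the candidate deformation
\[
H:S\times[0,1]\longrightarrow S,\qquad H(x,t)=\frac{(1-t)x_-+x_+}{\bigl|(1-t)x_-+x_+\bigr|},
\]
and verify that $H(x,0)=x$, that $H(x,1)=x_+/|x_+|\in\Sphere(V_+)$, and that $H$ restricted to $\Sphere(V_+)\times[0,1]$ is the identity in the first factor. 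This would exhibit $\Sphere(V_+)\cong\Sphere^{k-j}$ as a strong deformation retract of $S$, yielding the homotopy equivalence.

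The two points that require care, and which I expect to be the main technical checkpoints, are (i) showing that the denominator $|(1-t)x_-+x_+|$ never vanishes on $S\times[0,1]$, and (ii) showing that $H(x,t)$ actually lies in $S$. For (ii), since $Q$ is homogeneous of degree two, the sign of $Q(H(x,t))$ is that of $Q((1-t)x_-+x_+)=(1-t)^2 Q(x_-)+Q(x_+)$; because $Q(x_-)\leq 0$ and $Q(x_+)\geq 0$, shrinking the coefficient on the $V_-$-part can only increase $Q$, so this is $\geq Q(x_-)+Q(x_+)=Q(x)\geq 0$. For (i), if the denominator vanishes then $x_+=0$ and either $t<1$ (forcing $x_-=0$ and contradicting $|x|=1$) or $t=1$, in which case $x=x_-$ gives $Q(x)=\sum_{i<j}\lambda_i x_i^2\leq 0$ with equality forcing $x_-=0$ because every $\lambda_i$, $i<j$, is strictly negative; this again contradicts $|x|=1$. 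The strict negativity of the eigenvalues indexed by $V_-$ is exactly what makes the argument go through and is the reason the construction splits $V_+$ (non-negative) from $V_-$ (strictly negative) rather than the more symmetric positive/non-positive split.
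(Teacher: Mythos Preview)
Your proof is correct and follows precisely the route the paper indicates: the paper does not spell out a proof but says only that ``a simple argument involving diagonalizing the quadratic form $Q$'' gives the result, referring to \cite{Agrachev} and \cite{Basu8}. You have carried out exactly this diagonalization-and-retraction argument in full detail, and your checks (i) and (ii) are the right ones; in particular your observation that strict negativity on $V_-$ is what forces $x_+\neq 0$ for $x\in S$ is the crux.
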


\begin{example}
\label{ex:oneform}
The following figure (Figure \ref{fig:illus1})
illustrates Proposition \ref{prop:oneform}.
We display (from left to right) the subsets of $\Sphere^2$ 
described by the inequalities
\begin{align*}
X_0^2 + 2X_1^2 + 3X_2^2 &\geq 0  \;\; ({\rm index} = 0),\\
X_0^2 + 2X_1^2 - 3X_2^2 &\geq 0  \;\; ({\rm index} = 1),\\
X_0^2 - 2X_1^2 - 3X_2^2 &\geq 0  \;\; ({\rm index} = 2)
\end{align*}
respectively. Notice that each of the quadratic forms defining these sets
are already in a diagonal form, and hence its index can be read off
directly from the signs of the coefficients (the index is the number
of negative coefficients). 
By Proposition \ref{prop:oneform} these 
sets have the homotopy types of $\Sphere^2$, $\Sphere^1$, and
$\Sphere^0$ respectively, as can be also seen from the displayed images
below.

\vspace*{2cm}
\begin{figure}[hbt]
\centerline{\scalebox{0.5}{
\begin{picture}(500,100)
\includegraphics[bb=0 0 64mm 30mm]{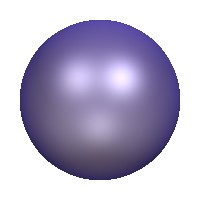}%
\includegraphics[bb=0 0 64mm 30mm]{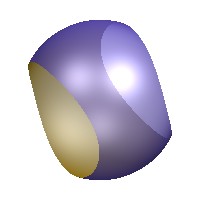}%
\includegraphics[bb=0 0 64mm 30mm]{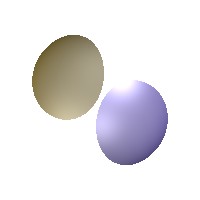}%
\end{picture}
}
}
\caption{Subsets of $\Sphere^2$ defined by one homogeneous quadratic 
inequality of index $0$, $1$ and $2$.}
\label{fig:illus1}
\end{figure}
\end{example}

\subsubsection{Case of Several Quadratic Forms}
Now let $Q_1,\ldots,Q_{s}$ be homogeneous quadratic polynomials in
$\R[X_0,\ldots,X_k]$. 

We denote by $Q = (Q_1,\ldots,Q_s): \R^{k+1} \rightarrow \R^s$,
the map defined by the forms  $Q_1,\ldots,Q_s$.  

Let
\begin{equation}
\label{eqn:defofT}
T = \bigcup_{1 \leq i \leq s}\{ x \in \Sphere^k \mid   Q_i(x) \leq   0 \},
\end{equation}
and let 
\begin{equation}
\label{eqn:defofOmega}
\Omega = \{\omega \in \R^{s} \mid  |\omega| = 1, \omega_i \leq 0, 1 \leq i \leq s\}.
\end{equation}

For $\omega \in \Omega$ we denote by ${\omega}Q$ the quadratic form
\begin{equation}
\label{eqnLdefofomegaQ}
{\omega} Q = \sum_{i=1}^{s} \omega_i Q_i.
\end{equation}

Let $B \subset \Omega \times \Sphere^k$ be the set defined by
\begin{equation}
\label{eqn:defofB}
B = \{ (\omega,x)\mid \omega \in \Omega, x \in \Sphere^k \;\mbox{and} \; 
{\omega}Q(x) \geq 0\}.
\end{equation}

We denote by $\phi_1: B \rightarrow \Omega $ and 
$\phi_2: B \rightarrow \Sphere^k$ the two projection maps. 

\[
\begin{diagram}
\node{}\node{B} \arrow{sw,t}{\phi_1} 
\arrow{se,t}{\phi_2} \\
\node{\Omega} \node{} \node{\Sphere^k}
\end{diagram}
\]

With the notation developed above we have
\begin{proposition} \cite{Agrachev}
\label{prop:homotopy2}
The map $\phi_2$ gives a homotopy equivalence between $B$ and 
$\phi_2(B) = T$.
\end{proposition}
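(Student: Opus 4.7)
The plan is to apply the Smale--Vietoris theorem (as invoked earlier in Lemma \ref{lem:hocolimit1}), which says that a proper continuous surjection between reasonable spaces with contractible fibers is a homotopy equivalence. Properness of $\phi_2$ will be automatic since $B$ lives inside the compact set $\Omega \times \Sphere^k$, and both $B$ and $T$ are closed semi-algebraic sets (hence have the homotopy type of finite CW complexes), so the only real work is to verify that $\phi_2(B)=T$ and that $\phi_2^{-1}(x)$ is contractible for every $x \in T$.

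For the equality $\phi_2(B) = T$: if $x \in T$, pick $i$ with $Q_i(x) \leq 0$ and take $\omega = -e_i$, where $e_i$ is the $i$-th standard basis vector of $\R^s$; then $\omega \in \Omega$ and $\omega Q(x) = -Q_i(x) \geq 0$, so $(\omega,x) \in B$. Conversely, if $x \in \Sphere^k$ satisfies $Q_i(x) > 0$ for every $i$, then for any $\omega \in \Omega$ one has $\omega Q(x) = \sum_i \omega_i Q_i(x) < 0$, because all $\omega_i \leq 0$ and $|\omega|=1$ forces at least one $\omega_i < 0$; thus $x \notin \phi_2(B)$.

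For contractibility of the fiber over $x \in T$, I would consider the set
\[
K_x = \{\omega \in \R^s \mid \omega_i \leq 0 \text{ for all } i, \ \omega Q(x) \geq 0\},
\]
so that $\phi_2^{-1}(x) = K_x \cap \Sphere^{s-1}$. Then $K_x$ is a closed convex cone (intersection of two closed cones), is pointed (it sits inside the negative orthant, which is pointed), and is nonempty by the previous paragraph. Fixing any $\omega_0 \in K_x \cap \Sphere^{s-1}$, I would exhibit the strong deformation retraction
\[
H(\omega,t) = \frac{t\omega_0 + (1-t)\omega}{|t\omega_0 + (1-t)\omega|}, \quad t \in [0,1],
\]
from $K_x \cap \Sphere^{s-1}$ onto $\{\omega_0\}$. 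The only point requiring verification is that the denominator never vanishes: if $t\omega_0 + (1-t)\omega = 0$ with $t \in [0,1]$, then $\omega$ would be a nonpositive scalar multiple of $\omega_0$, forcing $\omega_0 \in K_x \cap (-K_x) = \{0\}$, contradicting $|\omega_0| = 1$. Convexity of $K_x$ ensures $t\omega_0 + (1-t)\omega \in K_x$, and since $K_x$ is a cone the normalized expression stays in $K_x \cap \Sphere^{s-1}$.

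I do not anticipate serious obstacles; the conceptual content is the recognition that each fiber is the spherical slice of a pointed convex cone, and the geometric fact that such a slice is contractible is elementary. The only mildly delicate issue will be invoking a version of the Smale--Vietoris theorem appropriate to the semi-algebraic setting, but the map $\phi_2$ is proper with contractible (in particular nonempty) fibers between closed semi-algebraic sets, which is well within the standard hypotheses.
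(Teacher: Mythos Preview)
The paper does not actually prove this proposition; it simply attributes the result to Agrachev and states it without argument. Your proof is correct and is in fact the standard one: the fiber $\phi_2^{-1}(x)$ is the intersection of the unit sphere with a pointed closed convex cone (the negative orthant intersected with a half-space), hence contractible, and a Vietoris--Smale type result then gives the homotopy equivalence. This is precisely the argument one finds in Agrachev's original paper and in subsequent expositions of the quadratic case, so there is nothing to compare against and nothing to correct.
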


We denote by 
\begin{equation}
\label{eqn:defofOmega_j}
\Omega_j = \{\omega \in \Omega \;  \mid \;  \lambda_j({\omega}P) \geq 0 \}.
\end{equation}

It is clear that the $\Omega_j$'s induce a filtration of the space
$\Omega$, i.e.,
$
\Omega_0 \subset \Omega_1 \subset \cdots \subset \Omega_{k}.
$

The following lemma follows directly from Proposition
\ref{prop:oneform}.
It is an important ingredient in the
algorithms for computing the Euler-Poincar\'e 
characteristic as well as the Betti numbers of semi-algebraic sets
defined by quadratic inequalities described later.

\begin{lemma}
\label{lem:sphere}
The fiber of the map $\phi_1$ over a point 
$\omega \in \Omega_{j}\setminus \Omega_{j-1}$ has the homotopy type
of a sphere of dimension $k-j$. 
\end{lemma}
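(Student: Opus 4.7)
The plan is to reduce this directly to Proposition \ref{prop:oneform}, which already handles the fiber of $\phi_1$ over a single point by recognizing it as a sublevel set of a single quadratic form on the sphere.

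First, I would unpack what the fiber actually is. By definition of $B$ in equation~(\ref{eqn:defofB}), for any $\omega \in \Omega$,
\[
\phi_1^{-1}(\omega) \;=\; \{\omega\} \times \bigl\{ x \in \Sphere^k \;\mid\; \omega Q(x) \geq 0 \bigr\}.
\]
Since $\omega Q = \sum_i \omega_i Q_i$ is itself a homogeneous quadratic form on $\R^{k+1}$, Proposition \ref{prop:oneform} applied to $\omega Q$ gives immediately that the fiber is homotopy equivalent to $\Sphere^{k - \mathrm{index}(\omega Q)}$.

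Hence the whole problem reduces to identifying $\mathrm{index}(\omega Q)$ on the stratum $\Omega_j \setminus \Omega_{j-1}$. This is where the eigenvalue ordering $\lambda_0(\omega Q) \leq \lambda_1(\omega Q) \leq \cdots \leq \lambda_k(\omega Q)$ together with the definition (\ref{eqn:defofOmega_j}) of $\Omega_j$ does the work. The condition $\omega \in \Omega_j$ says $\lambda_j(\omega Q) \geq 0$, so $\lambda_j(\omega Q), \ldots, \lambda_k(\omega Q)$ are all nonnegative and at most the first $j$ eigenvalues are negative, giving $\mathrm{index}(\omega Q) \leq j$. The condition $\omega \notin \Omega_{j-1}$ says $\lambda_{j-1}(\omega Q) < 0$, so $\lambda_0(\omega Q), \ldots, \lambda_{j-1}(\omega Q)$ are all negative, giving $\mathrm{index}(\omega Q) \geq j$. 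Combining, $\mathrm{index}(\omega Q) = j$ on this stratum.

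Substituting back, $\phi_1^{-1}(\omega)$ is homotopy equivalent to $\Sphere^{k-j}$, as claimed. There is no real obstacle here; the only point to be mildly careful about is the convention that $\mathrm{index}$ counts strictly negative eigenvalues (so the boundary condition $\lambda_j(\omega Q) = 0$ does not contribute to the index), which is exactly what lets the strict/non-strict inequalities in the definition of the filtration $\Omega_j$ match up with the index count. Once this is noted, the lemma is essentially a bookkeeping consequence of Proposition \ref{prop:oneform}.
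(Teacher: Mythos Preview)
Your proposal is correct and follows exactly the approach the paper indicates: the paper simply states that the lemma follows directly from Proposition~\ref{prop:oneform}, and your argument spells out precisely this reduction together with the index computation on the stratum $\Omega_j \setminus \Omega_{j-1}$.
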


We illustrate Lemma \ref{lem:sphere} with an example.

\begin{example}
\label{ex:twoquadrics}
In this example $s=2, k=2$, and

\begin{align*}
Q_1 =& - X_0^2 - X_1^2 - X_2^2, \\
Q_2 =&   X_0^2  + 2 X_1^2  + 3 X_2^2.
\end{align*}

\begin{figure}[hbt]
\scalebox{0.8}{
\includegraphics{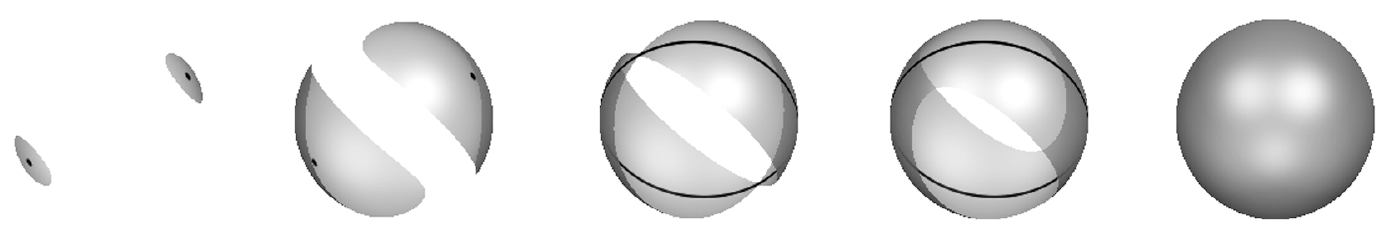}%
}
\caption{Type change: $\emptyset\to \Sphere^0\to \Sphere^1\to \Sphere^2$. 
$\emptyset$ is not shown. }
\label{fig:illus2}
\end{figure}

The set $\Omega$ is the part of the unit circle in the third quadrant of the
plane.
In the following Figure \ref{fig:illus2}, we display
the fibers of the map $\varphi_1^{-1}(\omega) \subset B$ for a sequence of 
values of $\omega$ starting from $(-1,0)$ and ending at 
$(0,-1)$. We also show the spheres
of dimensions $0,1$, and $2$, that these fibers
retract to. At $\omega = (-1,0)$, it is easy to verify that
${\rm index}(\omega Q) = 3$, and the 
fiber $\varphi_1^{-1}(\omega) \subset B$
is empty. Starting from
$\omega = (-\cos(\arctan(1)),-\sin(\arctan(1)))$ we have 
${\rm index}(\omega Q) = 2$
and the fiber 
$\varphi_1^{-1}(\omega)$ consists of the union of two spherical caps 
homotopy equivalent to $\Sphere^0$.
Starting from
$\omega = (-\cos(\arctan(1/2)),-\sin(\arctan(1/2)))$ we have
${\rm index}(\omega Q) = 1$, and 
the fiber $\varphi_1^{-1}(\omega)$ is homotopy equivalent to $\Sphere^1$. Finally,
starting from
$\omega = (-\cos(\arctan(1/3)),-\sin(\arctan(1/3)))$,
${\rm index}(\omega Q) = 0$, and 
the fiber $\varphi_1^{-1}(\omega)$ stays equal to to $\Sphere^2$.
\end{example}

As a consequence of Lemma \ref{lem:sphere} we obtain 
the following proposition which relates
the Euler-Poincar\'e characteristic of the set
$T$ (cf. Eqn. (\ref{eqn:defofT})) 
with the Borel-Moore Euler-Poincar\'e characteristic
(cf. Definition \ref{def:defofBMEP})
of $\Omega_j\setminus \Omega_{j-1}, \; 0 \leq j \leq k+1$.

\begin{proposition}
\label{prop:chi1}
\begin{equation}
\label{eqn:EP}
\chi(T) = 
\chi^{BM}(T) = \sum_{j=0}^{k+1} \chi^{BM}(\Omega_{j}\setminus \Omega_{j-1})
(1 + (-1)^{(k-j)}).
\end{equation}
\end{proposition}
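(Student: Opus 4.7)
The proof will have six main stages, combining the structural results already set up (Propositions \ref{prop:homotopy2}, \ref{6:prop:additivity}, Lemma \ref{lem:sphere}) with the additivity of the Borel-Moore Euler--Poincar\'e characteristic.

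First, the equality $\chi(T)=\chi^{BM}(T)$ is immediate: the set $T$ is a closed semi-algebraic subset of the compact sphere $\Sphere^k$, so it is closed and bounded, and for such sets the Borel-Moore and ordinary Euler--Poincar\'e characteristics agree (see the discussion following Definition \ref{def:defofBMEP}). I will then pass to the ``graph'' $B$ by invoking Proposition \ref{prop:homotopy2}: the projection $\phi_2 \colon B \to T$ is a homotopy equivalence, and since $B \subset \Omega \times \Sphere^k$ is itself closed and bounded, Proposition \ref{6:prop:homotopicsa} yields $\chi(T)=\chi(B)=\chi^{BM}(B)$.

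Next I would decompose $B$ along the fibers of $\phi_1$ using the filtration $\Omega_0 \subset \Omega_1 \subset \cdots \subset \Omega_k$. Setting $B_j := \phi_1^{-1}(\Omega_j \setminus \Omega_{j-1})$ (with the convention $\Omega_{-1}=\emptyset$), each $B_j$ is locally closed and $B$ is their disjoint union. Iterated application of Proposition \ref{6:prop:additivity} gives
\[
\chi^{BM}(B) \;=\; \sum_{j=0}^{k+1} \chi^{BM}(B_j).
\]

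The central step is to compute each $\chi^{BM}(B_j)$ in terms of the base and the fiber. By Lemma \ref{lem:sphere} the fiber of $\phi_1 \colon B_j \to \Omega_j \setminus \Omega_{j-1}$ over every point is homotopy equivalent to $\Sphere^{k-j}$, and the map is semi-algebraic. I would then invoke semi-algebraic (Hardt) triviality to stratify $\Omega_j \setminus \Omega_{j-1}$ into finitely many locally closed semi-algebraic strata over each of which $\phi_1$ is a trivial fibration with fiber homotopy equivalent to $\Sphere^{k-j}$; multiplicativity of $\chi^{BM}$ on trivial products together with additivity across the strata then yields
\[
\chi^{BM}(B_j) \;=\; \chi(\Sphere^{k-j})\cdot \chi^{BM}(\Omega_j \setminus \Omega_{j-1}) \;=\; \bigl(1+(-1)^{k-j}\bigr)\,\chi^{BM}(\Omega_j \setminus \Omega_{j-1}),
\]
using the standard value $\chi(\Sphere^{k-j})=1+(-1)^{k-j}$ (interpreted as $0$ when $j=k+1$, consistently with $\Sphere^{-1}=\emptyset$). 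Summing over $j$ gives the claimed identity.

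The main obstacle is the fiberwise step: Lemma \ref{lem:sphere} only asserts the fibers have the homotopy type of a sphere, but to pass from a pointwise Euler-characteristic computation to a multiplicative formula for $\chi^{BM}(B_j)$ we need some uniform behavior. I expect this to be handled by semi-algebraic triviality, which is available in the present setting and guarantees that, after a finite stratification of the base, the restriction of $\phi_1$ is a semi-algebraic trivial fibration; on each trivial piece $U \times F$ one has $\chi^{BM}(U\times F)=\chi^{BM}(U)\cdot\chi(F)$ since $F$ is compact and homotopy equivalent to a sphere. The remaining bookkeeping (checking the local closedness of the $B_j$'s and the precise additivity statement for the resulting partition) is routine.
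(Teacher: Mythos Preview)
Your proposal is correct and follows exactly the approach the paper intends: the paper presents Proposition~\ref{prop:chi1} simply as ``a consequence of Lemma~\ref{lem:sphere}'' after setting up Proposition~\ref{prop:homotopy2}, without spelling out any further details, and your argument fills in precisely those details---passing from $T$ to $B$ via $\phi_2$, stratifying $B$ by the index filtration on $\Omega$, and combining additivity of $\chi^{BM}$ with the fiberwise sphere computation. The use of Hardt triviality (the paper cites \cite{Hardt}) to justify the multiplicativity step is the standard and appropriate tool here.
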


It is instructive to continue Example \ref{ex:twoquadrics} and compute the
Euler-Poincar\'e characteristic of the set $T$ in that case using
Proposition \ref{prop:chi1}.
\begin{example}
In this example for each $0 < j \leq 3$, 
\[
\Omega_j \setminus \Omega_{j-1} \mbox{ is homeomorphic to } [0,1)
\]
and for $j=0$ we have 
\[
\Omega_0 \setminus \Omega_{-1} = \Omega_0 \mbox{ is homeomorphic to } [0,1].
\]
Recall from Eqn. (\ref{eqn:bmepforhalfopeninterval})
that $\chi^{BM}([0,1)) = 0$ and
$\chi^{BM}([0,1]) = \chi([0,1]) = 1$.
Finally using Eqn. (\ref{eqn:EP}) we deduce that
\[
\chi(T) = 2.
\]
\end{example}

\subsection{Computing the Euler-Poincar\'e Characteristics of Sets
Defined by Few Quadratic Inequalities}
\label{subsec:epquadratic}
Proposition \ref{prop:chi1} reduces the problem of 
computing the Euler-Poincar\'e characteristic of the set $T$, which
is defined by quadratic forms in $k+1$ variables, 
to computing the Borel-Moore Euler-Poincar\'e characteristics of the
sets $\Omega_{j}\setminus \Omega_{j-1} \subset \Omega$,
which are defined by $O(k)$ 
polynomials  in $s$ variables of degree also bounded by $O(k)$.
We now utilize an efficient algorithm 
for listing the Borel-Moore Euler-Poincar\'e characteristics of 
the realizations of all 
all sign conditions of a family of polynomials
developed in \cite{BPR6} to compute the terms occurring on the right
hand side of Eqn. (\ref{eqn:EP}). The complexity of this algorithm is
exponential in the number of variables (which is $O(s)$ in this case)
and polynomial in the number and degrees of the input polynomials
(which are $O(k)$ in this case).

The set $T$ is defined by a {\em disjunction} of homogeneous quadratic
inequalities. But since the Euler-Poincar\'e characteristic 
satisfies the inclusion-exclusion formula
(cf. Proposition \ref{6:prop:epinclusionexclusion}), 
we are able to compute
it for sets defined by a conjunction of such inequalities within the same
asymptotic time bound by making at most $2^s$ calls to the 
algorithm for disjunctions. 

For inhomogeneous quadratic inequalities there is
an easy  reduction  to the homogeneous case
(see \cite{Basu6} for detail).
As a result we obtain

\begin{theorem}\cite{Basu6}
\label{the:algoEP}
There exists an algorithm 
which takes as input a closed semi-algebraic set
$S \subset \R^k$ defined by 
\[
P_1 \leq 0, \ldots, P_s \leq 0,
P_i \in \R[X_1,\ldots,X_k], \deg(P_i) \leq 2,
\]
and computes the Euler-Poincar\'e characteristic of $S$. The complexity of the
algorithm is $k^{O(s)}$.
\end{theorem}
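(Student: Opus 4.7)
The plan is to reduce the problem to the homogeneous case, then to invoke the structural result Proposition \ref{prop:chi1} via an inclusion-exclusion argument, and finally to delegate the Borel-Moore Euler-Poincar\'e computations to the sign-condition algorithm of \cite{BPR6}.

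First I would homogenize: introducing a new variable $X_0$ one can associate to each $P_i(X_1,\ldots,X_k)$ of degree $\leq 2$ a homogeneous quadratic $Q_i \in \R[X_0,\ldots,X_k]$ in such a way that $\chi(S)$ is expressible in terms of the Euler-Poincar\'e characteristics of the closed subsets of $\Sphere^k$ cut out by $Q_i \leq 0$ on the two hemispheres $\{X_0 \geq 0\}$ and $\{X_0 \leq 0\}$ (this is the ``easy reduction'' referenced in Section~\ref{subsec:epquadratic}; the details in \cite{Basu6} handle the unbounded case by distinguishing the contribution of the hyperplane at infinity). From now on I may assume $S = \bigcap_{i=1}^{s} A_i \subset \Sphere^k$ with $A_i = \{x \in \Sphere^k \mid Q_i(x) \leq 0\}$, each $Q_i$ homogeneous of degree two.

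Next I would turn the conjunction into a combination of disjunctions. Since every $A_i$ is closed and bounded, Proposition~\ref{6:prop:epinclusionexclusion} applied to $\bigcup_{j \in J} A_j$ yields a triangular linear system that can be inverted by Möbius inversion on the Boolean lattice, giving
\[
\chi\Bigl(\bigcap_{i=1}^{s} A_i\Bigr) \;=\; \sum_{\emptyset \neq J \subseteq [s]} (-1)^{|J|+1}\, \chi(T_J),
\qquad T_J := \bigcup_{j \in J} A_j.
\]
This reduces the problem to computing $\chi(T_J)$ for each of the $2^s-1$ nonempty subsets $J \subseteq [s]$, using the algorithm for disjunctions.

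For each fixed $J$, I would apply Proposition~\ref{prop:chi1} to the family $\{Q_j\}_{j \in J}$, which writes $\chi(T_J)$ as an alternating sum of $\chi^{BM}(\Omega_{J,j} \setminus \Omega_{J,j-1})$, where $\Omega_{J,j} \subset \Omega_J \subset \R^{|J|}$ is defined by the condition $\lambda_j(\omega Q) \geq 0$ on the $j$-th eigenvalue of $\omega Q = \sum_{i \in J} \omega_i Q_i$. The key point is that the condition ``$\lambda_j(\omega Q) \geq 0$'' is equivalent to a Boolean combination of sign conditions on the coefficients of the characteristic polynomial of the symmetric matrix of $\omega Q$ (obtained, e.g., via the Sturm--Habicht or Descartes sign-counting criterion); these coefficients form a family of at most $O(k)$ polynomials in the $|J| \leq s$ variables $\omega$, each of degree $O(k)$. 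I would then feed this family to the algorithm of \cite{BPR6} that computes $\chi^{BM}$ of the realization of every sign condition of a family of polynomials in $n$ variables; its complexity, being polynomial in the number and degrees of the polynomials and singly exponential in the number of variables, is $k^{O(|J|)} \leq k^{O(s)}$ in our setting. Summing over all $J$ gives the claimed bound $2^s \cdot k^{O(s)} = k^{O(s)}$.

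The main obstacle, and the step that requires the most care, is the algebraic encoding of the eigenvalue conditions $\lambda_j(\omega Q) \geq 0$ as polynomial sign conditions of modest degree in $\omega$: one must verify that counting non-negative roots of the characteristic polynomial in $\lambda$ can be done via $O(k)$ polynomials in $\omega$ of degree $O(k)$, so that the complexity estimate is genuinely $k^{O(s)}$ rather than exponential in $k$. A secondary technicality is making sure the homogenization step correctly relates $\chi(S)$ (possibly unbounded) to characteristic computations on $\Sphere^k$, for which I would simply cite the reduction in \cite{Basu6}.
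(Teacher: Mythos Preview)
Your proposal is correct and follows essentially the same route as the paper's own argument in Section~\ref{subsec:epquadratic}: homogenize, use inclusion--exclusion to pass from conjunctions to at most $2^s$ disjunctions, apply Proposition~\ref{prop:chi1} to each $T_J$, and compute the Borel--Moore Euler--Poincar\'e characteristics of the $\Omega_{J,j}\setminus\Omega_{J,j-1}$ via the sign-condition algorithm of \cite{BPR6}. Your explicit M\"obius-inverted formula and your identification of the eigenvalue-encoding step as the delicate point match the paper's treatment exactly.
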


\begin{remark}
Very recently \cite{BP'R07} the above algorithm has been generalized to
the following setting.
Let 
\[
{\mathcal Q} = \{Q_1,\ldots,Q_s\} 
\subset \R[X_1,\ldots,X_k,Y_1,\ldots,Y_\ell]
\]
with
$
\deg_X(Q_i) \leq 2, \deg_Y(Q_i) \leq d, 1 \leq i \leq s,
$
and
\[
{\mathcal P}
\subset \R[Y_1,\ldots,Y_\ell]
\]
with 
$\deg(P) \leq d, P \in {\mathcal P}$ and $\#{\mathcal P} = m$.
Let $S \subset \R^{k+\ell}$  be a 
${\mathcal P} \cup {\mathcal Q}$-closed semi-algebraic set.
Then,
\begin{theorem}\cite{BP'R07}
\label{the:generalEP}
There exists an algorithm for computing the Euler-Poincar\'e
characteristic of $S$ whose complexity is bounded by
$(k \ell m d)^{O(s(s+\ell))}$.
\end{theorem}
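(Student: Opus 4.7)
The plan is to extend the index-based approach underlying Theorem \ref{the:algoEP} by treating $Y$ as a parameter and then assembling the contributions via an additivity (Fubini-type) argument for the Borel--Moore Euler--Poincar\'e characteristic.

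First, I would reduce to a tractable normal form. By writing the defining Boolean formula of $S$ in disjunctive normal form and applying the inclusion-exclusion identity (Proposition \ref{6:prop:epinclusionexclusion}), the computation of $\chi(S)$ is reduced to computing $\chi$ of basic closed sets cut out by conditions of the form $P_i \bowtie 0$ with $P_i\in\mathcal{P}$ and $Q_j \bowtie 0$ with $Q_j\in\mathcal{Q}$. A second layer of inclusion-exclusion (as in Theorem~\ref{the:algoEP}) converts conjunctions of $\mathcal{Q}$-inequalities into disjunctions, at the cost of a factor $2^s$. After homogenizing in $X$ (introducing an auxiliary variable $X_0$), one may assume the fiber problem lives on $\Sphere^k$ and is defined by a disjunction $\bigvee_j Q_j(X,y)\le 0$ of homogeneous quadratic inequalities in $X$ whose coefficients are polynomials in $y$ of degree $\le d$.

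Next, I would apply the parametric index formula. For each sign condition $\tau$ on $\mathcal{P}$ and each $y\in\RR(\tau)$, Proposition \ref{prop:chi1} applied fiber-wise to $T_y\subset\Sphere^k$ yields
\[
\chi(T_y)=\sum_{i=0}^{k+1}\bigl(1+(-1)^{k-i}\bigr)\,\chi^{BM}\bigl(\Omega_i(y)\setminus\Omega_{i-1}(y)\bigr),
\]
where $\Omega_i(y)=\{\omega\in\Omega:\lambda_i(\omega Q(\cdot,y))\ge 0\}$ is defined in $\Omega\subset\R^s$ by sign conditions on $O(k)$ polynomials in $(\omega,y)$ of degree $O(k)$ in $\omega$ and $O(kd)$ in $y$ (coming from the coefficients of the characteristic polynomial of the symmetric matrix of the form $\omega Q(\cdot,y)$). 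Letting $\Omega_i\subset\Omega\times\R^\ell$ denote the total space of this family, Proposition \ref{prop:homotopy2} extends fiber-wise to give $\chi(B_y)=\chi(T_y)$ for every $y$. Hence the constructible function $y\mapsto\chi(T_y)$ is computed uniformly by the pushforward of the indicator of $B$ along the projection $\Omega\times\R^\ell\to\R^\ell$.

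Then I would use additivity of $\chi^{BM}$ over a common semi-algebraic partition. By the Fubini principle for Borel--Moore Euler--Poincar\'e characteristic (pushforward of constructible functions along semi-algebraic maps, Proposition \ref{6:prop:additivity}), for each sign condition $\tau$ on $\mathcal{P}$,
\[
\chi\bigl(S\cap(\Sphere^k\times\RR(\tau))\bigr)=\sum_{i=0}^{k+1}\bigl(1+(-1)^{k-i}\bigr)\,\chi^{BM}\bigl((\Omega_i\setminus\Omega_{i-1})\cap(\Omega\times\RR(\tau))\bigr).
\]
Combined with the two inclusion-exclusion reductions above, every term on the right is the Borel--Moore Euler--Poincar\'e characteristic of the realization of some sign condition on a family of at most $O(k)+m$ polynomials in the $s+\ell$ variables $(\omega,y)$ of degree at most $O(kd)$. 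These can be computed jointly for all sign conditions simultaneously using the algorithm of \cite{BPR6} (underlying Theorem~\ref{the:algoEP}); tracking the factors $2^s$, $2^m$ from the two inclusion-exclusion layers, the ambient dimension $s+\ell$, and the degrees and cardinalities of the polynomial families yields the announced bound $(k\ell md)^{O(s(s+\ell))}$.

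The main obstacle is the middle step: making the parametric Fubini statement rigorous and algorithmically effective. One must ensure that the fiberwise homotopy equivalence of Proposition \ref{prop:homotopy2} translates into an equality of constructible pushforwards (so that the pointwise formula for $\chi(T_y)$ can legitimately be integrated against $d\chi$), and that all sets involved are locally closed enough for Borel--Moore arguments to apply uniformly across the stratification of $\R^\ell$ induced by $\mathcal{P}$ and by the eigenvalue-sign polynomials on $\omega Q(\cdot,y)$. The remaining steps are then essentially bookkeeping: counting polynomials, bounding their degrees, and invoking the sign-condition algorithm of \cite{BPR6} in the larger space $\R^{s+\ell}$.
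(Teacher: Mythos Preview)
The survey does not contain a proof of Theorem~\ref{the:generalEP}: it is stated inside a remark and attributed to \cite{BP'R07}. So there is no ``paper's own proof'' to compare against here. That said, your proposal is exactly the natural extension of the argument the paper sketches for Theorem~\ref{the:algoEP}: treat $y$ as a parameter, apply Proposition~\ref{prop:chi1} fiber-wise, and push the resulting constructible function forward along the projection to $\R^\ell$ using the additivity of $\chi^{BM}$. You also correctly identify the parametric Fubini step as the place where real work is needed; for semi-algebraic maps this is justified by Hardt's local triviality (cited in the paper as \cite{Hardt}), which guarantees that $y\mapsto\chi(T_y)$ is constant on the pieces of a semi-algebraic partition of the base, so the integration-against-Euler-characteristic identity goes through.

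There is, however, a genuine gap in your first reduction step. Writing the defining formula of a general $(\mathcal P\cup\mathcal Q)$-closed set in DNF and then applying Proposition~\ref{6:prop:epinclusionexclusion} does not stay within the stated complexity: the DNF can have size exponential in $s+m$, and the ``$2^m$'' factor you record is not dominated by $(k\ell m d)^{O(s(s+\ell))}$ once $m$ is large relative to $s$ and $\ell$. The correct reduction is the one implicit in \cite{BPR6} and already used in the paper for Theorem~\ref{the:algoEP}: use additivity of $\chi^{BM}$ over the partition of $\R^{k+\ell}$ by realizable sign conditions on $\mathcal P\cup\mathcal Q$ (Proposition~\ref{6:prop:additivity}), and compute the whole list of $\chi^{BM}(\RR(\sigma))$ simultaneously. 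That algorithm is polynomial in the number of polynomials and single exponential only in the ambient dimension, so the dependence on $m$ stays polynomial. With this fix, your outline matches the approach of \cite{BP'R07}; the remaining discrepancy between the bound you sketch and the exponent $O(s(s+\ell))$ in the statement is a matter of careful bookkeeping in that reference (in particular, the reduction from strict to non-strict quadratic sign conditions and the handling of the homogenization each contribute factors that you have not tracked), not a conceptual difference.
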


Notice that Theorem \ref{the:generalEP} is a generalization
of Theorem \ref{the:algoEP} in several respects. It allows a subset
of the variables to occur with degrees bigger than $2$ (and the
complexity of the algorithm is exponential in the number of these
variables) and it takes as input general 
${\mathcal P}\cup{\mathcal Q}$-closed semi-algebraic sets, not just
basic closed ones.
\end{remark}

\subsection{Computing the Betti Numbers}
\label{subsec:comp}
\subsubsection{The Homogeneous Case}
We first consider the homogeneous case.

Let ${\mathcal P}= (P_1,\ldots,P_s) \subset \R[X_0,\ldots,X_k]$ be a 
$s$-tuple of quadratic forms (i.e. homogeneous quadratic polynomials).
For any subset ${\mathcal Q} \subset {\mathcal P}$ we denote by
$T_{\mathcal Q} \subset \Sphere^k$ the semi-algebraic set
$$
\displaylines{
T_{\mathcal Q} = \bigcup_{P \in {\mathcal Q}}
               \{x \in \Sphere^k\; \mid \; P(x) \leq 0 \},
}
$$
and let
$$
\displaylines{
S = \bigcap_{P \in {\mathcal P}}
               \{x \in \Sphere^k\; \mid \; P(x) \leq 0 \}.
}
$$
\noindent
We denote by $\Ch^\bullet({\mathcal H}(T_{\mathcal Q}))$
the co-chain complex of a triangulation 
${\mathcal H}(T_{\mathcal Q})$ of $T_{\mathcal Q}$
which is to be chosen sufficiently fine.

We first describe for each subset ${\mathcal Q} \subset {\mathcal P}$
with $\#{\mathcal Q} = \ell < k$
a complex, ${\mathcal M}^{\bullet}_{\mathcal Q}$, 
and natural homomorphisms
$$
\psi_{\mathcal Q}: \Ch^\bullet({\mathcal H}(T_{\mathcal Q})) \rightarrow
{\mathcal M}^\bullet_{\mathcal Q}
$$
which induce isomorphisms
$$
\psi_{\mathcal Q}^*:
H^{*}(\Ch^{\bullet}({\mathcal H}(T_{\mathcal Q}))) \rightarrow
H^*({\mathcal M}_{\mathcal Q}^\bullet).
$$ 

Moreover, for ${\mathcal B} \subset {\mathcal A} \subset {\mathcal P}$ with
$\#{\mathcal A} = \#{\mathcal B} + 1  < k$, we construct a homomorphism
of complexes
$$
\phi_{{\mathcal A},{\mathcal B}}: {\mathcal M}^{\bullet}_{\mathcal A} \rightarrow 
{\mathcal M}^{\bullet}_{\mathcal B}
$$
such that the following diagram commutes.

\begin{equation}
\label{eqn:commutative}
\begin{diagram}
\node{H^*({\mathcal M}^{\bullet}_{\mathcal A})}
\arrow{e,t}{\phi_{{\mathcal A},{\mathcal B}}^*}
\node{H^*({\mathcal M}^{\bullet}_{\mathcal B})} \\
\node{H^*(\Ch^\bullet({\mathcal H}(T_{\mathcal A})))}
\arrow{e,t}{r^*}\arrow{n,l}{\psi_{\mathcal A}^*}
\node{H^*(\Ch^\bullet({\mathcal H}(T_{\mathcal B})))}
\arrow{n,l}{\psi_{\mathcal B}^*}
\end{diagram}
\end{equation}

\noindent
In the above diagram
$\phi_{{\mathcal A},{\mathcal B}}^*$ and $r^*$ are the induced homomorphisms
of $\phi_{{\mathcal A},{\mathcal B}}$ and the restriction homomorphism $r$ 
respectively.

Now consider a fixed subset ${\mathcal Q} \subset {\mathcal P}$, which 
without loss of generality, we take to be $\{P_1,\ldots,P_\ell\}$. Let
$$
P =(P_1,\ldots,P_\ell): \R^{k+1} \rightarrow \R^\ell
$$ 
denote the corresponding quadratic map.

Let
$\R^{\mathcal Q} = \R^{\ell}$ and
$$
\Omega_{\mathcal Q} = \{\omega \in \R^{\ell} \mid  |\omega| = 1, 
\omega_i \leq 0, 1 \leq i \leq \ell\}.
$$

Let 
$B_{\mathcal Q} \subset \Omega_{\mathcal Q} \times \Sphere^k$ be the set defined by
\[
B_{\mathcal Q} = \{ (\omega,x) \mid \omega \in \Omega_{\mathcal Q}, x \in \Sphere^k \;\mbox{and} \; 
{\omega}P(x) \geq  0\},
\]
and we denote by $\phi_{1,{\mathcal Q}}: {B}_{\mathcal Q} \rightarrow \Omega_{\mathcal Q}$ 
and 
$\phi_{2,{\mathcal Q}}: {B}_{\mathcal Q} \rightarrow \Sphere^k$ the two projection maps.

For each subset ${\mathcal Q}' \subset {\mathcal Q}$ we have a natural inclusion
$\Omega_{{\mathcal Q}'} \hookrightarrow \Omega_{\mathcal Q}$.

\subsubsection{Index Invariant Triangulations}
We now define a certain special kind of 
semi-algebraic triangulation of $\Omega_{{\mathcal Q}}$
that will play an important role in our algorithm.

\begin{definition}[Index Invariant Triangulation]
\label{def:iit}
An {\em index invariant triangulation} of $\Omega_{{\mathcal Q}}$ consists of:
\begin{enumerate}
\item
A semi-algebraic triangulation,
$$
h: \Delta_{{\mathcal Q}}  \rightarrow \Omega_{{\mathcal Q}}
$$ of  $\Omega_{{\mathcal Q}}$
which is compatible with the subsets $\Omega_{{\mathcal Q}'}$ for every
${\mathcal Q}' \subset {\mathcal Q}$ and
such that for any simplex $\sigma$ of $\Delta_{{\mathcal Q}}$, 
${\rm index}(\omega P_{\mathcal Q})$ 
as well as the multiplicities of the eigenvalues of $\omega P_{\mathcal Q}$
stay invariant as $\omega$ varies over $h(\sigma)$;
\item
for every simplex $\sigma$ of $\Delta_{{\mathcal Q}}$
with ${\rm index}(\omega P_{\mathcal Q}) =  j$ for $\omega \in h(\sigma)$, 
a uniform description of a family of
orthonormal vectors $e_0(\sigma,\omega),\ldots, e_k(\sigma,\omega)$,
parametrized by $(\omega,x) \in h(\sigma)$
having the property that 
\[
\{e_j(\sigma,\omega),\ldots,e_k(\sigma,\omega)\}
\] 
is a basis for the linear subspace $L^+(\omega) \subset \R^{k+1}$
(which is the orthogonal complement to the sum of the
eigenspaces corresponding to the
first $j$ eigenvalues of $\omega P_{\mathcal Q}$).
\end{enumerate}
\end{definition}

An algorithm to compute index invariant triangulations
is described in \cite{Basu8} (see also \cite{BP'R07})
the complexity of this algorithm is 
bounded by $k^{2^{O(s)}}$.
The same bound holds for 
the size of the complex $\Delta_{{\mathcal Q}}$ as well as the
degrees of the polynomials occurring in the 
parametrized representation of the vectors
$\{e_0(\sigma,\omega),\ldots,e_k(\sigma,\omega)\}$.

Now fix an index invariant triangulation
$
h: \Delta_{{\mathcal Q}} \rightarrow \Omega_{{\mathcal Q}}
$ 
satisfying the complexity estimates stated above.

We now construct a cell complex homotopy equivalent to
$B_{{\mathcal Q}}$. It is obtained by glueing together certain regular 
cell complexes,  ${\mathcal K}(\sigma)$,
where $\sigma \in \Delta_{{\mathcal Q}}$.

       \begin{figure}[hbt]
         \centerline{
           \scalebox{0.5}{
 \input{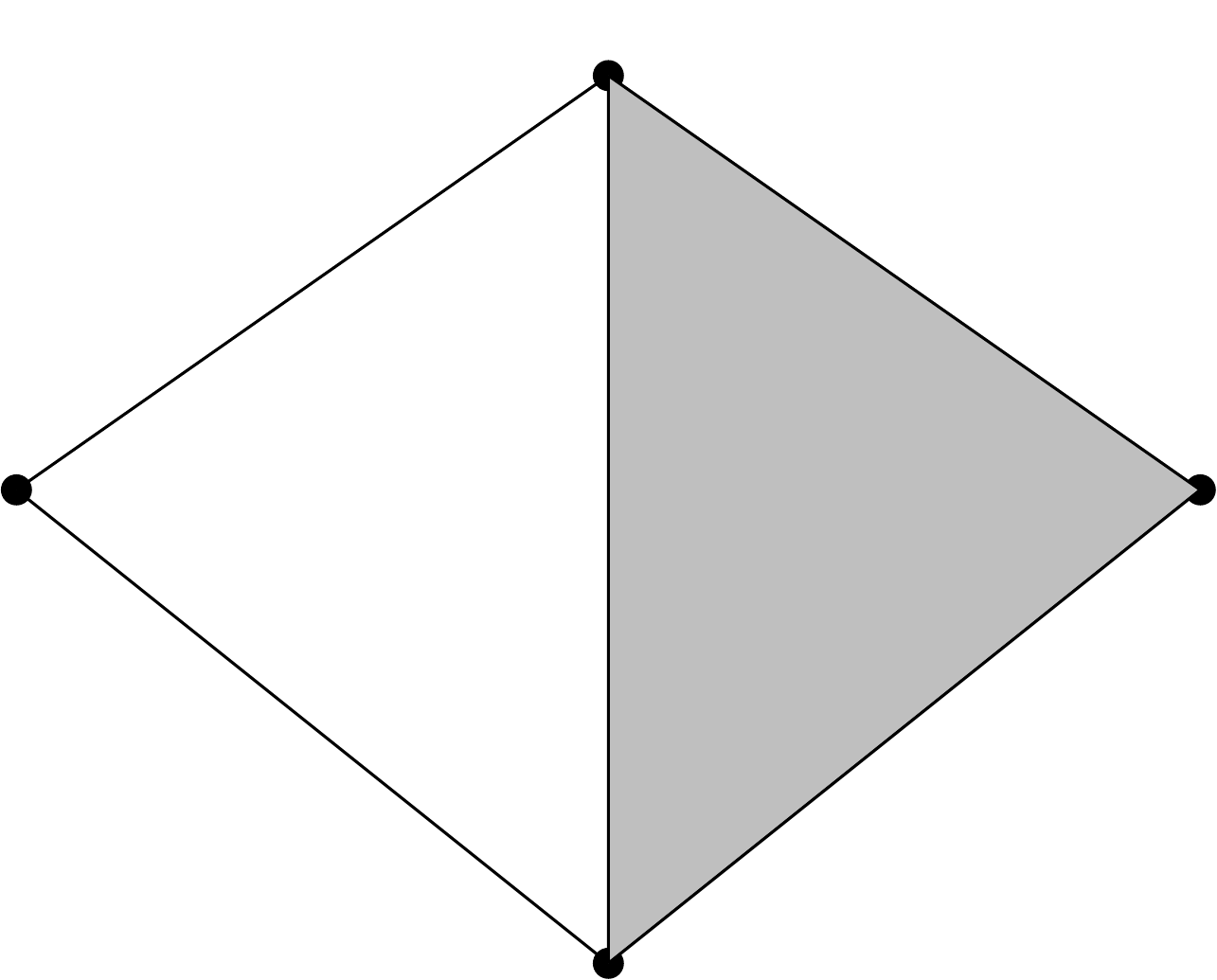tex_t}
             }
           }
         \caption{The complex $\Delta_{{\mathcal Q}}$.}
         \label{fig-eg1}
       \end{figure}

       \begin{figure}[hbt]
         \centerline{
           \scalebox{0.5}{
  \input{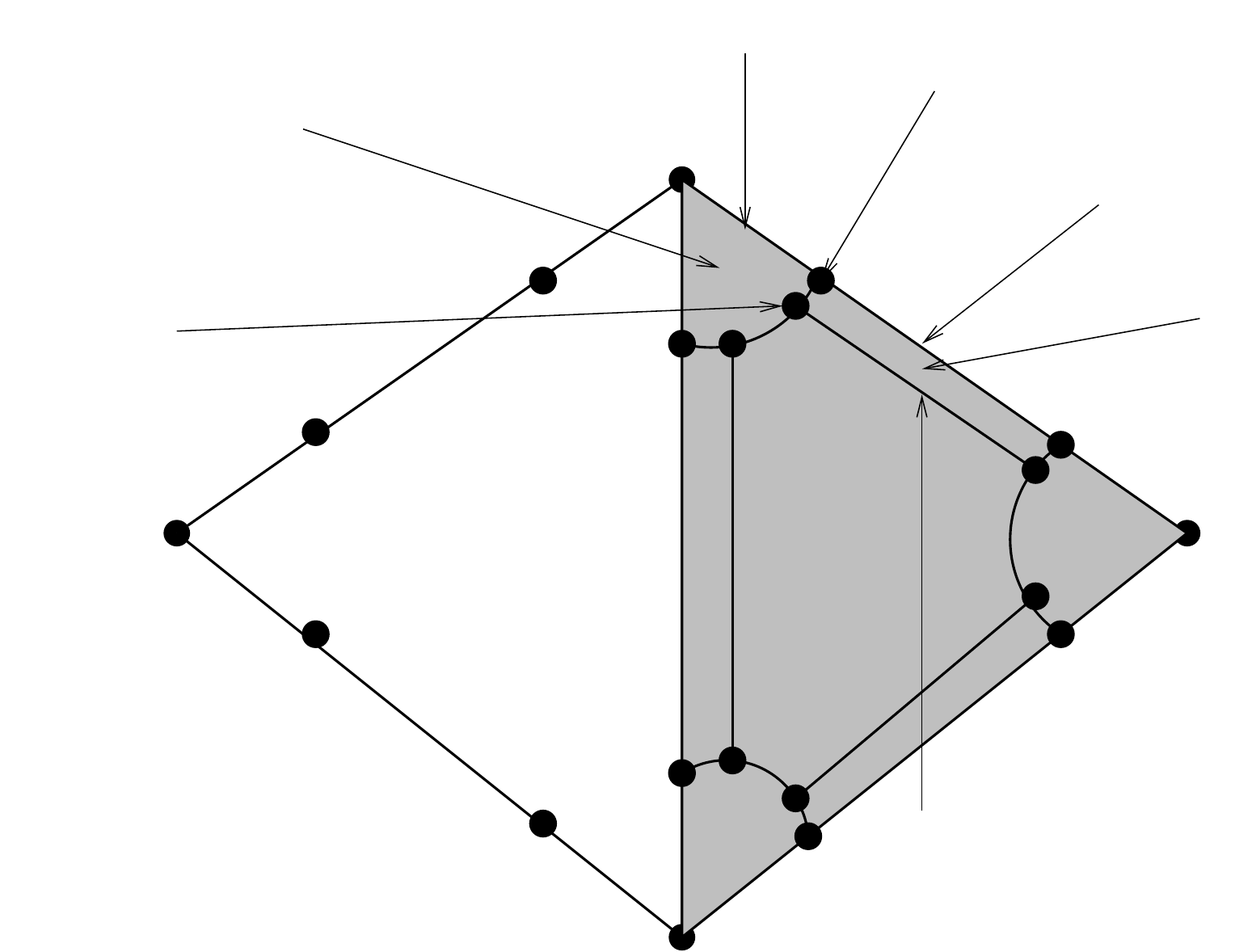tex_t}
             }
           }
         \caption{The corresponding complex 
${\mathcal C}(\Delta_{{\mathcal Q}})$.}
         \label{fig-eg2}
       \end{figure}

Let $1 \gg  \eps_0 \gg \eps_1 \gg \cdots \gg \eps_{\ell} > 0$ be 
infinitesimals. 
For $\tau \in \Delta_{{\mathcal Q}}$
we denote by $D_{\tau}$ the subset of $\bar\tau$ defined by
\[
D_{\tau} = \{v \in \bar\tau \;\mid\; 
\dist(v,\theta) \geq \eps_{\dim(\theta)} \mbox{ for all }
\theta \prec \sigma \}
\]
where $\dist$ refers to the ordinary Euclidean distance.
Now let $\sigma \prec \tau$ be two simplices of $\Delta_{{\mathcal Q}}$.
We denote by 
$D_{\sigma,\tau}$ the subset of $\bar\tau$ defined by
\[
D_{\sigma,\tau} = \{v \in \bar\tau \;\mid\; \dist(v,\sigma) \leq 
\eps_{\dim(\sigma)},
\mbox{ and }\dist(v,\theta) \geq \eps_{\dim(\theta)} \mbox{ for all }
\theta \prec \sigma \}.
\]

Note that 
$$
\displaylines{
|\Delta_{{\mathcal Q}}| = 
\bigcup_{\sigma \in \Delta_{\mathcal Q}} D_{\sigma} \cup
\bigcup_{\sigma,\tau \in \Delta_{\mathcal Q},\sigma \prec \tau} D_{\sigma,\tau}.
}
$$

Also, observe that the various $D_\tau$'s and $D_{\sigma,\tau}$'s 
are all homeomorphic to closed balls
and moreover all non-empty intersections between them also have the 
same property.

\begin{definition}
\label{def:defofC}
The union of the $D_{\tau}$'s and $D_{\sigma,\tau}$'s together with the
non-empty intersections between them form  a regular cell complex
(cf. Definition \ref{def:cellcomplex}),
${\mathcal C}(\Delta_{{\mathcal Q}})$, whose underlying
topological space is $|\Delta_{{\mathcal Q}}|$ 
(see Figures \ref{fig-eg1} and \ref{fig-eg2}). 
\end{definition}
       
We now associate to each 
$D_{\sigma}$  (respectively,  $D_{\sigma,\tau}$)
a regular cell complex, ${\mathcal K}(\sigma)$, (respectively,
${\mathcal K}(\sigma,\tau)$)
homotopy equivalent to 
$\phi_{1}^{-1}(h(D_\sigma))$
(respectively,
$
\displaystyle{
\phi_{1}^{-1}(h(D_{\sigma,\tau})).
}
$

For each $\sigma \in \Delta_{{\mathcal Q}}$
and $\omega  \in h(\sigma)$ let
$\{e_0(\sigma,\omega),\ldots,e_{k}(\sigma,\omega)\}$ be 
the continuously varying  orthonormal basis of $\R^{k+1}$
computed previously.

The orthonormal basis 
\[
\{e_0(\sigma,\omega),\ldots,e_{k}(\sigma,\omega)\}
\] 
determines a complete flag of subspaces, 
${\mathcal F}(\sigma,\omega)$, consisting of
\begin{align*}
F^0(\sigma,\omega) = &~0, \\
F^1(\sigma,\omega) = &~\spanof(e_k(\sigma,\omega)),\\
F^2(\sigma,\omega,x) = &
~\spanof(e_k(\sigma,\omega),e_{k-1}(\sigma,\omega)), \\
\vdots & \\
F^{k+1}(\sigma,\omega) =&~\R^{k+1}.
\end{align*}

\begin{definition}
\label{def:cell}
For $0 \leq j \leq k$  let $c_{j}^+(\sigma,\omega)$ 
(respectively, $c_{j}^-(\sigma,\omega)$)
denote the $(k-j)$-dimensional cell consisting of the intersection of the
$F^{k-j+1}(\sigma,\omega)$
with the unit hemisphere in $\R^{k+1}$ 
defined by 
\begin{align*}
\{x \in \Sphere^k
\;\mid\;&
 \langle x,e_j(\sigma,\omega)\rangle \geq 0\}\\
\text{(respectively, } \quad
\{x \in \Sphere^k \;\mid\;& \langle x,e_j(\sigma,\omega)
\rangle \leq 0\}\quad
).
\end{align*}
\end{definition}

The regular cell complex ${\mathcal K}(\sigma)$ 
(as well as $\mathcal{K}(\sigma,\tau)$)
is defined as follows.

For each $v \in |\Delta_{{\mathcal Q}}|$ and 
$\sigma \in \Delta_{{\mathcal Q}}$ let 
$v(\sigma) \in |\sigma|$ denote the point of $|\sigma|$
closest to $v$.

The cells of ${\mathcal K}(\sigma)$ are
\[
\{(x,\omega) \mid x \in c_j^{\pm}(\sigma,\omega), \omega
\in h(c)\}
\]
where ${\rm index}(\omega P_{\mathcal Q}) \leq j \leq k$
and 
$c \in {\mathcal C}(\Delta_{{\mathcal Q}})$
is either $D_\sigma$ itself  or a cell
contained in the boundary  of $D_\sigma$.

Similarly, the cells of ${\mathcal K}(\sigma,\tau)$ are
\[
\{(x,\omega) \mid x \in c_j^{\pm}(\sigma,h(v(\sigma))), v = h^{-1}(\omega) 
\in c\}
\]
where 
${\rm index}(\omega P_{\mathcal Q}) \leq j \leq k$ and
$c \in {\mathcal C}(\Delta_{{\mathcal Q}})$
is either $D_{\sigma,\tau}$ itself  or a cell
contained in the boundary  of $D_{\sigma,\tau}$.

Our next step is to obtain cellular subdivisions
of each non-empty intersection amongst the
spaces associated to the complexes constructed above and thus obtain
a regular cell complex,
${\mathcal K}(B_{{\mathcal Q}})$, whose associated space,
$|{\mathcal K}(B_{{\mathcal Q}})|$, will be shown to be 
homotopy equivalent to $B_{{\mathcal Q}}$.

First notice that $|{\mathcal K}(\sigma',\tau')|$ (respectively, 
$|{\mathcal K}(\sigma)|$) has a non-empty intersection with 
$|{\mathcal K}(\sigma,\tau)|$ only if $D_{\sigma',\tau'}$ (respectively,
$D_{\sigma'}$) intersects $D_{\sigma,\tau}$. 

Let $D$ be some non-empty intersection amongst the 
$D_{\sigma}$'s and $D_{\sigma,\tau}$'s,
i.e. $D$ is a cell of ${\mathcal C}(\Delta_{{\mathcal Q}})$.
Then, 
$D \subset |\tau|$ for a unique simplex $\tau \in \Delta_{{\mathcal Q}}$
and 
$$
\displaylines{
D =  D_{\sigma_1,\tau} \cap \cdots \cap D_{\sigma_p,\tau} \cap D_\tau
}
$$
with $\sigma_1 \prec \sigma_2 \prec \cdots \prec \sigma_p \prec \sigma_{p+1} =
\tau$
and $p \leq \ell$. 

For each $i, 1 \leq i \leq p+1$,
let
$ \{ f_0(\sigma_i,v),\ldots, f_{k}(\sigma_i,v)\}$
denote a orthonormal basis of $\R^{k+1}$ where
\[
f_j(\sigma_i,v) = 
\lim_{t \rightarrow 0}
e_j(\sigma_i, h(t v(\sigma_i) + (1-t) v(\sigma_1))), 0 \leq j \leq k,
\] 
and let
${\mathcal F}(\sigma_i,v)$
denote the corresponding flag
consisting of
\begin{align*}
F^0(\sigma_i,v) = &~0, \\
F^1(\sigma_i,v) = &~\spanof(f_k(\sigma_i,v)),\\
F^2(\sigma_i,v) = &
~\spanof(f_k(\sigma_i,v),f_{k-1}(\sigma_i,v)), \\
\vdots & \\
F^{k+1}(\sigma_i,v) =& ~\R^{k+1}.
\end{align*}
 
We thus have $p+1$ different flags 
\[
{\mathcal F}(\sigma_1,v),
\ldots, {\mathcal F}(\sigma_{p+1},v), 
\]
and these give rise to $p+1$ different regular cell decompositions of 
$\Sphere^k$.

       \begin{figure}[h]
         \centerline{
           \scalebox{0.5}{
   \input{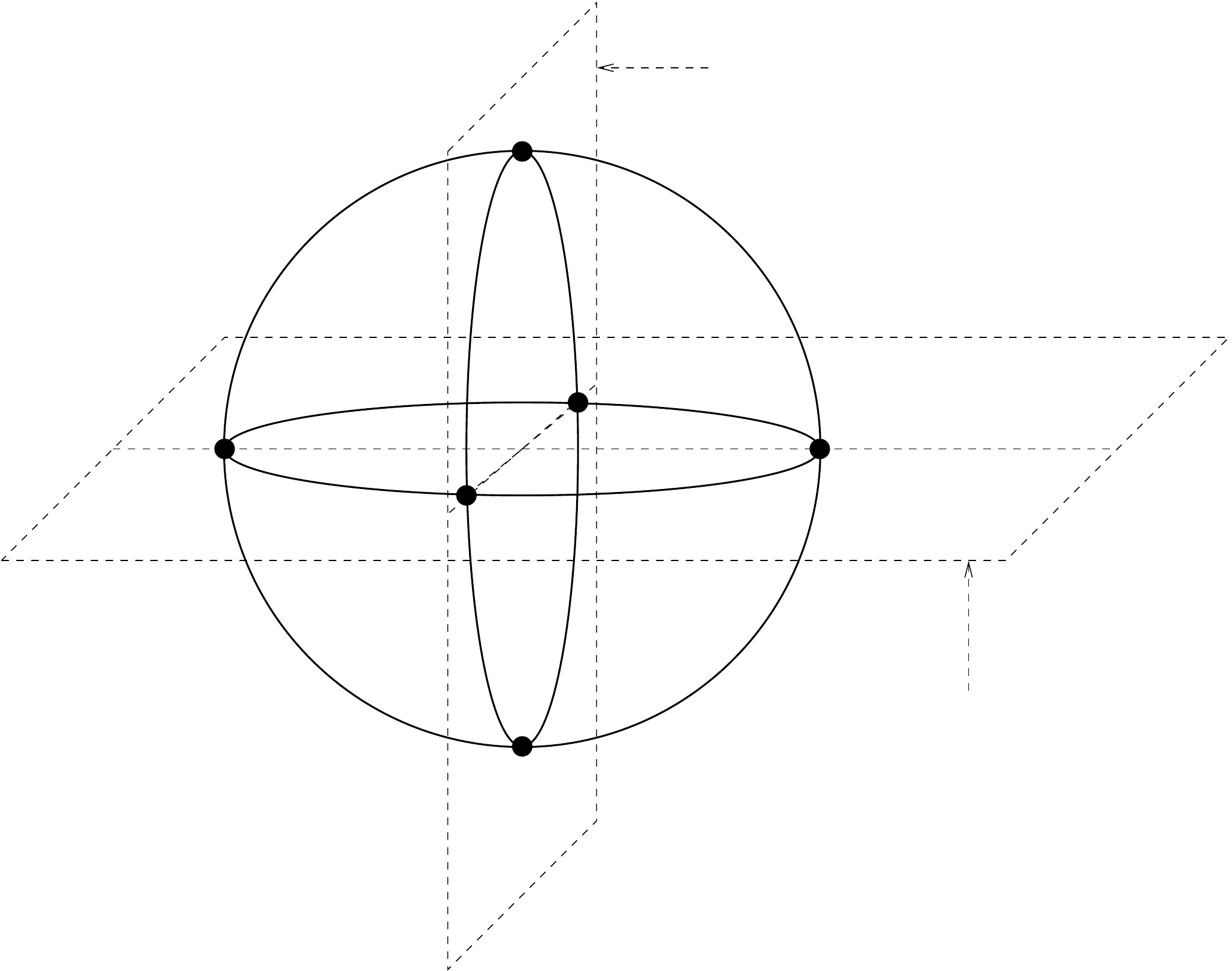tex_t}
             }
           }
         \caption{The cell complex ${\mathcal K}'(D,v)$.}  
         \label{fig-eg5}
       \end{figure}

There is a unique smallest regular cell complex, 
${\mathcal K}'(D,v)$,  
that refines all these cell decompositions
whose cells are the following.
Let $L \subset \R^{k+1}$ be  a linear subspace of dimension 
$j, 0 \leq j \leq k+1$, which is an intersection
of linear subspaces $L_1,\ldots,L_{p+1}$ where 
$L_i \in {\mathcal F}(\sigma_i,v), 1 \leq i \leq p+1 \leq \ell+1$. 
The elements of the flags
${\mathcal F}(\sigma_1,v),
\ldots, {\mathcal F}(\sigma_{p+1},v)$ of dimension $j+1$
partition $L$ into polyhedral cones of various dimensions. The 
intersections of these cones with $\Sphere^{k}$ 
over all such subspaces $L \subset \R^{k+1}$ are the cells of 
${\mathcal K}'(D,v)$.
Figure \ref{fig-eg5} illustrates the refinement described above in case
of two flags in $\R^3$.
We denote by ${\mathcal K}(D,v)$ the sub-complex of 
${\mathcal K}'(D,v)$ consisting of only those cells included in
$L(\sigma_1,h(v(\sigma_1))) \cap \Sphere^k$.

We now triangulate $h(D)$ using 
the algorithm implicit in Theorem  \ref{the:triangulation} (Triangulation)
so that the combinatorial type of the arrangement of flags
\[
{\mathcal F}(\sigma_1,v),
\ldots, {\mathcal F}(\sigma_{p+1},v)
\] 
and hence the cell decomposition ${\mathcal K}'(D,v)$
stays invariant over the image,
$h_D(\theta)$,  of  each simplex, $\theta$, of this triangulation.
Notice that the combinatorial type of the cell 
decomposition ${\mathcal K}'(D,v)$ is determined by the signs of the inner 
products
$\la f_j(\sigma_i,v), f_{j'}(\sigma_{i'},v) \ra$ where
$0 \leq j,j' \leq \ell, 1 \leq i,i' \leq p+1$.

We compute 
a family of polynomials
${\mathcal A}_{D}\subset \R[Z_1,\ldots,Z_\ell]$ whose signs 
determine the vanishing or non-vanishing of the inner products
$\la f_j(\sigma_i,v), f_{j'}(\sigma_{i'},v) \ra, 0 \leq j,j' \leq k, 
1 \leq i,i' \leq p+1$. 
It is then clear that the combinatorial type of the cell 
decomposition ${\mathcal K}'(D,v)$
will stay invariant as $\omega$ varies over each connected component of 
any realizable sign condition on 
${\mathcal A}_{D}\subset \R[Z_1,\ldots,Z_\ell]$.

Given the complexity bounds on the rational functions 
defining the orthonormal bases
$\{e_0(\sigma,\omega),\ldots,e_\ell(\sigma,\omega)\}$
$\omega\in h(\sigma)$, stated above
that the
number and degrees of the polynomials in the family ${\mathcal A}_D$
are bounded by  $k^{2^{O(s)}}$.
We then
use the algorithm implicit in Theorem  \ref{the:triangulation} (Triangulation)
with ${\mathcal A}_{D}$ as input, to obtain the required triangulation.

The closures of the sets
\[
\{(x,\omega) \;\mid\; x \in c \in  {\mathcal K}(D,h^{-1}(\omega)), \; 
\omega  \in h(h_D(\theta))\}
\] 
form a regular cell complex which we denote by
${\mathcal K}(D)$.

The following proposition gives an upper bound on the size of the
complex ${\mathcal K}(D)$. We use the notation introduced in the previous
paragraph.
\begin{proposition}
\label{prop:complexity}
For each $\omega \in h(D)$, the number of cells in 
${\mathcal K}(D,h^{-1}(\omega))$
is bounded by $k^{O(\ell)}$. Moreover, the number of cells in the complex
${\mathcal K}(D)$ is bounded by $k^{2^{O(\ell)}}$.
\end{proposition}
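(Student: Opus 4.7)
The plan is to prove the two bounds separately and then combine them. I will treat the fiberwise bound first, since it is purely combinatorial, and then use it together with a triangulation size estimate for the second bound.

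For the fiberwise claim, observe that $\mathcal{K}(D, h^{-1}(\omega))$ is by construction a sub-complex of $\mathcal{K}'(D, h^{-1}(\omega))$, which is the smallest common refinement of the regular cell decompositions of $\Sphere^{k}$ induced by the $p+1 \leq \ell+1$ flags $\mathcal{F}(\sigma_1, v), \ldots, \mathcal{F}(\sigma_{p+1}, v)$. Each single flag $\mathcal{F}(\sigma_i, v)$ gives rise to at most $2(k+1)$ cells on $\Sphere^{k}$, namely the closed hemispheres $c_j^{\pm}(\sigma_i, \omega)$ of dimensions $0$ through $k$ (cf. Definition \ref{def:cell}). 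The key combinatorial remark is that a cell of the common refinement of finitely many regular cell decompositions is uniquely specified by the tuple recording which cell it lies in inside each of the individual decompositions. Applying this to our $p+1 \leq \ell+1$ flag decompositions yields
\[
\#\mathcal{K}(D, h^{-1}(\omega)) \;\leq\; \#\mathcal{K}'(D, h^{-1}(\omega)) \;\leq\; (2(k+1))^{\ell+1} \;=\; k^{O(\ell)},
\]
which is the first bound.

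For the global claim, by the very definition of $\mathcal{K}(D)$ its cells are indexed by pairs $(c, \theta)$ where $\theta$ is a simplex of the triangulation $h_D$ of $h(D)$ adapted to $\mathcal{A}_D$, and $c$ is a cell of $\mathcal{K}(D, h^{-1}(\omega))$ for any (and hence every) representative $\omega \in h(h_D(\theta))$. Thus it suffices to bound the number of simplices $\theta$ and multiply by the fiberwise bound just established. The family $\mathcal{A}_D \subset \R[Z_1, \ldots, Z_\ell]$ has cardinality and degrees bounded by $k^{2^{O(\ell)}}$ (inherited from the complexity of the rational functions defining the orthonormal bases $e_j(\sigma_i, \omega)$ produced by the index invariant triangulation algorithm). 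Invoking Theorem \ref{the:triangulation} on a family of $N$ polynomials of degree at most $d$ in $\ell$ variables produces a triangulation of size $(Nd)^{2^{O(\ell)}}$; plugging in $N, d \leq k^{2^{O(\ell)}}$ gives
\[
(k^{2^{O(\ell)}})^{2^{O(\ell)}} \;=\; k^{2^{O(\ell)}}
\]
simplices $\theta$. Multiplying by the $k^{O(\ell)}$ fiberwise bound absorbs into the same estimate $k^{2^{O(\ell)}}$, proving the second claim.

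The main thing to be careful about is the first step: one must check that every cell of the refinement $\mathcal{K}'(D, v)$ is indeed uniquely labeled by its membership in each flag decomposition, which in turn relies on the fact that the cells of $\mathcal{K}'(D, v)$ are precisely the non-empty intersections of the form $L \cap \Sphere^k$ further stratified by the $(j+1)$-dimensional flag elements. Everything else is bookkeeping: tracking where the various $k^{2^{O(\ell)}}$ factors enter (the index invariant triangulation, the inner-product polynomials in $\mathcal{A}_D$, the triangulation of $h(D)$) and verifying that the composition of these bounds collapses, as it must, to the single expression $k^{2^{O(\ell)}}$ since a tower of $2^{O(\ell)}$ exponentials of $2^{O(\ell)}$ stays in the same complexity class.
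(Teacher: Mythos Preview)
Your proof is correct and follows precisely the approach implicit in the paper's construction (the paper states Proposition~\ref{prop:complexity} without proof, but your argument supplies exactly the details the surrounding text points to). Your fiberwise count via the tuple-labeling of cells in the common refinement of the $p+1\le\ell+1$ flag decompositions is the right idea---and your caveat is well placed: the reason the labeling works here is that the open cells of each flag decomposition are relatively open spherical half-spaces, so any non-empty intersection of one open cell from each flag is a spherically convex (hence connected) open cell, giving at most $(2(k+1))^{\ell+1}=k^{O(\ell)}$ cells; the paper's alternative description via the subspaces $L$ and the $(j+1)$-dimensional flag elements yields the same bound. Your global estimate, combining the $k^{2^{O(\ell)}}$ bound on $\#\mathcal{A}_D$ and degrees with Theorem~\ref{the:triangulation} in $\ell$ variables and then multiplying by the fiberwise bound, is exactly right, including the observation that $\bigl(k^{2^{O(\ell)}}\bigr)^{2^{O(\ell)}}=k^{2^{O(\ell)}}$.
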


Note that there is a homeomorphism
$i_{D,\sigma_i}: |{\mathcal K}(\sigma_i,\tau)| \cap \phi_1^{-1}(h(D))
\rightarrow |{\mathcal K}(D)|$
which takes each cell of 
$|{\mathcal K}(\sigma_i,\tau)| \cap \phi_1^{-1}(h(D))$
to a union of cells in ${\mathcal K}(D)$. 
We use these homeomorphisms
to glue the cell complexes ${\mathcal K}(\sigma_i,\tau)$
together to form the cell complex ${\mathcal K}(B_{{\mathcal Q}})$.
More precisely

\begin{definition}
\label{def:defofK(B)}
${\mathcal K}(B_{{\mathcal Q}})$ is the union of all the complexes
${\mathcal K}(D)$ constructed above, where we use the maps
$i_{D,\sigma_i}$ to make the obvious identifications. 
\end{definition}

We have that
\begin{proposition}
\label{prop:iso2}
$|{\mathcal K}(B_{{\mathcal Q}})|$ 
is homotopy equivalent to $B_{{\mathcal Q}}$.
\end{proposition}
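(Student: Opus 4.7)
The plan is to construct an explicit continuous map $\Phi : |{\mathcal K}(B_{{\mathcal Q}})| \to B_{{\mathcal Q}}$, sending a point in a cell indexed by $(x,\omega)$ to the point $(\omega,x) \in B_{{\mathcal Q}}$, and then to verify that $\Phi$ is a homotopy equivalence by a local-to-global argument filtered by the cell complex ${\mathcal C}(\Delta_{{\mathcal Q}})$. That $\Phi$ lands in $B_{{\mathcal Q}}$ follows from the construction of the cells of ${\mathcal K}(\sigma)$ and ${\mathcal K}(\sigma,\tau)$: these were defined as products of subsets of $L^+(\omega) \cap \Sphere^k$ with regions of $\Omega_{{\mathcal Q}}$, and on $L^+(\omega)$ the form $\omega P$ is nonnegative by definition.

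The main local step is to show that for each cell $D$ of ${\mathcal C}(\Delta_{{\mathcal Q}})$, the restriction $\Phi_D : |{\mathcal K}(D)| \to \phi_1^{-1}(h(D))$ is a homotopy equivalence. Because $h$ is an index-invariant triangulation, ${\rm index}(\omega P)$ is constant along the interior of each simplex of $\Delta_{{\mathcal Q}}$, so over $h(D_\sigma)$ the fibration $\phi_1$ is trivial with fiber homotopy equivalent to $\Sphere^{k - j}$ by Proposition \ref{prop:oneform}, where $j = {\rm index}(\sigma P)$. The cells $c_{j}^{\pm}(\sigma,\omega),\ldots, c_{k}^{\pm}(\sigma,\omega)$ from Definition \ref{def:cell} give the standard two-cells-per-dimension CW structure on this fiber, and the parametrized union over $\omega \in h(D_\sigma)$ yields a CW structure on $\phi_1^{-1}(h(D_\sigma))$ which, together with the deformation retraction of the fiber $\{x \in \Sphere^k : \omega P(x) \geq 0\}$ onto $L^+(\omega) \cap \Sphere^k$ supplied by the proof of Proposition \ref{prop:oneform}, gives the desired homotopy equivalence. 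For the cells $D_{\sigma,\tau}$, where the index may jump as $\omega$ passes from $\sigma$ to $\tau$, one uses the limiting bases $f_j(\sigma_i,v) = \lim_{t \to 0} e_j(\sigma_i, h(tv(\sigma_i) + (1-t)v(\sigma_1)))$ to produce a compatible CW structure on the fiber from the $\sigma$-side, and Proposition \ref{prop:oneform} again ensures the fiber has the correct homotopy type.

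For the refined intersections $D = D_{\sigma_1,\tau} \cap \cdots \cap D_{\sigma_p,\tau} \cap D_\tau$, the cell complex ${\mathcal K}(D)$ is the smallest common refinement of the $p+1$ flag decompositions ${\mathcal F}(\sigma_i,v)$; since each individual flag decomposition already gives a cell structure of the fiber $L^+(h(v(\sigma_1))) \cap \Sphere^k$, and common refinement of regular CW structures on a space is still a regular CW structure on the same space (with the same homotopy type), $|{\mathcal K}(D)| \simeq \phi_1^{-1}(h(D))$ follows. The gluing homeomorphisms $i_{D,\sigma_i}$ are by construction compatible with the inclusion $|{\mathcal K}(\sigma_i,\tau)| \cap \phi_1^{-1}(h(D)) \hookrightarrow B_{{\mathcal Q}}$, so the local homotopy equivalences assemble into a globally defined map $\Phi$.

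Finally, I would globalize by induction on the skeleta of ${\mathcal C}(\Delta_{{\mathcal Q}})$: both $|{\mathcal K}(B_{{\mathcal Q}})|$ and $B_{{\mathcal Q}}$ are expressed as iterated pushouts along these cellular pieces, and at each stage the inductive step is an instance of the gluing lemma for cofibrations (or, equivalently, the comparison map of the two Mayer--Vietoris spectral sequences of Section \ref{subsec:MV} applied to the covers $\{|{\mathcal K}(D)|\}_D$ and $\{\phi_1^{-1}(h(D))\}_D$ is an isomorphism on the $E_1$-page, hence a quasi-isomorphism on totalizations by Theorem \ref{the:spectral}). The hard part will be verifying the coherence of the identifications $i_{D,\sigma_i}$ on triple and higher overlaps, in particular confirming that the refined flag cell complex ${\mathcal K}'(D,v)$ is genuinely the minimal common refinement of the $p+1$ induced cell decompositions and that this minimality propagates correctly under face inclusion $D' \prec D$, so that $\Phi$ is a well-defined continuous map and not just a piecewise-defined collection of locally compatible ones.
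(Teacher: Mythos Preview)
The paper is a survey and states this proposition without proof, deferring the details to \cite{Basu8}. Your overall strategy---construct a fiber-preserving comparison map over the cells of ${\mathcal C}(\Delta_{{\mathcal Q}})$, verify it is a homotopy equivalence cell by cell using Proposition~\ref{prop:oneform} and Lemma~\ref{lem:sphere}, then globalize via the gluing lemma or the Mayer--Vietoris comparison of Theorem~\ref{the:spectral}---is the natural one and matches the architecture of the argument in \cite{Basu8}.

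There is, however, a gap in your construction of $\Phi$. You assert that $\Phi$ lands in $B_{{\mathcal Q}}$ because the cells are products of subsets of $L^+(\omega)\cap\Sphere^k$ with regions of $\Omega_{{\mathcal Q}}$. This is correct for the cells of ${\mathcal K}(\sigma)$, but the cells of ${\mathcal K}(\sigma,\tau)$ are defined using $c_j^{\pm}(\sigma,h(v(\sigma)))$, i.e.\ with respect to the basis attached to the \emph{projected} point $\omega' = h(v(\sigma)) \in h(|\sigma|)$ rather than to $\omega$ itself. Hence for such a cell one only knows $x \in L^+(\omega')$, which yields $\omega' P(x) \ge 0$ but not $\omega P(x) \ge 0$. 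The naive assignment $(x,\omega)\mapsto(\omega,x)$ therefore need not take values in $B_{{\mathcal Q}}$ over $D_{\sigma,\tau}$.

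This is repairable without abandoning your outline. One option is to reverse direction: over each cell $D$ retract the fiber $\phi_1^{-1}(\omega)=\{x\in\Sphere^k:\omega P(x)\ge 0\}$ onto the appropriate linear sphere using the explicit deformation underlying Proposition~\ref{prop:oneform}, and show these retractions assemble to a map $\Psi:\phi_1^{-1}(h(D))\to|{\mathcal K}(D)|$ compatible with the gluings $i_{D,\sigma_i}$. Another is to interpose an auxiliary space containing both (for instance an infinitesimal relaxation $\{\omega P(x)\ge -\eps\}$) and compare each side to it. Either way your cell-by-cell and spectral-sequence scaffolding survives intact; only the target of the local comparison map changes. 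The coherence of the $i_{D,\sigma_i}$ under face inclusion that you flag at the end is indeed the remaining bookkeeping, and your diagnosis that it reduces to functoriality of the refined flag decomposition ${\mathcal K}'(D,v)$ is on the right track.
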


We also have

\begin{proposition}
\label{prop:complexity2}
The number of cells in the
cell complex ${\mathcal K}(B_{{\mathcal Q}})$ is 
bounded by $k^{2^{O(\ell)}}$.
\end{proposition}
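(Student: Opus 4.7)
The plan is to bound the cell count of $\mathcal{K}(B_{\mathcal{Q}})$ by summing the cell counts of its building blocks $\mathcal{K}(D)$ and then bounding the number of such $D$'s via the size of the underlying index-invariant triangulation. Since $\mathcal{K}(B_{\mathcal{Q}})$ is obtained (Definition \ref{def:defofK(B)}) by glueing the various $\mathcal{K}(D)$ together along the homeomorphisms $i_{D,\sigma_i}$, and glueing can only identify cells (never create new ones), we have
\[
\#\mathcal{K}(B_{\mathcal{Q}}) \;\leq\; \sum_{D \in \mathcal{C}(\Delta_{\mathcal{Q}})} \#\mathcal{K}(D).
\]

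First, I would bound $\#\mathcal{C}(\Delta_{\mathcal{Q}})$. Recall from the construction of the index invariant triangulation (Definition \ref{def:iit} and the algorithm cited thereafter) that $\Delta_{\mathcal{Q}}$ is a simplicial complex on $\Omega_{\mathcal{Q}} \subset \R^\ell$ with size bounded by $k^{2^{O(\ell)}}$. The cells of $\mathcal{C}(\Delta_{\mathcal{Q}})$ are, by Definition \ref{def:defofC}, of two types: the ``thick'' pieces $D_\tau$ indexed by simplices $\tau$ of $\Delta_{\mathcal{Q}}$, and the ``collar'' pieces $D_{\sigma,\tau}$ indexed by pairs $\sigma \prec \tau$. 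Hence $\#\mathcal{C}(\Delta_{\mathcal{Q}}) \leq \#\Delta_{\mathcal{Q}} + (\#\Delta_{\mathcal{Q}})^2 \leq k^{2^{O(\ell)}}$, since squaring only affects the implicit constant in the exponent of $2$.

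Next, I apply Proposition \ref{prop:complexity}, which gives $\#\mathcal{K}(D) \leq k^{2^{O(\ell)}}$ for each individual $D$. Combining these two estimates yields
\[
\#\mathcal{K}(B_{\mathcal{Q}}) \;\leq\; k^{2^{O(\ell)}} \cdot k^{2^{O(\ell)}} \;=\; k^{2^{O(\ell)}},
\]
where again the product of two quantities of the form $k^{2^{O(\ell)}}$ is absorbed into the constant inside the exponent of $2$.

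The only subtle point, and the one I would handle with most care, is to verify that the glueing is genuinely a quotient operation on the underlying set of cells; that is, that each cell of $\mathcal{K}(B_{\mathcal{Q}})$ is the image of at least one cell in some $\mathcal{K}(D)$. This is immediate from Definition \ref{def:defofK(B)} because each $i_{D,\sigma_i}$ maps cells of $|\mathcal{K}(\sigma_i,\tau)| \cap \phi_1^{-1}(h(D))$ into unions of cells of $\mathcal{K}(D)$, so after identification every cell of the resulting complex is already accounted for in some $\mathcal{K}(D)$. Beyond this, the argument is a bookkeeping one and no further geometric input is needed.
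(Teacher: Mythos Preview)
Your approach is the natural one and matches what the paper (via \cite{Basu8}) has in mind: bound $\#\mathcal{K}(B_{\mathcal Q})$ by $\sum_D \#\mathcal{K}(D)$, invoke Proposition~\ref{prop:complexity} for each summand, and bound the number of cells $D$ of $\mathcal{C}(\Delta_{\mathcal Q})$ using the size estimate on the index-invariant triangulation.

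There is, however, a small undercount in your estimate of $\#\mathcal{C}(\Delta_{\mathcal Q})$. By Definition~\ref{def:defofC}, the cells of $\mathcal{C}(\Delta_{\mathcal Q})$ are not only the $D_\tau$ and the $D_{\sigma,\tau}$ but also all their non-empty intersections. As the paper notes just before Proposition~\ref{prop:complexity}, such an intersection has the form
\[
D = D_{\sigma_1,\tau} \cap \cdots \cap D_{\sigma_p,\tau} \cap D_\tau,
\qquad \sigma_1 \prec \cdots \prec \sigma_p \prec \tau,\ p \le \ell,
\]
so the cells of $\mathcal{C}(\Delta_{\mathcal Q})$ are indexed by chains of simplices of $\Delta_{\mathcal Q}$ of length at most $\ell+1$. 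Hence the correct bound is
\[
\#\mathcal{C}(\Delta_{\mathcal Q}) \le (\#\Delta_{\mathcal Q})^{\ell+1} \le \bigl(k^{2^{O(\ell)}}\bigr)^{\ell+1} = k^{2^{O(\ell)}},
\]
and your conclusion is unaffected. Once you replace your $\#\Delta_{\mathcal Q} + (\#\Delta_{\mathcal Q})^2$ estimate by this chain count, the argument is complete.
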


\begin{proposition}\cite{Basu8}
\label{prop:iso}
For $0 \leq i \leq k-1$, the induced homomorphisms
$$
\psi_{\mathcal Q}^*: \HH^i(\Ch^{\bullet}({\mathcal H}(T_{\mathcal Q}))) 
\rightarrow \HH^i({\mathcal M}^\bullet_{\mathcal Q})
$$
are isomorphisms.
\end{proposition}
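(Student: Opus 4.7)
The plan is to factor $\psi_{\mathcal Q}^{*}$ through the chain of homotopy equivalences
\[
T_{\mathcal Q} \;\stackrel{\phi_{2}}{\simeq}\; B_{\mathcal Q} \;\simeq\; |{\mathcal K}(B_{\mathcal Q})|
\]
supplied by Proposition \ref{prop:homotopy2} and Proposition \ref{prop:iso2}, and to identify ${\mathcal M}^{\bullet}_{\mathcal Q}$ with (a chain-equivalent model of) the cellular cochain complex of the regular CW-complex ${\mathcal K}(B_{\mathcal Q})$. Once these identifications are in place, the desired isomorphism follows from homotopy invariance (Proposition \ref{6:prop:homotopicsa}) plus the standard comparison between cellular and simplicial cohomology on a common refinement.

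First I would pick the triangulation ${\mathcal H}(T_{\mathcal Q})$ fine enough to refine the image under $\phi_{2}$ of every cell of ${\mathcal K}(B_{\mathcal Q})$. Writing $\iota\colon |{\mathcal K}(B_{\mathcal Q})| \hookrightarrow B_{\mathcal Q}$ for the homotopy equivalence given in Proposition \ref{prop:iso2}, this fineness assumption allows one to approximate $\phi_{2}\circ\iota$ by a cellular semi-algebraic map $f\colon |{\mathcal K}(B_{\mathcal Q})| \to |{\mathcal H}(T_{\mathcal Q})|$ that is semi-algebraically homotopic to it. The homomorphism $\psi_{\mathcal Q}$ is then realized as the pullback $f^{*}\colon \Ch^{\bullet}({\mathcal H}(T_{\mathcal Q})) \to {\mathcal M}^{\bullet}_{\mathcal Q}$, the latter being identified with the cellular cochain complex of ${\mathcal K}(B_{\mathcal Q})$.

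Next, I would verify that the map induced by $\psi_{\mathcal Q}$ on cohomology coincides with the composition
\[
\HH^{i}(T_{\mathcal Q}) \;\stackrel{\phi_{2}^{*}}{\longrightarrow}\; \HH^{i}(B_{\mathcal Q}) \;\stackrel{\iota^{*}}{\longrightarrow}\; \HH^{i}(|{\mathcal K}(B_{\mathcal Q})|) \;\cong\; \HH^{i}({\mathcal M}^{\bullet}_{\mathcal Q}).
\]
Because $\phi_{2}$ and $\iota$ are homotopy equivalences, each factor is an isomorphism in every degree by Proposition \ref{6:prop:homotopicsa}, so $\psi_{\mathcal Q}^{*}$ is an isomorphism in every degree. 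The stated truncation $0 \le i \le k-1$ does not reflect a genuine failure of the comparison but merely the fact that ${\mathcal M}^{\bullet}_{\mathcal Q}$ is built only up to dimension $k-1$ (which is harmless since $T_{\mathcal Q}\subsetneq \Sphere^{k}$ in the non-trivial cases, forcing $\HH^{k}(T_{\mathcal Q})=0$).

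The main obstacle I anticipate is the bookkeeping in the first step: one must verify that the pieces ${\mathcal K}(\sigma)$, ${\mathcal K}(\sigma,\tau)$, and the refined complexes ${\mathcal K}(D)$, glued via the homeomorphisms $i_{D,\sigma_{i}}$ of Definition \ref{def:defofK(B)}, genuinely assemble into a regular CW-complex whose attaching maps are compatible with the stratification by the cells $D \in {\mathcal C}(\Delta_{{\mathcal Q}})$, and that the resulting cellular cochain complex is precisely ${\mathcal M}^{\bullet}_{\mathcal Q}$. This reduces to checking that on each overlap $D_{\sigma,\tau}\cap D_{\sigma',\tau'}$ the two refinement procedures produce identical cell decompositions of the fibre, which follows from the uniformity of the orthonormal bases $\{e_{j}(\sigma,\omega)\}$ guaranteed by the index-invariant triangulation (Definition \ref{def:iit}). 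Once this compatibility is established, the cohomological comparison is routine.
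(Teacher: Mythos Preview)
Your approach is essentially the one the paper intends: the survey does not supply an independent proof of this proposition (it simply cites \cite{Basu8}), but the two ingredients you invoke---Proposition~\ref{prop:homotopy2} ($\phi_2:B_{\mathcal Q}\to T_{\mathcal Q}$ is a homotopy equivalence) and Proposition~\ref{prop:iso2} ($|{\mathcal K}(B_{\mathcal Q})|\simeq B_{\mathcal Q}$)---are exactly the pieces the paper assembles for this purpose, and ${\mathcal M}^\bullet_{\mathcal Q}$ is indeed a complex computing the cellular cochains of ${\mathcal K}(B_{\mathcal Q})$ (more precisely, the paper takes ${\mathcal M}^\bullet_{\mathcal Q}=\Tot^\bullet({C}_{\mathcal Q}^{\bullet,\bullet})$ for a double complex built from that cell structure). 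Your factorization of $\psi_{\mathcal Q}^*$ through $\HH^*(T_{\mathcal Q})\to\HH^*(B_{\mathcal Q})\to\HH^*(|{\mathcal K}(B_{\mathcal Q})|)$ and the appeal to homotopy invariance is the right argument.

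One small correction to your parenthetical remark about the range $0\le i\le k-1$: it is \emph{not} true that $T_{\mathcal Q}\subsetneq\Sphere^k$ in all non-trivial cases. If some $P_i\in{\mathcal Q}$ is negative semidefinite then $\{x\in\Sphere^k\mid P_i(x)\le 0\}=\Sphere^k$, hence $T_{\mathcal Q}=\Sphere^k$ and $\HH^k(T_{\mathcal Q})\cong\Q$. Your chain of homotopy equivalences in fact yields isomorphisms in \emph{every} degree, so nothing in your argument breaks at $i=k$; the stated restriction reflects only the range actually needed downstream in the Mayer-Vietoris computation of the top Betti numbers of $S$, not any limitation of the comparison itself.
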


Now let
${\mathcal B} \subset {\mathcal A} \subset {\mathcal P}$ with
$\#{\mathcal A} = \#{\mathcal B} + 1  < k$. 

The simplicial complex $\Delta_{\mathcal B}$ is a subcomplex of 
$\Delta_{\mathcal A}$ and hence,
${C}_{\mathcal B}^{\bullet,\bullet}$ is a subcomplex of 
${C}_{\mathcal A}^{\bullet,\bullet}$ and thus there exists a natural
homomorphism (induced by restriction)
$$
\phi_{{\mathcal A},{\mathcal B}}: {C}_{\mathcal A}^{\bullet,\bullet}
\rightarrow 
{C}_{\mathcal B}^{\bullet,\bullet}
$$ 
and let
$$
\phi_{{\mathcal A},{\mathcal B}}: {\rm Tot}^\bullet({C}_{\mathcal A}^{\bullet,\bullet}) = {\mathcal M}_{\mathcal A}^{\bullet}
\rightarrow
{\mathcal M}_{\mathcal B}^{\bullet}
= {\rm Tot}^\bullet({C}_{\mathcal B}^{\bullet,\bullet}),
$$
be the induced homomorphism between the corresponding associated 
total complexes.

The complexes ${\mathcal M}_{\mathcal A}^{\bullet},
{\mathcal M}_{\mathcal B}^{\bullet}$, and the homomorphisms, 
$
\phi_{{\mathcal A},{\mathcal B}},
\psi_{\mathcal A}, \psi_{\mathcal B}
$ 
satisfy
 
\begin{proposition} \cite{Basu8}
\label{prop:commutative}
The diagram
\begin{equation}
\begin{diagram}
\node{{\mathcal M}^{\bullet}_{\mathcal A}}
\arrow{e,t}{\phi_{{\mathcal A},{\mathcal B}}}
\node{{\mathcal M}^{\bullet}_{\mathcal B}} \\
\node{\Ch^\bullet({\mathcal H}(T_{\mathcal A}))}
\arrow{e,t}{r}\arrow{n,l}{\psi_{\mathcal A}} 
\node{\Ch^\bullet({\mathcal H}(T_{\mathcal B}))}
\arrow{n,r}{\psi_{\mathcal B}}
\end{diagram}
\end{equation}
is commutative,
where $r$ is the restriction homomorphism.
\end{proposition}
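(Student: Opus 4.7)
The plan is to trace elements around the square by unpacking the cellular constructions that underlie both vertical arrows $\psi_{\mathcal A}$ and $\psi_{\mathcal B}$, and then showing that both compositions agree cell by cell. First, I would exploit the fact that, by the index-invariance property of Definition \ref{def:iit} together with the compatibility of $h$ with every $\Omega_{{\mathcal Q}'}\subset \Omega_{\mathcal Q}$, the triangulation $\Delta_{\mathcal B}$ sits as a full subcomplex of $\Delta_{\mathcal A}$ and the parametrized orthonormal frames $\{e_j(\sigma,\omega)\}$ used to build ${\mathcal K}(\sigma)$ agree on simplices of $\Delta_{\mathcal B}$ regardless of whether they are viewed inside $\Delta_{\mathcal B}$ or inside $\Delta_{\mathcal A}$. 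Consequently each cell complex ${\mathcal K}(D)$ attached to a cell $D\subset|\Delta_{\mathcal B}|\subset|\Delta_{\mathcal A}|$ is produced by literally the same flag data on both sides, so ${\mathcal K}(B_{\mathcal B})$ embeds as a closed subcomplex of the portion of ${\mathcal K}(B_{\mathcal A})$ lying over $h(|\Delta_{\mathcal B}|)$.

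Next, I would observe that the triangulation ${\mathcal H}(T_{\mathcal Q})$ used to build $\psi_{\mathcal Q}$ can be chosen functorially: refine the triangulation of $T_{\mathcal A}$ so that $T_{\mathcal B}$ is a subcomplex (possible because ${\mathcal B}\subset{\mathcal A}$ gives $T_{\mathcal B}\subset T_{\mathcal A}$), and use its restriction as ${\mathcal H}(T_{\mathcal B})$. Under this choice the ordinary cochain restriction $r:\Ch^\bullet({\mathcal H}(T_{\mathcal A}))\to\Ch^\bullet({\mathcal H}(T_{\mathcal B}))$ is literally the projection along simplices not contained in $T_{\mathcal B}$. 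On the right-hand column, $\phi_{{\mathcal A},{\mathcal B}}$ is by construction the projection of the total complex along direct summands indexed by cells of $\Delta_{\mathcal A}\setminus\Delta_{\mathcal B}$, which via the cellular identification above corresponds to the cells of ${\mathcal K}(B_{\mathcal A})$ lying over $h(|\Delta_{\mathcal A}|)\setminus h(|\Delta_{\mathcal B}|)$, i.e.\ over points outside the image of $\phi_{1,{\mathcal B}}$. Thus both $r$ and $\phi_{{\mathcal A},{\mathcal B}}$ are simply ``forget the pieces that do not see ${\mathcal B}$''.

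The commutativity then follows by evaluating both compositions on an arbitrary generator $\alpha\in\Ch^\bullet({\mathcal H}(T_{\mathcal A}))$ dual to a simplex $s$: if $s\subset T_{\mathcal B}$, then $r(\alpha)$ is again dual to $s$ and $\psi_{\mathcal B}\circ r(\alpha)$ assigns the same coefficients to cells of ${\mathcal K}(B_{\mathcal B})$ as $\phi_{{\mathcal A},{\mathcal B}}\circ\psi_{\mathcal A}(\alpha)$ does on the image of the subcomplex inclusion; if $s\not\subset T_{\mathcal B}$, both composites vanish, the left one because $r(\alpha)=0$, and the right one because $\psi_{\mathcal A}(\alpha)$ is supported on cells of ${\mathcal K}(B_{\mathcal A})$ lying strictly outside ${\mathcal K}(B_{\mathcal B})$, which $\phi_{{\mathcal A},{\mathcal B}}$ kills.

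The main obstacle I anticipate is the bookkeeping in the second paragraph: pinning down that the restriction-by-forgetting-summands realizing $\phi_{{\mathcal A},{\mathcal B}}$ on ${\rm Tot}^\bullet({C}_{\mathcal Q}^{\bullet,\bullet})$ corresponds, under the homotopy equivalence $|{\mathcal K}(B_{\mathcal Q})|\simeq B_{\mathcal Q}\simeq T_{\mathcal Q}$ of Proposition \ref{prop:iso2}, to ordinary cochain restriction on a compatibly chosen ${\mathcal H}$. This boils down to checking that the map $i_{D,\sigma_i}$ used to glue the ${\mathcal K}(D)$'s respects the subcomplex inclusion ${\mathcal K}(B_{\mathcal B})\hookrightarrow{\mathcal K}(B_{\mathcal A})$; once this naturality statement is established, the rest of the argument is formal.
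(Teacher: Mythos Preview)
The paper does not contain its own proof of this proposition: it is a survey, and the statement is simply cited from \cite{Basu8} with no accompanying argument. So there is nothing in the paper to compare your proposal against directly.

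As for the substance of your outline: the strategy is reasonable, and your identification of the main obstacle is accurate. The commutativity does ultimately reduce to a naturality statement for the chain of homotopy equivalences $T_{\mathcal Q}\simeq B_{\mathcal Q}\simeq |{\mathcal K}(B_{\mathcal Q})|$ (Propositions \ref{prop:homotopy2} and \ref{prop:iso2}) with respect to the inclusion induced by ${\mathcal B}\subset{\mathcal A}$. However, two of your intermediate claims rest on constructions that this survey does not spell out: the precise definition of the double complex $C_{\mathcal Q}^{\bullet,\bullet}$ (and hence of ${\mathcal M}_{\mathcal Q}^\bullet=\Tot^\bullet(C_{\mathcal Q}^{\bullet,\bullet})$) and the explicit form of $\psi_{\mathcal Q}$ are only sketched here. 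In particular, your assertion that $\psi_{\mathcal A}(\alpha)$, for $\alpha$ dual to a simplex $s\not\subset T_{\mathcal B}$, is supported entirely outside ${\mathcal K}(B_{\mathcal B})$ presupposes a locality property of $\psi_{\mathcal A}$ that cannot be read off from what is written in this paper; one must go to \cite{Basu8} to verify it. Your plan is the right shape, but carrying it out requires the detailed construction in the original source rather than anything available in this survey.
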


We denote by
$$ 
\check{\phi}_{{\mathcal B},{\mathcal A}}:
\check{{\mathcal M}}_{\mathcal B}^{\bullet}
\rightarrow
\check{{\mathcal M}}_{\mathcal A}^{\bullet}
$$
the homomorphism dual to $\phi_{{\mathcal A},{\mathcal B}}$.
We denote by ${\mathcal D}^{\bullet,\bullet}_{\mathcal P}$ 
the double complex defined by:

$$
\displaylines{
{\mathcal D}_{\mathcal P}^{p,q} = 
\bigoplus_{{\mathcal Q} \subset {\mathcal P},\#{\mathcal Q} = p+1}
{\check{\mathcal M}}^{q}_{\mathcal Q}.
}
$$

The vertical differentials
$$
\displaylines{
d: {\mathcal D}_{\mathcal P}^{p,q} \rightarrow {\mathcal D}_{\mathcal P}^{p,q-1}
}
$$
are induced component-wise from the
differentials of the individual complexes 
${\check{\mathcal M}}^{\bullet}_{\mathcal Q}$.
The horizontal differentials
$$
\displaylines{
\delta: {\mathcal D}_{\mathcal P}^{p,q} \rightarrow {\mathcal D}_{\mathcal P}^{p+1,q}
}
$$
are defined as follows:
for $a \in {\mathcal D}_{\mathcal P}^{p,q} = 
\oplus_{\#{\mathcal Q} = p+1}{\check{\mathcal M}}^q_{\mathcal Q}$
for each subset 
\[
{\mathcal Q} = \{P_{i_0},\ldots,P_{i_{p+1}}\} \subset {\mathcal P}
\]
with $i_0 < \cdots < i_{p+1}$ 
the ${\mathcal Q}$-th component of 
$\delta a \in {\mathcal D}_{\mathcal P}^{p+1,q}$ is given by
$$
(\delta a)_{{\mathcal Q}} = \sum_{0 \leq j \leq p+1}
                         \check{\phi}_{{\mathcal Q}_j,{\mathcal Q}}(a_{{\mathcal Q}_j})
$$

where ${\mathcal Q}_j = {\mathcal Q} \setminus \{P_{i_j}\}$.
{\small
$$
\begin{array}{ccccccccc}
& & \vdots  && \vdots  && \vdots  && \cr
& &
\Big\downarrow\vcenter{\rlap{$d$}} & &
\Big\downarrow\vcenter{\rlap{$d$}} & &
\Big\downarrow\vcenter{\rlap{$d$}} & & \cr
0 & \longrightarrow & \oplus_{\#{\mathcal Q} = 1}{\check{\mathcal M}}^{3}_{\mathcal Q} & 
\stackrel{\delta}{\longrightarrow}  & \oplus_{\#{\mathcal Q} = 2}{\check{\mathcal M}}^3_{\mathcal Q}&
\stackrel{\delta}{\longrightarrow} & \oplus_{\#{\mathcal Q} = 3}{\check{\mathcal M}}^3_{\mathcal Q}&
 \longrightarrow & \cdots
\cr
& &
\Big\downarrow\vcenter{\rlap{$d$}} & &
\Big\downarrow\vcenter{\rlap{$d$}} & &
\Big\downarrow\vcenter{\rlap{$d$}} & &\cr
0 & \longrightarrow & \oplus_{\#{\mathcal Q} = 1}{\check{\mathcal M}}^2_{\mathcal Q} &
\stackrel{\delta}{\longrightarrow} & \oplus_{\#{\mathcal Q} = 2}{\check{\mathcal M}}^2_{\mathcal Q} &
\stackrel{\delta}{\longrightarrow} & \oplus_{\#{\mathcal Q} = 3}{\check{\mathcal M}}^2_{\mathcal Q} &
 \longrightarrow &\cdots 
\cr
& &
\Big\downarrow\vcenter{\rlap{$d$}} & &
\Big\downarrow\vcenter{\rlap{$d$}} & &
\Big\downarrow\vcenter{\rlap{$d$}} & &\cr
0 & \longrightarrow & \oplus_{\#{\mathcal Q} = 1}{\check{\mathcal M}}^1_{\mathcal Q}&
\stackrel{\delta}{\longrightarrow} & \oplus_{\#{\mathcal Q} = 2}{\check{\mathcal M}}^1_{\mathcal Q}&
\stackrel{\delta}{\longrightarrow} & \oplus_{\#{\mathcal Q} = 3}{\check{\mathcal M}}^1_{\mathcal Q} &
 \longrightarrow & \cdots
\cr
& &
\Big\downarrow\vcenter{\rlap{$d$}} & &
\Big\downarrow\vcenter{\rlap{$d$}} & &
\Big\downarrow\vcenter{\rlap{$d$}} & & \cr
0 & \longrightarrow & \oplus_{\#{\mathcal Q} = 1}{\check{\mathcal M}}^0_{\mathcal Q} &
\stackrel{\delta}{\longrightarrow} & \oplus_{\#{\mathcal Q} = 2}{\check{\mathcal M}}^0_{\mathcal Q} &
\stackrel{\delta}{\longrightarrow} & \oplus_{\#{\mathcal Q} = 3}{\check{\mathcal M}}^0_{\mathcal Q} &
 \longrightarrow & \cdots
\cr
& &
\Big\downarrow\vcenter{\rlap{$d$}} & &
\Big\downarrow\vcenter{\rlap{$d$}} & &
\Big\downarrow\vcenter{\rlap{$d$}} & & \cr
& & 0 && 0 && 0 & &\cr 
\end{array}
$$
}

We have the following theorem.

\begin{theorem} \cite{Basu8}
\label{the:main}
For $0 \leq i \leq k,$
$$
H^i(S) \cong 
H^i({\rm Tot}^{\bullet}({\mathcal D}^{\bullet,\bullet}_{\mathcal P})).
$$
\end{theorem}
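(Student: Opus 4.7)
The plan is to construct a double complex $\mathcal N^{\bullet,\bullet}$ whose total complex has cohomology $H^*(S)$ and whose $(p,q)$-term is of the form $\bigoplus_{|\mathcal Q|=p+1}\check{\Ch}^q(T_{\mathcal Q})$, then to replace each column by $\check{\mathcal M}^\bullet_{\mathcal Q}$ using Proposition \ref{prop:iso} to obtain $\mathcal D^{\bullet,\bullet}_{\mathcal P}$, and finally to conclude via the Comparison Theorem (Theorem \ref{the:spectral}).

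First I would build $\mathcal N^{\bullet,\bullet}$ via the dual form of the generalized Mayer-Vietoris double complex (as indicated in Remark \ref{rem:dual}) adapted to the intersection $S = \bigcap_{P \in \mathcal P} U_P$, where $U_P = \{x \in \Sphere^k \mid P(x) \leq 0\}$. In the dual construction for intersections, the roles of intersections and unions in Definition \ref{def:MV} are interchanged, so the resulting double complex has $(p,q)$-term $\bigoplus_{|\mathcal Q|=p+1} \check{\Ch}^q(\mathcal H(T_{\mathcal Q}))$, with horizontal differentials induced by (duals of) the restriction maps $\Ch^\bullet(\mathcal H(T_{\mathcal Q_2})) \to \Ch^\bullet(\mathcal H(T_{\mathcal Q_1}))$ arising from the inclusions $T_{\mathcal Q_1} \subset T_{\mathcal Q_2}$ for $\mathcal Q_1 \subset \mathcal Q_2$. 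By the exactness of the dual generalized Mayer-Vietoris sequence, the total complex of $\mathcal N^{\bullet,\bullet}$ is quasi-isomorphic to $\check{\Ch}^\bullet(\mathcal H(S))$, giving $H^i(\Tot^\bullet(\mathcal N^{\bullet,\bullet})) \cong H^i(S)$.

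For the second step, Proposition \ref{prop:iso} supplies a quasi-isomorphism $\psi_{\mathcal Q}: \Ch^\bullet(\mathcal H(T_{\mathcal Q})) \to \mathcal M^\bullet_{\mathcal Q}$ inducing isomorphisms on $H^i$ for $0 \leq i \leq k-1$. Dualizing over $\Q$ (which preserves quasi-isomorphisms, $\Q$ being a field) yields $\check{\psi}_{\mathcal Q}: \check{\mathcal M}^\bullet_{\mathcal Q} \to \check{\Ch}^\bullet(\mathcal H(T_{\mathcal Q}))$, again a quasi-isomorphism in the same range. Proposition \ref{prop:commutative} ensures that these maps intertwine the horizontal differentials of the two double complexes (the restrictions $r$ on the cochain side and their duals $\check{\phi}_{\mathcal B, \mathcal A}$ on the $\check{\mathcal M}$ side), so they assemble into a homomorphism of double complexes $\Psi^{\bullet,\bullet}: \mathcal D^{\bullet,\bullet}_{\mathcal P} \to \mathcal N^{\bullet,\bullet}$ which is a column-wise quasi-isomorphism. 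Applying the Comparison Theorem to the column-wise filtrations then shows that $\Tot^\bullet(\Psi)$ is a quasi-isomorphism of total complexes, yielding $H^i(\Tot^\bullet(\mathcal D^{\bullet,\bullet}_{\mathcal P})) \cong H^i(\Tot^\bullet(\mathcal N^{\bullet,\bullet})) \cong H^i(S)$ for $0 \leq i \leq k-1$. The top degree $i=k$ can be handled by a short separate argument, exploiting that $H^k(T_{\mathcal Q}) = 0$ whenever $T_{\mathcal Q}$ is a proper closed subset of $\Sphere^k$ (so the $E_1$ contributions in degree $k$ vanish except in the degenerate case $T_{\mathcal Q} = \Sphere^k$, where $S$ itself is easy to treat directly).

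The hard part will be verifying that the horizontal differentials of the dual Mayer-Vietoris complex $\mathcal N^{\bullet,\bullet}$ correspond precisely, under $\Psi$, to the maps $\check{\phi}_{\mathcal B, \mathcal A}$ that define the horizontal direction of $\mathcal D^{\bullet,\bullet}_{\mathcal P}$. This requires tracking signs and indices carefully through the dualization, and in particular reconciling the mixed grading convention of $\mathcal D^{\bullet,\bullet}_{\mathcal P}$ (where $\delta$ increases $p$ while $d$ decreases $q$, reflecting the dualization of the underlying cochain complexes) with the indexing inherited from the dual Mayer-Vietoris construction for the intersection $S$.
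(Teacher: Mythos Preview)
Your proposal is correct and follows essentially the same route that the paper outlines in Section~\ref{sec:outline}: one builds the dual Mayer-Vietoris double complex for the intersection $S=\bigcap_i S_i$ (whose $(p,q)$-term is $\bigoplus_{\#\mathcal Q=p+1}\check{\Ch}^q(\mathcal H(T_{\mathcal Q}))$), replaces each column by $\check{\mathcal M}^\bullet_{\mathcal Q}$ via the dualized quasi-isomorphisms $\psi_{\mathcal Q}$ of Proposition~\ref{prop:iso}, uses Proposition~\ref{prop:commutative} to make this a morphism of double complexes, and concludes by the Comparison Theorem (Theorem~\ref{the:spectral}). The only cosmetic difference is that the paper phrases the comparison as an isomorphism of spectral sequences ``from the $E_2$ term onwards,'' whereas your column-wise quasi-isomorphism already gives an $E_1$ isomorphism; either version suffices, and your observation about handling $i=k$ separately (since Proposition~\ref{prop:iso} only covers $0\le i\le k-1$) is the right way to close the gap.
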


Finally, using 
Theorem \ref{the:main} we have

\begin{theorem} \cite{Basu8}
\label{the:quadratic_main1}
There exists an algorithm which given a set of $s$ polynomials,
${\mathcal P} = \{P_1,\ldots,P_s\} \subset \R[X_1,\ldots,X_k],$
with ${\rm deg}(P_i) \leq 2, 1 \leq i \leq s,$
computes $b_{k-1}(S), \ldots, b_{k-\ell}(S),$ 
where $S$ is the set defined by $P_1 \leq 0,\ldots,P_s \leq 0$.
The complexity of the algorithm  is
\begin{equation}
\label{eqn:complexity}
\sum_{i=0}^{\ell+2} {s \choose i} k^{2^{O(\min(\ell,s))}}.
\end{equation}
If the coefficients of the polynomials in
${\mathcal P}$ are integers  of bit-sizes bounded by
 $\tau$, then the bit-sizes of the integers
appearing in the intermediate computations and the output
are bounded by $\tau (sk)^{2^{O(\min(\ell,s))}}.$
\end{theorem}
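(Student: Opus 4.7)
The plan is to use Theorem~\ref{the:main}, which already supplies the crucial isomorphism $H^i(S) \cong H^i(\Tot^\bullet(\mathcal{D}^{\bullet,\bullet}_\mathcal{P}))$; my task is therefore to build the double complex $\mathcal{D}^{\bullet,\bullet}_\mathcal{P}$ efficiently and extract the target cohomology via linear algebra, being careful to compute only the part that influences the top $\ell$ Betti numbers. First I would reduce the inhomogeneous problem to the homogeneous setting on $\Sphere^k$ by homogenizing each $P_i$ via a new variable $X_0$ and working on the unit sphere in $\R^{k+1}$ (after passing if necessary to a suitably compactified set to preserve homotopy type).

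Next I would observe that, to produce $b_{k-1}(S),\ldots,b_{k-\ell}(S)$, I only need $H^{k-j}(\Tot^\bullet(\mathcal{D}^{\bullet,\bullet}_\mathcal{P}))$ for $1 \le j \le \ell$. Because each $\mathcal{M}^\bullet_{\mathcal{Q}}$ computes cohomology only up through degree $k-1$ (by Proposition~\ref{prop:iso}) and because the $p$-th column of $\mathcal{D}^{\bullet,\bullet}_\mathcal{P}$ indexes subsets $\mathcal{Q} \subset \mathcal{P}$ of size $p+1$, the spectral sequence associated to the column-wise filtration shows that only columns with $p \le \ell+1$ — equivalently, only subsets of size $\le \ell+2$ — contribute to the cohomology groups I want. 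This truncation is what keeps the combinatorial complexity at $\sum_{i=0}^{\ell+2}\binom{s}{i}$ rather than the full $2^s$.

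For each such $\mathcal{Q}$ I would construct $\mathcal{M}^\bullet_\mathcal{Q}$ by following the procedure of Section~\ref{subsec:comp}: compute an index-invariant triangulation of $\Omega_\mathcal{Q}$ (of size $k^{2^{O(\ell)}}$), use the accompanying parametrized orthonormal eigenbases to write down the cells $c_j^\pm(\sigma,\omega)$ of Definition~\ref{def:cell}, glue together the local cell complexes $\mathcal{K}(\sigma)$, $\mathcal{K}(\sigma,\tau)$, and $\mathcal{K}(D)$ through the identifications $i_{D,\sigma_i}$ to produce $\mathcal{K}(B_\mathcal{Q})$, and take its cellular cochain complex. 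Propositions~\ref{prop:iso2}, \ref{prop:complexity2}, and~\ref{prop:iso} then guarantee that $\mathcal{M}^\bullet_\mathcal{Q}$ has the correct cohomology and has size bounded by $k^{2^{O(\ell)}}$. The dualized restriction maps $\check{\phi}_{\mathcal{B},\mathcal{A}}$ between these complexes, required to assemble $\mathcal{D}^{\bullet,\bullet}_\mathcal{P}$, come directly from the inclusion $\Delta_\mathcal{B} \hookrightarrow \Delta_\mathcal{A}$ of index-invariant triangulations.

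Finally I would form the truncated associated total complex, whose dimension in each degree is bounded by $\sum_{i=0}^{\ell+2}\binom{s}{i} k^{2^{O(\ell)}}$, and compute the ranks of its differentials over $\Q$ by Gaussian elimination to obtain $b_{k-1}(S),\ldots,b_{k-\ell}(S)$. The main obstacle is the third step: the cell-complex construction requires controlling how the eigenvector flags $\mathcal{F}(\sigma_i,v)$ evolve across adjacent simplices and refining the triangulation of each $h(D)$ so that the combinatorial type of the overlay of the flags stays invariant; this is where the doubly exponential factor $k^{2^{O(\ell)}}$ enters, and it is also what forces the $\min(\ell,s)$ in the final complexity — since we only ever need subsets of size $\le \ell+2$, the inner parameter governing the flag-overlay refinement is $\min(\ell,s)$ rather than $\ell$. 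The bit-size analysis for integer inputs follows by tracking the arithmetic through the index-invariant triangulation algorithm and the linear algebra, yielding the claimed bound $\tau(sk)^{2^{O(\min(\ell,s))}}$.
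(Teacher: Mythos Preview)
Your proposal is correct and follows essentially the same approach as the paper: reduce to the homogeneous case on $\Sphere^k$, invoke Theorem~\ref{the:main} to replace $\HH^*(S)$ by $\HH^*(\Tot^\bullet(\mathcal{D}^{\bullet,\bullet}_{\mathcal P}))$, truncate the double complex to columns indexed by subsets $\mathcal Q\subset\mathcal P$ of size at most $\ell+2$ (which is exactly what the paper's outline in Section~\ref{sec:outline} and Remark~\ref{rem:induction} prescribe), build each $\mathcal{M}^\bullet_{\mathcal Q}$ via the index-invariant triangulation and cell-complex machinery of Section~\ref{subsec:comp}, and finish with linear algebra. The paper itself does not spell out a proof here beyond the phrase ``using Theorem~\ref{the:main}'', so your expansion is faithful to (and somewhat more detailed than) what the survey provides.
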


For certain applications we need the following more detailed version
of Theorem \ref{the:quadratic_main1}.

\begin{theorem} \cite{Basu8}
\label{the:quadratic_main2}
There exists an algorithm  which takes as input 
a family of polynomials
$\{P_1,\ldots,P_s\}\subset  \R[X_1\ldots,X_k],$ with
${\rm deg}(P_i) \leq 2$
and a number $\ell \leq k$,
and outputs a complex ${\mathcal D}^{\bullet,\bullet}_\ell $. 
The complex $\Tot^{\bullet}({\mathcal D}^{\bullet,\bullet}_\ell)$ is
quasi-isomorphic to $\Ch^\ell_{\bullet}(S)$, the truncated singular
chain complex of $S$,
where
$$S = \bigcap_{P \in {\mathcal P}}
               \{x \in \R^{k}\; \mid \; P(x) \leq 0 \}.
$$

Moreover, given a subset ${\mathcal P}' \subset {\mathcal P}$
with 
$$
S' = \bigcap_{P \in {\mathcal P}'}
               \{x \in \R^{k}\; \mid \; P(x) \leq 0 \}.
$$
the algorithm outputs both complexes ${\mathcal D}^{\bullet,\bullet}_\ell$ and
${\mathcal D}'^{\bullet,\bullet}_\ell$ (corresponding to the sets
$S$ and $S'$ respectively) along with the matrices defining 
a homomorphism $\Phi_{{\mathcal P},{\mathcal P}'}$ 
such that 
$\Phi_{{\mathcal P},{\mathcal P}'}^*: \HH^*(\Tot^\bullet({\mathcal D}^{\bullet,\bullet}_\ell)) \cong \HH_*(S) \rightarrow  \HH_*(S') \cong
\HH^*(\Tot^\bullet({\mathcal D'}^{\bullet,\bullet}_\ell))
$
is the homomorphism induced by the inclusion map $i: S \hookrightarrow S'$.
The complexity of the algorithm is 
$ 
\sum_{i=0}^{\ell+2} {s \choose i} k^{2^{O(\min(\ell,s))}}.
$
\end{theorem}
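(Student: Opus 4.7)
The plan is to upgrade Theorem \ref{the:quadratic_main1} from a numerical Betti-number computation to an explicit chain-level algorithm that additionally tracks functoriality under subset inclusion. Specifically, I would refine the algorithm underlying Theorem \ref{the:quadratic_main1} so that it emits, rather than just the ranks of the cohomology groups, the matrices defining the complexes $\check{\mathcal{M}}^{\bullet}_{\mathcal{Q}}$ together with those for the horizontal differentials $\check{\phi}_{\mathcal{Q}',\mathcal{Q}}$. Concretely, for each subset $\mathcal{Q}\subset\mathcal{P}$ with $\#\mathcal{Q}\leq\ell+2$, I would explicitly construct the cell complex $\mathcal{K}(B_{\mathcal{Q}})$ described in Section \ref{subsec:comp} from an index-invariant triangulation of $\Omega_{\mathcal{Q}}$, and extract from it the complex $\mathcal{M}^{\bullet}_{\mathcal{Q}}$ together with its dual $\check{\mathcal{M}}^{\bullet}_{\mathcal{Q}}$, each obtained within the $k^{2^{O(\min(\ell,s))}}$ bound already established.

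Next I would assemble the truncated double complex $\mathcal{D}^{\bullet,\bullet}_{\ell}$ by taking, in the definition of $\mathcal{D}^{\bullet,\bullet}_{\mathcal{P}}$ that precedes Theorem \ref{the:main}, only those summands indexed by subsets $\mathcal{Q}\subset\mathcal{P}$ with $\#\mathcal{Q}\leq\ell+2$. By a standard spectral-sequence truncation argument in the same spirit as Corollary \ref{cor:MV}, the natural inclusion $\Tot^{\bullet}(\mathcal{D}^{\bullet,\bullet}_{\ell})\hookrightarrow\Tot^{\bullet}(\mathcal{D}^{\bullet,\bullet}_{\mathcal{P}})$ is a quasi-isomorphism in degrees at most $\ell$, and hence by Theorem \ref{the:main} the cohomology of the truncated total complex matches $\HH_{\bullet}(S)$ in those degrees, yielding the required quasi-isomorphism with the truncated singular chain complex $\Ch^{\ell}_{\bullet}(S)$. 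The bound $\sum_{i=0}^{\ell+2}\binom{s}{i}$ on the number of summands combined with the $k^{2^{O(\min(\ell,s))}}$ cost per summand gives the stated complexity.

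For the functoriality step, observe that every $\mathcal{Q}'\subset\mathcal{P}'$ is simultaneously a subset of $\mathcal{P}$, so each summand $\check{\mathcal{M}}^{\bullet}_{\mathcal{Q}'}$ appears in both $\mathcal{D}^{\bullet,\bullet}_{\ell}$ and $\mathcal{D}'^{\bullet,\bullet}_{\ell}$. Forgetting the summands indexed by $\mathcal{Q}\not\subset\mathcal{P}'$ defines a morphism of double complexes $\mathcal{D}^{\bullet,\bullet}_{\ell}\to\mathcal{D}'^{\bullet,\bullet}_{\ell}$, hence a morphism on total complexes and on cohomology; the matrices for this morphism are already produced as a byproduct of Step~1. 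Using Proposition \ref{prop:commutative}, which ensures compatibility of the individual complexes $\mathcal{M}^{\bullet}_{\mathcal{Q}}$ with the geometric restriction homomorphisms, I would verify that the induced map on cohomology coincides, under the quasi-isomorphisms of Theorem \ref{the:main}, with the map induced by the inclusion $S\hookrightarrow S'$.

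The main obstacle I anticipate is Step~3: one has to verify that the purely combinatorial projection between the subset-indexed double complexes, once transported through the chain of quasi-isomorphisms underlying Theorem \ref{the:main}, realizes precisely the geometric inclusion-induced map $i_{\ast}\colon\HH_{\bullet}(S)\to\HH_{\bullet}(S')$ and not, for instance, its transpose or a related dual map. This requires carefully tracking the direction of every identification in the proof of Theorem \ref{the:main} (especially the interplay between the dual Mayer-Vietoris spectral sequence alluded to in Remark \ref{rem:dual} and the Alexander-type duality implicit in passing between unions $T_{\mathcal{Q}}$ and the intersection $S$ on the sphere), and confirming that the inclusion on index sets translates correctly into the direction on associated geometric spaces. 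Once this bookkeeping is settled the remaining verifications are routine.
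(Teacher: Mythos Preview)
The paper does not supply its own proof of this theorem: it is stated with a citation to \cite{Basu8} and the exposition moves directly to Section~\ref{subsec:projquad}. So there is nothing in the paper to compare your argument against line by line.

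That said, your plan is the natural one and matches the architecture the paper sets up in Section~\ref{subsec:comp}. A couple of remarks. First, your observation that the projection $\mathcal{D}^{\bullet,\bullet}_{\ell}\to\mathcal{D}'^{\bullet,\bullet}_{\ell}$ (forgetting summands indexed by $\mathcal{Q}\not\subset\mathcal{P}'$) is a morphism of double complexes is correct, and it is worth noting that the other candidate, extension by zero $\mathcal{D}'^{\bullet,\bullet}_{\ell}\hookrightarrow\mathcal{D}^{\bullet,\bullet}_{\ell}$, is \emph{not} a chain map for the horizontal differential $\delta$ (a subset $\mathcal{Q}\not\subset\mathcal{P}'$ with $\#(\mathcal{Q}\setminus\mathcal{P}')=1$ receives a nonzero contribution from $\mathcal{Q}_j\subset\mathcal{P}'$). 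So the direction you chose is forced, not merely convenient. Second, your truncation to $\#\mathcal{Q}\leq\ell+2$ controls the number of summands, but to obtain the stated per-summand cost $k^{2^{O(\min(\ell,s))}}$ you should also record that the index-invariant triangulation of $\Omega_{\mathcal{Q}}$ and the resulting cell complex $\mathcal{K}(B_{\mathcal{Q}})$ have size $k^{2^{O(\#\mathcal{Q})}}$ by Proposition~\ref{prop:complexity2}, so the bound on $\#\mathcal{Q}$ is doing double duty. Finally, the obstacle you flag is the right one: the passage from the cohomology of the unions $T_{\mathcal{Q}}$ to the homology of the intersection $S$ goes through dualization (the checks on $\check{\mathcal{M}}^{\bullet}_{\mathcal{Q}}$) and the dual Mayer-Vietoris of Remark~\ref{rem:dual}, and one must verify that under these identifications the projection of double complexes realises $i_{*}$ rather than $i^{*}$. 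This is bookkeeping, but it is the only place where a sign or direction error could silently creep in.
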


\subsection{Projections of Sets Defined by Quadratic Inequalities}
\label{subsec:projquad}
There are two main ingredients in the algorithm for computing
Betti numbers of projections of sets defined by quadratic inequalities.
The first is the use of descent spectral sequence described in 
Section \ref{subsec:descent}.
Notice that the individual terms 
occurring in the double complex in Section \ref{subsubsec:ddd}
correspond to the chain groups of the fibered
products of the original set. {\em A crucial observation 
here is that the fibered product of a set defined by few quadratic 
inequalities is again a set of the same type.}
However, since there
is no known algorithm for efficiently triangulating semi-algebraic sets
(even those defined by few quadratic inequalities) we cannot directly use the
spectral sequence to actually compute the Betti numbers of the projections.
In order to do that we need an additional ingredient.
This second main ingredient is the polynomial time algorithm 
in Theorem \ref{the:quadratic_main2}
for computing a complex whose cohomology groups are
isomorphic to those of a given semi-algebraic set defined by a
constant number of quadratic inequalities. Using this algorithm we are
able to construct a certain double complex, whose associated total
complex is quasi-isomorphic to (implying having isomorphic homology
groups) a suitable truncation of the one obtained from the cohomological
descent spectral sequence mentioned above. This complex is of much
smaller size and can be computed in polynomial time and is enough for
computing the first $q$ Betti numbers of the projection in polynomial
time for any constant $q$.

We have the following theorem.

\begin{theorem}\cite{BZ}
\label{the:projquad_main}
There exists an algorithm that takes as input
a basic semi-algebraic set $S \subset \re^{k+m}$ defined by 
\[
P_1 \geq  0, \ldots, P_\ell \geq 0,
\] 
with $P_i \in \re[X_1,\ldots,X_k,Y_1,\ldots,Y_m]$,
$\deg(P_i) \leq 2, \; 1 \leq i \leq \ell$ 
and outputs 
\[
b_{0}(\pi(S)),\ldots, b_q(\pi(S)),
\]
where
$\pi:\re^{k+m} \rightarrow \re^m$ be the projection onto the last
$m$ coordinates. 
The complexity of the algorithm is bounded by $(k + m)^{2^{O((q+1)\ell)}}$.
\end{theorem}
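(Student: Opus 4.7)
The plan is to combine the cohomological descent spectral sequence of Theorem \ref{the:descent} with the polynomial-time complex computation of Theorem \ref{the:quadratic_main2}. The crucial observation that makes this combination work is that the fibered powers $W_\pi^p(S)$ of a set defined by $\ell$ quadratic inequalities are themselves sets defined by $(p+1)\ell$ quadratic inequalities; indeed, parametrizing a point of $W_\pi^p(S) \subset \re^{(p+1)k+m}$ as $(\mathbf{x}^0, \ldots, \mathbf{x}^p, y)$, the defining conditions are exactly $P_j(\mathbf{x}^i, y) \geq 0$ for $0 \leq i \leq p$ and $1 \leq j \leq \ell$, so the algorithm of Theorem \ref{the:quadratic_main2} applies to each $W_\pi^p(S)$ in a uniform way.

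First, I would replace $S$ by an open semi-algebraic set $\tilde{S}$ that is homotopy equivalent to $S$ (and whose projection is homotopy equivalent to $\pi(S)$), obtained by the standard infinitesimal thickening $P_i > -\eps$ with $\eps$ an infinitesimal; this is needed so that $\pi|_{\tilde{S}}$ is locally split and Theorem \ref{the:descent} applies to it. The homotopy equivalence $\pi(\tilde{S}) \simeq \pi(S)$ is a standard consequence of the conic structure of semi-algebraic sets at the boundary. By Theorem \ref{the:descent}, the spectral sequence with $E_1^{p,q} = \HH^q(W_\pi^p(\tilde{S}))$ associated to the descent double complex $\D^{\bullet,\bullet}(\tilde{S})$ converges to $\HH^*(\pi(\tilde{S})) \cong \HH^*(\pi(S))$. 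Since we only want the first $q+1$ Betti numbers, it suffices to work with the truncation of $\D^{\bullet,\bullet}(\tilde{S})$ in which only columns $0 \leq p \leq q+1$ (and, in column $p$, only rows up to degree $q+1-p$) appear; its associated total complex computes $\HH^i(\pi(S))$ for $0 \leq i \leq q$.

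Next, for each $0 \leq p \leq q+1$, I would apply Theorem \ref{the:quadratic_main2} to $W_\pi^p(\tilde{S})$ with truncation parameter $q+1-p$, obtaining a small complex $\M^\bullet_p$ whose cohomology agrees with $\HH^j(W_\pi^p(\tilde{S}))$ for $j \leq q+1-p$. Because Theorem \ref{the:quadratic_main2} also produces, for a pair of defining families with one contained in the other, the matrices of a homomorphism that realizes the induced map on cohomology, I can apply this functorial feature to the closed embeddings (or, in the open setting, to the inclusions of fibered-power defining conditions) needed to produce algebraic models for the maps $\pi_{p,i}^*$. This yields a double complex $\widetilde{\D}^{\bullet,\bullet}$ assembled from the $\M^\bullet_p$ together with horizontal differentials obtained as the alternating sum of the computed $\pi_{p,i}^*$ analogues, which by construction (via the Comparison Theorem \ref{the:spectral} applied to the column-wise filtration) is quasi-isomorphic in total degree $\leq q$ to the corresponding truncation of $\D^{\bullet,\bullet}(\tilde{S})$. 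Finally, one computes $\HH^i(\Tot^\bullet(\widetilde{\D}^{\bullet,\bullet}))$ for $i \leq q$ by ordinary linear algebra (kernel/image of integer matrices).

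For the complexity, each invocation of Theorem \ref{the:quadratic_main2} with $s' = (p+1)\ell$ polynomials in $k' = (p+1)k + m$ variables and truncation $q+1-p$ costs $(k')^{2^{O(\min(q+1-p,\,(p+1)\ell))}}$, which for $p \leq q+1$ is bounded by $(k+m)^{2^{O((q+1)\ell)}}$; summing over $p$ and adding the linear-algebra step preserves this bound. The main obstacle I anticipate is the construction of the horizontal differentials: the maps $\pi_{p,i}$ are projections that drop one factor, not inclusions, so realizing the pullbacks $\pi_{p,i}^*$ at the level of the compact complexes $\M^\bullet_p$ rather than at the level of (exponentially large) triangulations requires refining Theorem \ref{the:quadratic_main2} so that the explicit cellular models $\mathcal{K}(B_{\mathcal Q})$ built for different fibered powers are mutually compatible under these coordinate-forgetting maps; this compatibility must be established by an index-invariant triangulation on the union of parameter simplices $\Omega_{\mathcal Q}$ across the fibered powers, and it is the place where the bulk of the technical work of \cite{BZ} resides.
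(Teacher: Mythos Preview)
Your proposal is correct and follows essentially the same approach as the paper: combine the descent spectral sequence (Theorem \ref{the:descent}) with the polynomial-time complexes of Theorem \ref{the:quadratic_main2}, exploiting the key observation that the $(p+1)$-fold fibered power $W_\pi^p(S)$ is again defined by $(p+1)\ell$ quadratic inequalities in $(p+1)k+m$ variables, then assemble a small double complex quasi-isomorphic (in low degrees) to the truncated descent double complex via the Comparison Theorem. You have also correctly identified the one genuinely delicate point: the face maps $\pi_{p,i}^*$ are pullbacks along coordinate-forgetting projections rather than along inclusions of the form $S\hookrightarrow S'$ with $\mathcal P'\subset\mathcal P$, so Theorem \ref{the:quadratic_main2} as stated does not directly furnish the horizontal differentials; obtaining mutually compatible cellular models $\mathcal K(B_{\mathcal Q})$ across the fibered powers is indeed where the technical work of \cite{BZ} lies.
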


\section{Betti Numbers of Arrangements}
\label{sec:arrangements}
In this section  we describe an algorithm for computing the Betti
numbers of the union of a collection, ${\mathcal S}$,  of subsets 
of $\R^k$, where each set is
assumed to be a closed and bounded semi-algebraic set of 
{\em constant description complexity}. It is customary to call
the collection ${\mathcal S}$ an {\em arrangement} and 
we will refer to this problem as the problem of 
computing the Betti numbers of the arrangement ${\mathcal S}$.
A semi-algebraic
set in $\R^k$ is said to have constant description complexity
if it can be described by a first order formula of size
bounded by some constant (see also \cite{Basu9} for a
more general mathematical framework). The key point 
which distinguishes the results
in this section from those in the previous sections is that unlike
before, here we are interested only in the {\em
combinatorial part} of complexity estimates -- i.e.
the part of the complexity that depends on the number of sets
in the input. Since the input sets are of 
constant description complexity, the {\em algebraic part} 
of the complexity -- i.e. the part that depends on the degrees and 
number of polynomials defining each set -- is bounded by a constant. 
This point of view, which is now standard in discrete and computational
geometry (see \cite{Agarwal,Matousek}), 
presents new challenges from the point of view of
designing efficient algorithms for computing Betti numbers of 
arrangements of sets of constant description complexity.

Notice that, unlike before, in this setting it is not important 
to obtain a good (say single exponential in $k$) bound
on the the algebraic part of the complexity, since it is
bounded by some constant regardless of the exact nature of
the bound. Thus, we have much greater flexibility
in designing algorithms, since we can utilize triangulation
algorithms (cf. Theorem \ref{the:triangulation})
which have doubly exponential complexity as 
long as the number of sets in the input to each such call 
is bounded by a constant. 

On the other hand
the algorithms described in Section \ref{sec:bettifew}, while having
single exponential complexity, are no longer the best possible 
in this setting, since the combinatorial complexities of these 
algorithms are very far from being optimal. The goal is to use the 
flexibility afforded in the algebraic part to design algorithm having 
much tighter combinatorial complexity.

A version of the main result of this section (Algorithm 
\ref{alg:computingbettiofarrangements} below) appears in \cite{Basu5} where
a spectral sequence argument is used.
We present here a different (and simpler) algorithm
which avoids spectral sequences but instead uses the 
more geometric notion of homotopy colimits  
(cf. Definition \ref{def:hocolimit}).
The new algorithm has the same complexity as the previous one.
 
\subsection{Computing Betti Numbers via Global Triangulations}
As we have seen in Section \ref{sec:topbackground}, one approach towards
computing the Betti numbers of the arrangement is to obtain
a triangulation of the whole arrangement using the algorithm
implicit in Theorem \ref{the:triangulation}.
Thus, in order to compute the Betti numbers of an arrangement of 
$n$ closed and bounded semi-algebraic sets of constant description
complexity in $\R^k$
it suffices to first triangulate the
arrangement and then compute the Betti numbers of the corresponding simplicial
complex. However, using the complexity estimate in
 Theorem \ref{the:triangulation} the complexity of computing such
a triangulation is $O(n^{2^k})$.
However, since the Betti numbers of such an arrangement 
is bounded by $O(n^{k})$ (cf. Theorem \ref{the:B99}), 
it is reasonable to ask for an algorithm
whose  complexity is bounded by $O(n^k)$. 
More efficient ways of decomposing  arrangements into topological
balls have been proposed. In \cite{CEGS91} the authors provide a 
decomposition into $O^*(n^{2k-3})$ cells (see \cite{Koltun} for
an improvement of this result in the case $k=4$). 
However, this decomposition does not produce a cell complex and is 
therefore  not directly useful in computing the 
Betti numbers of the arrangement.

\subsection{Local Method}
We have seen in Section \ref{sec:topbackground} that 
in certain  simple situations it is possible to compute the 
Betti numbers of an arrangement without having to
compute a triangulation.
For instance,
if the arrangement has the Leray property (cf. Definition \ref{def:leray})
Theorem \ref{the:nerve}
provides an efficient way of computing
the Betti numbers of the union. 
The dimension of the $p$-th term of the nerve complex
$\LL^{p}({\mathcal C})$ (see Eqn. (\ref{eqn:defofLL})) is this case 
is bounded by ${n \choose p+1} = O(n^{p+1})$ corresponding
to all possible $(p+1)$-ary intersections amongst the $n$ given  sets.
The truncated complex,
$\LL^{p}_{\ell+1}({\mathcal C})$,
can be computed by testing for non-emptiness of each of the possible
$\sum_{1 \leq j \leq \ell+2} {n \choose j} = O(n^{\ell+2})$ at most
$(\ell + 2)$-ary intersections among the $n$ given sets. The 
first $\ell$ Betti numbers of the arrangements
can then be computed from $\LL^{p}_{\ell+1}({\mathcal C})$
using algorithms from linear algebra.
This technique would work, for instance, if one is interested in
computing the Betti numbers of a union of balls in $\R^k$.
However, this method is no longer useful if the sets in the
arrangement do not satisfy the Leray property.

For non-Leray arrangements, some new ideas are needed. 
Before introducing them we first need some new notation. 
For the rest of this section we fix a family
\begin{equation}
\label{eqn:defofS}
{\mathcal S} = \{S_1,\ldots, S_n\}
\end{equation}
of closed and bounded semi-algebraic subsets of $\R^k$. 
For $I \subset [n]$ we denote by
\begin{align}
S^I & = \bigcup_{i \in I} S_i \\
S_I & = \bigcap_{i \in I} S_i .
\end{align}

The main new idea is that in order to compute the first $\ell$ Betti numbers
of a non-Leray arrangement
${\mathcal S}$ it suffices to compute triangulations, $h^I$,
of the sets $S^I$ with $\#I \leq \ell+2$.
These triangulations should have a certain compatibility property namely --
the triangulation of $S_I$ obtained by restricting $h^I$
should be a refinement of the triangulations
of $S_J$ obtained by restricting $h^J$  for all $J \subset I$.

More formally, we define

\begin{definition}[Adaptive Triangulations]
\label{def:recursive triangulation}
An $\ell$-adaptive triangulation, $h_{\ell}({\mathcal S})$, 
of ${\mathcal S}$ is a 
collection
$
\displaystyle{
\{h^I\}_{I \subset [n], \#I \leq \ell+2}
}
$
of semi-algebraic triangulations 
\begin{equation}
h^I: K^I \rightarrow S^I
\end{equation}
having the following properties.
\begin{enumerate}
\item
For each $I \subset [n]$ with $\#I \leq \ell+2$
the triangulation $h^I$ respects the sets $S_i, i \in I$. In particular,
$h^I$ induces  a triangulation of $S_I$, which we denote by 
$h_I: K_I \rightarrow S_I$, where $K_I$ is a subcomplex of
$K^I$.
\item
For each $J \subset I \subset [n]$ with $\#I \leq \ell+2$,
the triangulation $h_I$ is a refinement of the triangulation $h_J|_{S_I}$.
\end{enumerate}
\end{definition}

We now show how to obtain from a given $\ell$-adaptive triangulation,
a cell complex whose first $\ell$ cohomology groups are isomorphic to those
of $S^{[n]}$. 
We will use the notion of homotopy colimits introduced in Section 
\ref{subsec:hocolimit}.

Given an $\ell$-adaptive triangulation, $h_{\ell}({\mathcal S})$, 
we associate to it a cell complex, ${\mathcal K}_\ell({\mathcal S})$
(best thought of as an infinitesimally thickened version of
$\hocolimit_{\leq \ell}({\mathcal S})$),
whose associated topological space is homotopy equivalent
to $|\hocolimit_{\leq \ell}({\mathcal S})|$.

\begin{definition}[The cell complex ${\mathcal K}_\ell({\mathcal S})$]
\label{def:defofK}
Let ${\mathcal C}$ denote the cell complex 
${\mathcal C}({\rm sk}_{\ell}(\Delta_{[n]}))$ defined previously 
(see Definition \ref{def:defofC} 
replacing $\Delta_{\mathcal Q}$ by $\Delta_{[n]}$).
Let $D$ be a cell of ${\mathcal C}({\rm sk}_{\ell}(\Delta_{[n]}))$.
Then, $D \subset |\Delta_I|$ for a unique simplex $\Delta_I \in 
\Delta_{[n]}$ with $\#I \leq \ell+2$
and (following notation introduced before in Definition \ref{def:defofC})
$$
\displaylines{
D =  D_{\Delta_{I_1},\Delta_{I}} \cap \cdots \cap D_{\Delta_{I_p},\Delta_I}
\cap D_{\Delta_I},
}
$$
with $I_1 \subset \I_2 \subset \cdots \subset I_p \subset
I_{p+1} =I$
and $p \leq \ell+1$. 
We denote
\begin{equation}
{\mathcal K}(D) = \{ D \times \overline{h_{I}(|\sigma|)}\;\mid\; \sigma \in 
K^{I}, \mbox{ with } h_{I}(|\sigma|) \subset  S^{I_1}\},
\end{equation}
and
\begin{equation}
{\mathcal K}_\ell({\mathcal S})
=
\bigcup_{D \in {\mathcal C}({\rm sk}_{\ell}(\Delta_{[n]}))}
{\mathcal K}(D).
\end{equation}
\end{definition}

Notice that $|{\mathcal K}_\ell({\mathcal S})|$
is a closed and bounded semi-algebraic  set
defined over $\R' = \R\la\eps_0,\ldots,\eps_\ell\ra$,
and it contains the semi-algebraic set
$\E(|\hocolimit_{\leq \ell}({\mathcal S})|,\R')$.
Furthermore, we have

\begin{proposition}
\label{prop:hocolimit2}
The semi-algebraic set 
\[
|{\mathcal K}_\ell({\mathcal S})|
\]
is homotopy equivalent to 
\[
\E(|\hocolimit_{\leq \ell}({\mathcal S})|,\R').
\]
\end{proposition}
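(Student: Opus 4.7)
The plan is to exhibit an explicit semi-algebraic deformation retraction from $|{\mathcal K}_\ell({\mathcal S})|$ onto an embedded copy of $\E(|\hocolimit_{\leq \ell}({\mathcal S})|,\R')$, realizing the informal intuition that ${\mathcal K}_\ell({\mathcal S})$ is the hocolimit thickened cell-by-cell via the decomposition ${\mathcal C}({\rm sk}_\ell(\Delta_{[n]}))$ of the simplicial factor together with the adaptive triangulations $\{h^I\}$ of the semi-algebraic factor.

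First I would verify that the cells of ${\mathcal C}({\rm sk}_\ell(\Delta_{[n]}))$ provide a regular cell decomposition of $|{\rm sk}_\ell(\Delta_{[n]})|$, with maximal cells $D$ indexed by chains $I_1\subsetneq\cdots\subsetneq I_{p+1}=I$, $p\le \ell+1$. Each such $D$ is a convex semi-algebraic subset of $|\Delta_I|$ lying in a band of width controlled by the $\eps_i$ around the smallest face $|\Delta_{I_1}|$, and hence deformation retracts onto $D\cap|\Delta_{I_1}|$ via the nearest-point projection $\rho_D$. On each slab ${\mathcal K}(D)=\{D\times \overline{h^I(|\sigma|)}\}$ I would then define
\[
\phi_D\colon D\times \overline{h^I(|\sigma|)}\longrightarrow |\Delta_{I_1}|\times \overline{h^I(|\sigma|)},\qquad (d,x)\longmapsto (\rho_D(d),x).
\]
Using the compatibility axiom in Definition \ref{def:recursive triangulation} (which says that $h^I$ restricted to the subcomplex lying over $S^{I_1}$ refines $h^{I_1}$), these local maps land in the image of $\Delta_{I_1}\times S_{I_1}$ inside the hocolimit, and patch to a globally defined continuous semi-algebraic map
\[
\phi\colon |{\mathcal K}_\ell({\mathcal S})|\longrightarrow \E(|\hocolimit_{\leq \ell}({\mathcal S})|,\R').
\]

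To conclude that $\phi$ is a homotopy equivalence, I would write down the straight-line semi-algebraic homotopy $(d,x,t)\mapsto ((1-t)d + t\rho_D(d),x)$ on each slab, which is well-defined by convexity of $D$, and observe that the slab-wise homotopies are compatible with the identifications $\sim$ defining $\hocolimit_{\leq \ell}$ because of (i) the nesting $\eps_0\gg \eps_1\gg\cdots\gg \eps_\ell>0$, which forces the bands of different chains to meet only along subchains, and (ii) the adaptivity of $\{h^I\}$, which guarantees that the image coordinate $x$ represents a well-defined point of the appropriate $S_{I_1}$ regardless of which $I$ indexes the slab. A cleaner alternative is to invoke the Smale--Vietoris theorem (as in Lemma \ref{lem:hocolimit1}): the fiber of $\phi$ over a point $(s,x)$ is the union of the contractible convex sets $\rho_D^{-1}(s)\times\{x\}$ over those $D$ for which $s\in \rho_D(D)$ and $x\in \overline{h^I(|\sigma|)}$, and these pieces meet in contractible intersections, so the fiber itself is contractible.

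The main obstacle is the gluing verification in the middle paragraph: a single point of $|{\mathcal K}_\ell({\mathcal S})|$ may lie in several slabs ${\mathcal K}(D)$ at once, and one must show that the resulting candidate values of $\phi$ are identified by the equivalence relation $\sim$ of Definition \ref{def:hocolimit}. This reduces to a combinatorial check that the inclusion pattern of the bands $D_{\Delta_{I_1},\ldots,\Delta_I}\cap D_{\Delta_{J_1},\ldots,\Delta_J}$ matches, via the face inclusions $s_{I_1 J_1}$ and the refinement compatibility $i_{I_1 J_1}$ on the triangulations, the gluing data of the hocolimit; once this bookkeeping is in place the homotopy equivalence follows from the cell-wise straight-line retraction.
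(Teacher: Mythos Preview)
Your plan is sound in spirit but takes a very different and much more laborious route than the paper. The paper's proof is two lines: it observes that $\lim_{\eps_0}|{\mathcal K}_\ell({\mathcal S})| = |\hocolimit_{\leq\ell}({\mathcal S})|$ (as the infinitesimals $\eps_0\gg\cdots\gg\eps_\ell$ go to $0$ each collar $D_{\sigma,\tau}$ collapses onto $|\sigma|$ and the thickened complex collapses to the standard hocolimit), and then invokes Lemma~16.17 of \cite{BPRbook06}, a general result stating that a bounded semi-algebraic set over $\R\la\eps\ra$ is homotopy equivalent to the extension of its $\lim_\eps$-image. All of the retraction and gluing work you are attempting by hand is packaged into that single citation; the infinitesimal/limit machinery was set up earlier in the paper (Section~\ref{subsec:Puiseux}) precisely to handle arguments of this kind uniformly.

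There is also a concrete slip in your formulation. For a cell $D$ that contains $D_{\Delta_I}$ as a factor with $I_1\subsetneq I$ (for instance $D = D_{\Delta_{I_1},\Delta_I}\cap D_{\Delta_I}$), the constraint from $D_{\Delta_I}$ forces $\dist(v,\Delta_{I_1})\geq\eps_{\dim\Delta_{I_1}}$ while $D_{\Delta_{I_1},\Delta_I}$ forces $\dist(v,\Delta_{I_1})\leq\eps_{\dim\Delta_{I_1}}$, so such a $D$ sits at distance exactly $\eps_{\dim\Delta_{I_1}}$ from $|\Delta_{I_1}|$ and $D\cap|\Delta_{I_1}|=\emptyset$. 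Nearest-point projection to $|\Delta_{I_1}|$ therefore does not give a deformation retraction of $D$ onto a subset of itself; you would have to assemble the images across \emph{different} cells and check they land in the embedded hocolimit. That is fixable, but the gluing verification you already flag as the ``main obstacle'' becomes correspondingly more delicate, and you are then essentially reproving the cited lemma in this special case.
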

\begin{proof}
From the definition of the complex ${\mathcal K}_\ell({\mathcal S})$ it follows
easily that 
\[
\lim_{\eps_0}
|{\mathcal K}_\ell({\mathcal S})| = |\hocolimit_{\leq \ell}({\mathcal S})|.
\]
It now follows (see \cite[Lemma 16.17]{BPRbook06}) that  
$
|{\mathcal K}_\ell({\mathcal S})|
$
is homotopy equivalent to 
$
\E(|\hocolimit_{\leq \ell}({\mathcal S})|,\R').
$
\end{proof}

By Theorem \ref{the:homologyoftruncated},
in order to compute the
first $\ell$ Betti numbers of $S^{[n]}$, it suffices to
compute the first $\ell$ Betti numbers of 
$|\hocolimit_{\leq \ell}({\mathcal S})|$.
Moreover, by virtue of Proposition \ref{prop:hocolimit2}, 
and Proposition \ref{6:prop:homotopicsa} (homotopy invariance
of the cohomology groups) we have that in order
to compute the Betti numbers of 
$|\hocolimit_{\leq \ell}({\mathcal S})|$
it suffices to compute the Betti numbers of the set 
$|{\mathcal K}_\ell({\mathcal S})|$. 
This is the main idea behind the following algorithm.

\subsection{Algorithm for Computing the Betti Numbers of Arrangements}
We can now describe our algorithm for computing the first $\ell$ Betti
numbers of the set $S^{[n]}$.

\begin{algorithm}
\label{alg:computingbettiofarrangements}
\item[]
\item[{\sc Input}] 
A family ${\mathcal S} = \{S_1,\ldots,S_n\}$ 
of closed and bounded  semi-algebraic sets of $\R^k$ of constant 
description complexity.
\item[{\sc Output}] 
$b_0(S^{[n]}),\ldots, b_\ell(S^{[n]})$.
\item[{\sc Procedure}]
\item[Step 1] Using the algorithm implicit in Theorem \ref{the:triangulation} 
compute an  $\ell$-adaptive triangulation, $h_{\ell}({\mathcal S})$.
\item[Step 2]
Compute the matrices corresponding to the differentials in
the co-chain complex of the 
the cell complex ${\mathcal K}_\ell({\mathcal S})$.
\item[Step 3]
Compute using standard algorithms from linear algebra for computing 
dimensions of images and kernels of linear maps  the dimensions
of the cohomology groups of the complex
$\Ch^{\bullet}({\mathcal K}_\ell({\mathcal S}))$.
\item[Step 4]
For $0 \leq i \leq \ell$ output 
\[
b_i(S^{[n]}) = \dim \HH^i(\Ch^{\bullet}({\mathcal K}_\ell({\mathcal S}))).
\]
\end{algorithm}

\noindent\textsc{Proof of Correctness:}
The correctness of the algorithm is a consequence of 
Theorem \ref{the:homologyoftruncated} and Proposition \ref{prop:hocolimit2}.
\qed

\noindent\textsc{Complexity Analysis:}
There are clearly at most 
$\displaystyle{
\sum_{i}^{\ell+2} {n \choose i} = O(n^{\ell+2})
}
$ 
calls to  the triangulation algorithm. 
Each such call takes constant time under the assumption that the input
sets have constant description complexity. Thus, the total number
of algebraic operations (involving the coefficients of the
input polynomials) is bounded by $O(n^{\ell+2})$.
Additionally, one has to perform linear algebra on matrices of 
size bounded by $O(n^{\ell+2})$. 
\qed

\section{Open Problems}
\label{sec:open}
We list here some interesting open problems some of which could possibly
be tackled in the near future. \\

\paragraph{\em Computing Betti Numbers in Single Exponential Time ?}

Suppose $S \subset \R^k$ is a semi-algebraic set defined in terms
of $s$ polynomials, of degrees bounded by $d$. One of the most fundamental
open questions in algorithmic semi-algebraic geometry, is whether
there exists a single exponential (in $k$) time algorithm for computing
the Betti numbers of $S$. 
The best we can do so far is summarized in Theorem \ref{the:bettifew}
which gives the existence of single exponential time algorithms for
computing the first $\ell$ Betti numbers of $S$ for any constant
$\ell$. 
A big challenge is to extend these ideas to design an algorithm
for computing all the Betti numbers of $S$. \\

\paragraph{\em Are the Middle Betti Numbers Harder to Compute ?}

From the algorithm design perspective it seems that computing the 
lowest (as well as the highest) Betti numbers of semi-algebraic sets, as
well the Euler-Poincar\'e characteristic of semi-algebraic sets,
are easier than computing the ``middle'' Betti numbers. Is there a
complexity-theoretic hardness result that would justify this fact? 
In certain mathematical contexts 
(for instance, the topology of smooth projective
complex varieties) the middle Betti numbers
contain all the information. Is there a complexity-theoretic analogue of 
this phenomenon that would justify our experience that certain Betti
numbers are harder to compute than the others? \\

\paragraph{\em More Efficient Algorithms for 
Computing the Number of Connected Components in the Quadratic
Case ?}

For semi-algebraic sets in $\R^k$ defined by $\ell$ quadratic inequalities,
there are algorithms for deciding emptiness, as well as computing
sample points in every connected component whose complexity is
bounded by $k^{O(\ell)}$ \cite{Barvinok93,GP}. 
We also have an algorithm \cite{Basu6}
for computing the Euler-Poincar\'e characteristic of such sets whose
complexity is  $k^{O(\ell)}$. 
However, the best known algorithm for computing
the number of connected components of such sets 
has complexity $k^{2^{O(\ell)}}$ (as a special case of the 
algorithm for computing all the Betti numbers given in Theorem
\ref{the:quadratic}). This raises the question whether there
exists a more efficient algorithm with complexity $k^{O(\ell)}$ or even
$k^{O(\ell^2)}$ for counting the number of connected components of such sets.
Roadmap type constructions
used for counting connected components in the case of 
general semi-algebraic sets cannot be directly employed in this context,
because such algorithms will have complexity exponential in $k$. \\

\paragraph{\em More Efficient Algorithms for 
Computing the Number of Connected Components for General Semi-algebraic Sets ?}
A very interesting open question is whether the exponent $O(k^2)$ 
in the complexity of roadmap algorithms 
(cf. Theorem \ref{16:the:saconnecting}) can be improved to $O(k)$, so that the
complexity of testing connectivity becomes asymptotically the same as that
of testing emptiness of a semi-algebraic set (cf. Theorem 
\ref{13:the:samplealg}).

Such an improvement
would go a long way in making this algorithm practically useful.
It would also be of interest for studying 
metric properties of semi-algebraic sets because of the following.
Applying Crofton's formula from integral geometry (see for example
\cite{Santalo}) one immediately
obtains as a corollary of Theorem \ref{16:the:saconnecting}
(using the same notation as in the theorem) 
an upper bound of $s^{k'+1}d^{O(k^2)}$
on the length of a semi-algebraic connecting
path connecting two points in any connected component of $S$
(assuming that $S$ is contained in the unit ball centered at the origin).
An improvement in the complexity of algorithms for constructing
connecting paths (such as the roadmap algorithm) 
would also improve the bound on the length of connecting paths.
Recent results due to D'Acunto and  Kurdyka \cite{DK} show that 
it is possible to construct  semi-algebraic paths
of length $d^{O(k)}$ between two points of $S$
(assuming that $S$ is a connected
component of a real algebraic set contained in the  unit ball
defined by polynomials of degree $d$). 
However, the semi-algebraic complexity
of such paths cannot be bounded in terms of the parameters $d$ and $k$.
The improvement in the complexity suggested above, apart from
its algorithmic significance,  would also be an
effective version of the results in \cite{DK}.

\section*{Acknowledgment}
The author thanks Richard Pollack for his 
careful reading and the anonymous referees for many
helpful comments which helped to substantially improve the article.

\end{document}